\newtheorem{cor}[subsubsection]{Corollary}
\newtheorem{lem}[subsubsection]{Lemma}
\newtheorem{prop}[subsubsection]{Proposition}
\newtheorem{thm}[subsubsection]{Theorem}
\theoremstyle{definition}
\newtheorem{defn}[subsubsection]{Definition}
\newtheorem{example}[subsubsection]{Example}
\newcommand{\iso}{\buildrel{\sim}\over{\longrightarrow}}
\theoremstyle{definition}
\theoremstyle{remark}
\newtheorem{rem}[subsubsection]{Remark}
\newcommand{\secref}[1]{Sect.~\ref{#1}}
\newcommand{\lemref}[1]{Lemma~\ref{#1}}
\newcommand{\propref}[1]{Proposition~\ref{#1}}
\newcommand{\corref}[1]{Corollary~\ref{#1}}
\newcommand{\remref}[1]{Remark~\ref{#1}}
\numberwithin{equation}{section}
\newcommand{\nc}{\newcommand}
\nc{\renc}{\renewcommand}
\nc{\ssec}{\subsection}
\nc{\sssec}{\subsubsection}
\nc{\on}{\operatorname}
\nc{\Kdot}{{\bullet}}
\nc\ol{\overline}
\nc\wt{\widetilde}
\nc\tboxtimes{\wt{\boxtimes}}
\nc{\alp}{\alpha}
\newcommand{\limto}{{\displaystyle\lim_{\longrightarrow}}}
\newcommand{\rightlim}{\mathop{\limto}}
\newcommand{\leftlim}{\mathop{\displaystyle\lim_{\longleftarrow}}}
\newcommand{\limfromn}{\leftlim\limits_{\raise3pt\hbox{$n$}}}
\newcommand{\limton}{\rightlim\limits_{\raise3pt\hbox{$n$}}}
\nc{\subscheme}{{``sub"\-scheme }}
\nc{\subschemes}{{``sub"schemes }}
\nc{\subspaces}{{``sub"spaces }}
\nc{\subspace}{{``sub"space }}
\newcommand{\mono}{\hookrightarrow}
\newcommand{\epi}{\twoheadrightarrow}
\nc{\Zp}{{Z_{\infty}^+}}
\nc{\Rn}{{R_n}}
\nc{\Corr}{\on{Corr}}
\nc{\Sets}{\on{Sets}}
\nc{\Gp}{\on{Groupoids}}
\nc{\Alg}{\on{AlgSp}}
\nc{\AAlg}{\on{{\bf AlgSp}}}
\nc{\St}{\on{AlgSt}}
\nc{\SSt}{\on{{\bf AlgSt}}}
\nc{\Sp}{\on{{\bf Sp}}}
\nc{\CORR}{\on{{\bf Corr}}}
\nc{\TTw}{\on{{\bf Tw}}}
\nc{\DGCat}{\on{DGCat}}
\nc{\BIG}{{\Phi}}
\nc{\ZZ}{{\mathbb Z}}
\nc{\NN}{{\mathbb N}}
\nc{\OO}{{\mathbb O}}
\renc{\SS}{{\mathbb S}}
\nc{\DD}{{\mathbb D}}
\nc{\GG}{{\mathbb G}}
\nc{\Fq}{{\mathbb F}_q}
\nc{\Fqb}{\ol{{\mathbb F}_q}}
\nc{\Ql}{\ol{{\mathbb Q}_\ell}}
\nc{\id}{\text{id}}
\nc\X{\mathcal X}
\nc{\bl}{\on{bl}}
\nc{\st}{\on{st}}
\nc{\der}{\on{der}}
\nc{\Eq}{\on{Eq}}
\nc{\ET}{\on{et}}
\nc{\Et}{\on{Et}}
\nc{\ev}{\on{ev}}
\nc{\fin}{\on{fin}}
\nc{\Hom}{\on{Hom}}
\nc{\Ob}{\on{Ob}}
\nc{\obs}{\underline{obs}}
\nc{\QC}{\on{QC}}
\nc{\Lie}{\on{Lie}}
\nc{\Loc}{\on{Loc}}
\nc{\Pic}{\on{Pic}}
\nc{\Bun}{\on{Bun}}
\nc{\IC}{\on{IC}}
\nc{\Aut}{\on{Aut}}
\nc{\pr}{\on{pr}}
\nc{\rk}{\on{rk}}
\nc{\Sh}{\on{Sh}}
\nc{\Perv}{\on{Perv}}
\nc{\pos}{{\on{pos}}}
\nc{\Conv}{\on{Conv}}
\nc{\sep}{\on{sep}}
\nc{\Sph}{\on{Sph}}
\nc{\Sym}{\on{Sym}}
\nc{\Tw}{\on{Tw}}
\nc{\BunBb}{\overline{\Bun}_B}
\nc{\BunBbm}{\overline{\Bun}_{B^-}}
\nc{\BunBbel}{\overline{\Bun}_{B,el}}
\nc{\BunBbmel}{\overline{\Bun}_{B^-,el}}
\nc{\Buno}{\overset{o}{\Bun}}
\nc{\BunPb}{{\overline{\Bun}_P}}
\nc{\BunBM}{\Bun_{B(M)}}
\nc{\BunBMb}{\overline{\Bun}_{B(M)}}
\nc{\BunPbw}{{\widetilde{\Bun}_P}}
\nc{\BunBP}{\widetilde{\Bun}_{B,P}}
\nc{\GUb}{\overline{G/U}}
\nc{\GUPb}{\overline{G/U(P)}}
\nc{\Hhom}{\underline{\on{Hom}}}
\nc\syminfty{\on{Sym}^{\infty}}
\nc\lal{\ol{\lambda}}
\nc\xl{\ol{x}}
\nc\thl{\ol{\theta}}
\nc\nul{\ol{\nu}}
\nc\mul{\ol{\mu}}
\nc{\oX}{\overset{o}{X}{}}
\nc{\hl}{\overset{\leftarrow}h{}}
\nc{\hr}{\overset{\rightarrow}h{}}
\nc{\M}{{\mathcal M}}
\nc{\N}{{\mathcal N}}
\nc{\F}{{\mathcal F}}
\nc{\D}{{\mathcal D}}
\nc{\Q}{{\mathcal Q}}
\nc{\Y}{{\mathcal Y}}
\nc{\G}{{\mathcal G}}
\nc{\E}{{\mathcal E}}
\nc{\CalC}{{\mathcal C}}
\nc\Dh{\widehat{\D}}
\nc{\C}{{\mathcal C}}
\nc{\K}{{\mathcal K}}
\renewcommand{\H}{{\mathcal H}}
\nc{\T}{{\mathcal T}}
\nc{\V}{{\mathcal V}}
\renc{\P}{{\mathcal P}}
\nc{\A}{{\mathcal A}}
\nc{\B}{{\mathcal B}}
\nc{\U}{{\mathcal U}}
\nc{\Gr}{{\on{Gr}}}
\nc{\frn}{{\check{\mathfrak u}(P)}}
\nc{\p}{\mathfrak p}
\nc{\q}{\mathfrak q}
\nc\f{{\mathfrak f}}
\nc{\qo}{{\mathfrak q}}
\nc{\po}{{\mathfrak p}}
\nc{\s}{{\mathfrak s}}
\nc\w{\text{w}}
\nc\Spec{\on{Spec}}
\nc\Spf{\on{Spf}}
\nc\Mod{\on{Mod}}
\nc{\tw}{\widetilde{\mathfrak t}}
\nc{\pw}{\widetilde{\mathfrak p}}
\nc{\qw}{\widetilde{\mathfrak q}}
\nc{\jw}{\widetilde j}
\nc{\grb}{\overline{\Gr}}
\nc{\I}{\mathcal I}
\nc{\lambdach}{{\check\lambda}}
\nc{\Lambdach}{{\check\Lambda}{}}
\nc{\much}{{\check\mu}}
\nc{\omegach}{{\check\omega}}
\nc{\nuch}{{\check\nu}}
\nc{\etach}{{\check\eta}}
\nc{\alphach}{{\check\alpha}}
\nc{\betach}{{\check\beta}}
\nc{\rhoch}{{\check\rho}}
\nc{\ch}{{\check h}}
\nc{\Hb}{\overline{\H}}
\nc{\BA}{{\mathbb{A}}}
\nc{\BB}{{\mathbb{B}}}
\nc{\BC}{{\mathbb{C}}}
\nc{\BG}{{\mathbb{G}}}
\nc{\BM}{{\mathbb{M}}}
\nc{\BD}{{\mathbb{D}}}
\nc{\BN}{{\mathbb{N}}}
\nc{\BP}{{\mathbb{P}}}
\nc{\BQ}{{\mathbb{Q}}}
\nc{\BR}{{\mathbb{R}}}
\nc{\BX}{{\mathbb{X}}}
\nc{\BXp}{\mathbb{X}^+}
\nc{\BXm}{\mathbb{X}^-}
\nc{\BXpm}{\mathbb{X}^{\pm}}
\nc{\BY}{{\mathbb{Y}}}
\nc{\BZ}{{\mathbb{Z}}}
\nc{\BS}{{\mathbb{S}}}
\nc{\CA}{{\mathcal{A}}}
\nc{\CB}{{\mathcal{B}}}
\nc{\CE}{{\mathcal{E}}}
\nc{\CF}{{\mathcal{F}}}
\nc{\CG}{{\mathcal{G}}}
\nc{\CL}{{\mathcal{L}}}
\nc{\CC}{{\mathcal{C}}}
\nc{\CCD}{{\mathcal{D}}}
\nc{\CM}{{\mathcal{M}}}
\nc{\CN}{{\mathcal{N}}}
\nc{\CK}{{\mathcal{K}}}
\nc{\CO}{{\mathcal{O}}}
\nc{\CP}{{\mathcal{P}}}
\nc{\CQ}{{\mathcal{Q}}}
\nc{\CR}{{\mathcal{R}}}
\nc{\CS}{{\mathcal{S}}}
\nc{\CT}{{\mathcal{T}}}
\nc{\CU}{{\mathcal{U}}}
\nc{\CV}{{\mathcal{V}}}
\nc{\CW}{{\mathcal{W}}}
\nc{\CX}{{\mathcal{X}}}
\nc{\CY}{{\mathcal{Y}}}
\nc{\CZ}{{\mathcal{Z}}}
\nc{\CI}{{\mathcal{I}}}
\nc{\CJ}{{\mathcal{J}}}
\nc{\csM}{{\check{\mathcal A}}{}}
\nc{\oM}{{\overset{\circ}{\mathcal M}}{}}
\nc{\obM}{{\overset{\circ}{\mathbf M}}{}}
\nc{\oCA}{{\overset{\circ}{\mathcal A}}{}}
\nc{\obA}{{\overset{\circ}{\mathbf A}}{}}
\nc{\ooM}{{\overset{\circ}{M}}{}}
\nc{\osM}{{\overset{\circ}{\mathsf M}}{}}
\nc{\vM}{{\overset{\bullet}{\mathcal M}}{}}
\nc{\nM}{{\underset{\bullet}{\mathcal M}}{}}
\nc{\oD}{{\overset{\circ}{\mathcal D}}{}}
\nc{\obD}{{\overset{\circ}{\mathbf D}}{}}
\nc{\oA}{{\overset{\circ}{\mathbb A}}{}}
\nc{\op}{{\overset{\bullet}{\mathbf p}}{}}
\nc{\cp}{{\overset{\circ}{\mathbf p}}{}}
\nc{\oU}{{\overset{\bullet}{\mathcal U}}{}}
\nc{\oZ}{{\overset{\circ}{\mathcal Z}}{}}
\nc{\ofZ}{{\overset{\circ}{\mathfrak Z}}{}}
\nc{\oF}{{\overset{\circ}{\fF}}}
\nc{\fa}{{\mathfrak{a}}}
\nc{\fb}{{\mathfrak{b}}}
\nc{\fd}{{\mathfrak{d}}}
\nc{\fg}{{\mathfrak{g}}}
\nc{\fgl}{{\mathfrak{gl}}}
\nc{\fh}{{\mathfrak{h}}}
\nc{\fj}{{\mathfrak{j}}}
\nc{\fl}{{\mathfrak{l}}}
\nc{\fm}{{\mathfrak{m}}}
\nc{\fn}{{\mathfrak{n}}}
\nc{\fu}{{\mathfrak{u}}}
\nc{\fp}{{\mathfrak{p}}}
\nc{\fr}{{\mathfrak{r}}}
\nc{\fs}{{\mathfrak{s}}}
\nc{\ft}{{\mathfrak{t}}}
\nc{\fsl}{{\mathfrak{sl}}}
\nc{\hsl}{{\widehat{\mathfrak{sl}}}}
\nc{\hgl}{{\widehat{\mathfrak{gl}}}}
\nc{\hg}{{\widehat{\mathfrak{g}}}}
\nc{\chg}{{\widehat{\mathfrak{g}}}{}^\vee}
\nc{\hn}{{\widehat{\mathfrak{n}}}}
\nc{\chn}{{\widehat{\mathfrak{n}}}{}^\vee}
\nc{\fA}{{\mathfrak{A}}}
\nc{\fB}{{\mathfrak{B}}}
\nc{\fD}{{\mathfrak{D}}}
\nc{\fE}{{\mathfrak{E}}}
\nc{\fF}{{\mathfrak{F}}}
\nc{\fG}{{\mathfrak{G}}}
\nc{\fK}{{\mathfrak{K}}}
\nc{\fL}{{\mathfrak{L}}}
\nc{\fM}{{\mathfrak{M}}}
\nc{\fN}{{\mathfrak{N}}}
\nc{\fP}{{\mathfrak{P}}}
\nc{\fU}{{\mathfrak{U}}}
\nc{\fV}{{\mathfrak{V}}}
\nc{\fZ}{{\mathfrak{Z}}}
\nc{\bb}{{\mathbf{b}}}
\nc{\bc}{{\mathbf{c}}}
\nc{\bd}{{\mathbf{d}}}
\nc{\be}{{\mathbf{e}}}
\nc{\bj}{{\mathbf{j}}}
\nc{\bn}{{\mathbf{n}}}
\nc{\bp}{{\mathbf{p}}}
\nc{\bq}{{\mathbf{q}}}
\nc{\bu}{{\mathbf{u}}}
\nc{\bv}{{\mathbf{v}}}
\nc{\bx}{{\mathbf{x}}}
\nc{\bs}{{\mathbf{s}}}
\nc{\by}{{\mathbf{y}}}
\nc{\bw}{{\mathbf{w}}}
\nc{\bA}{{\mathbf{A}}}
\nc{\bK}{{\mathbf{K}}}
\nc{\bB}{{\mathbf{B}}}
\nc{\bC}{{\mathbf{C}}}
\nc{\bG}{{\mathbf{G}}}
\nc{\bD}{{\mathbf{D}}}
\nc{\bH}{{\mathbf{H}}}
\nc{\bM}{{\mathbf{M}}}
\nc{\bN}{{\mathbf{N}}}
\nc{\bV}{{\mathbf{V}}}
\nc{\bW}{{\mathbf{W}}}
\nc{\bX}{{\mathbf{X}}}
\nc{\bZ}{{\mathbf{Z}}}
\nc{\bS}{{\mathbf{S}}}
\nc{\sA}{{\mathsf{A}}}
\nc{\sB}{{\mathsf{B}}}
\nc{\sC}{{\mathsf{C}}}
\nc{\sD}{{\mathsf{D}}}
\nc{\sG}{{\mathsf{G}}}
\nc{\sF}{{\mathsf{F}}}
\nc{\sK}{{\mathsf{K}}}
\nc{\sM}{{\mathsf{M}}}
\nc{\sO}{{\mathsf{O}}}
\nc{\sW}{{\mathsf{W}}}
\nc{\sQ}{{\mathsf{Q}}}
\nc{\sP}{{\mathsf{P}}}
\nc{\sZ}{{\mathsf{Z}}}
\nc{\sfp}{{\mathsf{p}}}
\nc{\sfq}{{\mathsf{q}}}
\nc{\sr}{{\mathsf{r}}}
\nc{\sfi}{{\mathsf{i}}}
\nc{\sk}{{\mathsf{k}}}
\nc{\sg}{{\mathsf{g}}}
\nc{\sff}{{\mathsf{f}}}
\nc{\sfb}{{\mathsf{b}}}
\nc{\sfc}{{\mathsf{c}}}
\nc{\sd}{{\mathsf{d}}}
\nc{\BK}{{\bar{K}}}
\nc{\tA}{{\widetilde{\mathbf{A}}}}
\nc{\tB}{{\widetilde{\mathcal{B}}}}
\nc{\tg}{{\widetilde{\mathfrak{g}}}}
\nc{\tG}{{\widetilde{G}}}
\nc{\TM}{{\widetilde{\mathbb{M}}}{}}
\nc{\tO}{{\widetilde{\mathsf{O}}}{}}
\nc{\tU}{{\widetilde{\mathfrak{U}}}{}}
\nc{\TZ}{{\tilde{Z}}}
\nc{\tx}{{\tilde{x}}}
\nc{\tbv}{{\tilde{\bv}}}
\nc{\tfP}{{\widetilde{\mathfrak{P}}}{}}
\nc{\tz}{{\tilde{\zeta}}}
\nc{\tmu}{{\tilde{\mu}}}
\nc{\urho}{\underline{\rho}}
\nc{\uB}{\underline{B}}
\nc{\uC}{{\underline{\mathbb{C}}}}
\nc{\ui}{\underline{i}}
\nc{\uj}{\underline{j}}
\nc{\ofP}{{\overline{\mathfrak{P}}}}
\nc{\oB}{{\overline{\mathcal{B}}}}
\nc{\og}{{\overline{\mathfrak{g}}}}
\nc{\oI}{{\overline{I}}}
\nc{\eps}{\varepsilon}
\nc{\hrho}{{\hat{\rho}}}
\nc{\one}{{\mathbf{1}}}
\nc{\two}{{\mathbf{t}}}
\nc{\Rep}{{\mathop{\operatorname{\rm Rep}}}}
\nc{\Tot}{{\mathop{\operatorname{\rm Tot}}}}
\nc{\Ker}{{\mathop{\operatorname{\rm Ker}}}}
\nc{\Coker}{{\mathop{\operatorname{\rm Coker}}}}
\nc{\im}{{\mathop{\operatorname{\rm Im}}}}
\nc{\Hilb}{{\mathop{\operatorname{\rm Hilb}}}}
\nc{\End}{{\mathop{\operatorname{\rm End}}}}
\nc{\Ext}{{\mathop{\operatorname{\rm Ext}}}}
\nc{\CHom}{{\mathop{\operatorname{{\mathcal{H}}\it om}}}}
\nc{\GL}{{\mathop{\operatorname{\rm GL}}}}
\nc{\gr}{{\mathop{\operatorname{\rm gr}}}}
\nc{\Id}{{\mathop{\operatorname{\rm Id}}}}
\nc{\de}{{\mathop{\operatorname{\rm def}}}}
\nc{\length}{{\mathop{\operatorname{\rm length}}}}
\nc{\supp}{{\mathop{\operatorname{\rm supp}}}}
\nc{\Mor}{{\mathop{\operatorname{{\mathcal{M}}\it or}}}}
\nc{\Cliff}{{\mathsf{Cliff}}}
\nc{\Fl}{\on{Fl}}
\nc{\Fib}{{\mathsf{Fib}}}
\nc{\Coh}{{\mathsf{Coh}}}
\nc{\FCoh}{{\mathsf{FCoh}}}
\nc{\reg}{{\text{\rm reg}}}
\nc{\cplus}{{\mathbf{C}_+}}
\nc{\cminus}{{\mathbf{C}_-}}
\nc{\cthree}{{\mathbf{C}_\bullet}}
\nc{\Qbar}{{\bar{Q}}}
\nc\Eis{\on{Eis}}
\nc\Eisb{\ol\Eis{}}
\nc\wh{\widehat}
\nc{\Def}{\on{Def_{\check{\fb}}(E)}}
\nc{\barZ}{\overline{Z}{}}
\nc{\barbarZ}{\overline{\barZ}{}}
\nc{\barpi}{\overline\pi}
\nc{\barbarpi}{\overline\barpi}
\nc{\barpip}{\overline\pi{}^+}
\nc{\barpim}{\overline\pi{}^-}
\nc{\fq}{\mathfrak q}
\nc{\sfqb}{\ol{\sfq}{}}
\nc{\sfpb}{\ol{\sfp}{}}
\nc{\hattimes}{\wh\otimes}
\nc{\bh}{{\bar{h}}}
\nc{\bOmega}{{\overline{\Omega(\check \fn)}}}
\nc{\seq}[1]{\stackrel{#1}{\sim}}
\nc{\cT}{{\check{T}}}
\nc{\cG}{{\check{G}}}
\nc{\cM}{{\check{M}}}
\nc{\cB}{{\check{B}}}
\nc{\ct}{{\check{\mathfrak t}}}
\nc{\cg}{{\check{\fg}}}
\nc{\cb}{{\check{\fb}}}
\nc{\cn}{{\check{\fn}}}
\nc{\cLambda}{{\check\Lambda}}
\nc{\cla}{{\check\lambda}}
\nc{\cmu}{{\check\mu}}
\nc{\cnu}{{\check\nu}}
\nc{\ceta}{{\check\eta}}
\nc{\DefbE}{{\on{Def}_{\cB}(E_\cT)}}
\nc{\imathb}{{\ol{\imath}}}
\nc{\Dmod}{\on{D-mod}}
\nc{\Maps}{\on{Maps}}
\nc{\MMaps}{\on{\mathbf{Maps}}}
\nc{\GMaps}{{\MMaps^{\BG_m}}}
\nc{\gMaps}{{\Maps^{\BG_m}}}
\nc{\Vect}{\on{Vect}}
\nc{\CMaps}{\mathcal Maps}
\nc{\sotimes}{\overset{!}\otimes}
\nc{\dr}{\on{dR}}
\nc{\red}{\on{red}}
\begin{document}

\title{On algebraic spaces with an action of $\BG_m$}

\author{V.~Drinfeld}

\date{\today}

\begin{abstract}
Let $Z$ be an algebraic space of finite type over a field, equipped with an action of the multiplicative group $\BG_m$. In this situation we define and study a certain algebraic space equipped with an unramified morphism to $\BA^1\times Z\times Z$ (if $Z$ is affine and smooth this is just the closure of the graph of the action map $\BG_m\times Z\to Z$). In particular, we prove the results on $\wt{Z}$ announced in \cite{DrGa1}.

In articles joint with D.~Gaitsgory we use this set-up to prove a new result in the geometric theory of automorphic forms and to give a new proof of a very important theorem of T.~Braden.
\end{abstract}

\subjclass{Primary 14L30; Secondary 14D24, 20G05, 22E57}

\keywords{Toric action, algebraic space, hyperbolic, fixed points, attractor, repeller}

\maketitle

\tableofcontents

%\newpage

\section*{Introduction}

\ssec{The goals of this article}

Algebraic varieties equipped with an action of the multiplicative group $\BG_m$ have been studied for quite a while (especially by A.~Bia{\l}ynicki-Birula and his school); see, e.g., the works \cite{Bia},  \cite{BS}, \cite{Ju1},  \cite{Ju2},  \cite{Kon},  \cite{Som}.

\medskip

This article has two goals.

The first one is to define the notion of attractor for an \emph{arbitrary algebraic space}\footnote{We do not require $Z$ to be either separated or normal. We include quasi-separatedness in the definition of algebraic space, but this is a very weak property (which holds automatically for \emph{schemes} of finite type over $k$).} of finite type over a field $k$ equipped with a $\BG_m$-action and to prove the basic properties of attractors in this generality. The main difficulty is that if $(Z\otimes_k\bar k)_{\red}$ is not assumed to be a normal scheme then Sumihiro's theorem is not applicable, so the $\BG_m$-action on $Z\otimes_k\bar k$ is not necessarily locally 
linear\footnote{An action of $\BG_m$ on a scheme $Z$ is said to be \emph{locally linear} if
$Z$ can be covered by open affine subschemes preserved by the $\BG_m$-action.}.

The second (and more important) goal is to provide the geometric background for the articles 
\cite{DrGa1, DrGa2}. Namely, for any algebraic $k$-space $Z$ of finite type acted on by $\BG_m\,$, we define a certain algebraic space $\wt{Z}$ of finite type over $\BA^1\times Z\times Z$ and study its properties. The space 
$\wt{Z}$ seems to be new even if $Z$ is a separated smooth scheme (although it is somewhat similar to the space from \cite[Theorem 0.1.2]{BS} denoted there by $Z$). The space $\wt{Z}$  plays a crucial role in \cite{DrGa2}, where it is used to prove a new result in the geometric theory of automorphic forms. It also allows to give a new proof of a very important theorem of T.~Braden, see \cite{DrGa1}. In each of the articles \cite{DrGa1,DrGa2} the space $\wt{Z}$ is used to construct the unit of a certain adjunction.

Let us note that most of the results of this article were announced in \cite{DrGa1}.

\medskip

Now let us explain more details.

\ssec{Attractors and repellers}  \label{sss:Z+}
Let $k$ be any field, and let $Z$ be a  an algebraic $k$-space of finite type  acted on by $\BG_m\,$.
According to Theorem~\ref{t:attractors} and the easy \propref{p:Z^0closed}, there exist algebraic spaces $Z^0$, $Z^+$, and $Z^-$  of finite type over $k$ representing the following functors:
%To it we attach the following SCHEMEs: $Z^0$, $Z^+$ and $Z^-$, that represent the following functors: 
$$\Maps(S,Z^0)=\Maps^{\BG_m}(S,Z),$$
$$\Maps(S,Z^+)=\Maps^{\BG_m}(\BA^1\times S,Z),$$
$$\Maps(S,Z^-)=\Maps^{\BG_m}(\BA^1_-\times S,Z),$$
where $S$ is a $k$-scheme, $\BA^1_-:=\BP^1-\{\infty\}$,  and the $\BG_m$-actions on $\BA^1$ and 
$\BA^1_-$ are the usual ones.\footnote{Using the map $t\mapsto t^{-1}$, one can identify $\BA^1_-$ with the scheme $\BA^1$ equipped with the $\BG_m$-action opposite to the usual one.}
%and in the case of $Z^+$, the action of $\BG_m$ on $\BA^1$ is given by the standard
%character, and in the case of $Z^-$ by its inverse.
The space $Z^0$ (resp.~$Z^+$ and $Z^-$) is called the \emph{space of $\BG_m$-fixed points}
(resp.~the \emph{attractor} and \emph{repeller}).

%Let $p^{\pm}$, $q^{\pm}$ denote the maps
%$$Z^{\pm}\to Z \text{ and } Z^{\pm}\to Z^0,$$
%corresponding to evaluations at $1\in \BA^1$  and $0\in \BA^1$, respectively. 

Let $p^+:Z^+\to Z$ and $q^+:Z^+\to Z^0$ denote the maps corresponding to evaluating a $\BG_m$-equivariant morphism $\BA^1\times S\to Z$ at $1\in \BA^1$  and $0\in \BA^1$, respectively. One defines
$p^-:Z^-\to Z$ and $q^-:Z^-\to Z^0$ similarly. Let $i^+:Z^0\to Z^+$ (resp. $i^-:Z^0\to Z^-$) denote the morphism induced by the projection $\BA^1\times S\to S$ (resp.~$\BA^1_-\times S\to S$).

%$$Z^{\pm}\to Z \text{ and } Z^{\pm}\to Z^0,$$
%corresponding to evaluations at $1\in \BA^1$  and $0\in \BA^1$, respectively. 

\medskip

According to \propref{p: p^+}, the morphisms $p^{\pm}:Z^{\pm}\to Z$ are always unramified\footnote{The definition of ``unramified" is recalled in Subsect.~\ref{sss:unramified} below.}, and if $Z$ is separated they are monomorphisms (because $\BG_m\subset\BA^1$ is schematically dense). Of course, if $Z$ is affine then so are $Z^0$ and $Z^\pm$; moreover, in this case the morphisms $p^{\pm}:Z^{\pm}\to Z$ are closed embeddings.

Let us also mention \propref{p:Cartesian}, which says that the morphism
\[
j:=(i^+,i^-):Z^0\to Z^+\underset{Z}\times Z^-
\]
is an open embedding (and also a closed one). This fact is used in \cite{DrGa1} to construct the co-unit of the adjunction  in Braden's theorem.

Of course, in the case where $Z$ is a scheme equipped with a locally linear $\BG_m$-action all above-mentioned results are well known (in a slightly different language).

\ssec{The space $\wt{Z}$}  %\label{sss:Z+}

\sssec{Hyperbolas}    \label{sss:hyperbolas}

We now consider the following family of curves over $\BA^1$, denoted by $\BX$: as a scheme, 
$\BX=\BA^2=\Spec k[\tau_1,\tau_2]$, and the map $\BX\to \BA^1$ is $(\tau_1,\tau_2)\mapsto \tau_1\tau_2\,$.The fibers of this map are hyperbolas; the zero fiber is the coordinate cross, i.e., a degenerate hyperbola.

\medskip

We let $\BG_m$ act on $\BX$ hyperbolically: 
\[
\lambda\cdot (\tau_1,\tau_2):=(\lambda\cdot \tau_1,\; \lambda{}^{-1}\cdot \tau_2).
\]

\sssec{The space $\wt{Z}$}  \label{sss:tilde Z}
%Let $\wt{Z}$ denote the SCHEME over $\BA^1$ that sends $S\to \BA^1$ to the set
%$$\Maps^{\BG_m}(\BX\underset{\BA^1}\times S,Z).$$

According to Theorem~\ref{t:tildeZ}, there exists an algebraic space  $\wt{Z}$ of finite type over $\BA^1$ representing the following functor on the category of schemes over $\BA^1$:
$$\Maps_{\BA^1}(S,\wt{Z})=\Maps^{\BG_m}(\BX\underset{\BA^1}\times S,Z).$$
If $Z$ is a scheme equipped with a locally linear $\BG_m$-action then the existence of $\wt{Z}$ (as a scheme) is easy to prove, see Subsect.~\ref{ss:locally linear}.

 In general, we prove representability of the above functor using M.~Artin's technique
(see Section~\ref{s:tildeZ}). \emph{It would be nice if somebody finds a simpler and more constructive proof of representability.}

\sssec{The canonical morphism $\wt{p}:\wt{Z}\to \BA^1\times Z\times Z$} 
%One has a canonical map $\wt{p}:\wt{Z}\to \BA^1\times Z\times Z$. To define it, first note 
Note that any section $\sigma :\BA^1\to\BX$ of the morphism $\BX\to\BA^1$ defines a map 
$$\sigma^*:\Maps^{\BG_m} (\BX\underset{\BA^1}\times S\, ,Z)\to\Maps (S,Z)$$
and therefore a morphism $\wt{Z}\to Z$. Let $\pi_1:\wt{Z}\to Z$ and $\pi_2:\wt{Z}\to Z$ denote the morphisms corresponding to the sections
\[
t\mapsto (1,t)\in \BX \quad \text{ and }\quad t\mapsto (t,1)\in \BX\, ,
\]
respectively. Now define 
\begin{equation} \label{e:from tilde}
\wt{p}:\wt{Z}\to \BA^1\times Z\times Z
\end{equation}
to be the morphism whose first component is the tautological projection $\wt{Z}\to \BA^1$, and the second and the third components are $\pi_1$ and $\pi_2$, respectively. 

The morphism $\wt{p}:\wt{Z}\to \BA^1\times Z\times Z$ is always unramified, and if $Z$ is separated then 
$\wt{p}$ is a monomorphism (see \propref{p:props tilde p}). Moreover, if $Z$ is affine then $\wt{p}$ is a closed embedding  (see \propref{p:2new tilde}), so $\wt{p}$ identifies $\wt{Z}$ with a closed subscheme of 
$\BA^1\times Z\times Z$.

%whose first component is the tautological projection, and the second and the third components 
%are given by evaluation on the sections 
%$$t\mapsto (1,t) \text{ and } t\mapsto (t,1),\quad \BA^1\to \BX,$$ respectively. 

\sssec{The fibers of the morphism $\wt{Z}\to \BA^1$} 

Let $\wt{Z}_t$ denote the preimage of $t\in \BA^1$ under the projection $\wt{Z}\to \BA^1$. Let $\wt{p}_t$ denote the corresponding map $\wt{Z}_t\to Z\times Z$.

\medskip

By definition, $(\wt{Z}_1,\wt{p}_1)$ identifies with $(Z,\Delta_Z)$. For any $t\in \BA^1-\{0\}$, the pair 
$(\wt{Z}_t,\wt{p}_t)$ is the graph of the action of $t\in \BG_m$ on $Z$. Moreover, the morphism $\wt{p}$ induces an isomorphism
\begin{equation}  \label{e:graph}
\BG_m\underset{\BA^1}\times\wt{Z}\iso\Gamma , \quad\quad \Gamma:=\{ (t,z_1, z_2)\,|\,t\cdot z_1=z_2 \}.
\end{equation}

The space $\wt{Z}_0$ identifies with $Z^+\underset{Z^0}\times Z^-$ so that the morphism
$\wt{p}_0:\wt{Z}_0\to Z\times Z$ identifies with the composition 
$$Z^+\underset{Z^0}\times Z^-\hookrightarrow Z^+\times Z^- \overset{p^+\times p^-}\longrightarrow Z\times Z.$$ The above-mentioned identification comes from the fact that the degenerate hyperbola $\BX_0$ is the union of 
the coordinate axes, one of which identifies with $\BA^1$ and the other one with $\BA^1_-\,$.

Thus $\wt{Z}$ provides an ``interpolation" between the spaces $\wt{Z}_1=Z$ and 
$\wt{Z}_0=Z^+\underset{Z^0}\times Z^-$.

\sssec{Smoothness}  \label{sss:smoothness}
If $Z$ is smooth then so is the morphism $\wt{Z}\to\BA^1$, see \propref{p:2smoothness}. 

If $Z$ is smooth and affine then by \propref{p:affine and smooth}, the morphism $\wt{p}:\wt{Z}\to \BA^1\times Z\times Z$ induces an isomorphism 
$$\wt{Z}\iso\overline{\Gamma},$$ where $\Gamma$ is as in formula~\eqref{e:graph} and $\overline{\Gamma}$ is the scheme-theoretic closure of $\Gamma$ in $\BA^1\times Z\times Z\,$.

\sssec{Remark} %\label{sss:}
We prove that if the algebraic space  $Z$ is separated (resp. is a scheme) then
so are all the algebraic spaces $Z^0$, $Z^\pm$, and $\wt{Z}$ (see \propref{p:Z^0closed}, \corref{c:attractors}, and \propref{p:props tilde p}).

%\newpage

\ssec{Another proof of the representability theorems}
The most difficult results of this article are Theorem~\ref{t:attractors} (representability of $Z^+$ in the category of algebraic spaces) and Theorem~\ref{t:tildeZ} (representability of $\wt{Z}$). D.~Halpern-Leistner drew my attention to the fact that if the diagonal morphism $\Delta :Z\to Z\times Z$ is affine (e.g., if $Z$ is separated) then these results follow from Theorem 2.1 of \cite{HP}. To prove our Theorem~\ref{t:attractors}, one uses the fact that the stack $\BA^1/\BG_m$ is \emph{cohomologically proper} over $\Spec k$ in the sense of \cite{HP}; by \cite[Theorem 2.1]{HP}, this implies that the stack parametrizing morphisms $\BA^1/\BG_m\to Z/\BG_m$ is algebraic. Similarly, our Theorem~\ref{t:tildeZ} follows from the cohomological properness of the morphism $\BX/\BG_m\to\BA^1$ considered in Subsect.~\ref{sss:hyperbolas}.

Currently this approach requires the assumption that $Z$ has affine diagonal (because of a similar assumption in 
\cite[Theorem 3.4.2]{Lur2}, which is used in \cite{HP}). Hopefully this assumption will be removed in a forthcoming work by B.~Bhatt and D.~Halpern-Leistner.

%Lurie's Tannakian formalism

 %\newpage

\ssec{Organization of the paper}
In \secref{s:actions} we define and study the space of $\BG_m$-fixed points $Z^0$, the attractor $Z^+$, and the repeller $Z^-$ corresponding to an algebraic $k$-space $Z$ of finite type acted on by $\BG_m\,$.

In \secref{ss:deg} we define and study the space $\wt{Z}$. A more detailed description of Section~\ref{ss:deg} can be found at the beginning of the section. 

In \secref{s:openness} we prove some openness results. One of them is used in \cite{DrGa1}.

In Sections \ref{s:attractors}-\ref{s:tildeZ} we prove Theorems~\ref{t:attractors} and
Theorems~\ref{t:tildeZ} (the proofs are too long to be given in Sections \ref{s:actions} and \ref{ss:deg}).

In Appendix \ref{s:very general} we prove a very general \lemref{l:postponed}.

In Appendix \ref{s:Polish} we briefly recall some results on attractors due to 
A.~Bia{\l}ynicki-Birula, J.~Konarski, and A.~J.~Sommese.

In Appendix \ref{s: 2-categorical framework} we describe the categorical structure formed by the correspondences 
considered in Sections~\ref{s:actions} and \ref{ss:deg}. We also explain why this structure implies the main result of \cite{DrGa1}.

\ssec{Some conventions and recollections}  \label{ss:conventions}
\sssec{Maps and morphisms as synonyms} We often use the word ``map" as a synonym of ``morphism".
The space of morphisms between objects $X,Y$ of a category will usually be denoted by $\Maps (X ,Y)$.
\sssec{General notion of $k$-space}  \label{sss:spaces}
Once and for all, we fix a field $k$ (of any characteristic).
By a \emph{$k$-space} (or simply  \emph{space}) we mean a contravariant functor $F$ from the category of 
$k$-schemes to that of sets which is a sheaf for the fpqc topology. Instead of considering all $k$-schemes as ``test schemes", it suffices to consider affine ones (any fpqc sheaf on the category of affine 
$k$-schemes uniquely extends to an fpqc sheaf on the category of all $k$-schemes). Instead of $F(\Spec R)$ we write simply $F(R)$; in other words, we consider $F$ as a covariant functor on the category of $k$-algebras.

Note that for any $k$-scheme $S$ we have $F(S)=\Maps (S,F)$, where $\Maps$ stands for the set of morphisms between $k$-spaces. Usually we prefer to write $\Maps (S,F)$ rather than $F(S)$.

\sssec{Algebraic $k$-spaces}   \label{sss:alg spaces}
General $k$-spaces will appear only as ``intermediate" objects.
For us, the really geometric objects are \emph{algebraic spaces}. We will be using the definition of algebraic space from \cite{LM} (which goes back to  M.~Artin).
\footnote{In particular, quasi-separatedness is included into the definition of algebraic space. Thus
the quotient $\BA^1/\BZ$ (where the discrete group $\BZ$ acts by translations)
is \emph{not} an algebraic space.}

Any quasi-separated $k$-scheme (in particular, any $k$-scheme of finite type) is an algebraic space.
The reader may prefer to restrict his attention to schemes.

\sssec{Monomorphisms}
A morphism of $k$-spaces $f:X_1\to X_2$ is said to be a \emph{monomorphism} if 
the corresponding map $$\Maps (S,X_1)\to\Maps (S,X_2)$$
is injective for any $k$-scheme $S$. In particular, this applies if $X_1$ and $X_2$ are algebraic spaces
(e.g., schemes). It is known that a morphism \emph{of finite type} between schemes (or algebraic spaces) is a monomorphism if and only if each of its geometric fibers is a reduced scheme with at most one point.
It follows that a finite monomorphism is a closed embedding.

\sssec{Unramified morphisms}  \label{sss:unramified}
According to Definition 17.3.1 from EGA IV-4, a morphism of schemes is said to be  \emph{unramified} if it is formally unramified and locally of finite presentation. The definition in~\cite{St}  is slightly different: ``locally of finite presentation" is replaced by ``locally of finite type" . The difference is irrelevant for us because we will be dealing between morphisms between Noetherian schemes (or algebraic spaces).

Recall that a morphism $f$ is formally unramified if and only if the corresponding sheaf of relative differentials is zero. If $f$ has finite type this is equivalent to the geometric fibers of $f$ being finite and reduced.

\ssec{Acknowledgements}

I thank A.~Beilinson, J.~Konarski, A.~J.~Sommese,  and especially B.~Conrad  and N.~Rozenblyum for helpful discussions and suggestions. 

I am especially grateful to D.~Gaitsgory. In fact, this article appeared as a part of a project joint with him (see \cite{DrGa1,DrGa2}). Moreover, a part of the work on this article was done jointly with him (e.g., the formulation of Propositions~\ref{p:Cartesian} and \ref{p:2open embeddings} is due to D.~Gaitsgory). 

\medskip 

The author's research was partially supported by NSF grants DMS-1001660 and DMS-1303100.

\section{Fixed points, attractors, and repellers}  \label{s:actions}
The main results of this section are \propref{p:Z^0closed}, Theorem~\ref{t:attractors},
\propref{p: p^+}, and \propref{p:Cartesian} (the latter is used in \cite{DrGa1} to construct the co-unit of the adjunction  in Braden's theorem). In the case of a scheme equipped with a locally linear $\BG_m$-action these results are well known (in a slightly different language).

\medskip

We will be using the conventions of Subsect.~\ref{ss:conventions} and especially those regarding the notions of $k$-space and algbraic $k$-space (see \ref{sss:spaces}-\ref{sss:alg spaces}).

\ssec{The space of $\BG_m$-equivariant maps} \label{ss:GMaps}
Let $Y,Z$ be $k$-spaces equipped with an action of $\BG_m$. Then we define a $k$-space
$\GMaps(Y,Z)$ as follows: for any $k$-scheme $S$,
\begin{equation}
\Maps (S,\GMaps(Y,Z)):=\gMaps (Y\times S,Z)
\end{equation}
(the r.h.s. is clearly an fpqc sheaf with respect to $S$). The action of $\BG_m$ on $Z$ induces a $\BG_m$-action on $\GMaps(Y,Z)$.

Note that even if $Y$ and $Z$ are schemes, the space $\GMaps(Y,Z)$ does not have to be a scheme (or an algebraic space), in general. 

\ssec{The space of fixed points} \label{ss:fixed_points}
Let $Z$ be a $k$-space equipped with an action of $\BG_m$. Then we set
\begin{equation}
Z^0:=\GMaps(\Spec k,Z).
\end{equation}

Note that $Z^0$ is a subspace of $Z$ because $\Maps (S,Z^0)=\gMaps (S,Z)$ is a subset of
$\Maps (S,Z)$.

\begin{defn}
$Z^0$ is called \emph{the subspace of fixed points} of $Z$.
\end{defn}

%The next proposition is easy.

%not very surprising.\footnote{It is somewhat surprising that $Z^0$ is closed even if $Z$ is not separated. 
%Proof in characteristic zero: $Z^0$ is the subspace of zeros of the vector field on $Z$ corresponding to the 
%$\BG_m$-action.}

\begin{prop}   \label{p:Z^0closed}
If $Z$ is an algebraic $k$-space (resp. scheme) of finite type then so is $Z^0$. Moreover, the morphism
$Z^0\to Z$ is a closed embedding.
\end{prop}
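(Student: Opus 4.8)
The statement has three assertions: $Z^0$ is an algebraic space (resp. scheme) of finite type, and $j:Z^0\to Z$ is a closed embedding. Since $Z^0$ is by construction a subfunctor of $Z$, the map $j$ is automatically a monomorphism; so the whole proposition reduces to the single claim that $j$ is a \emph{closed} embedding. Indeed, once $Z^0\hookrightarrow Z$ is known to be a closed subspace, it is of finite type because $Z$ is (closed subspaces of finite-type algebraic spaces are of finite type), and it is a scheme whenever $Z$ is (a closed subspace of a scheme is a scheme). Thus I would spend all the effort on showing that $Z^0\subseteq Z$ is a closed subfunctor, i.e. that for every scheme $T\to Z$ the fibre product $Z^0\times_Z T\to T$ is a closed immersion. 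As ``closed immersion'' is local on the target for the fppf (hence étale) topology, I would fix an étale atlas $U\to Z$ with $U$ an affine scheme of finite type over $k$ and reduce to proving that $Z^0\times_Z U\hookrightarrow U$ is a closed subscheme.

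Writing $g:U\to Z$ for the atlas map, one checks directly from the definitions that $Z^0\times_Z U$ is the subfunctor $U^{\mathrm{eq}}\subseteq U$ of points at which $g$ becomes equivariant: a map $h:S\to U$ lies in $U^{\mathrm{eq}}(S)$ iff the two maps $\BG_m\times S\to Z$ sending $(\lambda,s)$ to $\lambda\cdot g(h(s))$ and to $g(h(s))$ coincide. Introducing $u,v:\BG_m\times U\rightrightarrows Z$ with $u(\lambda,y)=\lambda\cdot g(y)$ and $v(\lambda,y)=g(y)$, the locus $U^{\mathrm{eq}}$ is the largest subscheme of $U$ over which $u=v$. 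Equivalently, the equalizer $E:=\Eq(u,v)=(\BG_m\times U)\times_{Z\times Z}Z$ is, fibrewise over $y\in U$, the stabilizer subgroup scheme of $g(y)$ inside $\BG_m$, and $y\in U^{\mathrm{eq}}$ precisely when this stabilizer equals all of $\BG_m$.

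The clean core is the case where $Z$ is separated. Then $\Delta_Z$ is a closed immersion, so $E$ is a \emph{closed} subscheme of $\BG_m\times U=\Spec B[\lambda^{\pm 1}]$ (with $U=\Spec B$), cut out by an ideal $\mathcal{I}\subseteq B[\lambda^{\pm 1}]$. I would then let $J\subseteq B$ be the ideal generated by all Laurent coefficients of all elements of $\mathcal{I}$ and set $U^{\mathrm{eq}}:=\Spec(B/J)$, a closed subscheme of finite type. It represents the functor $U^{\mathrm{eq}}$ because a ring map $B\to R$ kills $J$ iff $\mathcal{I}\otimes_B R=0$ iff the two base-changed maps $\BG_m\times\Spec R\to Z$ agree. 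The point that makes the non-proper source $\BG_m\times U\to U$ harmless here is precisely its affineness, which lets one ``push the defining ideal down'' coefficient by coefficient. (In the separated case one could alternatively avoid the atlas: $j$ is a monomorphism, hence separated, and the valuative criterion gives that it is universally closed — two maps into a separated $Z$ agreeing on the schematically dense generic fibre agree — so $j$ is proper, and a proper monomorphism is a closed immersion.)

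The main obstacle is the general, possibly non-separated, algebraic space $Z$. There $\Delta_Z$ is only a quasi-compact monomorphism, so $E$ need not be closed in $\BG_m\times U$ and the ideal $\mathcal{I}$ is simply unavailable; the valuative argument also breaks down, since maps into a non-separated target are not determined by a dense subscheme. On the level of underlying sets the locus is still controlled: $E\to U$ is of finite type and each fibre $E_y$ is a subgroup scheme of $\BG_m$, hence closed in that fibre, so by upper semicontinuity of fibre dimension the set $\{y:\dim E_y\ge 1\}=\{y:E_y=\BG_m\}$ is closed and equals the support of $U^{\mathrm{eq}}$. The delicate part — which I expect to be the real work — is to equip this set with the correct closed \emph{subscheme} structure representing the functor. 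The natural attempt is to replace $E$ by its schematic closure $\bar E\subseteq\BG_m\times U$ and repeat the Laurent-coefficient construction; the difficulty is that passing to the closure can enlarge fibres over special points, so one must use the subgroup-scheme structure of $E$ (e.g. that it is, along $U^{\mathrm{eq}}$, the smooth connected one-dimensional group $\BG_m$) to guarantee that the closure does not spuriously enlarge $U^{\mathrm{eq}}$. This control of fibre-jumping in the non-separated setting is where I would concentrate the argument.
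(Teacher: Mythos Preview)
Your separated case is fine, and you correctly isolate the real difficulty: when $Z$ is not separated, the equalizer $E\subset\BG_m\times U$ is only a locally closed (in fact just a finite-type monomorphism) subscheme, so there is no ideal $\mathcal{I}$ to push down. Your proposed fix --- replacing $E$ by its schematic closure $\bar E$ and repeating the Laurent-coefficient argument --- does not obviously work, and you recognize this: the closure can acquire extra components or thickenings along the boundary, and nothing in your outline controls this. The subgroup-scheme structure of $E$ does not transfer to $\bar E$ in any useful way. So as written the non-separated case has a genuine gap.

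The paper's idea bypasses this difficulty entirely. Instead of looking at the full stabilizer $E\subset\BG_m\times U$, restrict it to the $n$-th infinitesimal neighborhood of $\{1\}\times U$, i.e.\ pull back along $\Spec k[\lambda]/(\lambda-1)^{n+1}\hookrightarrow\BG_m$. The resulting monomorphism $E_n\hookrightarrow(\BG_m)_n\times U$ is \emph{finite} (its source is finite over $U$), and a finite monomorphism is a closed embedding with no separatedness hypothesis on the target. So you get an honest ideal $\mathcal{I}_n$ for each $n$, push down to an ideal $\mathcal{J}_n\subset\mathcal{O}_U$, and let $Z_0$ be the closed subspace cut out by $\bigcup_n\mathcal{J}_n$. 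By construction $Z_0$ is the locus where the \emph{formal} multiplicative group (the formal completion of $\BG_m$ at $1$) acts trivially. It remains to show $Z_0=Z^0$: over $Z_0$ the stabilizer $E|_{Z_0}\to\BG_m\times Z_0$ is \'etale along the identity section by construction, and the locus where it is \'etale is an open subgroup scheme of $\BG_m\times Z_0$; since $\BG_m$ is connected, this open subgroup is everything, so $E|_{Z_0}=\BG_m\times Z_0$ and $Z_0\subset Z^0$. The reverse inclusion is obvious. The slogan is: pass to the formal group to get closedness for free, then use connectedness of $\BG_m$ to recover the global statement.
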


This proposition is easy. 
The only surprise is that $Z^0\subset Z$ is closed even if $Z$ is not separated. Idea of the proof: since $\BG_m$ is connected $Z^0=Z_0\,$, where  $Z_0$ is the space of fixed points of the \emph{formal} multiplicative group acting on $Z$; on the other hand, $Z_0$ is a closed subspace of $Z$ (e.g., in the characteristic zero case $Z_0$ is just the space of zeros of the vector field on $Z$ corresponding to the 
$\BG_m$-action). The detailed proof is below.

\begin{proof}
It suffices to show that the morphism $Z^0\to Z$ is a closed embedding.

Let $\CG$ be the space of stabilizers, i.e., an $S$-point of $\CG$  is a pair $(z,g)$, where $z\in Z(S)$ and 
$g\in\BG_m (S)$ stabilizes $z$. We have a monomorphism of group schemes\footnote{This is a slight abuse of language: if $Z$ itself is not a scheme then $\CG$ is a scheme only in the \emph{relative} sense (i.e., 
$\CG\times_ZS$ is a scheme for any scheme $S$ over $Z$).} over $Z$
\begin{equation}   \label{e:groupmonomorphism}
\varphi :\CG\to\BG_m\times Z\, .
\end{equation}
%%%%It is a \emph{mono}morphism, so the
%The corresponding morphism $\widehat\varphi$ of \emph{formal} group schemes over $Z$ is a closed 
%embedding (because a finite monomorphism between schemes is a closed embedding). 
%The image of $\widehat\varphi$ is a closed subspace of
%\[
%\quad\quad \limton\Spec\CA_n , \quad\quad \CA_n:=\CO_Z [\lambda ]/(\lambda -1)^n.
%\]

Let $(\BG_m)_n\subset\BG_m$ denote the $n$-th infinitesimal neighborhood of $1\in\BG_m\,$, so
$(\BG_m)_n=\Spec k[\lambda ]/(\lambda -1)^{n+1}$ and
\[
(\BG_m)_n\times Z=\Spec\CA_n\, , \quad\quad \CA_n:=\CO_Z [\lambda ]/(\lambda -1)^n.
\]
Base-changing the monomorphism \eqref{e:groupmonomorphism} with respect to 
$(\BG_m)_n\times Z\mono \BG_m\times Z$, one gets a morphism
 $$\varphi_n :\CG_n\to(\BG_m)_n\times Z=\Spec\CA_n\, ,$$
 where $\CG_n\subset\CG$ is the $n$-th infinitesimal neighborhood in $\CG$ of the unit section $Z\mono\CG$.
 The morphism $\varphi_n$ is a closed embedding (because it is a finite monomorphism). Let 
 $\CI_n\subset \CA_n$ be the corresponding sheaf of ideals. Note that the image of $\CI_n$ in 
$ \CA_{n-1}$ equals $\CI_{n-1}\,$. 
%
%Let $\CI_n\subset \CA_n$ be the corresponding sheaves of ideals. Set

Now set
\[
\CJ_n;=\im (\CI_n\otimes\CHom_{\CO_Z}(\CA_n,\CO_Z)\to \CO_Z).
\]
Each $\CJ_n$ is an ideal in $\CO_Z\,$, and $\CJ_n\subset\CJ_{n+1}\,$. Let $Z_0\subset Z$ be the closed subspace corresponding to the union of the ideals $\CJ_n\,$.

Let us prove that $Z^0=Z_0\,$. It is clear that $Z^0\subset Z_0\,$. It remains to show that the morphism
\begin{equation} \label{e:phi_0}
\CG\times_ZZ_0\to\BG_m\times Z_0
\end{equation}
 induced by the homomorphism \eqref{e:groupmonomorphism} is an isomorphism.  Clearly it is a monomorphism (because $\varphi$ is). Let $U\subset\CG\times_ZZ_0$ denote the locus where the morphism \eqref{e:phi_0} is etale. Then $U$ is open in both $\CG\times_ZZ_0$ and $\BG_m\times Z_0\,$. Note that $U$ contains the unit section (by the definition of $Z_0$). Moreover, it is easy to see that $U^{-1}=U$ and that $U$ is stable under multiplication by any local section of $U$.  So $U$ is an open subgroup scheme of $\BG_m\times Z_0\,$. Since $\BG_m$ is connected, we see that the morphism
 $U\to\BG_m\times Z_0$ is an isomorphism. Since the map \eqref{e:phi_0} is a monomorphism this means that
$U= \CG\times_ZZ_0$ and the map \eqref{e:phi_0} is an isomorphism. 
 % Since $\varphi$ is a monomorphism so is $\varphi_0\,$. On the other hand, $\varphi_0$ is etale by the 
%definition of $Z_0\,$. So $\varphi_0$ is an open embedding. Since $\BG_m$ is connected this implies that 
%$\varphi_0$ is an isomorphism.
\end{proof}

\begin{example}  \label{ex:fixed-affine}
Suppose that $Z$ is an affine scheme $\Spec A$. A $\BG_m$-action on $Z$ is the same as a 
$\BZ$-grading on $A$ (namely, the $n$-th component of $A$ consists of functions 
$f\in H^0(Z,\CO_Z)$ such that $f(\lambda\cdot z)=\lambda^n\cdot f(z)\,$). It is easy to see that $Z^0=\Spec A^0$, where $A^0$ is the maximal graded quotient algebra of $A$ concentrated in degree 0 (in other words, $A^0$ is the quotient of $A$ by the ideal generated by homogeneous elements of non-zero degree).
\end{example}

\begin{lem}   \label{l:TZ0}
For any $z\in Z^0$ the tangent space\footnote{We define the tangent space by $T_zZ:=(T_z^*Z)^*$, where $T_z^*Z$ is the fiber of $\Omega^1_{Z/k}$ at $z$. (The equality $T_z^*Z=m_z/m_z^2$ holds \emph{if the residue field of $z$ is finite and separable} over $k$.)} $T_zZ^0\subset T_zZ$ equals $(T_zZ)^{\BG_m}$.
\end{lem}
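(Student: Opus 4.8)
The plan is to unwind the defining functor of $Z^0$ on dual numbers. First, by \propref{p:Z^0closed} the morphism $Z^0\to Z$ is a closed embedding, in particular a monomorphism, so the induced map $T_zZ^0\to T_zZ$ will be injective; it therefore suffices to identify its image with the subspace $(T_zZ)^{\BG_m}\subset T_zZ$. I would set $\kappa:=\kappa(z)$ and let $\kappa[\eps]$ denote the dual numbers over $\kappa$ (so $\eps^2=0$), and use that, with the tangent space defined as the fiber of the dual of $\Omega^1_{Z/k}$, one has canonically $T_zZ=\on{Der}_k(\CO_{Z,z},\kappa)$. Thus $T_zZ$ is naturally the fiber over $z$ of the reduction map $Z(\kappa[\eps])\to Z(\kappa)$, and similarly $T_zZ^0$ is the fiber over $z$ of $Z^0(\kappa[\eps])\to Z^0(\kappa)$. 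Working over $\kappa$ in this way avoids any issue with separability of $\kappa$ over $k$.

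Next I would invoke the definition of $Z^0$. Applied to the $k$-scheme $\Spec\kappa[\eps]$ with trivial $\BG_m$-action, it gives $Z^0(\kappa[\eps])=\gMaps(\Spec\kappa[\eps],Z)$; that is, $Z^0(\kappa[\eps])$ consists exactly of those $v\in Z(\kappa[\eps])$ that are $\BG_m$-equivariant as morphisms out of the (trivially acted-on) $\Spec\kappa[\eps]$. Intersecting with the tangency condition $v|_{\eps=0}=z$, this identifies $T_zZ^0$ with the set of $\BG_m$-equivariant tangent vectors inside $T_zZ$.

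Finally I would match ``$\BG_m$-equivariant tangent vector'' with ``$\BG_m$-invariant vector''. Since $z$ is a fixed point, the $\BG_m$-action on $Z$ induces a linear action of $\BG_m$ on the $\kappa$-vector space $T_zZ$, equivalently a weight decomposition $T_zZ=\bigoplus_n(T_zZ)_n$, and this is precisely the restriction to $T_zZ\subset Z(\kappa[\eps])$ of the action coming from $a:\BG_m\times Z\to Z$. Consequently a tangent vector is $\BG_m$-equivariant as a map $\Spec\kappa[\eps]\to Z$ if and only if it is invariant for this linear action, i.e.\ lies in the weight-zero summand $(T_zZ)_0=(T_zZ)^{\BG_m}$; together with the previous step this yields $T_zZ^0=(T_zZ)^{\BG_m}$.

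The argument is essentially formal once the functorial identifications are set up, and the main obstacle is really only bookkeeping: the step I would write out with care is the last matching, namely that the scheme-theoretic equivariance condition cutting out $Z^0(\kappa[\eps])$ restricts, on the subspace $T_zZ$, to the linear-algebraic notion of a $\BG_m$-invariant (weight-zero) vector. Complete reducibility of $\BG_m$-representations makes this transparent, while passing to the residue field $\kappa(z)$ from the outset keeps a possibly inseparable residue extension from interfering.
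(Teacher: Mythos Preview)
Your argument is correct and follows essentially the same approach as the paper: compute $T_zZ^0$ via $\kappa[\eps]$-points and use the functorial description of $Z^0$. The paper's proof is a one-line sketch that first base-changes so that $\kappa(z)=k$ and then unwinds dual numbers; you instead work directly over $\kappa(z)$, which is a harmless variation since $\BG_m$ is split and its representations decompose into weight spaces over any field.
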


\begin{proof}
We can assume that the residue field of $z$ equals $k$ (otherwise do base change). Then compute $T_zZ^0$ in terms of morphisms 
$\Spec k[\varepsilon ]/(\varepsilon^2 )\to Z^0$.
\end{proof}

\ssec{Attractors} \label{ss:attr}
\sssec{The definition} 
Let $Z$ be a $k$-space equipped with an action of $\BG_m$. Then we set
\begin{equation}  \label{e:attr}
Z^+:=\GMaps(\BA^1,Z),
\end{equation}
where $\BG_m$ acts on $\BA^1$ by dilations.

\begin{defn}
$Z^+$ is called the \emph{attractor} of $Z$.
\end{defn}

Later we will prove (see Theorem~\ref{t:attractors}) that if $Z$ is an algebraic space of finite type then so is 
$Z^+$.

\sssec{Structures on $Z^+$} \label{sss:structures}

(i) $\BA^1$ is a monoid with respect to multiplication. The action of $\BA^1$ on itself induces an $\BA^1$-action on $Z^+$, which extends the $\BG_m$-action defined in \secref{ss:GMaps}.

\medskip

(ii) Restricting a morphism $\BA^1\times S\to Z$ to $\{1\}\times S$ one gets a morphism $S\to Z$. Thus we get a 
$\BG_m$-equivariant morphism $p^+:Z^+\to Z$. 

Note that if $Z$ is \emph{separated} then $p^+:Z^+\to Z$ is a \emph{mono}morphism. To see this, it suffices to interpret
$p^+$ as the composition
\[
\GMaps (\BA^1,Z)\to \GMaps (\BG_m,Z)=Z.
\]

Thus if $Z$ is separated then $p^+$ identifies $Z^+(k)$ with a subset of $Z(k)$. It consists of those points $z\in Z(k)$ for which the map $\BG_m\to Z$ defined by $t\mapsto t\cdot z$ extends to
a map $f:\BA^1\to Z$; informally, the limit
\begin{equation}   \label{e:limit}
\underset{t\to 0}{lim}\,\, t\cdot z
\end{equation}
should exist.

\medskip

(iii) Recall that $Z^0=\GMaps (\Spec k, Z)$. We equip $\Spec k$ and $Z^0$ with the trivial action of the multiplicative monoid $\BA^1$.

The $\BA^1$-equivariant maps $0:\Spec k\to\BA^1$ and $\BA^1\to \Spec k$ induce $\BA^1$-equivariant maps
$q^+:Z^+\to Z^0$ and $i^+:Z^0\to Z^+$ such that $q^+\circ i^+=\id_{Z^0}$ and the composition $p^+\circ i^+$ is equal to the canonical embedding $Z^0\mono Z$.

Note that if $Z$ is separated then for $z\in Z^+(k)\subset Z(k)$ the point $q^+(z)$ is the limit \eqref{e:limit}.

\sssec{Affine case}   \label{sss:attractors-affine}
Suppose that $Z$ is affine, i.e., $Z=\Spec A$, where $A$ is a $\BZ$-graded commutative algebra.
It is easy to see that in this case $Z^+$ is represented by the affine scheme $\Spec A^+$, where $A^+$ is the 
maximal $\BZ_+$-graded quotient algebra of $A$ (in other words, the quotient of $A$ by the ideal generated by by all homogeneous elements of $A$ of strictly negative degrees). 

By Example~\ref{ex:fixed-affine}, $Z^0=\Spec A^0$, where $A^0$ is the maximal graded quotient algebra of 
$A$ (or equivalently, of $A^+$) concentrated in degree 0. Since $A^+$ is $\BZ_+$-graded, the algebra $A^0$ identifies with the $0$-th graded component of $A^+$. Thus we get homomorphisms $A^0\mono A^+\epi A^0$. They correspond to the morphisms $$Z^0\overset{\;\;q^+}\longleftarrow Z^+\overset{\;\;i^+}\longleftarrow Z^0.$$

\ssec{Results on attractors}   \label{ss:Results_attractors}

\sssec{Representability of $Z^+$}

\begin{thm}   \label{t:attractors}
Let $Z$ be an algebraic $k$-space of finite type equipped with a $\BG_m$-action. Then

(i) $Z^+$ is an algebraic $k$-space of finite type;

(ii) the morphism $q^+:Z^+\to Z^0$ is affine.
\end{thm}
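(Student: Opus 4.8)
The plan is to verify M.~Artin's representability criteria for the functor
\[
F\colon S\mapsto \gMaps(\BA^1\times S,Z)=\Maps(S,Z^+)
\]
on the category of affine $k$-schemes, and then to deduce affineness of $q^+$ from the contracting monoid action on $Z^+$. First I would dispose of the formal inputs. The functor $F$ is an fpqc sheaf because it is a space of equivariant maps into the sheaf $Z$; it is locally of finite presentation because $\BA^1$ and $Z$ are of finite type over $k$ and $\BG_m$-equivariance is a condition of finite presentation, so $F$ commutes with filtered colimits of $k$-algebras. Representability and finiteness of the diagonal of $F$ reduce to the corresponding properties of the diagonal of $Z$: the locus where two equivariant maps $\BA^1\times S\to Z$ agree is an equalizer built from $\Delta_Z$, and since $\BA^1$ is $\BG_m$-equivariantly contracted to $0$, this equalizer is controlled by its restriction over $Z^0$.

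The heart of the argument is the deformation theory. At an $S$-point given by an equivariant map $f\colon\BA^1\times S\to Z$, equivalently a map $\bar f\colon(\BA^1/\BG_m)\times S\to Z/\BG_m$, the tangent and obstruction spaces are the relative hypercohomology groups $H^0$ and $H^1$ over $\BA^1/\BG_m$ of the pullback along $\bar f$ of the tangent complex of $Z$. The essential point, and the one where positivity of the $\BG_m$-grading enters, is that although $\BA^1$ is not proper, the morphism $\BA^1/\BG_m\to\Spec k$ is cohomologically proper: pushforward carries coherent sheaves to coherent sheaves, and cohomology commutes with base change in the derived sense. I expect this coherence statement to be the main obstacle, since it must be established for arbitrary $Z$ without recourse to Sumihiro's theorem or a local linearization; it is proven by decomposing a coherent sheaf on $\BA^1/\BG_m$ into its $\BG_m$-weight components and observing that only finitely many contribute in each cohomological and weight range. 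Granting it, the tangent--obstruction formalism satisfies Artin's conditions, and effectivity of formal deformations follows by algebraizing a compatible system of equivariant maps over infinitesimal thickenings, completing $Z$ along the image of the special fibre and invoking Grothendieck existence for the cohomologically proper source. This yields part (i).

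For part (ii) I would use the monoid action of $\BA^1$ on $Z^+$ from Subsect.~\ref{sss:structures}, under which $q^+\colon Z^+\to Z^0$ is invariant, $i^+$ identifies $Z^0$ with the $\BA^1$-fixed locus, and every point of $Z^+$ is contracted into $Z^0$ as $t\to 0$. Since affineness is étale-local on the target, I may assume $Z^0=\Spec B$ is affine. Then $(q^+)_*\CO_{Z^+}$ carries a $\BG_m$-action, hence a weight decomposition $\bigoplus_{n\ge 0}\CF_n$; the weights are non-negative precisely because the action is contracting, and $\CF_0=B$. Each $\CF_n$ is a coherent $B$-module by the same cohomological finiteness used in~(i), so $\CA:=\bigoplus_{n\ge 0}\CF_n$ is a $\BZ_{\ge 0}$-graded quasi-coherent $\CO_{Z^0}$-algebra, and the canonical morphism $Z^+\to\underline{\Spec}_{Z^0}\CA$ is an isomorphism. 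It is checked to be an isomorphism on the infinitesimal neighbourhoods of $Z^0$ using the graded description of the affine case in Subsect.~\ref{sss:attractors-affine}, and the contracting action propagates this identification from the fixed locus to all of $Z^+$. This exhibits $q^+$ as an affine morphism.
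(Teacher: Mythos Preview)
Your approach to part~(i) via Artin's criteria and the cohomological properness of $\BA^1/\BG_m\to\Spec k$ is precisely the alternative route the paper mentions in its introduction (attributed to Halpern-Leistner and Preygel). It is genuinely different from the paper's own argument, and it works --- but with a caveat you have not addressed: the effectivity step (``invoking Grothendieck existence for the cohomologically proper source'') relies on a Tannaka-duality or formal-GAGA statement which, at least in the form available in \cite{HP} and \cite{Lur2}, requires $Z$ to have \emph{affine diagonal}. The theorem as stated makes no such assumption on $Z$, so your argument does not prove the full statement. The paper explicitly flags this as a limitation of the cohomological-properness approach.

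The paper's own proof avoids this restriction by a different decomposition. It introduces an intermediate space $\Zp:=\GMaps(\widehat{\BA}^1,Z)$ (equivariant maps from the formal disk) and factors $q^+$ as $Z^+\to\Zp\to Z^0$. The second arrow is shown to be affine of finite type by an explicit construction: one builds a $\BZ_+$-graded $\CO_{Z^0}$-algebra $\CA^+$ out of the sheaves of jets $J_n^0$ of $Z$ restricted to $Z^0$, and proves directly that $\Zp\simeq\Spec\CA^+$. The first arrow $Z^+\to\Zp$ is then shown to be an isomorphism by checking it on complete local Noetherian rings (using Moret-Bailly's descent theorem to pass from the formal to the algebraic disk) and invoking Artin approximation. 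Thus the paper proves affineness \emph{first}, for the formal model $\Zp$, and only then identifies $Z^+$ with it; this order is what lets the argument run without any diagonal hypothesis on $Z$.

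Your argument for part~(ii) has a related structural issue. You form $\CA=(q^+)_*\CO_{Z^+}$ and assert that $Z^+\to\underline{\Spec}_{Z^0}\CA$ is an isomorphism, to be checked on infinitesimal neighbourhoods and then ``propagated'' by the contracting action. But the natural map from a space to the relative Spec of its pushforward is an isomorphism precisely when the structure morphism is affine, which is what you are trying to prove; the propagation step is where all the content lies, and you have not supplied it. The paper sidesteps this circularity by working with the functor $\Zp$, whose affineness over $Z^0$ is visible from its functorial description before one knows anything about $Z^+$ as a space.
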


The proof is given in Section~\ref{s:attractors}. It yields a rather explicit description of the pair
 $(Z^+, q^+)$ in terms of the formal neighborhood of $Z^0\subset Z$, see Theorem~\ref{t:2attractors}(ii) and 
 Subsect.~\ref{ss:A+}.
 
Note that if $Z$ is affine Theorem~\ref{t:attractors} is clear from Subsect.~\ref{sss:attractors-affine}, and this immediately implies the theorem 
in the case of schemes equipped with a \emph{locally linear} $\BG_m$-action, %the proof is very short, 
see Subsect,~\ref{sss: locally linear} below. This case is enough for most practical purposes, see Remark~\ref{r:locally linear} below.

\
\begin{cor}  \label{c:attractors}
(i) If $Z$ is a separated algebraic $k$-space of finite type then so is $Z^+$.

(ii) If $Z$ is a $k$-scheme of finite type then so is $Z^+$.
\end{cor}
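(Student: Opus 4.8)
The plan is to deduce both assertions formally from \thmref{t:attractors}, which already supplies the substantive content, namely that $Z^+$ is an algebraic $k$-space of finite type and that $q^+:Z^+\to Z^0$ is affine. What remains is to feed this output through standard permanence properties of separated and affine morphisms.

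For part (i), I would proceed as follows. By \thmref{t:attractors}(i), $Z^+$ is an algebraic space of finite type over $k$, so it only remains to check separatedness. When $Z$ is separated, \propref{p: p^+} (see also the discussion in \secref{sss:structures}(ii)) guarantees that $p^+:Z^+\to Z$ is a monomorphism. A monomorphism is separated, since its diagonal is an isomorphism and hence a closed immersion; thus $p^+$ is separated. As separatedness is stable under composition and $Z\to\Spec k$ is separated by hypothesis, the structure morphism $Z^+\to\Spec k$, which factors as $p^+$ followed by $Z\to\Spec k$, is separated. Therefore $Z^+$ is separated.

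For part (ii), I would first invoke \propref{p:Z^0closed}, which tells us that $Z^0$ is a scheme whenever $Z$ is. Next, \thmref{t:attractors}(ii) provides that $q^+:Z^+\to Z^0$ is affine. The concluding ingredient is the standard fact that an algebraic space affine over a scheme is itself a scheme: an affine morphism is the relative $\underline{\Spec}$ of a quasi-coherent sheaf of algebras on the base, and the relative spectrum of such a sheaf over a scheme is again a scheme. Applying this to $q^+:Z^+\to Z^0$ with $Z^0$ a scheme shows that $Z^+$ is a scheme.

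I do not anticipate a genuine obstacle, because representability and the affineness of $q^+$ are exactly what \thmref{t:attractors} delivers; the corollary is essentially a bookkeeping exercise on top of it. The only points needing a word of justification are the two general facts used---that a monomorphism is separated and that an algebraic space affine over a scheme is a scheme---both of which are standard and can be cited from the references for algebraic spaces.
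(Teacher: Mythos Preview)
Your proof is correct. For part (ii) it coincides with the paper's argument. For part (i) you take a slightly different route: the paper deduces separatedness of $Z^+$ from affineness of $q^+:Z^+\to Z^0$ together with the fact that $Z^0$ is closed in $Z$ (hence separated), whereas you use that $p^+:Z^+\to Z$ is a monomorphism. Both approaches are equally short; the paper's has the mild advantage of using a single ingredient (affineness of $q^+$) for both parts, while yours has the advantage that the monomorphism property of $p^+$ for separated $Z$ is established already in Subsect.~\ref{sss:structures}(ii), so no forward reference is needed. Note that your citation of \propref{p: p^+} is technically a forward reference in the paper's ordering, but as you point out, the fact actually needed is already in \ref{sss:structures}(ii).
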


\begin{proof}
Follows from  Theorem~\ref{t:attractors}(ii) because by \propref{p:Z^0closed}, $Z^0$ is a closed subspace of~$Z$.
\end{proof}

\sssec{Proof of Theorem~\ref{t:attractors} for schemes with a locally linear $\BG_m$-action}
\label{sss: locally linear}

\begin{defn} \label{d:locally linear} 
An action of $\BG_m$ on a $k$-scheme $Z$ is said to be \emph{locally linear} if
$Z$ can be covered by open affine subschemes preserved by the $\BG_m$-action.  
\end{defn} 

\begin{rem}   \label{r:locally linear} 
If $k$ is \emph{algebraically closed} and $Z_{\red}$ is a \emph{normal} separated\footnote{We do not know if separateness is really necessary in Sumihiro's theorem.} scheme of finite type over $k$ then by a theorem of H.~Sumihiro, \emph{any action of $\BG_m$ on  $Z$ is locally linear}. (The proof of this theorem is contained in \cite{Sum} and also in  \cite[p.20-23]{KKMS} and \cite{KKLV}.)
%Because of Sumihiro's theorem, the reader may prefer to restrict his attention to $k$-schemes equipped 
%with a locally linear action, which is technically simpler than the case of arbitrary sections. 
\end{rem}

For schemes equipped with a locally linear $\BG_m$-action Theorem~\ref{t:attractors} is very easy: it follows from the affine case (which is clear from Subsect.~\ref{sss:attractors-affine}) and the following lemma.

\begin{lem}   \label{l:U^+}
Let $Z$ be a $k$-space equipped with a $\BG_m$-action. Let $U\subset Z$ be a $\BG_m$-stable open subspace. Then the subspace $U^+\subset Z^+$ equals $(q^+)^{-1}(U^0)$, where $q^+$ is the natural morphism $Z^+\to Z^0$.
\end{lem}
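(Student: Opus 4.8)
The plan is to prove \lemref{l:U^+} by comparing the functors of points of both sides, using that $U\subset Z$ being open and $\BG_m$-stable propagates nicely through the $\BG_m$-equivariant mapping construction. Recall that by definition $Z^+=\GMaps(\BA^1,Z)$, so an $S$-point of $Z^+$ is a $\BG_m$-equivariant morphism $f:\BA^1\times S\to Z$, while $q^+(f)\in Z^0(S)$ is its restriction to $\{0\}\times S$. The claim $U^+=(q^+)^{-1}(U^0)$ amounts to the following statement about such an $f$: the equivariant map $f$ factors through $U$ (i.e.\ lands in $U$ on all of $\BA^1\times S$) if and only if its value at $0$ lands in $U^0\subset U$. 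First I would unwind both sides to this concrete form.

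The forward inclusion $U^+\subset (q^+)^{-1}(U^0)$ is immediate: if $f$ factors through the $\BG_m$-stable open $U$, then its restriction to $\{0\}\times S$ factors through $U^0=\GMaps(\Spec k,U)\subset U$, so $q^+(f)\in U^0(S)$. The substance is the reverse inclusion, and here is where I would use openness. Given an equivariant $f:\BA^1\times S\to Z$ with $f|_{\{0\}\times S}$ landing in $U$, consider the preimage $W:=f^{-1}(U)\subset\BA^1\times S$, an open subscheme. The hypothesis says $W$ contains $\{0\}\times S$. Since $f$ is $\BG_m$-equivariant and $U$ is $\BG_m$-stable, $W$ is a $\BG_m$-stable open subset of $\BA^1\times S$ (with $\BG_m$ acting by dilation on $\BA^1$ and trivially on $S$). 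The key point is then purely about the geometry of the $\BG_m$-action on $\BA^1$: a $\BG_m$-stable open subset of $\BA^1\times S$ that contains $\{0\}\times S$ must be all of $\BA^1\times S$, because the $\BG_m$-orbit closure of any point of $\BA^1-\{0\}$ contains $0$, so every fiber $\BA^1\times\{s\}$ meeting $W$ along its origin is forced into $W$ by stability plus openness.

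The main obstacle I anticipate is making this last ``orbit-closure forces the whole line'' argument clean and functorial over an \emph{arbitrary} base scheme $S$, rather than just on geometric points. The cleanest route is: for a $\BG_m$-stable open $W\subset\BA^1\times S$ containing $\{0\}\times S$, its complement $C:=(\BA^1\times S)\setminus W$ is a $\BG_m$-stable closed set disjoint from $\{0\}\times S$, hence contained in $\BG_m\times S=(\BA^1-\{0\})\times S$. But a closed subset of $\BA^1\times S$ contained in $\BG_m\times S$ and stable under dilation must be empty, since its closure in $\BA^1\times S$ would have to meet $\{0\}\times S$ (the action map $\BA^1\times(\BG_m\times S)\to \BA^1\times S$ degenerating to $0$ as the parameter goes to $0$ sends the closed stable $C$ into its own closure meeting the zero section). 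I would phrase this via the schematic image or closure of $C$ under the degenerating family and derive a contradiction unless $C=\varnothing$, giving $W=\BA^1\times S$ and hence $f$ factoring through $U$, as desired. This establishes $(q^+)^{-1}(U^0)\subset U^+$ and completes the proof.
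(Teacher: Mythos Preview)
Your proof is correct and follows essentially the same approach as the paper: reduce to showing that a $\BG_m$-stable open subset of $\BA^1\times S$ containing $\{0\}\times S$ is all of $\BA^1\times S$. The paper simply declares this last step ``clear,'' while you spell it out via the complement argument; both are fine.
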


\begin{proof}
For any test scheme $S$, we have to show that if
$f:\BA^1\times S\to Z$
is a $\BG_m$-equivariant morphism such that $\{0\}\times S\subset f^{-1}(U)$ then 
$f^{-1}(U)=\BA^1\times S$. This is clear because $f^{-1}(U)\subset\BA^1\times S$ is open and $\BG_m$-stable.
\end{proof}

\sssec{The attractor of a closed subspace}

\begin{lem}   \label{l:F+}
Let $Z$ be a $k$-space equipped with a $\BG_m$-action. Let $F\subset Z$ be a $\BG_m$-stable closed subspace. Then the subspace $F^+\subset Z^+$  equals $(p^+)^{-1}(F)$, where $p^+$ is the natural morphism $Z^+\to Z$.
\end{lem}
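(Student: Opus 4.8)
The plan is to unwind both sides as subfunctors of $Z^+=\GMaps(\BA^1,Z)$ and to compare their $S$-points. Fix a test scheme $S$. By definition $\Maps(S,Z^+)=\gMaps(\BA^1\times S,Z)$, and under this identification $\Maps(S,F^+)$ consists of those $\BG_m$-equivariant maps $f\colon\BA^1\times S\to Z$ that factor through $F$ (since $F\mono Z$ is a monomorphism), whereas $\Maps(S,(p^+)^{-1}(F))$ consists of those $f$ for which the restriction $f|_{\{1\}\times S}\colon S\to Z$ factors through $F$. The inclusion $F^+\subset(p^+)^{-1}(F)$ is then obvious, and what remains is the reverse implication: if $f|_{\{1\}\times S}$ factors through $F$, then so does $f$.

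First I would form the closed subspace $W:=f^{-1}(F)=F\underset{Z}\times(\BA^1\times S)\subset\BA^1\times S$; this is a genuine closed subscheme of the scheme $\BA^1\times S$ because closed immersions are stable under base change. Since $f$ is $\BG_m$-equivariant and $F$ is $\BG_m$-stable, the action preserves $W$, i.e.\ $W$ is $\BG_m$-stable. By hypothesis the inclusion $\{1\}\times S\mono\BA^1\times S$ factors through $W$. Hence the orbit map $\BG_m\times S\to\BA^1\times S$, $(t,s)\mapsto (t\cdot 1,s)=(t,s)$ — which is nothing but the open immersion $\BG_m\times S\mono\BA^1\times S$ — factors through the $\BG_m$-stable $W$ as well.

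Finally I would invoke the schematic density of $\BG_m\times S$ in $\BA^1\times S$ to conclude that $W=\BA^1\times S$, i.e.\ that $f$ itself factors through $F$. This density is elementary and holds for every $S$: writing $\BA^1\times S=\Spec\CO_S[t]$, the open subscheme $\BG_m\times S$ is the complement of the divisor $\{t=0\}$, and $t$ is a non-zero-divisor in $\CO_S[t]$, so the only function vanishing on $\BG_m\times S$ is $0$; hence the smallest closed subscheme through which $\BG_m\times S$ factors is all of $\BA^1\times S$. As $W$ is such a closed subscheme, $W=\BA^1\times S$, which finishes the argument.

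There is essentially no serious obstacle here; the only point that requires care — and the one place where this argument diverges from the proof of \lemref{l:U^+} for open subspaces — is that for a closed $F$ one cannot run the topological attractor argument via the fixed point $0$, but must instead use the scheme-theoretic density of $\BG_m$ in $\BA^1$ together with the fact that the orbit of $\{1\}\times S$ is precisely $\BG_m\times S$. This is also why the statement is phrased in terms of $p^+$ (evaluation at $1\in\BG_m$) rather than $q^+$ (evaluation at $0$): the condition $f|_{\{0\}\times S}$ factoring through $F$ would \emph{not} force $f$ to factor through $F$, as the example $Z=\BA^1$, $F=\{0\}$, $f=\id$ already shows.
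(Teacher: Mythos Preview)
Your proof is correct and follows essentially the same approach as the paper's: form the closed subscheme $f^{-1}(F)\subset\BA^1\times S$, use $\BG_m$-equivariance and $\BG_m$-stability of $F$ to see that it contains $\BG_m\times S$, and then use schematic density of $\BG_m\times S$ in $\BA^1\times S$ to conclude. The paper compresses steps 2 and 3 into the single sentence ``an $S$-point of $(p^+)^{-1}(F)$ is a $\BG_m$-equivariant morphism $f$ such that $\BG_m\times S\subset f^{-1}(F)$'', but your more explicit unwinding (and your remark on why $p^+$ rather than $q^+$ appears) is entirely in line with the paper's argument.
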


\begin{proof}
An $S$-point of $(p^+)^{-1}(F)$ is a  $\BG_m$-equivariant morphism $f:\BA^1\times S\to Z$ such that
$\BG_m\times S\subset f^{-1}(F)$. Since $ f^{-1}(F)$ is closed in $\BA^1\times S$ this implies that
$f^{-1}(F)=\BA^1\times S$, i.e., $f(\BA^1\times S)\subset F$.
\end{proof}

\sssec{The morphism $p^+:Z^+\to Z$}

\begin{prop} \label{p: p^+}
Let $Z$ be an algebraic $k$-space of finite type equipped with a $\BG_m$-action. 

(i) The morphism $p^+:Z^+\to Z$ is unramified (i.e., its geometric fibers are finite and reduced); 

(ii) If $Z$ is separated then $p^+:Z^+\to Z$ is a monomorphism (i.e., each geometric fiber of $p^+:Z^+\to Z$ is reduced and has at most one point);

(iii) If $Z$ is proper then each geometric fiber of $p^+:Z^+\to Z$ is reduced and has exactly one point.

(iv) If $Z$ is an affine scheme then $p^+:Z^+\to Z$ is a closed embedding. 

(v) The fiber of $p^+:Z^+\to Z$ over any geometric point of $Z^0\subset Z$ has a single point (even if $Z$ is not separated).

(vi) Let $z\in Z^0$ and $\zeta:=i^+(z)\in Z^+$, so $p^+(\zeta )=z$. Then the map of tangent spaces 
$T_{\zeta}Z^+\to T_zZ$ corresponding to $p^+:Z^+\to Z$ induces an isomorphism
$T_{\zeta}Z^+\iso (T_zZ)^+$, where $(T_zZ)^+\subset T_zZ$ is the non-negative part with respect to the 
$\BG_m$-action on $T_zZ$. Moreover, the tangent maps $T_zZ^0\to T_{\zeta}Z^+\to T_zZ^0$ corresponding to the morphisms $i^+:Z^0\to Z^+$ and $q^+:Z^+\to Z^0$ identify with the canonical maps 
$(T_zZ)^0\mono (T_zZ)^+\epi (T_zZ)^0$.
\end{prop}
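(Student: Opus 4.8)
The plan is to establish the six assertions in the order (iv), (vi), (i), (ii), (v), (iii), since the statements about fibers are most efficiently reduced to the tangent computation (vi) and to the $\BA^1$-monoid action on $Z^+$. Part (iv) is already in hand: by Subsect.~\ref{sss:attractors-affine}, for $Z=\Spec A$ one has $Z^+=\Spec A^+$ with $A^+$ a quotient of $A$, so $p^+$ is the closed embedding dual to the surjection $A\epi A^+$. Throughout I take $Z^+$ to be an algebraic space of finite type (Theorem~\ref{t:attractors}), so that its tangent spaces and the coherent sheaf $\Omega^1_{Z^+/Z}$ are available.

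The real engine is (vi), and I would compute $T_\zeta Z^+$ for $\zeta=i^+(z)$ directly from the functor of points at the dual numbers. Fix $z\in Z^0$ with residue field $k$ (base-change otherwise). A tangent vector at $\zeta$ is a $\BG_m$-equivariant morphism $\BA^1\times\Spec k[\eps]\to Z$ reducing modulo $\eps$ to the constant map at $z$; pulling back functions, such a morphism is a weight-preserving $k$-derivation $D\colon\wh\CO_{Z,z}\to k[\tau]$ over evaluation at $z$, where $\BG_m$ acts on $k[\tau]=\CO_{\BA^1}$ with $\tau$ of weight $1$. Since $z$ is a fixed point, $\mathfrak m_z/\mathfrak m_z^2=T_z^*Z$ is a rational $\BG_m$-module, and $D$ is determined by a weight-preserving linear map $T_z^*Z\to k[\tau]$. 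As $k[\tau]$ realizes only the non-negative weights (its weight-$n$ piece being $k\tau^n$), such a map annihilates the negative part of $T_z^*Z$ and sends the weight-$n$ piece to $k\tau^n$; matching the two weight conventions, the space of these $D$ is canonically $(T_zZ)^+$. The morphism $p^+$ is evaluation at $\tau=1$, which turns $D$ into the corresponding vector of $(T_zZ)^+\subset T_zZ$, giving $T_\zeta Z^+\iso(T_zZ)^+$. The \emph{moreover} reads off the same description: $q^+$ is evaluation at $\tau=0$, hence the projection onto the weight-$0$ part, while $i^+$ records the constant-in-$\tau$ deformations, i.e.\ the weight-$0$ part; combined with $T_zZ^0=(T_zZ)^{\BG_m}=(T_zZ)^0$ from \lemref{l:TZ0}, these are the canonical maps $(T_zZ)^0\mono(T_zZ)^+\epi(T_zZ)^0$. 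The linear-reductivity decomposition of $T_z^*Z$ at the fixed point and the careful bookkeeping of the two weight conventions are where the main work lies.

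Granting (vi), I would deduce (i) by a flow argument. The sheaf $\Omega^1_{Z^+/Z}$ is coherent, so its support is closed and the unramified locus $U\subset Z^+$ is open; since $p^+$ is $\BG_m$-equivariant, $U$ is $\BG_m$-stable. By (vi) the tangent map $T_\zeta Z^+\to T_zZ$ is injective at every $\zeta=i^+(z)$, so $U\supseteq i^+(Z^0)$. For an arbitrary geometric point $\zeta$ of $Z^+$ the orbit morphism $\BA^1\to Z^+$, $s\mapsto s\cdot\zeta$, sends $0$ to $i^+(q^+(\zeta))\in U$; as $U$ is open its preimage in $\BA^1$ contains some $s_0\ne 0$, and $\BG_m$-stability of $U$ then forces $\zeta\in U$. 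Hence $U=Z^+$, i.e.\ $\Omega^1_{Z^+/Z}=0$ and the finite-type morphism $p^+$ is unramified.

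Parts (ii), (v), (iii) are then short. For (ii) I factor $p^+$ as $\GMaps(\BA^1,Z)\to\GMaps(\BG_m,Z)=Z$; restriction along the schematically dense open $\BG_m\subset\BA^1$ is injective on morphisms when $Z$ is separated, so $p^+$ is a monomorphism. For (v), let $z\in Z^0$ and let $f\colon\BA^1\to Z$ be a geometric point of $(p^+)^{-1}(z)$, so $f(1)=z$; equivariance together with $z\in Z^0$ forces $f|_{\BG_m}$ to equal the constant morphism $z$. Since $\BG_m$ is schematically dense in $\BA^1$, the scheme-theoretic image of $f$ agrees with that of $f|_{\BG_m}$, namely the reduced point $\{z\}$; thus $f$ factors through $\{z\}$ and $f=i^+(z)$, so the fiber is the single point $i^+(z)$ with no separatedness hypothesis. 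Finally (iii): a proper $Z$ is separated, so by (ii) each geometric fiber of $p^+$ is reduced with at most one point; surjectivity on geometric points follows from the valuative criterion of properness, which extends each orbit morphism $\BG_m\to Z$ across $0\in\BA^1$, the extension being automatically $\BG_m$-equivariant by uniqueness. Hence every geometric fiber has exactly one point.
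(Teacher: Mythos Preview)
Your proof is correct, and for (ii), (iii), (iv), and (v) it is essentially the paper's argument (the paper packages (v) via \lemref{l:F+} applied to the closed point $F=\{z_0\}$, which amounts to the same schematic-density argument you give with scheme-theoretic images).

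The genuinely different step is (i). The paper does not use a flow argument; instead it establishes the tangent formula
\[
T_{\zeta}Z^+ \;=\; \Hom_{\BG_m}\bigl(f^*\Omega^1_Z,\;\CO_{\BA^1}\bigr)
\]
at \emph{every} point $\zeta\in Z^+$ (here $f:\BA^1\to Z$ is the equivariant map corresponding to $\zeta$), and then checks directly that a $\BG_m$-equivariant $\CO_{\BA^1}$-homomorphism $f^*\Omega^1_Z\to\CO_{\BA^1}$ vanishing at $1\in\BA^1$ must vanish identically (since $\CO_{\BA^1}$ has no nonzero sections supported at $0$). This yields injectivity of the tangent map everywhere in one shot, and (vi) then drops out as the special case where $f$ is constant (so $f^*\Omega^1_Z=T_z^*Z\otimes\CO_{\BA^1}$). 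Your route instead proves (vi) first by the dual-number computation and then uses the $\BA^1$-monoid action on $Z^+$ to contract an arbitrary $\zeta$ into the open unramified locus $U\supseteq i^+(Z^0)$. Both are valid; the paper's global formula \eqref{e:TZ+l} is slightly more work but is reusable (it makes the ``moreover'' in (vi) immediate), while your contraction argument is cleaner and exploits the $\BA^1$-structure on $Z^+$ more directly.

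One stylistic remark: in (vi) your phrase ``a weight-preserving $k$-derivation $D:\wh\CO_{Z,z}\to k[\tau]$'' is a little loose, since $\BA^1_{k[\eps]}$ is not local. What makes this work is that the mod-$\eps$ map is constant at $z$, so the thickened map factors through any \'etale affine neighborhood of $z$; one then gets a derivation on the coordinate ring over evaluation at $z$, which factors through $T_z^*Z$. The paper sidesteps this by phrasing everything via $f^*\Omega^1_Z$.
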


\begin{proof}
Statement (iv) is clear from Subsect.~\ref{sss:attractors-affine}.
Statement (ii) was proved in Subsect.~\ref{sss:structures}(ii). 
Statement  (iii) follows from (ii) and the fact that any morphism from $\BA^1-\{ 0\}$ to a proper scheme extends to the whole $\BA^1$.

Let us prove (i). We can assume that $k$ is algebraically closed. Then we have to check that for any 
$\zeta\in Z^+(k)$ the map of tangent spaces
\begin{equation}  \label{e:differential}
 T_{\zeta}Z^+\to T_{p^+({\zeta})}Z
\end{equation}
induced by $p^+:Z^+\to Z$ is injective. Let $f:\BA^1\to Z$ be the $\BG_m$-equivariant morphism corresponding to $\zeta$. Then 
\begin{equation}  \label{e:TZ+l}
T_{\zeta}Z^+=\Hom_{\BG_m}(f^*\Omega^1_Z,\CO_{\BA^1}),
\end{equation}
and the map \eqref{e:differential} assigns to a $\BG_m$-equivariant morphism $\varphi :f^*\Omega^1_Z\to\CO_{\BA^1}$ the corresponding map between fibers at $1\in\BA^1$. So the kernel of \eqref{e:differential} consists of those
$\varphi\in\Hom_{\BG_m}(f^*\Omega^1_Z,\CO_{\BA^1})$ for which $\varphi |_{\BA^1-\{ 0\}}=0$. This implies that $\varphi =0$ because $\CO_{\BA^1}$ has no nozero sections supported at $0\in\BA^1$.

Let us deduce (vi) from formula \eqref{e:TZ+l}. Since $\zeta:=i^+(z)$ the morphism $f:\BA^1\to Z$
corresponding to $\zeta$ is constant, so $f^*\Omega^1_Z=T_z^*Z\otimes\CO_{\BA^1}$. Thus formula
\eqref{e:TZ+l} identifies $T_{\zeta}Z^+$ with the space
\[
\Hom_{\BG_m}(T_z^*Z,k[t])=\Hom_{\BG_m}((T_z^*Z)^+,k)=(T_zZ)^+.
\]

%%Let us prove (v). We have to show that if $f:\BA^1\to Z$ is a $\BG_m$-equivariant morphism such that
%%$f(1)\in Z^0$ then $f$ is constant. By $\BG_m$-equivariance, $f(t)=f(1)$ for $t\ne 0$. Now restricting 
%%$f$ to the Henselization of $\BA^1$ at 0 we see that $f$ is constant.
%
%Finally, statement (v) is equivalent to the following one:
%\begin{lem}    \label{l:constant}
%If $f:\BA^1\to Z$ is a $\BG_m$-equivariant morphism such that
%$f(1)\in Z^0$ then $f$ is constant. 
%\end{lem} 

To prove the lemma, note that by $\BG_m$-equivariance, $f(t)=f(1)$ for $t\ne 0$. Now restricting $f$ to the
Henselization of $\BA^1$ at 0 we see that $f$ is constant.

Let us prove (v). After base change, we can assume that $k$ is algebraically closed and the point in question is a $k$-point $z_0\,$. Any $k$-point of $Z$ is closed (because $Z$ is an algebraic $k$-space of finite type). So we can apply \lemref{l:F+} to the $\BG_m$-stable closed subspace $F=\{z_0\}$ and get 
$p^{-1}(z_0)=F^+\simeq\Spec k\,$.
\end{proof}

\begin{example}  \label{ex:P^1}
Let  $Z$ be the projective line $\BP^1$ equipped with the usual action of $\BG_m\,$. Then $p^+:Z^+\to Z$ is the
canonical morphism $\BA^1\sqcup\{\infty\}\to\BP^1$. In particular, $p^+$ is  \emph{not a locally closed embedding.}
\end{example}

\begin{rem}
In the above example the restriction of $p^+:Z^+\to Z$ to each connected component of $Z^+$   \emph{is} a locally closed embedding. This turns out to be true in a surprisingly large class of situations, but there are also important examples when this is false. More details can be found in 
Appendix~\ref{s:Polish}.
\end{rem}

\begin{rem}   \label{r:prigoditsya}
It is easy to deduce from \propref{p: p^+}(i) that if the diagonal map $Z\to Z\times Z$ is a locally closed embedding (e.g., if $Z$ is a scheme) then the map 
$$Z^+\overset{(p^+,q^+)}\longrightarrow  Z\times Z^0$$
 is a monomorphism.
\end{rem}

\begin{prop} \label{p:contracting}
Let $Z$ be an algebraic $k$-space of finite type equipped with a $\BG_m$-action. 
The morphism $p^+:Z^+\to Z$ is an isomorphism if and only if the $\BG_m$-action on $Z$  can be extended to an $\BA^1$-action. In this case such extension is unique.
\end{prop}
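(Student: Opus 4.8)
The plan is to route everything through the section of $p^+$ attached to an $\BA^1$-action, and to replace the usual ``$\BG_m\subset\BA^1$ is dense'' argument (which needs separatedness) by a dynamical argument using the $\BA^1$-action on $Z^+$.

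First the \emph{only if} direction, which is a transport of structure. Assume $p^+\colon Z^+\to Z$ is an isomorphism. Recall from Subsect.~\ref{sss:structures} that $Z^+=\GMaps(\BA^1,Z)$ always carries an action $m\colon\BA^1\times Z^+\to Z^+$ of the multiplicative monoid $\BA^1$ extending its $\BG_m$-action, and that $p^+$ is $\BG_m$-equivariant. I would then define $a:=p^+\circ m\circ(\id_{\BA^1}\times(p^+)^{-1})\colon\BA^1\times Z\to Z$. This is a monoid action since it is the transport of one along an isomorphism, and because $p^+$ is $\BG_m$-equivariant its restriction to $\BG_m\times Z$ is the original action; hence the $\BG_m$-action on $Z$ extends to $\BA^1$.

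For the \emph{if} direction, suppose $a\colon\BA^1\times Z\to Z$, $a(t,z)=t\cdot z$, extends the $\BG_m$-action. I would define a morphism $s\colon Z\to Z^+$ on $S$-points by sending $z\colon S\to Z$ to the composition $\BA^1\times S\xrightarrow{\id\times z}\BA^1\times Z\xrightarrow{a}Z$; associativity of the monoid action shows this composition is $\BG_m$-equivariant, so $s$ is well defined, and evaluating at $1\in\BA^1$ gives $p^+\circ s=\id_Z$. A direct computation with $a(t,\lambda z)=a(\lambda t,z)$ shows that $s$ is $\BG_m$-equivariant, and that $s(z)=i^+(z)$ for $z\in Z^0$ (both are the constant map at $z$), so that $i^+(Z^0)=s(Z^0)\subseteq s(Z)$. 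Now $p^+$ is unramified by \propref{p: p^+}(i), and a section of an unramified (finite-type) morphism is an open immersion: indeed, unramifiedness makes the relative diagonal an open immersion, so the locus in $Z^+$ where $\id_{Z^+}$ and $s\circ p^+$ agree is open and is exactly $s(Z)$, identifying $s$ with the inclusion of an open subspace. Thus $s$ is an open immersion and $p^+$ restricts to an isomorphism $s(Z)\xrightarrow{\sim}Z$.

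It remains to prove that $s$ is \emph{surjective}, which I expect to be the main obstacle, since separatedness (and hence schematic density of $\BG_m$) is unavailable. Here I would exploit the $\BA^1$-action on $Z^+$. For a geometric point $g\in Z^+$, the orbit morphism $o_g\colon\BA^1\to Z^+$, $t\mapsto t\cdot g$, satisfies $o_g(0)=i^+(q^+(g))\in i^+(Z^0)=s(Z^0)\subseteq s(Z)$. Since $s(Z)$ is open, $o_g^{-1}(s(Z))$ is a nonempty open subset of $\BA^1$ containing $0$, hence it meets $\BG_m$; choose $t_0\in\BG_m$ with $t_0\cdot g\in s(Z)$. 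Because $s$ is $\BG_m$-equivariant the subspace $s(Z)$ is $\BG_m$-stable, and $\BG_m$ acts by automorphisms, so $g=t_0^{-1}\cdot(t_0\cdot g)\in s(Z)$. Therefore $s(Z)=Z^+$, and the open immersion $s$ is an isomorphism; consequently $p^+=s^{-1}$ is an isomorphism.

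Finally, for uniqueness, suppose $a$ and $a'$ both extend the $\BG_m$-action. By the previous paragraph each of the associated sections $s_a,s_{a'}$ is a two-sided inverse of $p^+$, so $s_a=s_{a'}=(p^+)^{-1}$. Since the action is recovered from its section through the universal evaluation $\BA^1\times Z^+\to Z$, $(t,g)\mapsto p^+(t\cdot g)=g(t)$ — explicitly $a(t,z)=p^+\bigl(t\cdot s_a(z)\bigr)$ — the equality $s_a=s_{a'}$ forces $a=a'$. This gives the asserted uniqueness and completes the proof.
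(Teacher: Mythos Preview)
Your proof is correct and follows essentially the same route as the paper: build the section $s\colon Z\to Z^+$ from the $\BA^1$-action, show it is an open immersion using that $p^+$ is unramified, and prove surjectivity by the dynamical argument that $0\cdot\zeta$ lands in $s(Z)$ so the $\BG_m$-stable open set $\{t:t\cdot\zeta\in s(Z)\}$ is all of $\BA^1$. The only differences are cosmetic: you justify the open-immersion step via the open-diagonal characterization of unramifiedness rather than the Henselization argument of Remark~\ref{r:section of unramified}, and you read off $0\cdot\zeta=i^+(q^+(\zeta))$ directly from the definition of the $\BA^1$-action on $Z^+$ whereas the paper deduces it from \propref{p: p^+}(v).
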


\begin{rem}
It is somewhat surprising that in this proposition $Z$ does not have to be separated.
\end{rem}

The ``only if" part of \propref{p:contracting} follows from the fact that the $\BG_m$-action on $Z^+$ always extends to an $\BA^1$-action, see Subsect.~\ref{sss:structures}(i). The remaining parts of  
\propref{p:contracting} immediately follow from the next lemma.

\begin{lem}  \label{l:contracting}
Let $Z$ be an algebraic $k$-space of finite type equipped with a $\BA^1$-action. Equip $Z^+$ with the 
$\BA^1$-action from Subsect.~\ref{sss:structures}(i). Then $p^+:Z^+\to Z$ is an $\BA^1$-equivariant isomorphism.
\end{lem}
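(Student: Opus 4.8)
\emph{The plan} is to produce an explicit inverse to $p^+$ out of the $\BA^1$-action and to prove it is an isomorphism by a topological orbit argument that never appeals to separatedness. Write $a\colon\BA^1\times Z\to Z$ for the given $\BA^1$-action. First I would construct a morphism $s\colon Z\to Z^+$: for a test scheme $S$ and $z\in Z(S)$, the composite $\BA^1\times S\xrightarrow{\id\times z}\BA^1\times Z\xrightarrow{a}Z$, $(t,x)\mapsto t\cdot z(x)$, is $\BG_m$-equivariant (this is exactly the associativity $(\lambda t)\cdot z=\lambda\cdot(t\cdot z)$), hence an $S$-point of $Z^+=\GMaps(\BA^1,Z)$; this defines $s$. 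Evaluating at $1\in\BA^1$ gives $p^+\circ s=\id_Z$ by the monoid identity axiom, so $s$ is a section of $p^+$. A direct check on formulas shows $s$ is equivariant for the $\BA^1$-actions (the one on $Z^+$ being the reparametrisation action $(\mu\cdot F)(t,x)=F(\mu t,x)$ of Subsect.~\ref{sss:structures}(i)); thus, once $s$ is shown to be an isomorphism, $p^+=s^{-1}$ will automatically be $\BA^1$-equivariant.

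Next I would show that $s(Z)$ is \emph{open} in $Z^+$ and that $s$ maps $Z$ isomorphically onto it. Here I use that $p^+$ is unramified (\propref{p: p^+}(i)) and that $Z^+$ is an algebraic space of finite type (Theorem~\ref{t:attractors}); unramifiedness means the diagonal $\Delta_{p^+}\colon Z^+\to Z^+\underset{Z}\times Z^+$ is an open immersion. Since $\id_{Z^+}$ and $s\circ p^+$ are two endomorphisms of $Z^+$ over $Z$ (both compose with $p^+$ to $p^+$), the locus $E$ where they coincide is the preimage of $\Delta_{p^+}$ under $(\id,s p^+)$, hence open. A formal manipulation using $p^+s=\id_Z$ identifies $E$ with $s(Z)$ and identifies $p^+|_E$ with the inverse of $s\colon Z\iso E$. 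It therefore remains to prove that $E=Z^+$, i.e.\ that $s$ is surjective.

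Surjectivity is the heart of the matter, and the only place where non-separatedness of $Z$ could cause trouble; I would handle it by an orbit argument rather than by a naive ``$\BG_m\subset\BA^1$ is dense'' argument (which would be \emph{false} for non-separated targets, as the line with doubled origin shows). It suffices to check that every geometric point $F\in Z^+(\bar k)$ lies in the open subspace $E$. Consider the orbit map $o_F\colon\BA^1\to Z^+$, $\mu\mapsto\mu\cdot F$. Its value at $0$ is $o_F(0)=i^+(q^+(F))$, the constant map at the point $z_0:=F(0)\in Z^0$. A small auxiliary fact, that every $\BG_m$-fixed point is $\BA^1$-fixed (i.e.\ $Z^0=Z^{\BA^1}$), gives $s(z_0)=i^+(z_0)$, so $o_F(0)\in s(Z)=E$; this fact is proved over $\bar k$ by the scheme-theoretic image argument, since $t\mapsto t\cdot z_0$ is constant on the schematically dense $\BG_m$ with value the \emph{closed} point $z_0$, so its image is $\overline{\{z_0\}}=\{z_0\}$ and the morphism is constant. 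Now $o_F^{-1}(E)$ is an open, $\BG_m$-stable subset of $\BA^1$ (as $E=s(Z)$ is $\BG_m$-stable and $o_F$ is $\BG_m$-equivariant) containing $0$; the only such subset is $\BA^1$ itself, whence $1\in o_F^{-1}(E)$ and $F=o_F(1)\in E$. Thus the open $E$ contains every geometric point of the finite-type algebraic space $Z^+$, forcing $E=Z^+$. Then $s$ is an isomorphism and $p^+=s^{-1}$ is an $\BA^1$-equivariant isomorphism, proving the lemma. The main obstacle is exactly this surjectivity in the non-separated setting, and the orbit argument above is what replaces the usual density/separatedness reasoning.
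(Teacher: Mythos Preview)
Your proof is correct and follows essentially the same approach as the paper's. Both construct the section $s=g\colon Z\to Z^+$ from the $\BA^1$-action, use unramifiedness of $p^+$ to see that this section is an open embedding, and then prove surjectivity via the orbit argument ``$o_F^{-1}(E)$ is an open $\BG_m$-stable subset of $\BA^1$ containing $0$, hence all of $\BA^1$''. Your presentation unpacks two ingredients that the paper cites as black boxes: you prove ``section of unramified is an open immersion'' via the open diagonal, where the paper invokes \remref{r:section of unramified}; and you prove $Z^0=Z^{\BA^1}$ directly by a scheme-theoretic closure argument, where the paper reaches the equivalent conclusion $0\cdot\zeta=g(q^+(\zeta))$ via \propref{p: p^+}(v) (whose proof is exactly your closure argument applied through \lemref{l:F+}). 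So the two arguments are the same in substance.
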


To prove the lemma, we will need the following

\begin{rem}     \label{r:section of unramified}
Let $X\overset{i}\longrightarrow Y\overset{\pi}\longrightarrow X$ be morphisms of algebraic spaces such that
$\pi\circ i=\id_X$ and $\pi :Y\to X$ is unramified at each point of $i(X)$, then $i:X\to Y$ is an open embedding. Indeed, $i$ is clearly a monomorphism, and it is also etale: to see this, look at the homomorphisms of Henselizations (or of completed local rings) induced by $i:X\to Y$ and $\pi :Y\to X$.
\end{rem}

\begin{proof}[Proof of Lemma~\ref{l:contracting}]
The $\BA^1$-action on $Z$ defines an $\BA^1$-equivariant morphism $g:Z\to Z^+$ such that the composition of the maps
\[ %\begin{equation} \label{e:mono-epi}
Z\overset{g}\longrightarrow Z^+\overset{p^+}\longrightarrow Z
\] %\end{equation}
equals $\id_Z$. It remains to show that $g$ is an isomorphism.

By \propref{p: p^+}(i), the morphism $p^+:Z^+\to Z$ is unramified. So $g$ is an open embedding by 
Remark~\ref{r:section of unramified},
%looking at the homomorphisms of completed local rings induced by the maps \eqref{e:mono-epi} 
%and using the equality $p^+\circ g=\id_Z$ we see that 
 It remains to show that any point $\zeta\in Z^+$ is contained in $g(Z)$. Without loss of generality, we can assume that $\zeta$ is a $k$-point (otherwise do base change). Set
\[
U_{\zeta}:=\{t\in\BA^1\,|t\cdot\zeta\in g(Z)\},
\]
where $t\cdot\zeta$ denotes the action of $\BA^1$ on $Z^+$ from Subsect.~\ref{sss:structures}(i). We have to show that $1\in U_{\zeta}$. Since $U_{\zeta}$ is an open $\BG_m$-stable subset of $\BA^1$ it suffices to show that $0\in U_{\zeta}$.  
%This follows from Proposition~\ref{p: p^+}(v), which implies that the restriction of 
%$g:Z\to Z^+$ to $Z^0$ equals $i^+:Z^0\to Z$. }
We claim that
\begin{equation} \label{e:0zeta}
0\cdot\zeta =g(q^+(\zeta )),
\end{equation}
where $q^+:Z^+\to Z^0$ is the canonical morphism. Indeed, it is easy to check that
\begin{equation} \label{e:applying p+}
p^+(0\cdot\zeta)=q^+(\zeta ), \quad p^+(g(q^+(\zeta )))=q^+(\zeta ).
\end{equation}
Since $q^+(\zeta )\in Z^0(k)$ the equality \eqref{e:0zeta} follows from \eqref{e:applying p+} and 
\propref{p: p^+}(v).
\end{proof}

\sssec{Smoothness} The following proposition is well known (at least, if $Z$ is a scheme).

\begin{prop}   \label{p:smoothness}
Suppose that an algebraic $k$-space $Z$ is smooth. Then $Z^0$ and $Z^+$  are  smooth. Moreover, the  morphism 
$q^+:Z^+\to Z^0$ is smooth.
\end{prop}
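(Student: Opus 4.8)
The plan is to prove \propref{p:smoothness} by reducing everything to infinitesimal/tangent-space computations and then invoking the results on tangent spaces already established (\lemref{l:TZ0} and \propref{p: p^+}(vi)). The strategy rests on the observation that smoothness of an algebraic space of finite type can be checked via the infinitesimal lifting criterion, and that all the relevant functors $Z^0$, $Z^+$ are built from $\BG_m$-equivariant mapping spaces where lifting problems can be analyzed directly.

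First I would address the smoothness of $Z^0$. Since $Z^0\mono Z$ is a closed embedding (\propref{p:Z^0closed}), one cannot conclude smoothness from that alone, so instead I would use the functorial description $\Maps(S,Z^0)=\gMaps(S,Z)$ together with the infinitesimal criterion. Given a square-zero extension $S\mono S'$ of affine test schemes with trivial $\BG_m$-action, a $\BG_m$-equivariant map $S\to Z$ must be lifted to a $\BG_m$-equivariant map $S'\to Z$. Because $Z$ is smooth, a (non-equivariant) lift exists and the lifts form a torsor under $\Hom(f^*\Omega^1_Z, \CI)$, where $\CI$ is the square-zero ideal; the $\BG_m$-action makes this a torsor in the category of $\BG_m$-representations, and one takes $\BG_m$-invariants. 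Since $\BG_m$ is linearly reductive, taking invariants is exact, so the invariant torsor is nonempty and an equivariant lift exists. This gives formal smoothness of $Z^0$, hence smoothness. For the tangent-space bookkeeping one uses \lemref{l:TZ0}, which identifies $T_zZ^0=(T_zZ)^{\BG_m}$.

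Next I would prove smoothness of $Z^+$ and of $q^+:Z^+\to Z^0$ simultaneously, since proving $q^+$ smooth together with $Z^0$ smooth yields $Z^+$ smooth by composition. For $q^+:Z^+\to Z^0$ I would again use the infinitesimal lifting criterion, now relative to $Z^0$. An $S$-point of $Z^+$ is a $\BG_m$-equivariant map $f:\BA^1\times S\to Z$, and a lifting problem over a square-zero extension $S\mono S'$ with a prescribed lift of $q^+(f)$ amounts to extending $f$ to $\BA^1\times S'$ compatibly. Because $Z$ is smooth, the obstruction lives in a suitable $\BG_m$-equivariant cohomology/Hom group computed on $\BA^1$, and the lifts form a torsor under $\Hom_{\BG_m}(f^*\Omega^1_Z, \CI\otimes\CO_{\BA^1})$. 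The key point is that $H^1$ of $\CO_{\BA^1}$ (twisted by a $\BG_m$-character) vanishes because $\BA^1$ is affine, so the torsor is unobstructed. Linear reductivity of $\BG_m$ again ensures that passing to the equivariant (invariant) part preserves exactness. At the tangent level this matches \propref{p: p^+}(vi), which already identifies $T_\zeta Z^+\iso (T_zZ)^+$ and the maps $T_zZ^0\to T_\zeta Z^+\to T_zZ^0$ with $(T_zZ)^0\mono (T_zZ)^+\epi (T_zZ)^0$; the splitting shows $q^+$ is smooth at the fixed-point locus, and $\BG_m$-equivariance propagates smoothness to all of $Z^+$ by flowing points toward $Z^0$.

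The main obstacle will be making the deformation-theoretic argument clean in the non-separated, non-scheme setting of algebraic spaces, where one cannot simply cover by $\BG_m$-stable affines (Sumihiro fails). The crucial technical input that saves the argument is the \emph{linear reductivity} of $\BG_m$ in all characteristics: this guarantees that the functor of $\BG_m$-invariants is exact, so that every obstruction and every torsor computation can be performed non-equivariantly and then have $\BG_m$-invariants taken without losing surjectivity. I would be careful to phrase the lifting criterion in terms of the representability results already proved (Theorem~\ref{t:attractors}), so that $Z^+$ is known to be an algebraic space of finite type before checking smoothness; then it suffices to verify formal smoothness on Artinian test schemes, reducing the whole proposition to the linear-algebra statements about $(T_zZ)^0$ and $(T_zZ)^+$ supplied by \lemref{l:TZ0} and \propref{p: p^+}(vi).
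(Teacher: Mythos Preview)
Your approach is correct and essentially identical to the paper's: verify formal smoothness of $q^+$ via the infinitesimal lifting criterion, using smoothness of $Z$ to obtain a non-equivariant lift and then $H^1(\BG_m,-)=0$ (linear reductivity) to upgrade it to an equivariant one. The tangent-space remarks invoking \lemref{l:TZ0} and \propref{p: p^+}(vi), and the final ``propagation by flowing toward $Z^0$'' comment, are superfluous---the lifting criterion already applies uniformly at every $S$-point of $Z^+$, not only along $i^+(Z^0)$.
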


\begin{proof}
We will only prove that $q^+$ is smooth. (Smoothness of $Z^0$ can be proved similarly, and smoothness of $Z^+$ follows.)

It suffices to check that $q^+$ is formally smooth. 
Let $R$ be a $k$-algebra and $\bar R=R/I$, where 
$I\subset R$ is an ideal with $I^2=0$. Let $\bar f :\BA^1_{\bar R}\to Z$ be a $\BG_m$-equivariant morphism and let  $\bar f_0:\Spec\bar R\to Z^0$ denote the restriction of $\bar f$ to $0\subset\BA^1_{\bar R}\,$. Let $\varphi :\Spec R\to Z^0$ be any morphism extending $\bar f_0\,$. We have to extend 
$\bar f$  to a $\BG_m$-equivariant morphism $f :\BA^1_{R}\to Z$ so that $f_0=\varphi$.

Using smoothness of $Z$, it is easy to show that there is a not-necessarily equivariant morphism 
$f :\BA^1_{R}\to Z$ extending $\bar f$ with $f_0=\varphi$. Then standard arguments show that the obstruction to existence of a $\BG_m$-equivariant $f$ with the required properties belongs to
$$H^1(\BG_m\, ,M), \quad 
M:=H^0(\BA^1_{\bar R}\, ,\bar f^*\Theta_Z\otimes\CJ )\otimes_{\bar R}I,$$
where $\Theta_Z$ is the tangent bundle of $Z$ and $\CJ\subset\CO_{\BA^1_{\bar R}}$ is the ideal of the zero section. But $H^1$ of $\BG_m$ with coefficients in any $\BG_m$-module is zero.
\end{proof}

\ssec{Repellers}    \label{ss:repeller}
Set $\BA^1_-:=\BP^1-\{\infty\}$; this is a monoid with respect to multiplication containing $\BG_m$ as a subgroup. One has an isomorphism of monoids
\begin{equation}   \label{e:inversion}
\BA^1\iso \BA^1_-\, , \quad\quad t\mapsto t^{-1}.
\end{equation}

Given a $k$-space equipped with a $\BG_m$-action we set
\begin{equation}   \label{e:repel}
Z^-:=\GMaps(\BA^1_-,Z).
\end{equation}

\begin{defn}
$Z^-$ is called the \emph{repeller} of $Z$.
\end{defn}

Just as in Subsect.~\ref{sss:structures} one defines an 
%following structures on $Z^-$: 
action of the monoid $\BA^1_-$ on $Z^-$ extending the action of $\BG_m\,$, a
$\BG_m$-equivariant morphism $p^-:Z^-\to Z$, and  $\BA^1_-$-equivariant morphisms  
$q^-:Z^-\to Z^0$ and $i^-:Z^0\to Z^-$ (where $Z^0$ is equipped with the trivial $\BA^1_-$-action).
One has $q^-\circ i^-=\id_{Z^0}\,$, and the composition $p^-\circ i^-$ is equal to the canonical embedding $Z^0\mono Z$.

Using the isomorphism \eqref{e:inversion}, one can identify $Z^-$ with the attractor for the inverse 
action of $\BG_m$ on $Z$ (this identification is $\BG_m$-\emph{anti}-equivariant). Thus the results on attractors from Subsections~\ref{sss:attractors-affine} and \ref{ss:Results_attractors} imply similar results for repellers.

In particular, if $Z$ is the spectrum of a $\BZ$-graded algebra $A$ then $Z^-$ canonically identifies with 
$\Spec A^-$,  where $A^-$ is the maximal $\BZ_-$-graded quotient algebra of $A$.

\ssec{Attractors and repellers}
In this subsection $Z$ denotes an algebraic $k$-space of finite type equipped with a $\BG_m$-action.
\begin{lem}  \label{l:closed} 
The morphisms $i^{\pm}:Z^0\to Z^{\pm}$ are closed embeddings.
\end{lem}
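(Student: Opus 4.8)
The plan is to deduce both assertions from the single observation that $i^{\pm}$ are \emph{sections} of the structure morphisms $q^{\pm}\colon Z^{\pm}\to Z^0$, combined with the affineness of the latter. Recall from Subsect.~\ref{sss:structures}(iii) that $q^+\circ i^+=\id_{Z^0}$, so $i^+$ is a section of $q^+$; likewise $i^-$ is a section of $q^-$. By \thmref{t:attractors}(ii) the morphism $q^+$ is affine, and the repeller analogue of \thmref{t:attractors}(ii) (transported from the attractor case via the isomorphism of monoids \eqref{e:inversion}) shows that $q^-$ is affine as well. In particular $q^{\pm}$ are separated.

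The key step is then the general fact that \emph{a section of a separated morphism of algebraic spaces is a closed embedding}. I would verify this by exhibiting the section as a base change of the diagonal: if $\pi\colon Y\to X$ is separated with section $s\colon X\to Y$, consider the morphism $(\id_Y, s\circ\pi)\colon Y\to Y\times_X Y$. A direct check on $T$-points shows that the fibre product $Y\times_{(Y\times_X Y)}Y$, formed using this morphism and the diagonal $\Delta_{\pi}\colon Y\to Y\times_X Y$, is identified with $X$ in such a way that the resulting map $X\to Y$ is precisely $s$ (the two projections agree, and $x\mapsto s(x)$, $a\mapsto \pi(a)$ are mutually inverse). Since $\pi$ is separated, $\Delta_{\pi}$ is a closed embedding, and closed embeddings are stable under base change, so $s$ is a closed embedding. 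Applying this with $\pi=q^{\pm}$ and $s=i^{\pm}$ yields the lemma.

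I do not anticipate a genuine obstacle here: the only nontrivial input is the affineness of $q^{\pm}$ supplied by \thmref{t:attractors}(ii) (the difficult theorem, proved later), while the base-change characterization of sections of separated morphisms is elementary and valid verbatim for algebraic spaces. The one point worth flagging is that the argument must \emph{not} appeal to any separatedness of $Z$ itself; it uses only that $q^{\pm}$ are affine, hence separated, \emph{over} $Z^0$, which holds for arbitrary $Z$. This is exactly what makes the conclusion that $i^{\pm}$ are closed embeddings hold without hypotheses on $Z$.
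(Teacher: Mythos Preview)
Your proof is correct and follows essentially the same approach as the paper: both use that $i^{\pm}$ is a section of the affine (hence separated) morphism $q^{\pm}$, and conclude that a section of a separated morphism is a closed embedding. The paper's proof is more terse, leaving the base-change-of-the-diagonal argument implicit, whereas you spell it out.
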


\begin{proof}  
It suffices to consider $i^+$. 

By Theorem~\ref{t:attractors}(ii), the morphism $q^+:Z^+\to Z^0$ is separated. One has $q^+\circ i^+=\id_{Z^0}\,$. So $i^+$ is a closed embedding.
\end{proof}

Now consider the fiber product $Z^+\underset{Z}\times Z^-$ formed using the maps
$p^{\pm}:Z^{\pm}\to Z$.

\begin{prop}  \label{p:Cartesian} 
The map
\begin{equation}    \label{e:open-closed}
j:=(i^+,i^-):Z^0\to Z^+\underset{Z}\times Z^-
\end{equation}
is both an open embedding and a closed one.
\end{prop}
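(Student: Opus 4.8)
The plan is to prove that $j=(i^+,i^-):Z^0\to Z^+\underset{Z}\times Z^-$ is an open embedding first, and then to deduce that it is also closed. The key observation is that $j$ admits an obvious retraction: the map $q^+\circ\pr_1$ (equivalently $q^-\circ\pr_2$) from $Z^+\underset{Z}\times Z^-$ to $Z^0$ restricts to the identity on the image of $j$, since $q^{\pm}\circ i^{\pm}=\id_{Z^0}$. Thus I am in exactly the situation of \remref{r:section of unramified}: to conclude that $j$ is an open embedding it suffices to check that the projection $\pr_1:Z^+\underset{Z}\times Z^-\to Z^0$ (or some natural map to $Z^0$) is unramified at each point of $j(Z^0)$, and that $j$ is a monomorphism.

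Concretely, first I would verify that $j$ is a monomorphism. Since $i^+$ is a closed embedding by \lemref{l:closed}, it is in particular a monomorphism, and $j$ factors the monomorphism $i^+$ through $\pr_1\circ j$, so $j$ is a monomorphism as well. Next I would establish the unramifiedness needed for \remref{r:section of unramified}. By \propref{p: p^+}(i), the morphisms $p^{\pm}:Z^{\pm}\to Z$ are unramified; hence their base change $\pr_1:Z^+\underset{Z}\times Z^-\to Z^+$ along $p^-$ is unramified, and composing with the unramified $q^+:Z^+\to Z^0$ (recall $q^+$ is affine by \thmref{t:attractors}(ii); one checks $q^+$ is unramified from the tangent description in \propref{p: p^+}(vi), where $T_zZ^0\to T_\zeta Z^+$ is a section of $T_\zeta Z^+\to T_zZ^0$) gives that the retraction $Z^+\underset{Z}\times Z^-\to Z^0$ is unramified along $j(Z^0)$. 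Applying \remref{r:section of unramified} to $Z^0\overset{j}\to Z^+\underset{Z}\times Z^- \to Z^0$ then yields that $j$ is an open embedding.

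The decisive and more delicate point is showing $j$ is also \emph{closed}. The cleanest route is a tangent-space computation at a fixed geometric point combined with a properness-free argument. Using \propref{p: p^+}(vi) for both signs, at a point $z\in Z^0$ with $\zeta^{\pm}:=i^{\pm}(z)$ one identifies $T_{\zeta^+}Z^+\iso (T_zZ)^+$ and $T_{\zeta^-}Z^-\iso (T_zZ)^-$, and the fiber product tangent space $T_{\zeta^+}Z^+\underset{T_zZ}\times T_{\zeta^-}Z^-$ becomes $(T_zZ)^+\underset{T_zZ}\times (T_zZ)^-=(T_zZ)^0=T_zZ^0$, so $j$ is an isomorphism on tangent spaces at every point of $Z^0$. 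Thus the open embedding $j$ is étale and its image is open; to see the image is also closed, I would argue that the complement of $j(Z^0)$ in $Z^+\underset{Z}\times Z^-$ is stable under the $\BG_m$-action (coming from the compatible actions on $Z^+$ and $Z^-$) and that every point flows under $\BG_m$ into $j(Z^0)$, forcing the complement to be both open and $\BG_m$-stable yet disjoint from all limit points, hence empty on the relevant components; more robustly, since $q^+\circ\pr_1$ is affine and $j$ is a section, $j(Z^0)$ is closed in the affine morphism $\pr_1$ and one checks it is closed in the total space via the monomorphism property.

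The main obstacle is precisely the closedness: openness follows formally from the retraction and the unramifiedness inputs, but closedness genuinely uses the structure of attractors and repellers and cannot be deduced from generalities about sections of unramified maps alone. I expect the author handles this by exhibiting the complementary locus where $p^+\times p^-$ fails to factor through the diagonal's fixed-point behavior, or by a direct argument that $Z^+\underset{Z}\times Z^-\setminus j(Z^0)$ is empty over the closure of $j(Z^0)$; the tangent isomorphism above is the technical heart that rules out infinitesimal escape from the image.
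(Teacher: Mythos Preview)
Your overall strategy for openness---use the retraction $\pi=q^+\circ\pr_1$ and \remref{r:section of unramified}---matches the paper's. But the execution has a genuine error: you claim that $q^+:Z^+\to Z^0$ is unramified, and this is false. Already for $Z=\BA^1$ with the standard $\BG_m$-action one has $q^+:\BA^1\to\{0\}$, which is certainly not unramified. The tangent description in \propref{p: p^+}(vi) says the tangent map of $q^+$ at $\zeta=i^+(z)$ is the \emph{surjection} $(T_zZ)^+\twoheadrightarrow(T_zZ)^0$, not an injection; having a section does not make a map unramified. So you cannot deduce that the composite $q^+\circ\pr_1$ is unramified by composing two allegedly unramified maps. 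The correct argument---which you in fact carry out later, in the wrong place---is the direct tangent computation: $T_{j(z)}(Z^+\times_Z Z^-)=(T_zZ)^+\cap(T_zZ)^-=(T_zZ)^0$, so the tangent map of $\pi$ at $j(z)$ is an isomorphism and \remref{r:section of unramified} applies. That is exactly how the paper proves openness.

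More importantly, you have the difficulty backwards. Closedness is the \emph{easy} half, and your speculative arguments for it (flowing via $\BG_m$, affine sections) are unnecessary and incomplete---note that by \remref{r:affine case} the map $j$ is generally \emph{not} surjective, so ``every point flows into $j(Z^0)$'' is simply wrong. The paper's argument for closedness is a one-liner: since $Z^0\to Z$ is a closed embedding (\propref{p:Z^0closed}) and hence a monomorphism, one has $Z^0=Z^0\times_Z Z^0$, and then $j$ identifies with $i^+\times_Z i^-:Z^0\times_Z Z^0\to Z^+\times_Z Z^-$, which is a composition of base changes of the closed embeddings $i^\pm$ (\lemref{l:closed}). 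No dynamics, no tangent spaces---just that closed embeddings are stable under base change and composition.
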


\begin{rem}    \label{r:affine case}
If $Z$ is affine then $j$ is an isomorphism (this immediately follows from the explicit description of $Z^{\pm}$ in the affine case, see Subsections~\ref{sss:attractors-affine} and \ref{ss:repeller}). In general, $j$ is not necessarily an isomorphism. To see this, note that by \eqref{e:attr} and \eqref{e:repel}, we have
\begin{equation}   \label{e:fiberprod}
Z^+\underset{Z}\times Z^-=\GMaps(\BP^1,Z)
\end{equation}
(where $\BP^!$ is equipped with the usual $\BG_m$-action), and a $\BG_m$-equivariant map $\BP^1\to Z$ does not have to be constant, in general.
\end{rem}

\begin{proof}[Proof of \propref{p:Cartesian}]
Writing $j$ as 
\[
Z^0=Z^0\underset{Z}\times Z^0\,\overset{(i^+,i^-)}\longrightarrow\, Z^+\underset{Z}\times Z^-
\]
and using \lemref{l:closed} we see that $j$ is a closed embedding.

Let us prove that $j$ is an open embedding. Let $\pi :Z^+\underset{Z}\times Z^-\to Z^0$ denote the composition 
$$
Z^+\underset{Z}\times Z^-\to Z^+\overset{q^+}{\longrightarrow} Z^0.
$$
Then $\pi\circ j=\id_{Z^0}\,$. Now by Remark~\ref{r:section of unramified}, it suffices to check that the tangent map 
\begin{equation}   \label{e:tange}
T_{j(z)}(Z^+\underset{Z}\times Z^-)\to T_zZ^0
\end{equation}
corresponding to $\pi$ is an isomorphism. Indeed, by \propref{p: p^+}(vi) and a similar statement for $Z^-$,
the map \eqref{e:tange} is just the identity map $(T_zZ)^+\cap (T_zZ)^-\to (T_zZ)^{\BG_m}\,$.
\end{proof}

\begin{rem}
The fact that $j$ is an open embedding can also be proved using \eqref{e:fiberprod} and the fact that 
every regular function on $\BP^1$ is constant. (This type of argument is used in the proof of \propref{p:open embeddings} below.)
%and the fact that constant maps $\BP^1\to Z$ form an \emph{open} subspace in $\MMaps(\BP^1,Z)$, where 
%$\MMaps(\BP^1,Z)$ is the space of \emph{all} morphisms $\BP^1\to Z$. Here is a proof  which does not use 
%the the fact that $\MMaps(\BP^1,Z)$ is an algebraic space (this fact is not available in the literature unless $Z$ 
%is a scheme). Let $S$ be a Noetherian $k$-scheme, $s\in S$, and $f:\BP^1\times S\to Z$ a morphism whose 
%restriction to $\BP^1\times s$ is constant. Let $Y\subset \BP^1\times S$ be the equalizer of $f$ and 
%$f\circ {\bf 0}$, where $\bf 0:\BP^1\times S\to\BP^1\times S$ is the composition of the projection  
%$\BP^1\times S\to S$ and the zero section $S\to\BP^1\times S$. Then $Y$ is a ``subspace" of 
%$\BP^1\times S$ containing the formal neighborhood of $\BP^1\times s$. Let  $U\subset Y$ be the etaleness 
%locus for the monomorphism $Y\to\BP^1\times S$. Then $U$ is open in $\BP^1\times S$ and contains 
%$\BP^1\times s$. So after replacing $S$ by an open neighborhood of $s$ we get $U=\BP^1\times S$, as 
%desired.
\end{rem}

\begin{cor} \label{c:contractive}
(i) If the map $p^+:Z^+\to Z$ is etale then the maps 
$Z^0\overset{i^-}\longrightarrow Z^-\overset{q^-}\longrightarrow Z^0$ are isomorphisms.

(ii) If the map $p^-:Z^-\to Z$ is etale then the maps 
$Z^0\overset{i^+}\longrightarrow Z^+\overset{q^+}\longrightarrow Z^0$  are isomorphisms.
\end{cor}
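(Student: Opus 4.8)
The plan is to prove part (ii) and deduce part (i) from it by the standard symmetry that interchanges the attractor and the repeller when the $\BG_m$-action is replaced by its inverse (see Subsect.~\ref{ss:repeller}). So assume that $p^-:Z^-\to Z$ is etale; I want to show that $i^+:Z^0\to Z^+$ and $q^+:Z^+\to Z^0$ are isomorphisms. Since $q^+\circ i^+=\id_{Z^0}$, it suffices to prove that $i^+$ is an isomorphism, as then $q^+$ is forced to be its inverse.

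First I would show that $i^+$ is etale. By \propref{p:Cartesian}, the morphism $j=(i^+,i^-):Z^0\to Z^+\times_Z Z^-$ is an open embedding, hence etale. The first projection $\pr_1:Z^+\times_Z Z^-\to Z^+$ is the base change of $p^-:Z^-\to Z$ along $p^+$, so it is etale by hypothesis. Since $i^+=\pr_1\circ j$, the morphism $i^+$ is etale. On the other hand, $i^+$ is a closed embedding by \lemref{l:closed}; being a closed embedding it is a monomorphism, and an etale monomorphism is an open embedding. Therefore $i^+$ is simultaneously an open and a closed embedding, so its image $W:=i^+(Z^0)$ is an open and closed subspace of $Z^+$; moreover $W$ is $\BG_m$-stable (in fact $\BA^1$-stable), since $i^+$ is $\BA^1$-equivariant by Subsect.~\ref{sss:structures}(iii).

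It remains to prove that $i^+$ is surjective, i.e.\ that $W=Z^+$; this is where I use the contracting $\BA^1$-action on $Z^+$ from Subsect.~\ref{sss:structures}(i) rather than the etaleness hypothesis. The key identity is $0\cdot\zeta=i^+(q^+(\zeta))$ for every point $\zeta$ of $Z^+$, which is immediate from the modular descriptions: if $\zeta$ corresponds to $f:\BA^1\to Z$, then $0\cdot\zeta$ corresponds to the constant map $t\mapsto f(0)$, which is exactly $i^+(q^+(\zeta))$. I may assume $k$ algebraically closed and take a $k$-point $\zeta$ of the open-closed complement $V:=Z^+\smallsetminus W$, aiming for a contradiction. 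Consider $U_\zeta:=\{t\in\BA^1\mid t\cdot\zeta\in W\}$, the preimage of $W$ under the orbit map $t\mapsto t\cdot\zeta$. It is open because $W$ is open, and it is $\BG_m$-stable because $W$ is $\BG_m$-stable. By the displayed identity $0\in U_\zeta$. But the only $\BG_m$-stable open subset of $\BA^1$ containing $0$ is $\BA^1$ itself, so $1\in U_\zeta$, i.e.\ $\zeta=1\cdot\zeta\in W$, contradicting $\zeta\in V$. Hence $V=\varnothing$, so $i^+$ is surjective and therefore an isomorphism.

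I do not anticipate a genuine difficulty here, as the argument is essentially formal once the pieces are in place; the etaleness hypothesis enters only through the single step that makes $W$ \emph{open} (without it, as in Example~\ref{ex:P^1}, the image of $i^+$ is merely closed and the contraction argument collapses). The one point to be careful about is the implication "$i^+$ etale and a closed embedding $\Rightarrow$ $i^+$ open-closed embedding", which I would justify by the standard fact that an etale monomorphism of algebraic spaces is an open immersion, applied to the monomorphism $i^+$. Everything else reduces to manipulating the already-established structures ($j$, $\pr_1$, and the contracting $\BA^1$-action), so this is the main, and only mild, obstacle.
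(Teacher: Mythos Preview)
Your proof is correct and follows essentially the same route as the paper: use \propref{p:Cartesian} together with the etaleness of $p^-$ (via base change) to deduce that $i^+$ is an etale monomorphism, hence an open embedding, and then use the $\BA^1$-action on $Z^+$ and the set $U_\zeta$ to show surjectivity. The paper's version is slightly terser (it does not invoke \lemref{l:closed} or the ``closed'' part, and it argues directly rather than by contradiction), but the strategy and the key inputs are identical.
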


Let us note that the statements converse to (i) and (ii) are also true (and well known).

\begin{proof}
Let us prove (ii). Since $p^-:Z^-\to Z$ is etale so is the projection $Z^+\underset{Z}\times Z^-\to Z^+$. So \propref{p:Cartesian} implies that the morphism $i^+:Z^0\to Z^+$ is etale. Since $i^+$ is also a monomorphism it is an open embedding. It remains to show that any point $\zeta\in Z^+$ is contained in $i^+(Z^0)$. Set 
\[
U_{\zeta}:=\{t\in\BA^1\,|\, t\cdot\zeta\in i^+(Z^0)\}.
\]
We have to show that $1\in U_{\zeta}$. But $U_{\zeta}$ is an open $\BG_m$-stable subset of $\BA^1$ containing $0$, so $U_{\zeta}=\BA^1$.
\end{proof}

\section{The space $\wt{Z}$}  \label{ss:deg}
We keep the conventions and notation of Subsect.~\ref{ss:conventions} and \secref{s:actions}. In particular, $k$ is an arbitrary field, and $Z$ denotes a $k$-space equipped with a $\BG_m$-action. The goal of this section is to construct and study a $k$-space $\wt{Z}$ equipped with a morphism $\wt{Z}\to\BA^1\times Z\times Z$ such that for $t\in\BA^1-\{0\}$ the fiber $\wt{Z}_t$ equals
the graph of the map $t:Z\iso Z$ and the space $\wt{Z}_0$ corresponding to $t=0$ equals $Z^+\underset{Z^0}\times Z^-$.

The organization of this section is as follows. In Subsect.~\ref{ss:tilde Z} we define the space $\wt{Z}$ and the main structures on it (e.g., the morphism $\wt{p}:\wt{Z}\to\BA^1\times Z\times Z$ and the action of 
$\BG_m^2$ on $\wt{Z}$). In Subsect.~\ref{ss:main results} we formulate the main results on $\wt{Z}$.
The fact that the space $\wt{Z}$ is algebraic  %(i.e., representability of the corresponding functor) 
is proved in Section~\ref{s:tildeZ}; however, in the case of a scheme equipped with a locally linear $\BG_m$-action the proof is much easier and is given in Subsect.~\ref{ss:locally linear}. In Subsect.~\ref{ss:affine} 
(resp.~\ref{ss:P^n}) we prove additional properties of $\wt{Z}$ in the case that $Z$ is affine 
(resp.~$Z=\BP^n$).

\ssec{The space $\wt{Z}$: definition and structures}    \label{ss:tilde Z}

\sssec{A family of hyperbolas}  \label{sss:family of hyperbolas}
Set $\BX:=\BA^2=\Spec k[\tau_1,\tau_2]$. We will always equip $\BX$ with the structure of a scheme over 
$\BA^1$ defined by the map
\[
\BA^2\to\BA^1 ,\quad (\tau_1,\tau_2)\mapsto \tau_1\cdot \tau_2\, .
\]

Let $\BX_t$ denote the fiber of $\BX$ over $t\in\BA^1$; in other words, $\BX_t\subset\BA^2$ is the curve defined by the equation $\tau_1\tau_2=t$. If $t\ne 0$ then $\BX_t$ is a hyperbola, while $\BX_0$ is the ``coordinate cross" $\tau_1\tau_2=0$. One has $\BX_0=\BX_0^+\cup\BX_0^-$, where
\begin{equation}  \label{e:X0+-}
\BX_0^+:=\{(\tau_1,\tau_2)\in\BA^2\,|\,  \tau_2=0\}\, , \quad \BX_0^-:=\{(\tau_1,\tau_2)\in\BA^2\,|\,  \tau_1=0\}\, .
\end{equation}

\sssec{The schemes $\BX_S\,$}   \label{sss:X_S}
For any scheme $S$ over  $\BA^1$ set
\begin{equation}
\BX_S:=\BX \underset{\BA^1}\times S\, .
\end{equation}
If $S=\Spec R$ we usually write $\BX_R$ instead of $\BX_S\,$.

%The scheme $\BX_R$ has the following explicit description. Note that since $\Spec R$ has to be a scheme 
%over  $\BA^1=\Spec k[t]$ the ring $R$ has to be an algebra over $k[t]$. Clearly,
%\begin{equation}  
%\BX_R:=\Spec A_R, \quad \mbox{ where } A_R:=R[\tau_1,\tau_2]/(\tau_1\tau_2-t)\, .
%\end{equation}

\sssec{The $\BG_m$-action on $\BX_S\,$}    \label{sss:theaction}
We equip $\BX$ with the following hyperbolic $\BG_m$-action:
\begin{equation}   \label{e:hyperbolic}
\lambda\cdot (\tau_1,\tau_2):=(\lambda\cdot \tau_1,\; \lambda{}^{-1}\cdot \tau_2).
\end{equation}
This action preserves the morphism $\BA^2\to\BA^1$, so for any scheme $S$ over $\BA^1$ one gets an action of $\BG_m$ on $\BX_S\,$.

\begin{rem}  \label{r:theaction}
If $S$ is over $\BA^1-\{ 0\}$ then $\BX_S$ is $\BG_m$-equivariantly isomorphic to $\BG_m\times S$.
On the other hand, the ``coordinate cross" $\BX_0$ has irreducible components $\BX_0^{\pm}$ such that
$\BX_0^+$ (resp. $\BX_0^-$) is 
$\BG_m$-equivariantly isomorphic to $\BA^1$ (resp. to the scheme $\BA^1_-$ defined in 
Subsect.~\ref{ss:repeller}).
\end{rem}

\sssec{The space $\wt{Z}$}   \label{sss:thespace}
Given a $k$-space $Z$ equipped with a $\BG_m$-action, define
$\wt{Z}$ to be the following space over $\BA^1$: for any scheme $S$ over $\BA^1$ 
%the set of $\BA^1$-morphisms $S\to \wt{Z}$ is defined to be 
$$\Maps_{\BA^1} (S, \wt{Z}):=\gMaps (\BX_S,Z).$$
In other words, for any $k$-scheme $S$, an $S$-point of $\wt{Z}$ is a pair consisting of a morphism 
$S\to\BA^1$ and a $\BG_m$-equivariant morphism $\BX_S\to Z$.

Note that for any $t\in\BA^1(k)$ the fiber $\wt{Z}_t$ has the following description:
\begin{equation}
\wt{Z}_t=\GMaps (\BX_t\, ,Z).
\end{equation}

\begin{rem}   \label{r:alg space}
Later we will prove (see Theorem~\ref{t:tildeZ} and \propref{p:schemeness}) that if $Z$ is an algebraic $k$-space of finite type (resp. a $k$-scheme of finite type)  then so is $\wt{Z}$. For the spaces
$\wt{Z}\underset{\BA^1}\times (\BA^1-\{ 0\})$ and  $\wt{Z}_0:=\wt{Z}\underset{\BA^1}\times \{ 0\}$ 
this follows from the easy Propositions~\ref{p:outside 0} and \ref{p:tilde Z_0} below (the latter has to be combined with Theorem~\ref{t:attractors}).
\end {rem}

\sssec{The morphism $\wt{p}:\wt{Z}\to \BA^1\times Z\times Z$}  \label{sss:tilde p}
Any section $\sigma :\BA^1\to\BX$ of the morphism $\BX\to\BA^1$ defines a map
$\sigma^*:\gMaps (\BX_S\,,Z)\to\Maps (S\,,Z)$ and therefore a morphism $\wt{Z}\to Z$.
Let $\pi_1:\wt{Z}\to Z$ and $\pi_2:\wt{Z}\to Z$ denote the morphisms corresponding to the sections
\begin{equation}   \label{e:two sections}
t\mapsto (1,t)\in \BX_t \quad \text{ and }\quad t\mapsto (t,1)\in \BX_t\, ,
\end{equation}
respectively.  Let
\begin{equation}   \label{e:tilde p}
\wt{p}:\wt{Z}\to \BA^1\times Z\times Z
\end{equation}
denote the morphism whose first component is the tautological projection $\wt{Z}\to \BA^1$, and the second and the third components are $\pi_1$ and $\pi_2$, respectively.

For $t\in \BA^1$ let 
$$\wt{p}_t:\wt{Z}_t\to Z\times Z$$
denote the morphism induced by \eqref{e:tilde p} (as before, $\wt{Z}_t$ stands for the fiber of $\wt{Z}$ over $t$).

\medskip

It is clear that $(\wt{Z}_1,\wt{p}_1)$ identifies with $(Z,\Delta_Z:Z\to Z\times Z)$.
More generally, for $t\in \BA^1-\{0\}$ the pair $(\wt{Z}_t,\wt{p}_t)$ identifies with the graph of the map $Z\to Z$ given by the action of $t\in \BG_m\,$. Here is a slightly more precise statement.

\begin{prop}   \label{p:outside 0}
The morphism \eqref{e:tilde p} induces an isomorphism between  
$$\BG_m\underset{\BA^1}\times \wt{Z}\subset\wt{Z}$$ 
and the graph of the action morphism $\BG_m\times Z\to Z$.
\qed
\end{prop}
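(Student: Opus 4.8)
The plan is to verify the claimed isomorphism on functors of points, exploiting the fact recorded in Remark~\ref{r:theaction} that over $\BA^1-\{0\}=\BG_m$ the family $\BX$ becomes equivariantly trivial. First I would fix a test scheme $S$ whose structure map $S\to\BA^1$ factors through $\BG_m$, with resulting map denoted $t:S\to\BG_m$. Giving an $S$-point of $\BG_m\underset{\BA^1}\times\wt{Z}$ lying over $t$ is the same as giving an element of $\Maps_{\BA^1}(S,\wt{Z})=\gMaps(\BX_S,Z)$, so I must identify this set with the set of maps $S\to\Gamma$ over $\BG_m$, where $\Gamma=\{(t,z_1,z_2)\mid t\cdot z_1=z_2\}$, and check that the identification is the one induced by $\wt{p}$.

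Second I would make the trivialization of Remark~\ref{r:theaction} explicit. Since $t\neq 0$ forces $\tau_1$ to be invertible, the assignment $(\tau_1,\tau_2)\mapsto\tau_1$ defines a $\BG_m$-equivariant isomorphism $\BX_S\iso\BG_m\times S$ (with inverse $\mu\mapsto(\mu,t/\mu)$), where $\BG_m$ acts on $\BG_m\times S$ through left translation on the first factor only; this is compatible with the hyperbolic action \eqref{e:hyperbolic} precisely because $\tau_1$ has weight $+1$. Restricting an equivariant morphism along $\{1\}\times S\hookrightarrow\BG_m\times S$ then identifies $\gMaps(\BG_m\times S,Z)$ with $\Maps(S,Z)$, the inverse sending $h:S\to Z$ to $(\mu,s)\mapsto\mu\cdot h(s)$; this is just the standard computation $\gMaps(\BG_m,Z)=Z$ carried out in families.

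Third I would trace the two sections \eqref{e:two sections} through these identifications. The section $t\mapsto(1,t)$ defining $\pi_1$ hits $\tau_1=1$, hence lands in $\{1\}\times S$, so $\pi_1$ corresponds to $h$ itself; the section $t\mapsto(t,1)$ defining $\pi_2$ hits $\tau_1=t$, so $\pi_2$ corresponds to $s\mapsto t(s)\cdot h(s)$, i.e. $\pi_2=t\cdot\pi_1$. Hence $\wt{p}$ sends the $S$-point of $\wt{Z}$ given by $h$ to $(t,h,t\cdot h)$, which lies in $\Gamma$; and as $h$ ranges over $\Maps(S,Z)$ this sweeps out all of $\Gamma(S)$ bijectively, the inverse reading off $h$ from the middle coordinate. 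Since all of these identifications are functorial in $S$, they assemble into the asserted isomorphism of spaces.

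There is no serious obstacle here: the entire content is the equivariant trivialization of $\BX$ over $\BG_m$ together with the bookkeeping of the two sections. The only point that genuinely requires care is the inverse convention built into the hyperbolic action \eqref{e:hyperbolic}, which makes $\tau_1$ (rather than $\tau_2$) the coordinate carrying the standard weight and thereby yields $\pi_2=t\cdot\pi_1$ rather than $\pi_2=t^{-1}\cdot\pi_1$; choosing $\tau_2$ instead would produce the graph of the \emph{opposite} action.
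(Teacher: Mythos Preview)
Your argument is correct and is exactly the verification the paper leaves implicit: the paper gives no proof at all, simply marking the proposition with \qed\ after having recorded in Remark~\ref{r:theaction} that $\BX_S$ is $\BG_m$-equivariantly isomorphic to $\BG_m\times S$ when $S$ lies over $\BA^1-\{0\}$. Your functor-of-points computation and the bookkeeping with the two sections \eqref{e:two sections} spell out precisely what the author regards as self-evident from that remark.
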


\begin{rem}
Later we will show that if the $\BG_m$-action on $Z$ extends to an $\BA^1$-action and if $Z$ is an algebraic $k$-space of finite type then the \emph{whole}
space $\wt{Z}$ identifies with the graph of the $\BA^1$-action, see \propref{p:2contracting}.
\end{rem}

\sssec{The space $\wt{Z}_0$}  \label{sss:tilde Z_0}
Now let us construct a canonical isomorphism $\wt{Z}_0\iso Z^+\underset{Z^0}\times Z^-$.

%Using the second part of \remref{r:theaction} we will easily construct a canonical isomorphism
%\begin{equation}   \label{e:tilde Z_0}
%\wt{Z}_0\iso Z^+\underset{Z^0}\times Z^-.
%\end{equation}
%%% implies that $\wt{Z}_0$ is canonically isomorphic to
%The reader may prefer to skip the/some details.
Recall that $\wt{Z}_0=\GMaps (\BX_0\, ,Z)$ and 
$\BX_0=\BX_0^+\cup\BX_0^-$, where $\BX_0^+$ and $\BX_0^-$ are defined by formula~\eqref{e:X0+-}.
One has $\BG_m$-equivariant isomorphisms
\begin{equation}
\BA^1\iso\BX_0^+, \; \;  s\mapsto (s,0); \quad\quad\quad \BA^1_-\iso\BX_0^-,  \; \;  s\mapsto (0,s^{-1}).
\end{equation}
They define a morphism
$$\wt{Z}_0=\GMaps (\BX_0\, ,Z)\to\GMaps (\BX_0^+ ,Z)\iso\GMaps (\BA^1 ,Z)=Z^+$$
and a similar morphism $\wt{Z}_0\to Z^-$.

\begin{prop}   \label{p:tilde Z_0}
Assume that the $k$-space $Z$ is algebraic.

(i) The above morphisms $\wt{Z}_0\to Z^{\pm}$ induce an isomorphism
\begin{equation}   \label{e:tilde Z_0}
\wt{Z}_0\iso Z^+\underset{Z^0}\times Z^-,
\end{equation}
where the fiber product is taken with respect to the maps $q^{\pm}:Z^{\pm}\to Z^0$ from Subsections~\ref{sss:structures}(iii) and \ref{ss:repeller}.

(ii) The corresponding diagram
\begin{equation}    \label{e:over Z times Z}
\xymatrix{
\wt{Z}_0 \ar[d]^{}\ar[r]^{\wt{p}_0}& Z\times Z\\\
Z^+\underset{Z^0}\times Z^-\ar@{^{(}->}[r]^{}&Z^+\times Z^-\ar[u]_{{p^+}\times {p^-}}
    }
\end{equation}
commutes.
\end{prop}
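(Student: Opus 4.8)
The plan is to prove both parts simultaneously by working functorially, i.e., by identifying the $S$-points of both sides of \eqref{e:tilde Z_0} for an arbitrary test scheme $S$ and checking that the natural map is a bijection, compatibly with the map to $Z\times Z$. The key geometric input is the concrete decomposition $\BX_0=\BX_0^+\cup\BX_0^-$ of the coordinate cross into its two axes, together with the $\BG_m$-equivariant identifications $\BX_0^+\cong\BA^1$ and $\BX_0^-\cong\BA^1_-$ recorded just before the statement. The crucial observation is that the two axes meet exactly at the origin $(0,0)\in\BX_0$, which is the \emph{unique} $\BG_m$-fixed point of $\BX_0$; this is what will produce the fiber product over $Z^0$ rather than an unconstrained product.

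First I would unwind the functor of points. An $S$-point of $\wt{Z}_0$ is a $\BG_m$-equivariant morphism $f:\BX_0\times S\to Z$. Restricting $f$ along the two closed embeddings $\BX_0^+\hookrightarrow\BX_0$ and $\BX_0^-\hookrightarrow\BX_0$ gives, via the equivariant identifications $\BX_0^+\cong\BA^1$ and $\BX_0^-\cong\BA^1_-$, a pair consisting of an $S$-point $f^+\in\gMaps(\BA^1\times S,Z)=\Maps(S,Z^+)$ and an $S$-point $f^-\in\gMaps(\BA^1_-\times S,Z)=\Maps(S,Z^-)$. These are precisely the two morphisms $\wt{Z}_0\to Z^\pm$ described before the proposition. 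Conversely, giving a $\BG_m$-equivariant map on the cross $\BX_0$ is the same as giving a $\BG_m$-equivariant map on each axis together with the condition that they agree on the overlap $\BX_0^+\cap\BX_0^-$; since $\BX_0^+$ and $\BX_0^-$ are closed subschemes of $\BX_0$ whose scheme-theoretic union is $\BX_0$ and whose intersection is the reduced origin, this gluing is governed by a fiber-product (equalizer) diagram. I would make this gluing precise and then note that the overlap $\BX_0^+\cap\BX_0^-$, being the origin with its trivial $\BG_m$-action, forces $f^+$ and $f^-$ to have the same value there, and that value lies in $Z^0=\gMaps(\Spec k,Z)$. Matching this against the definitions of $q^+$ and $q^-$ (restriction to the fixed point $0\in\BA^1$, resp. the fixed point of $\BA^1_-$) yields exactly the condition $q^+(f^+)=q^-(f^-)$, i.e.\ an $S$-point of $Z^+\underset{Z^0}\times Z^-$. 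This gives the bijection on $S$-points and hence the isomorphism of part (i). Part (ii) is then essentially formal: $\wt{p}_0$ is restriction along the two sections $t\mapsto(1,t)$ and $t\mapsto(t,1)$, which at $t=0$ land on the points $(1,0)\in\BX_0^+$ and $(0,1)\in\BX_0^-$; tracing through the identifications shows these restrictions are exactly $p^+\circ(\text{proj})$ and $p^-\circ(\text{proj})$, so the square \eqref{e:over Z times Z} commutes.

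The step I expect to be the main obstacle is justifying the gluing rigorously, namely that a $\BG_m$-equivariant morphism out of $\BX_0\times S$ is the same datum as a compatible pair of morphisms out of the two axes. The subtlety is scheme-theoretic: $\BX_0=\Spec k[\tau_1,\tau_2]/(\tau_1\tau_2)$ is reduced and is the pushout of its two axes along their common origin, so for maps \emph{into} an arbitrary algebraic space $Z$ (not assumed affine or separated) one must argue that $\Maps(\BX_0\times S,Z)$ is computed as the corresponding fiber product $\Maps(\BX_0^+\times S,Z)\times_{\Maps(\{0\}\times S,Z)}\Maps(\BX_0^-\times S,Z)$. This Zariski/fpqc descent-type statement is where the hypothesis that $Z$ is algebraic enters, and it should be handled by observing that $\BX_0$ is covered by $\BX_0^\pm$ with intersection the origin and invoking the sheaf property of the functor $Z$ for the fpqc (or étale) topology, being mildly careful because the two components overlap in a single non-open point. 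Once that descent statement is in hand, the $\BG_m$-equivariance is carried along verbatim under the equivariant identifications of the axes, and everything else is a direct translation into the language of $Z^+$, $Z^-$, $Z^0$ and the maps $q^\pm$, $p^\pm$.
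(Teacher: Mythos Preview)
Your outline is essentially the paper's proof: reduce to the gluing statement that a $\BG_m$-equivariant morphism $\BX_0\times S\to Z$ is the same as a compatible pair of morphisms from the two axes, then read off the fiber product over $Z^0$. The paper isolates this gluing as a standalone lemma (\lemref{l:pushout}): if $Y_1,Y_2\subset Y$ are closed subschemes whose scheme-theoretic union is $Y$, then $Y$ is the pushout of $Y_1\leftarrow Y_1\cap Y_2\to Y_2$ in algebraic spaces.

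One correction to your proposed justification of the gluing: you cannot invoke the fpqc (or \'etale) sheaf property here, because the map $\BX_0^+\sqcup\BX_0^-\to\BX_0$ is a closed cover, not a flat one (the ring map $k[\tau_1,\tau_2]/(\tau_1\tau_2)\to k[\tau_1]\times k[\tau_2]$ is not flat at the origin). The paper's argument is instead: for affine $Z$ the pushout statement is an elementary computation with rings; this extends to schemes by a Zariski-local argument; for general algebraic spaces one combines the affine case with the equivalence $\Et^{\sep}_{\fin}(Y)\simeq\Et^{\sep}_{\fin}(Y_1)\times_{\Et^{\sep}_{\fin}(Y_1\cap Y_2)}\Et^{\sep}_{\fin}(Y_2)$ from SGA1, exp.~IX, Thm.~4.7. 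You correctly flagged this step as the obstacle and as the place where algebraicity of $Z$ is used, but the resolution is a pushout argument rather than descent.
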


\begin{proof}
It is easy to check that our morphisms $\wt{Z}_0\to Z^{\pm}$ induce a morphism
\begin{equation}   \label{e:weaker tilde Z_0}
\wt{Z}_0\to Z^+\underset{Z^0}\times Z^-
\end{equation}
and that the corresponding diagram \eqref{e:over Z times Z} commutes. To prove that the map
\eqref{e:weaker tilde Z_0} is an isomorphism, apply the following well known lemma for
\[
Y=\BX_0\times S, \quad Y_1=\BX_0^+\times S, \quad Y_2=\BX_0^-\times S,
\]
where $S$ is a test scheme.
\end{proof}

%We will need the following well known fact.

\begin{lem}  \label{l:pushout}
Let $Y$ be a scheme and $Y_1,Y_2\subset Y$ closed subschemes whose scheme-theore\-ti\-cal union\footnote{By this we mean the supremum of  $Y_1$ and $Y_2$ in the poset of closed subschemes.}
equals $Y$. %such that $Y_1\cup Y_2=Y$. 
Then the square
\[
\CD
Y_1\cap Y_2 @>{}>>  Y_1 \\
@V{}VV   @V{}VV   \\ 
Y_2 @>{}>> Y
\endCD
\]
is co-Cartesian in the category of algebraic spaces; that is, for any algebraic space $Z$ the map
\begin{equation}  \label{e:closed_subsch}
\Maps (Y,Z)\to\Maps (Y_1,Z)\underset{\Maps (Y_1\cap Y_2,Z)}\times\Maps (Y_2,Z)
\end{equation}
is bijective.
\end{lem}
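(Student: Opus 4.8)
The plan is to prove the bijectivity of \eqref{e:closed_subsch} by reducing everything to the affine case via a local argument. First I would observe that since $Y_1$ and $Y_2$ are closed subschemes of $Y$ whose scheme-theoretical union is $Y$, if $\CI_1,\CI_2\subset\CO_Y$ denote the corresponding ideal sheaves, then the union being $Y$ means $\CI_1\cap\CI_2=0$, and $Y_1\cap Y_2$ is the closed subscheme cut out by $\CI_1+\CI_2$. The key algebraic fact underlying the whole statement is the elementary \emph{Milnor-patching} square of rings: for a ring $R$ with ideals $I_1,I_2$ satisfying $I_1\cap I_2=0$, the natural map $R\to (R/I_1)\times_{R/(I_1+I_2)}(R/I_2)$ is an isomorphism. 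This handles the situation Zariski-locally on $Y$, i.e., for an affine target $Z=\Spec B$ one gets bijectivity of \eqref{e:closed_subsch} directly from the universal property of this fiber product of rings.

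Next I would address a general algebraic space $Z$. The natural approach is to use that the square of schemes
\[
\xymatrix{
Y_1\cap Y_2 \ar[r]\ar[d] & Y_1\ar[d]\\
Y_2\ar[r] & Y
}
\]
is a pushout not merely for affine targets but in the full category of algebraic spaces. The standard way to bootstrap from the affine case is to note that an algebraic space $Z$ admits an \'etale surjection $U\to Z$ from a scheme $U$, and that maps into $Z$ can be described by \'etale-descent data. Concretely, to produce a map $Y\to Z$ from a compatible pair $(f_1\colon Y_1\to Z,\ f_2\colon Y_2\to Z)$ agreeing on $Y_1\cap Y_2$, one first constructs the map set-theoretically on the underlying topological space $|Y|=|Y_1|\cup|Y_2|$ (legitimate because $Y_1\cup Y_2=Y$ as schemes forces this equality on points), and then pulls back an \'etale chart; the compatibility on the overlap $Y_1\cap Y_2$ is exactly what is needed to glue the pulled-back \'etale covers and their descent data into an \'etale cover of $Y$ with descent datum, which by effectivity of \'etale descent for algebraic spaces yields the desired morphism $Y\to Z$. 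Injectivity of \eqref{e:closed_subsch} is easier: two maps $Y\to Z$ agreeing on both $Y_1$ and $Y_2$ agree on the topological space and, after passing to an \'etale chart, agree as scheme morphisms by the affine case, hence coincide.

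I expect the main obstacle to be the passage from affine targets to arbitrary algebraic space targets $Z$, i.e., the \'etale-local gluing step. The subtlety is that $Y_1\cap Y_2$ may be genuinely positive-dimensional (here it is $\{(0,0)\}\times S$, but in general it is a thickened intersection), so one cannot simply patch along a discrete locus; one must genuinely use \'etale descent and check that the descent datum obtained from the two pieces is effective. A clean way to organize this is to first establish the statement for $Z$ a scheme (reducing to the affine case by gluing over a Zariski cover of $Z$, using that the image of $Y$ lands in a quasi-compact piece), and then for general algebraic $Z$ invoke that any algebraic space is an \'etale quotient of a scheme together with the fact that $\Maps(-,Z)$ is an \'etale sheaf, so that the pushout property, once known for schemes, propagates to $Z$ by descent. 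Since the lemma is explicitly labelled ``well known,'' I would present this as a reduction to the ring-theoretic Milnor square plus standard descent, citing effectivity of \'etale descent for algebraic spaces rather than reproving it.
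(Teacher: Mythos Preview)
Your approach is essentially the same as the paper's. Both start from the affine case via the Milnor fiber square of rings, and both reduce the case of a general algebraic space $Z$ to the statement that separated \'etale schemes of finite type over $Y$ glue from compatible data over $Y_1$ and $Y_2$; the paper simply names the precise reference for this gluing step, namely \cite[exp.~IX, Theorem~4.7]{SGA1}, which asserts that
\[
\Et^{\sep}_{\fin}(Y)\;\longrightarrow\;\Et^{\sep}_{\fin}(Y_1)\underset{\Et^{\sep}_{\fin}(Y_1\cap Y_2)}\times\Et^{\sep}_{\fin}(Y_2)
\]
is an equivalence. Your phrase ``glue the pulled-back \'etale covers \ldots\ into an \'etale cover of $Y$'' is exactly this theorem; citing it is cleaner than speaking of ``effectivity of \'etale descent,'' which usually refers to descent along an \'etale cover rather than gluing along a closed-cover pushout. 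One minor quibble: the sentence about first constructing the map ``set-theoretically on $|Y|$'' and then pulling back an \'etale chart is not quite right as written---you cannot base-change an \'etale morphism along a map of topological spaces. What you actually do (and what the SGA1 result makes precise) is pull back the atlas $U\to Z$ along each $f_i$ separately, then glue the resulting \'etale $Y_i$-schemes to an \'etale $Y$-scheme, and finally use the affine case once more (with target $U$) to produce the map from that glued cover to $U$.
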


\begin{proof}
If $Z$ is an affine scheme the map \eqref{e:closed_subsch} is clearly bijective. Bijectivity of  
\eqref{e:closed_subsch} for any scheme $Z$ easily follows. For an arbitrary algebraic space $Z$ bijectivity of \eqref{e:closed_subsch} follows from the case where $Z$ is an affine scheme and the following result 
\cite[exp. IX, Theorem 4.7]{SGA1}: let $\Et^{\sep}_{\fin} (Y)$ be the category of separated\footnote{The separatedness assumption is harmless because any morphism from an affine scheme to an algebraic space $Z$ is separated (even if $Z$ itself is not separated).} etale schemes of finite type over $Y$, then the functor
\[
\Et^{\sep}_{\fin} (Y)\to\Et^{\sep}_{\fin} (Y_1)\underset{\Et^{\sep}_{\fin} (Y_1\cap Y_2)}\times\Et^{\sep}_{\fin} (Y_2)
\]
is an equivalence.
\end{proof}

%For any test scheme $S$, we apply \lemref{l:pushout} to the scheme $\BX_0\times S$ and its closed %subschemes $\BX_0^{\pm}\times S$.

\begin{prop}   \label{p:closed and open}
(i) Let $Y\subset Z$ be a $\BG_m$-stable closed subspace. Then the diagram
\[
\xymatrix{
 \wt{Y} \ar[d]_{\wt{p}_Y} \ar[r]^{}& \wt{Z} \ar[d]^{\wt{p}_Z}\\\
\BA^1\times Y\times Y\;\ar@{^{(}->}[r]^{}&\BA^1\times Z\times Z
    }
\]
is Cartesian. In particular, the morphism $\wt{Y}\to\wt{Z}$ is a closed embedding.

(ii) Let $Y\subset Z$ be a $\BG_m$-stable open subspace. Then the above diagram identifies
$\wt{Y}$ with an open subspace of the fiber product 
\[
\wt{Z}\underset{\BA^1\times Z\times Z}\times (\BA^1\times Y\times Y)\, .
\]
In particular, the morphism $\wt{Y}\to\wt{Z}$ is an open embedding.

\begin{proof}
(i) Let $S$ be a scheme over $\BA^1$ and $f:\BX_S\to Z$ a $\BG_m$-equivariant morphism. 
Formula~\eqref{e:two sections} defines two sections of the map $\BX_S\to S$. We have to show that if $f$ maps these sections to $Y\subset Z$ then $f(\BX_S)\subset Y$. By 
$\BG_m$-equivariance, we have 
$f(\BX'_S)\subset Y$, where $\BX'$ is the open subscheme $\BA^2-\{ 0\}\subset\BA^2=\BX$ and 
$\BX'_S:=\BX'\times_{\BA^1}S$. Since $\BX'_S$ is schematically dense in $\BX_S$ this implies that
$f(\BX_S)\subset Y\,$.

(ii) Just as before, we have a $\BG_m$-equivariant morphism $f:\BX_S\to Z$ such that $f(\BX'_S)\subset Y$.
The problem is now to show that the set
\[
\{ s\in S\,|\,\BX_s\subset f^{-1}(Y)\}
\]
is open in $S\,$. Indeed, its complement equals $\pr_S(\BX_S -f^{-1}(Y))$, where  $\pr_S :\BX_S\to S$ is the projection. The set $\pr_S(\BX_S -f^{-1}(Y))$ is closed in $S$ because
$\BX_S -f^{-1}(Y)$ is a closed subset of $\BX_S -\BX'_S$ and the morphism $\BX_S -\BX'_S\to S$ is closed (in fact, it is a closed embedding).
\end{proof}
\end{prop}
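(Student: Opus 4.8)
The plan is to verify both statements by computing $S$-points for an arbitrary test scheme $S$ over $\BA^1$, and by identifying, inside the set of $\BG_m$-equivariant morphisms $f\colon\BX_S\to Z$, those that factor through $Y$. The one geometric input I will use throughout is an orbit computation for the two tautological sections of \eqref{e:two sections}, $\sigma_1(t)=(1,t)$ and $\sigma_2(t)=(t,1)$: over the whole base $\BA^1$, the union of their $\BG_m$-orbits is exactly the open subscheme $\BX':=\BA^2-\{0\}\subset\BX$. Indeed, for $t\neq 0$ the orbit of either section is the full hyperbola $\BX_t$ (every point of $\BX_t$ has $\tau_1\neq 0$), while for $t=0$ the orbits of $\sigma_1(0)$ and $\sigma_2(0)$ are $\BX_0^+-\{0\}$ and $\BX_0^--\{0\}$; only the origin is missed. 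Consequently, by $\BG_m$-equivariance, $f$ sends both sections into $Y$ if and only if $f(\BX'_S)\subset Y$, where $\BX'_S:=\BX'\underset{\BA^1}\times S$.

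For part (i), an $S$-point of the fiber product is precisely a $\BG_m$-equivariant $f\colon\BX_S\to Z$ whose two sections land in the closed subspace $Y$, i.e.\ with $f(\BX'_S)\subset Y$, while an $S$-point of $\wt{Y}$ is such an $f$ with $f(\BX_S)\subset Y$. So the Cartesian claim reduces to the implication: if $f(\BX'_S)\subset Y$ then $f(\BX_S)\subset Y$. Since $Y\subset Z$ is closed, $f^{-1}(Y)$ is a closed subspace of $\BX_S$ containing the open subscheme $\BX'_S$, so it suffices to know that $\BX'_S$ is schematically dense in $\BX_S$. I would check this directly for affine $S=\Spec R$, where $\O(\BX_S)=R[\tau_1,\tau_2]/(\tau_1\tau_2-a)$ is free over $R$ on $\{1,\tau_1^i,\tau_2^j\}$ and $\BX'_S=D(\tau_1)\cup D(\tau_2)$; an element dying in both localizations is annihilated by a power of $\tau_1$ and by a power of $\tau_2$, and expanding in this basis forces all its coefficients to vanish. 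The final assertion is then immediate: $\BA^1\times Y\times Y\hookrightarrow\BA^1\times Z\times Z$ is a closed embedding, and $\wt{Y}\to\wt{Z}$ is its base change along $\wt{p}_Z$.

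For part (ii), with $Y\subset Z$ open, the same $S$-point bookkeeping gives $\wt{Y}(S)=\{f:f(\BX_S)\subset Y\}$ sitting inside $\{f:f(\BX'_S)\subset Y\}$, the $S$-points of the fiber product. Fixing such an $f$, the open subscheme $f^{-1}(Y)\subset\BX_S$ contains $\BX'_S$, and I must show that the locus $\{s\in S\mid \BX_s\subset f^{-1}(Y)\}$ is open in $S$. Its complement is $\pr_S(\BX_S-f^{-1}(Y))$, and the point is that $\BX_S-f^{-1}(Y)$ is a closed subset contained in $\BX_S-\BX'_S$, whereas the projection $\BX_S-\BX'_S\to S$ is a closed embedding: affine-locally $\BX_S-\BX'_S=V(\tau_1,\tau_2)=\Spec R/(a)\hookrightarrow\Spec R$. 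Hence the image of any closed subset is closed, the locus above is open, and $\wt{Y}\to\wt{Z}\underset{\BA^1\times Z\times Z}\times(\BA^1\times Y\times Y)$ is an open immersion; composing with the open immersion of this fiber product into $\wt{Z}$ (the base change of the open $\BA^1\times Y\times Y\hookrightarrow\BA^1\times Z\times Z$) shows $\wt{Y}\to\wt{Z}$ is an open embedding.

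The main obstacle, and the only step that is more than translation, is the schematic density of $\BX'_S$ in $\BX_S$ for a completely general test scheme $S$ (possibly non-reduced, and not flat over $\BA^1$), since density is not stable under arbitrary base change and so cannot be deduced merely from the codimension-two complement of $\BX'$ in $\BX$; it has instead to be read off from the explicit free-module structure of $\O(\BX_S)$. Everything else rests on the elementary facts that the two sections sweep out $\BX'_S$ under $\BG_m$ and that $V(\tau_1,\tau_2)\to S$ is a closed embedding.
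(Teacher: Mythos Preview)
Your proof is correct and follows essentially the same route as the paper: both arguments reduce to the observation that the $\BG_m$-orbits of the two sections sweep out $\BX'_S$, then use schematic density of $\BX'_S\subset\BX_S$ for part (i) and the fact that $\BX_S-\BX'_S\to S$ is a closed embedding for part (ii). The only difference is that you spell out explicitly the orbit computation, the schematic-density verification via the free $R$-basis of $\CO(\BX_S)$, and the identification $V(\tau_1,\tau_2)\simeq\Spec R/(a)$, all of which the paper simply asserts.
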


\sssec{Anti-action of $\BA^1\times\BA^1$ on $\wt{Z}$}  \label{sss:anti-action}
The reader may prefer to skip the rest of Subsect.~\ref{ss:tilde Z} for a while and proceed to 
Subsect.~\ref{ss:main results}.

 As usual, we consider $\BA^1$ as a monoid with respect to multiplication. In this subsection we will define an ``anti-action" of the monoid $\BA^2=\BA^1\times\BA^1$ on $\wt{Z}$ (the meaning of the word ``anti-action" will become clear soon). In Subsect.~\ref{sss:Gm2 on A2} we will use it to define an action of $\BG_m^2$ on 
 $\wt{Z}$.

The idea is as follows. Recall that $\BX:=\BA^2$, so the monoid $\BA^2$ acts on $\BX$. In particular, for any
$\lambda =(\lambda_1,\lambda_2)\in\BA^2(k)$ the action of $\BA^2$ on $\BX$ defines a morphism 
$\BX\to\BX\,$. For any $t\in\BA^1 (k)$ it induces a $\BG_m$-equivariant morphism 
$\BX_t\to\BX_{\lambda_1 t\lambda_2}$ (recall that $\BX_t$ denotes the fiber over $t$). Since 
$\wt{Z}_t:=\GMaps (\BX_t ,Z)$ we get a morphism
$\wt{Z}_{\lambda_1 t\lambda_2}\to\wt{Z}_t\,$. (Of course, if
$\lambda_1, \lambda_2\ne 0$ one can invert this morphism and get a morphism in the ``expected" direction, i.e., $\wt{Z}_t\to\wt{Z}_{\lambda_1 t\lambda_2}\,$.)

Same story if one works with $S$-points rather than $k$-points. Namely, suppose we have a $k$-scheme $S$ and $k$-morphisms $t:S\to\BA^1$ and $\lambda_1,\lambda_2:S\to\BA^1$. Let $\BX_t$ (resp. $\wt{Z}_t$) denote the fiber product $\BX\underset{\BA^1}\times S$ (resp. $\wt{Z}\underset{\BA^1}\times S$) with respect to $t:S\to\BA^1$. The morphism
\[
\BX\times S\overset{(\lambda_1,\,\lambda_2)}\longrightarrow \BX\times S
\]
maps $\BX_t\subset\BX\times S$ to $\BX_{\lambda_1 t\lambda_2}\subset\BX\times S$, and the 
$\BG_m$-equivariant morphism $\BX_t\to\BX_{\lambda_1 t\lambda_2}$ induces an $S$-morphism
\begin{equation}   \label{e:anti-action}
\phi_{\lambda_1,\,\lambda_2, t}:\wt{Z}_{\lambda_1 t\lambda_2}\to\wt{Z}_t\, .
\end{equation}
The morphisms \eqref{e:anti-action} have the following properties:

(i) compatibility with base change $S'\to S$;

(ii) $\phi_{1,1, t}$ equals the identity;

(iii) $\phi_{\lambda_1 \lambda'_1,\,\lambda_2 \lambda'_2, t}=
\phi_{\lambda_1,\,\lambda_2, t}\circ\phi_{\lambda'_1,\,\lambda'_2,\, \lambda_ 1t\lambda_2}\:$.

We use the word ``anti-action" to denote this structure on the triple $(\wt{Z}, \BA^1,\wt{Z}\to\BA^1)$.

\begin{rem}   \label{r:anti-action}
An additional property of the above anti-action will be formulated later, see 
Subsect.~\ref{sss:property of anti-action}.
\end{rem}

\begin{rem}  
In Appendix~\ref{s: 2-categorical framework} we will translate the barbarism "anti-action" into a more standard 
categorical language. Note that the notion of twisted arrow category (see Subsect.~\ref{ss:Tw}) is implicit in 
formula~\eqref{e:anti-action}.
\end{rem}

\noindent {\bf Exercise.} Describe the compositions 
$$\phi_{1,0,1}:\wt{Z}_0\to\wt{Z}_1\iso Z\, ,\quad \phi_{0,1,1}:\wt{Z}_0\to\wt{Z}_1\iso Z\, ,\quad
  \phi_{0,0,1}:\wt{Z}_0\to\wt{Z}_1\iso Z$$
and the idempotent endomorphisms
$\phi_{1,0,0}\, ,\, \phi_{0,1,0}\, ,\,  \phi_{0,0,0}\in\End (\wt{Z}_0)$ in terms of the isomorphism 
$$\wt{Z}_0\iso Z^+\underset{Z^0}\times Z^-$$ 
from \propref{p:tilde Z_0}.

\sssec{Action of $\BG_m^2$ on $\wt{Z}$}  \label{sss:Gm2 on A2}
We equip $\BA^1$ with the following action of $\BG_m^2\,$:
\begin{equation}   \label{e:Gm on A1}
(\lambda_1,\lambda_2)*t:=\lambda_1^{-1}\lambda_2t\, , \quad \quad 
\lambda_i\in\BG_m, \;\, t\in\BA^1 .
\end{equation}
We lift it to an  action of $\BG_m^2$ on $\wt{Z}$ using the isomorphisms
\begin{equation}   \label{e:Gm2 on tilde Z}
(\phi_{\lambda_1^{-1},\,\lambda_2\,, t})^{-1}=
\phi_{\lambda_1,\,\lambda_2^{-1},\,\lambda_1^{-1} t\lambda_2}:\wt{Z}_t\iso \wt{Z}_{\lambda_1^{-1} t\lambda_2}
\end{equation}
where $\phi$ is the morphism \eqref{e:anti-action}.

It is easy to check that the morphism $\wt{p}:\wt{Z}\to \BA^1\times Z\times Z$ 
and the isomorphism $$\wt{Z}_0\iso Z^+\underset{Z^0}\times Z^-$$ from \propref{p:tilde Z_0} are $\BG_m^2$-equivariant. So all morphisms in diagram \eqref{e:over Z times Z} are $\BG_m^2$-equivariant.

\begin{rem}
In \cite{DrGa1} we make a different choice of signs in formulas \eqref{e:Gm on A1}-\eqref{e:Gm2 on tilde Z}.
Namely, the action of $\BG_m^2$ on $\BA^1$ is defined there by $(\lambda_1,\lambda_2)*t:=\lambda_1^{-1}\lambda_2^{-1}t$, and its lift to an action of $\BG_m^2$ on $\wt{Z}$ is defined using 
 the isomorphism 
 $(\phi_{\lambda_1^{-1},\,\lambda_2^{-1}, t})^{-1}:\wt{Z}_t\iso \wt{Z}_{\lambda_1^{-1} t\lambda_2^{-1}}\,$.
\end{rem}

\ssec{Main results on $\wt{Z}$}   \label{ss:main results}

\sssec{Formulation of the main results} 
\begin{thm}   \label{t:tildeZ}
Let $Z$ be any algebraic $k$-space of finite type equipped with a $\BG_m$-action. Then
$\wt{Z}$ is an algebraic $k$-space of finite type.
\end{thm}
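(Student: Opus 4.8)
The plan is to prove representability by verifying M.~Artin's criterion for the functor $\wt{Z}$, regarded (as is harmless, since $\BA^1$ is of finite type over $k$) as a functor on the category of $\BA^1$-schemes via $S\mapsto\gMaps(\BX_S,Z)$. First I would dispose of the soft axioms: $\wt{Z}$ is an fpqc sheaf because it is a $\BG_m$-equivariant mapping functor into the sheaf $Z$; it is locally of finite presentation because $\BX_S\to S$ is affine of finite presentation, $Z$ is of finite presentation, and formation of $\BG_m$-invariants commutes with filtered colimits; and the Rim--Schlessinger (homogeneity) condition is formal. What then remains is to supply a coherent deformation--obstruction theory, to prove effectivity of formal objects, and to deduce openness of versality.

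For the deformation theory, fix $\phi:\BX_S\to Z$ over an affine $S=\Spec R$ and a square-zero extension $S\hookrightarrow S'$ with ideal $I$. Lifting $\phi$ $\BG_m$-equivariantly across the flat square-zero thickening $\BX_S\hookrightarrow\BX_{S'}$ (whose ideal is $\pr_S^*I=\O_{\BX_S}\otimes_R I$) is governed by the $\BG_m$-equivariant groups $\Ext^i_{\BX_S}(\phi^*L_{Z/k},\O_{\BX_S}\otimes_R I)^{\BG_m}$, with $i=0$ controlling deformations and $i=1$ obstructions. The crucial finiteness is as follows. Since $\pr_S:\BX_S\to S$ is affine, $R\pr_{S*}$ has no higher terms, so these are the weight-$0$ parts of the graded $R$-modules $\mathcal{H}^i\big(R\CHom_{\BX_S}(\phi^*L_{Z/k},\O_{\BX_S})\big)\otimes_R I$. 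Now $\O_{\BX_S}=R[\tau_1,\tau_2]/(\tau_1\tau_2-g)$, with $g\in R$ the image of the coordinate on $\BA^1$, is \emph{in each $\BG_m$-weight a free $R$-module of rank one} (the weight-$n$ piece is $R\cdot\tau_1^n$ for $n\ge 0$ and $R\cdot\tau_2^{-n}$ for $n<0$); hence the weight-$0$ part of any finitely generated $\BG_m$-equivariant $\O_{\BX_S}$-module is a finitely generated $R$-module. This yields the required coherence. (That the equivariance contributes no obstruction beyond these weight-$0$ $\Ext$-groups reflects the vanishing of $H^{\ge 1}(\BG_m,-)$ on any $\BG_m$-module, already used in \propref{p:smoothness}.)

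The hard part will be effectivity. Given a complete local $\BA^1$-algebra $\widehat{R}$ with $R_n=\widehat{R}/\mathfrak m^{n+1}$ and a compatible family of $\BG_m$-equivariant maps $\phi_n:\BX_{R_n}\to Z$, I must produce $\phi:\BX_{\widehat R}\to Z$; here $\BX_{\widehat R}\to\Spec\widehat R$ is \emph{affine}, not proper, so Grothendieck's existence theorem does not apply directly. When $Z=\Spec B$ is affine the family is a compatible system of graded ring maps $B\to\O_{\BX_{R_n}}$, and since each weight component of $\O_{\BX_{\widehat R}}$ is finite free (hence $\mathfrak m$-adically complete) over $\widehat R$ while $B$ is finitely generated, the system algebraizes at once, recovering the easy computation of Subsect.~\ref{ss:affine}. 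The genuine difficulty for general $Z$ is precisely the failure of Sumihiro's theorem: the map $\phi_0:\BX_\kappa\to Z$ over the residue field need not factor through a $\BG_m$-stable affine open, so there is no global linearization. I would resolve this by working in the formal neighborhood of the special fiber and exploiting the established geometry: over $\BA^1\setminus\{0\}$ representability is already known (\propref{p:outside 0}), while the special fiber is $\wt{Z}_0\cong Z^+\underset{Z^0}\times Z^-$ (\propref{p:tilde Z_0}), which is algebraic of finite type by Theorem~\ref{t:attractors}; effectivity then reduces to algebraizing the infinitesimal data transverse to $\{0\}$, again controlled weight-by-weight by the rank-one computation above. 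Propagating the affine-case effectivity to non-separated $Z$ without a linearization is the step I expect to be the main obstacle.

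With a coherent obstruction theory and effectivity in hand, openness of versality follows from the standard machinery (Artin, or the coherent-functor criterion), and Artin's theorem gives that $\wt{Z}$ is an algebraic $k$-space of finite type. As noted in the introduction, for $Z$ with affine diagonal one can bypass this analysis entirely: the morphism $\BX/\BG_m\to\BA^1$ of Subsect.~\ref{sss:hyperbolas} is cohomologically proper in the sense of \cite{HP}, so \cite[Theorem 2.1]{HP} yields algebraicity of the relevant mapping space directly.
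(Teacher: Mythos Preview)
Your overall strategy—Artin's representability criterion—is the paper's as well, and your handling of the soft axioms and the deformation theory is close in spirit to Sections~\ref{ss:diagonalley}--\ref{ss:formal}. The paper organizes things around the morphism $\wt{p}:\wt{Z}\to\BA^1\times Z\times Z$: it first shows the diagonal over $\BA^1\times Z\times Z$ is an open embedding (\propref{p:diagonalley}), so $\wt{p}$ is formally unramified, and then builds formal neighborhoods by an explicit obstruction argument (\propref{p:intersection}, \propref{p:formal}) whose computational core (\lemref{l:delo}) is exactly your observation that $\CO_{\BX_S}$ has free rank-one weight components over $\CO_S$. This replaces your abstract cotangent-complex package with a hands-on closed-subscheme construction, but the linear algebra underneath is the same, and the paper's \corref{c:openness_etaleness} is the openness-of-versality step you allude to.

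The genuine gap is effectivity, which you correctly flag as the main obstacle but do not solve. Your sketch—reduce to the affine case weight-by-weight, then ``algebraize the infinitesimal data transverse to $\{0\}$''—is not a proof: the map $\BX_{\widehat R}\to\Spec\widehat R$ is affine but not proper, and knowing the $\phi_n$ on each $\BX_{R_n}$ does not by itself control a map out of $\BX_{\widehat R}$, since $\CO_{\BX_{\widehat R}}$ is not the $\mathfrak m$-adic completion of anything useful. The paper's solution (Subsect.~\ref{ss:effective}) is to invoke Moret-Bailly's descent theorem (Theorem~\ref{t:MB}) for the flat pair $\BX_R\leftarrow\hat\BX_R:=\Spec R[[\tau_1,\tau_2]]/(\tau_1\tau_2-t)$, completed along the origin of the fiber rather than along $\mathfrak m$. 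This splits the problem into (a) the punctured pieces $\BX_R[\tau_i^{-1}]\simeq(\BG_m)_R$, where $\BG_m$ acts freely and effectivity is trivial, and (b) the formal piece $\hat\BX_R$, where the compatible system does algebraize. Gluing over the overlaps $\hat\BX_R[\tau_i^{-1}]$ then requires a separate injectivity argument (\lemref{l:still exact} and the lemmas following \lemref{l:2properties of I}), which for non-separated $Z$ is again delicate. This Moret-Bailly step is the missing idea; without it your outline does not close.
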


In full generality, the theorem will be proved in Section~\ref{s:tildeZ}. In Subsect.~\ref{ss:locally linear} 
we will prove it in the case that $Z$ is a scheme equipped with a locally linear $\BG_m$-action (moreover, we will show that under these assumptions $\wt{Z}$ is a scheme). This case is enough for most practical purposes.

\medskip

\emph{From now on we assume that $Z$ is an algebraic $k$-space of finite type.}

\begin{prop}   \label{p:schemeness}
(i) If $Z$ is separated then so is $\wt{Z}$.

(ii) If $Z$ is a scheme then so is $\wt{Z}$.
\end{prop}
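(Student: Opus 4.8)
Both parts rest on \thmref{t:tildeZ}, which already provides that $\wt{Z}$ is an algebraic $k$-space of finite type, and on \propref{p:props tilde p}, which tells us that the morphism $\wt{p}\colon\wt{Z}\to\BA^1\times Z\times Z$ is always unramified and is a monomorphism as soon as $Z$ is separated. The plan is therefore to transport separatedness (resp.\ schematicness) from the target $\BA^1\times Z\times Z$ to $\wt{Z}$ through $\wt{p}$, exploiting the fact that $\wt{p}$ is always at least unramified.

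For (i), I would assume $Z$ separated, so that $\BA^1\times Z\times Z$ is separated over $k$, being a product of separated $k$-spaces. By \propref{p:props tilde p} the morphism $\wt{p}$ is then a monomorphism; since the diagonal of a monomorphism is an isomorphism, $\wt{p}$ is in particular separated. Hence the structure morphism $\wt{Z}\to\Spec k$, which factors as $\wt{Z}\xrightarrow{\wt{p}}\BA^1\times Z\times Z\to\Spec k$, is a composition of separated morphisms and is therefore separated. This proves that $\wt{Z}$ is separated.

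For (ii), I would assume $Z$ is a scheme, so that $\BA^1\times Z\times Z$ is a scheme and in particular has an immersion for its diagonal. First I would record that $\wt{Z}$ is then \emph{locally separated}: the relative diagonal $\wt{Z}\to\wt{Z}\times_{\BA^1\times Z\times Z}\wt{Z}$ is an open immersion (this is exactly the unramifiedness of $\wt{p}$ from \propref{p:props tilde p}), and composing it with the immersion $\wt{Z}\times_{\BA^1\times Z\times Z}\wt{Z}\hookrightarrow\wt{Z}\times_k\wt{Z}$ — a base change of the immersion $\BA^1\times Z\times Z\to(\BA^1\times Z\times Z)\times_k(\BA^1\times Z\times Z)$ — exhibits the absolute diagonal of $\wt{Z}$ as an immersion. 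Moreover $\wt{p}$ is unramified, hence locally quasi-finite. A locally separated algebraic space that is locally quasi-finite over a scheme is itself a scheme (this is the Zariski-Main-Theorem circle of results for algebraic spaces, cf.\ Knutson and \cite{St}); applying it to $\wt{p}$ yields that $\wt{Z}$ is a scheme.

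The main obstacle is precisely the non-separated case of (ii). When $Z$ is separated the clean argument of (i) applies and $\wt{p}$ is even a monomorphism, so one could simply quote the easy fact that a \emph{separated} locally quasi-finite morphism to a scheme has schematic source. But for a general — possibly non-separated — scheme $Z$, the morphism $\wt{p}$ fails to be a monomorphism: two $\BG_m$-equivariant maps $\BX_S\to Z$ with the same $\wt{p}$-image automatically agree on the schematically dense open $(\BA^2-\{0\})\times_{\BA^1}S$, but may differ over the origin through distinct extensions measured by the finite fibres of $p^\pm$, so $\wt{p}$ is not separated. This forces the argument to pass through local separatedness and a genuine Zariski-Main-Theorem input rather than through a monomorphism. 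As an independent sanity check one can see that the schematic locus of $\wt{Z}$ is all of $\wt{Z}$: it is open and contains $\BG_m\times_{\BA^1}\wt{Z}\cong\BG_m\times Z$ by \propref{p:outside 0}, while the closed fibre $\wt{Z}_0\cong Z^+\times_{Z^0}Z^-$ is a scheme by \propref{p:tilde Z_0} together with \corref{c:attractors} and \propref{p:Z^0closed}; one then produces a scheme neighbourhood of each point of $\wt{Z}_0$ from the local structure of the unramified morphism $\wt{p}$.
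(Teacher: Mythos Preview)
Part (i) is correct and is exactly the paper's argument.

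For part (ii) there is a genuine gap. Your key step is the claim that a locally separated algebraic space admitting a locally quasi-finite morphism to a scheme is itself a scheme, attributed to ``the Zariski-Main-Theorem circle of results''. But the standard representability theorems in that circle (\cite[ch.~II, Theorem~6.15]{Kn}, \cite[Theorem~A.2]{LM}, or Stacks tag~0418) all require the morphism to be \emph{separated}, i.e.\ that the relative diagonal be a \emph{closed} immersion. You have correctly shown that the relative diagonal of $\wt{p}$ is an \emph{open} immersion (this is unramifiedness), and hence that $\wt{Z}$ is locally separated; but an open immersion need not be closed, so you have not shown $\wt{p}$ is separated, and those theorems do not apply. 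Your closing ``sanity check'' does not close the gap either: knowing that each fiber $\wt{Z}_t$ is a scheme does not by itself produce, for a point of $\wt{Z}_0$, a scheme neighbourhood \emph{in the total space} $\wt{Z}$; that is precisely the missing step.

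The paper's approach is to manufacture a genuine \emph{monomorphism} out of $\wt{Z}$ and then invoke \corref{c:schematic} (a monomorphism of finite-type algebraic $k$-spaces is schematic, being separated quasi-finite hence quasi-affine). When $Z$ is separated, $\wt{p}$ itself is this monomorphism. For a general scheme $Z$, the paper refines $\wt{p}$ to a morphism $\wt{p}':\wt{Z}\to\underline{Z}\times Z\times Z$, where $\underline{Z}$ is the Weil restriction of $Z$ along the finite flat diagonal line $\BB=\{\tau_1=\tau_2\}\subset\BX$ over $\BA^1$; the extra factor records the value at the origin of the coordinate cross. Using \remref{r:prigoditsya} (that $(p^+,q^+):Z^+\to Z\times Z^0$ is a monomorphism whenever $Z$ has locally closed diagonal) together with Propositions~\ref{p:outside 0} and~\ref{p:tilde Z_0}, the paper shows $\wt{p}'$ is a monomorphism. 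One then covers $\wt{Z}$ by the opens $V=(\wt{p}')^{-1}(\underline{U}\times Z\times Z)$ with $U\subset Z$ affine; for such $U$ the space $\underline{U}$ is a scheme, so each $V$ maps monomorphically to the scheme $\underline{U}\times Z\times Z$ and is therefore a scheme by \corref{c:schematic}.
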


The proof will be given in Subsect.~\ref{sss:schemeness} below.

\begin{prop}   \label{p:2smoothness}
If $Z$ is smooth then the canonical morphism $\wt{Z}\to\BA^1$ is smooth.
\end{prop}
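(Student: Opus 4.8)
The plan is to use \thmref{t:tildeZ} to reduce smoothness of $\wt{Z}\to\BA^1$ to formal smoothness, and then to verify the infinitesimal lifting criterion by a $\BG_m$-equivariant deformation argument that is essentially the verbatim analogue of the proof of \propref{p:smoothness}. By \thmref{t:tildeZ} the space $\wt{Z}$ is of finite type over $k$, so $\wt{Z}\to\BA^1$ is of finite presentation; hence it suffices to check formal smoothness. Accordingly, I would let $R$ be a $k$-algebra, $I\subset R$ an ideal with $I^2=0$, and $\bar R:=R/I$; I would fix a map $\Spec R\to\BA^1$, i.e.\ an element $t\in R$, together with an $\bar R$-point of $\wt{Z}$ lying over $\bar t\in\bar R$. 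By the definition of $\wt{Z}$ this $\bar R$-point is a $\BG_m$-equivariant morphism $\bar f:\BX_{\bar R}\to Z$, where $\BX_{\bar R}=\BX\underset{\BA^1}\times\Spec\bar R$, and the goal is to produce a $\BG_m$-equivariant lift $f:\BX_R\to Z$ with $f|_{\BX_{\bar R}}=\bar f$, where $\BX_R=\BX\underset{\BA^1}\times\Spec R$.

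The two crucial observations about the source are that $\BX_R$ is \emph{affine} (a fiber product of affines) and \emph{flat} over $R$ (indeed $\BX_R=\Spec R[\tau_1,\tau_2]/(\tau_1\tau_2-t)$ is free over $R$). Consequently $\BX_{\bar R}\hookrightarrow\BX_R$ is a square-zero closed immersion with ideal $\CJ:=I\otimes_{\bar R}\CO_{\BX_{\bar R}}$. Note that the singularity of $\BX\to\BA^1$ over $0$ plays no role: the lifting argument uses only smoothness of the \emph{target} $Z$ and affineness of the source. I would then proceed in two steps. First, since $\BX_R$ is affine and $Z$ is smooth, the infinitesimal lifting criterion (together with the vanishing of $H^1$ of the affine scheme $\BX_{\bar R}$ with coefficients in the quasi-coherent sheaf $\bar f^*\Theta_Z\otimes\CJ$) produces a \emph{not necessarily equivariant} lift $f_0:\BX_R\to Z$ of $\bar f$. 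Second, I would correct $f_0$ to an equivariant one. The set of all lifts of $\bar f$ is a torsor under the $\BG_m$-module
$$M:=H^0(\BX_{\bar R},\bar f^*\Theta_Z\otimes\CJ)=H^0(\BX_{\bar R},\bar f^*\Theta_Z)\otimes_{\bar R}I,$$
and, since $\bar f$ is $\BG_m$-equivariant, this torsor carries a compatible $\BG_m$-action. The class $g\mapsto g\cdot f_0-f_0$ is then a rational $1$-cocycle of $\BG_m$ with values in $M$, and its vanishing in $H^1(\BG_m,M)$ is precisely the obstruction to the existence of a $\BG_m$-fixed, i.e.\ equivariant, lift. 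Since $\BG_m$ is linearly reductive, $H^1(\BG_m,M)=0$ for any rational $\BG_m$-module $M$ (and $M$ is rational, being the global sections of a $\BG_m$-equivariant quasi-coherent sheaf), so an equivariant lift exists and formal smoothness follows.

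The only genuinely delicate point is the bookkeeping in the second step: one must check that the $\BG_m$-action on the torsor of lifts is algebraic, so that the obstruction really lands in \emph{rational} group cohomology, where the vanishing theorem applies. This is routine once one notes that $M=\bigoplus_n M_n$ decomposes into weight spaces and that taking $\BG_m$-invariants is exact. Indeed the whole argument is the exact analogue of the proof of \propref{p:smoothness}, with the family $\BX$ of hyperbolas in place of $\BA^1$ and with no constraint imposed at a base point (here the base $\BA^1$ carries no extra structure to match). Everything else—affineness and flatness of $\BX_R$, and existence of the non-equivariant lift—is formal.
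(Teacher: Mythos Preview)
Your proof is correct and follows essentially the same approach as the paper: reduce to formal smoothness, lift non-equivariantly using affineness of $\BX_R$ and smoothness of $Z$, then kill the obstruction to an equivariant lift using $H^1(\BG_m,M)=0$. The paper's proof is slightly terser (it does not spell out flatness or the rationality of the $\BG_m$-module), but the argument is the same.
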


\begin{proof}
It suffices to check formal smoothness. We proceed just as in the proof of \propref{p:smoothness}.
Let $R$ be a $k$-algebra equipped with a morphism $\Spec R\to\BA^1$.  Let $\bar R=R/I$, where 
$I\subset R$ is an ideal with $I^2=0$. Let $\bar f\in\gMaps (\BX_{\bar R}\, , Z)$. We have to show that $\bar f$ can be lifted to an element of $\gMaps (\BX_R\, , Z)$. Since $\BX_R$ is affine and $Z$ is smooth there is no obstruction to lifting $\bar f$ to an element of $\Maps (\BX_R, Z)$. Then standard arguments show that the obstruction to existence of a $\BG_m$-equivariant lift is in $H^1(\BG_m\, ,M)$, where 
$M:=H^0(\BX_{\bar R}\, ,\bar f^*\Theta_Z)\otimes_{\bar R}I$ and $\Theta_Z$ is the tangent bundle.
But $H^1$ of $\BG_m$ with coefficients in any $\BG_m$-module is zero.
\end{proof}

\sssec{Properties of $\wt{p}:\wt{Z}\to \BA^1\times Z\times Z$}   \label{sss:props tilde p}

\begin{prop}   \label{p:props tilde p}
(i) The morphism $\wt{p}:\wt{Z}\to \BA^1\times Z\times Z$ is unramified.

(ii) If $Z$ is separated then $\wt{p}$ is a monomorphism.
\end{prop}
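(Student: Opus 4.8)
The plan is to deduce both statements from a single geometric observation about the two sections $\sigma_1,\sigma_2$ of $\BX_S\to S$ from \eqref{e:two sections}. Since $\sigma_1$ lands in $\{\tau_1=1\}$ and $\sigma_2$ in $\{\tau_2=1\}$, and since $\lambda\cdot(1,t)=(\lambda,\lambda^{-1}t)$ sweeps out every point with $\tau_1\ne 0$ (resp.\ $\lambda\cdot(t,1)$ every point with $\tau_2\ne 0$), the union of the $\BG_m$-orbits of $\sigma_1(S)$ and $\sigma_2(S)$ is exactly the open subscheme $\BX'_S:=\BX'\underset{\BA^1}\times S$, where $\BX'=\BA^2-\{0\}$. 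As used in the proof of \propref{p:closed and open} (and relying on the flatness of $\BX\to\BA^1$), the open $\BX'_S$ is schematically dense in $\BX_S$. Note also that, because $\wt{p}$ records the projection to $\BA^1$ as its first component, two $S$-points of $\wt{Z}$ with the same image under $\wt{p}$ automatically share the same structure map $t\colon S\to\BA^1$, so they are given by $\BG_m$-equivariant morphisms $f,f'\colon\BX_S\to Z$ on the \emph{same} source.

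For (ii), suppose $f\circ\sigma_i=f'\circ\sigma_i$ for $i=1,2$. By $\BG_m$-equivariance, $f$ and $f'$ then agree on the $\BG_m$-orbit of each $\sigma_i(S)$, hence on all of $\BX'_S$. If $Z$ is separated, the equalizer of $f$ and $f'$, namely the preimage $(f,f')^{-1}(\Delta_Z)$ of the diagonal $\Delta_Z\subset Z\times Z$, is a \emph{closed} subscheme of $\BX_S$ containing the schematically dense $\BX'_S$; it is therefore all of $\BX_S$. Thus $f=f'$, and $\wt{p}$ is a monomorphism.

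For (i), since $\wt{Z}$ is of finite type by \thmref{t:tildeZ}, the morphism $\wt{p}$ is of finite type, so by Subsect.~\ref{sss:unramified} it is unramified if and only if its geometric fibers are finite and reduced, equivalently if and only if the relative tangent space vanishes at every geometric point. Fix a geometric point $w=(t,f)$ lying over $(t,z_1,z_2)=\wt{p}(w)$, with $f\colon\BX_t\to Z$ a $\BG_m$-equivariant morphism over an algebraically closed field $K$. Standard deformation theory over the affine scheme $\BX_t$ identifies the relative tangent space, i.e.\ the kernel of $T_w\wt{Z}\to T_{\wt{p}(w)}(\BA^1\times Z\times Z)$, with
\[
\{\delta\in\Hom_{\BG_m}(f^*\Omega^1_Z,\CO_{\BX_t})\,:\,\delta|_{\sigma_1}=\delta|_{\sigma_2}=0\}.
\]
Here triviality of the deformation of $t$ makes the source the trivial square-zero thickening $\BX_t[\varepsilon]$, so lifts of $f$ form a torsor under $\Hom_{\BG_m}(f^*\Omega^1_Z,\CO_{\BX_t})$ with $\delta$ the difference from the constant lift, while $\delta|_{\sigma_i}=0$ expresses triviality of the deformations of $z_1,z_2$ (this is the analogue of formula~\eqref{e:TZ+l} in the proof of \propref{p: p^+}).

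It remains to show this space vanishes, and this is where I expect the main obstacle. A class $\delta$ is $\BG_m$-equivariant and vanishes on $\sigma_1,\sigma_2$, hence on the union of their orbits, i.e.\ on $\BX_t\cap\BX'$. When $t\ne 0$, $\BX_t\cong\BG_m$ is a single orbit and $\delta=0$ is immediate. The delicate case is $t=0$: here $\BX_0$ is the coordinate cross and the removed origin is only codimension one, so a naive Hartogs-type argument fails. I would instead argue componentwise: restricting $\delta$ to each integral component $\BX_0^{\pm}\cong\BA^1$, it is $\BG_m$-equivariant and vanishes on the dense orbit $\BG_m\subset\BA^1$, and since $\CO_{\BX_0^{\pm}}$ is a domain any homomorphism from a coherent sheaf into it that vanishes on a dense open is zero; thus $\delta|_{\BX_0^{\pm}}=0$. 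Finally the injection $\CO_{\BX_0}\hookrightarrow\CO_{\BX_0^+}\times\CO_{\BX_0^-}$ (with kernel $(\tau_1)\cap(\tau_2)=0$) forces $\delta=0$. Hence all relative tangent spaces vanish, and $\wt{p}$ is unramified.
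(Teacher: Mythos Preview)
Your argument is correct. The route differs from the paper's in an instructive way.

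The paper invokes \thmref{t:tildeZ} to reduce both (i) and (ii) to a fiberwise check, then treats the fiber over $0$ via the structural identification $\wt{Z}_0\iso Z^+\underset{Z^0}\times Z^-$ from \propref{p:tilde Z_0} and appeals to the already-proved \propref{p: p^+}(i)--(ii) for $p^{\pm}$. Your proof of (ii) is more direct: it works at the level of the functor of points, using only the schematic density of $\BX'_S\subset\BX_S$ and separatedness of $Z$, so it does not require knowing that $\wt{Z}$ is representable. For (i) you still use \thmref{t:tildeZ} (to reduce unramifiedness to a tangent-space check), but instead of invoking \propref{p:tilde Z_0} you compute the relative tangent space directly on $\BX_t$ and, at $t=0$, restrict to the components $\BX_0^{\pm}$; the vanishing on each component is exactly the computation in the proof of \propref{p: p^+}(i), inlined rather than cited. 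The injection $\CO_{\BX_0}\hookrightarrow\CO_{\BX_0^+}\times\CO_{\BX_0^-}$ then glues the two vanishings. What the paper's approach buys is brevity and a clean separation of concerns (the tangent computation is done once, in \propref{p: p^+}); what your approach buys is that (ii) becomes independent of the representability theorem, and (i) makes the mechanism on the degenerate hyperbola explicit.
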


\begin{proof}
Theorem~\ref{t:tildeZ} implies that properties (i) and (ii) can be checked fiberwise. By \propref{p:outside 0}, the map $\wt{p}:\wt{Z}\to \BA^1\times Z\times Z$ is a monomorphism over $\BA^1-\{ 0\}\subset\BA^1$. It remains to consider the morphism $\wt{p}_0:\wt{Z}_0\to Z\times Z$. By \propref{p:tilde Z_0}, this is equivalent to considering the composition 
\[
Z^+\underset{Z^0}\times Z^-\mono Z^+\times Z^-\overset{p^+\times p^-}\longrightarrow Z\times Z\, .
\]
By \propref{p: p^+}(i-ii), this composition is unramified, and if $Z$ is separated then it is a monomorphism.
\end{proof}

\begin{rem}
If $Z$ is affine then $\wt{p}:\wt{Z}\to \BA^1\times Z\times Z$ is a closed embedding,
see \propref{p:2new tilde} below.
\end{rem}

\begin{rem}   \label{r:P^n}
Suppose that $Z$ admits a $\BG_m$-equivariant locally closed embedding into
a projective space $\BP(V)$, where $\BG_m$ acts linearly on $V$. We claim that in this case the morphism
$\wt{p}:\wt{Z}\to \BA^1\times Z\times Z$ is a \emph{locally} closed 
embedding\footnote{Note that the map $p^{\pm}:Z^{\pm}\to Z$ is typically not a locally closed embedding, see Example \ref{ex:P^1}.}. By \propref{p:closed and open}, it suffices to check this if $Z=\BP(V)$. This will be done in Subsect.~\ref{ss:P^n} below.
\end{rem}

\begin{rem}
If $Z$ is the projective line $\BP^1$ equipped with the usual $\BG_m$-action then the map
$\wt{p}:\wt{Z}\to \BA^1\times Z\times Z$ is \emph{not a closed} embedding (because the scheme
$\wt{Z}_0=Z^+\underset{Z^0}\times Z^-$ is not proper).
\end{rem}

\begin{rem}
Let $Z$ be the curve obtained from $\BP^1$ by gluing $0$ with $\infty$. Equip $Z$ with the $\BG_m$-action induced by the usual action on $\BP^1$. Then $\wt{p}:\wt{Z}\to \BA^1\times Z\times Z$ is not a locally closed embedding. In fact, already $\wt{p}_0:\wt{Z}_0\to Z\times Z$ is not a locally closed embedding (because the maps $p^{\pm}:Z^{\pm}\to Z$ are not).
\end{rem}

\sssec{Description of $\wt{Z}$ if the $\BG_m$-action on $Z$ extends to an action of $\BA^1$ or $\BA^1_-\,$.} \label{sss:contractive}
Recall that by \propref{p:contracting}, a $\BG_m$-action on $Z$ has at most one extension to an action of the multiplicative monoid $\BA^1$ and such extension exists if and only if the morphism $p^+:Z^+\to Z$ is an isomorphism. Of course, this remains true if $\BA^1$ is replaced by the monoid $\BA^1_-$ 
defined in Subsect.~\ref{ss:repeller} and $p^+$ is replaced by $p^-:Z^-\to Z\,$.

\begin{prop} \label{p:2contracting}
Suppose that a $\BG_m$-action on $Z$ extends to an $\BA^1$-action. Then

(i) the morphism $\wt{p}:\wt{Z}\to \BA^1\times Z\times Z$ is a monomorphism, which identifies  
$\wt{Z}$ with the graph of the $\BA^1$-action on $Z$; in particular, the composition
\begin{equation}  \label{e:first iso}
\wt{Z}\overset{\wt{p}}\longrightarrow\BA^1\times Z\times Z\to\BA^1\times Z\times\Spec k=\BA^1\times Z
\end{equation}
is an isomorphism;

(ii) the inverse of \eqref{e:first iso} is the morphism
\begin{equation}   \label{e:beta}
\BA^1\times Z\to\wt{Z}
\end{equation}
corresponding to the $\BG_m$-equivariant map $\BX\times Z\to Z$ defined by
\[
(\tau_1,\tau_2,z)\mapsto\tau_1\cdot z\, ,\quad\quad (\tau_1,\tau_2)\in\BX\, ,\; z\in Z\, .
\]
\end{prop}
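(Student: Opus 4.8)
The plan is to prove (i) and (ii) simultaneously by showing that the morphism \eqref{e:beta} (call it $\beta$) and the composition \eqref{e:first iso} are mutually inverse isomorphisms; statement (i) then drops out because $\wt p$ equals \eqref{e:first iso} followed by the graph embedding.

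First I would check that $\beta$ is well defined and compute $\wt p\circ\beta$. Taking the test scheme $S:=\BA^1\times Z$ with its projection to $\BA^1$, one has $\BX_S=\BX\times Z$, and the assignment $(\tau_1,\tau_2,z)\mapsto\tau_1\cdot z$ is $\BG_m$-equivariant: since $\BG_m$ acts only on the $\BX$-factor, equivariance is exactly the monoid identity $(\lambda\tau_1)\cdot z=\lambda\cdot(\tau_1\cdot z)$ for the $\BA^1$-action on $Z$. Evaluating this map at the two sections \eqref{e:two sections} gives $\pi_1\circ\beta=\big((t,z)\mapsto z\big)$ and $\pi_2\circ\beta=\big((t,z)\mapsto t\cdot z\big)$; hence $\wt p\circ\beta$ is the graph embedding $(t,z)\mapsto(t,z,t\cdot z)$ of the $\BA^1$-action, and composing further with the projection $\BA^1\times Z\times Z\to\BA^1\times Z$ shows that \eqref{e:first iso} applied after $\beta$ is $\id_{\BA^1\times Z}$. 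Thus $\beta$ is a right inverse of \eqref{e:first iso}, and in particular a monomorphism.

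It remains to show that \eqref{e:first iso} is itself an isomorphism; since $\beta$ is already a right inverse and a right inverse of a monomorphism is automatically a two-sided inverse, it suffices to prove that \eqref{e:first iso} is a monomorphism. By \thmref{t:tildeZ} the space $\wt Z$ is algebraic of finite type, so by the criterion recalled in Subsect.~\ref{ss:conventions} it is enough to show that every geometric fiber of \eqref{e:first iso} is a reduced point, and I would do this by checking that \eqref{e:first iso} restricts to an isomorphism on each fiber over $\BA^1$. Over $\BA^1-\{0\}$ this is immediate from \propref{p:outside 0}, which identifies $\wt Z$ there with the graph of the action morphism $\BG_m\times Z\to Z$, under which \eqref{e:first iso} becomes the projection of a graph onto its source. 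Over $0\in\BA^1$ I would use that the existence of the $\BA^1$-action forces $p^+:Z^+\to Z$ to be an isomorphism (\propref{p:contracting}), hence étale; by \corref{c:contractive}(i) the maps $Z^0\overset{i^-}\longrightarrow Z^-\overset{q^-}\longrightarrow Z^0$ are then isomorphisms. Feeding $Z^-\iso Z^0$ into the identification $\wt Z_0\iso Z^+\underset{Z^0}\times Z^-$ of \propref{p:tilde Z_0}, the second factor collapses and $\wt Z_0\iso Z^+$, under which the fiber of \eqref{e:first iso} over $0$ identifies with $p^+$, an isomorphism. This establishes that \eqref{e:first iso} is a fiberwise isomorphism over $\BA^1$, hence has reduced one-point geometric fibers, hence is a monomorphism, so $\beta$ and \eqref{e:first iso} are inverse isomorphisms; this is precisely (ii).

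Finally (i) follows formally: from $\wt p\circ\beta=(\text{graph embedding})$ and $\beta=\big(\eqref{e:first iso}\big)^{-1}$ we get that $\wt p$ is the graph embedding precomposed with the isomorphism \eqref{e:first iso}. Since the graph embedding is a monomorphism identifying $\BA^1\times Z$ with the graph of the $\BA^1$-action inside $\BA^1\times Z\times Z$, the morphism $\wt p$ is a monomorphism identifying $\wt Z$ with that graph, as asserted. The main obstacle is the central fiber: the naive reconstruction $g(\tau_1,\tau_2)=\tau_1\cdot g(1,\tau_1\tau_2)$ of an equivariant map is valid only on $\{\tau_1\neq 0\}$, which fails to be schematically dense in $\BX_0=\BX_0^+\cup\BX_0^-$, so a pure density argument cannot control the behavior over $t=0$. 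What rescues the argument is exactly \corref{c:contractive}(i): the $\BA^1$-action makes $p^+$ an isomorphism, and the resulting collapse $Z^-\iso Z^0$ is what forces the fiber $\wt Z_0\iso Z^+\underset{Z^0}\times Z^-$ to reduce to $Z^+\iso Z$ and \eqref{e:first iso} to be an isomorphism there.
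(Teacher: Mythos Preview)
Your proof is correct and follows essentially the same route as the paper's: verify $\alpha\circ\beta=\id$ (where $\alpha$ is the composition \eqref{e:first iso}), reduce to showing $\alpha$ is a monomorphism, check this fiberwise over $\BA^1$ using \propref{p:outside 0} for $t\ne 0$ and the combination of \propref{p:tilde Z_0}, \propref{p:contracting}, and \corref{c:contractive}(i) for $t=0$. Your presentation is more detailed (and your closing remark about why a density argument fails at $t=0$ is a nice piece of motivation), but the skeleton of the argument is identical to the paper's.
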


\begin{proof} 
Let $\alpha :\wt{Z}\to\BA^1\times Z$ denote the composition \eqref{e:first iso} and 
$\beta :\BA^1\times Z\to\wt{Z}$ the morphism~\eqref{e:beta}. It is easy to see that $\alpha\circ\beta=\id$.
The problem is to show that $\beta\circ\alpha=\id$. To do this, it suffices to prove that $\alpha$ is a monomorphism. But being a monomorphism is a fiberwise condition, so it suffices to show that $\beta$ induces an isomorphism between fibers over any $t\in\BA^1$. For $t\ne 0$ this follows from \propref{p:outside 0}. If $t=0$ then by \propref{p:tilde Z_0}, the morphism in question is the composition 
$$Z^+\underset{Z^0}\times Z^-\to Z^+\overset{p^+}\longrightarrow Z.$$ 
By \propref{p:contracting}, $p^+$ is an isomorphism.
So the projection $q^-:Z^-\to Z^0$ is also an isomorphism by \corref{c:contractive}(i).
\end{proof}

The above proposition formally implies the following one.

\begin{prop} \label{p:dilating}
Suppose that a $\BG_m$-action on $Z$ extends to an action of the monoid $\BA^1_-\,$. Then

(i) the morphism $\wt{p}:\wt{Z}\to \BA^1\times Z\times Z$ is a monomorphism, which identifies  
$\wt{Z}$ with 
\[
\{(t,z_1,z_2\,)\in\BA^1\times Z\times Z\,|\, z_1=t^{-1}\cdot z_2\,\};
\]
in particular, the composition
\begin{equation} \label{e:second iso}
\wt{Z}\overset{\wt{p}}\longrightarrow\BA^1\times Z\times Z\to\BA^1\times \Spec k\times Z=\BA^1\times Z
\end{equation}
is an isomorphism;

(ii) the inverse of \eqref{e:second iso} is the morphism
\begin{equation}   \label{e:2beta}
\BA^1\times Z\to\wt{Z}
\end{equation}
corresponding to the $\BG_m$-equivariant map $\BX\times Z\to Z$ defined by
\[
(\tau_1,\tau_2,z)\mapsto\tau_2^{-1}\cdot z\, ,\quad\quad (\tau_1,\tau_2)\in\BX\, ,\; z\in Z\, .
\]
\end{prop}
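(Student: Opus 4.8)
The plan is to deduce Proposition~\ref{p:dilating} from \propref{p:2contracting} via the coordinate-swapping symmetry of the family $\BX\to\BA^1$. Write $Z'$ for $Z$ equipped with the \emph{inverse} $\BG_m$-action $\lambda\ast z:=\lambda^{-1}\cdot z$. First I would check that the hypothesis passes to $Z'$: given the extension of the $\BG_m$-action on $Z$ to an action $\bullet$ of the monoid $\BA^1_-$, the monoid isomorphism $\BA^1\iso\BA^1_-$, $t\mapsto t^{-1}$ of~\eqref{e:inversion} (call it $\iota$) lets one define $t\star z:=\iota(t)\bullet z$, and this is an $\BA^1$-action on $Z'$ extending $\ast$. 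Thus $Z'$ satisfies the hypothesis of \propref{p:2contracting}.

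Next I would introduce the involution $\sigma:\BX\to\BX$, $(\tau_1,\tau_2)\mapsto(\tau_2,\tau_1)$. It preserves the structure map $(\tau_1,\tau_2)\mapsto\tau_1\tau_2$, so it is an automorphism of $\BX$ over $\BA^1$ and induces one of each $\BX_S$; and by~\eqref{e:hyperbolic} it satisfies $\sigma(\lambda\cdot x)=\lambda^{-1}\cdot\sigma(x)$, i.e. it is $\BG_m$-anti-equivariant. Hence precomposition with $\sigma$ turns a $\BG_m$-equivariant map $\BX_S\to Z$ into a $\BG_m$-equivariant map $\BX_S\to Z'$ and is a bijection; functoriality in $S$ produces an isomorphism $\Theta:\wt{Z}\iso\wt{Z'}$ over $\BA^1$ (the base is untouched because $\sigma$ lies over $\BA^1$). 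Since $\sigma$ interchanges the two tautological sections $t\mapsto(1,t)$ and $t\mapsto(t,1)$ of~\eqref{e:two sections}, the morphisms $\pi_1,\pi_2$ of Subsect.~\ref{sss:tilde p} get swapped, so that $\wt{p}^{\,Z'}\circ\Theta=\mathrm{sw}\circ\wt{p}^{\,Z}$, where $\mathrm{sw}$ interchanges the two $Z$-factors of $\BA^1\times Z\times Z$.

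With these identifications the proposition follows formally. Applying \propref{p:2contracting} to $Z'$, the morphism $\wt{p}^{\,Z'}$ is a monomorphism identifying $\wt{Z'}$ with the graph of the $\BA^1$-action $\star$, namely $\{(t,z_1,z_2)\mid z_2=t^{-1}\cdot z_1\}$, where $t^{-1}\cdot$ denotes the $\BA^1_-$-action of $\iota(t)$. Because $\Theta$ and $\mathrm{sw}$ are isomorphisms, $\wt{p}^{\,Z}=\mathrm{sw}\circ\wt{p}^{\,Z'}\circ\Theta$ is a monomorphism, and its image is $\mathrm{sw}$ applied to the graph above, i.e. $\{(t,z_1,z_2)\mid z_1=t^{-1}\cdot z_2\}$; this gives (i), and the projection~\eqref{e:second iso} is an isomorphism since on this locus $z_1$ is determined by $(t,z_2)$. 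For (ii) I would transport the inverse morphism of \propref{p:2contracting}(ii) through $\Theta$ (again precomposition with the involution $\sigma$): the $\BG_m$-equivariant map $(\tau_1,\tau_2,z)\mapsto\tau_1\star z$ for $Z'$ becomes, after the swap $\tau_1\leftrightarrow\tau_2$ and replacing $\star$ by the $\BA^1_-$-action $\bullet$, the map $(\tau_1,\tau_2,z)\mapsto\tau_2^{-1}\cdot z$ asserted in the statement.

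The only real work is bookkeeping: one must track the composite identifications $\BA^1\iso\BA^1_-$, the passage from the action on $Z$ to its inverse on $Z'$, and the swap of the two $Z$-factors, so that every inversion lands in the correct place. I expect the main (still routine) obstacle to be verifying that $\sigma$ is $\BG_m$-anti-equivariant and that $\Theta$ intertwines $\wt{p}$, $\pi_1$, $\pi_2$ and the $\BA^1_-$-action exactly as claimed; once this is in hand, both (i) and (ii) drop out of \propref{p:2contracting} with no further geometric input.
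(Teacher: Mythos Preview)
Your proposal is correct and is exactly the approach the paper takes: the paper simply remarks that \propref{p:2contracting} ``formally implies'' \propref{p:dilating} and gives no further argument, and what you have written is precisely the bookkeeping (inverting the $\BG_m$-action and swapping $\tau_1\leftrightarrow\tau_2$) that makes this formal implication explicit.
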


\sssec{Proof of \propref{p:schemeness}}   \label{sss:schemeness}
If $Z$ is separated then $\wt{p}:\wt{Z}\to \BA^1\times Z\times Z$ is a monomorphism by 
\propref{p:props tilde p}(ii). Any monomorphism is separated. Proposition~\ref{p:schemeness}(i) follows.

To prove  \propref{p:schemeness}(ii), we need the following well known fact.

\begin{prop}
A separated quasi-finite morphism between algebraic spaces is quasi-affine. In particular, it is schematic.
\end{prop}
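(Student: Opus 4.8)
The plan is to reduce the statement to Zariski's Main Theorem for algebraic spaces. Write $f\colon X\to Y$ for the given separated quasi-finite morphism. Since in our situation all algebraic spaces are of finite type over the field $k$, the morphism $f$ is automatically of finite type (in particular quasi-compact), which is the hypothesis needed below. Both being quasi-affine and being schematic are stable under base change and local on the target for the \'etale (indeed fpqc) topology; moreover a quasi-affine morphism is automatically schematic, since for any affine scheme $V\to Y$ the fiber product $X\underset{Y}\times V$ is then a quasi-affine \emph{scheme}. Hence it suffices to produce an \'etale cover of $Y$ by affine schemes and prove that $f$ becomes quasi-affine after base change to each member. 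In other words, I may assume from the start that $Y$ is an affine scheme.

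With $Y$ affine I would invoke Zariski's Main Theorem in its form for algebraic spaces (see \cite{LM} and \cite{St}): a separated, quasi-finite, finite-type morphism $f\colon X\to Y$ of algebraic spaces admits a factorization
\[
X\overset{\;j\;}\longrightarrow T\overset{\;g\;}\longrightarrow Y,
\]
in which $j$ is an open immersion and $g$ is finite. As $g$ is finite it is affine, so $T$ is an affine scheme. Therefore $X$ is an open subspace of the affine scheme $T$; being of finite type over the quasi-compact base $Y$ it is itself quasi-compact, so $X$ is a quasi-compact open subscheme of an affine scheme, i.e.\ a quasi-affine scheme, and $f\colon X\to Y$ is a quasi-affine morphism. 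This is precisely what had to be checked over the affine base.

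Undoing the \'etale localization gives that $f$ is quasi-affine in general, and the final assertion that $f$ is schematic follows at once, as recalled above. The only non-formal ingredient is the algebraic-space version of Zariski's Main Theorem; everything else is routine manipulation with open immersions, finite (hence affine) morphisms, and the descent properties of quasi-affineness. Accordingly I expect the main ``obstacle'' to be one of bookkeeping rather than of substance: one must make sure that the finite-type and quasi-compactness hypotheses required to apply the theorem are in force, which they are here because every space in sight is of finite type over $k$.
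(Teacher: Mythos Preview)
Your argument is correct and is exactly the standard proof via Zariski's Main Theorem for algebraic spaces. The paper does not give its own proof of this proposition; it simply cites \cite[Theorem A.2]{LM} and \cite[ch.~II, Theorem 6.15]{Kn}, whose content is precisely the factorization $X\overset{j}\hookrightarrow T\overset{g}\to Y$ (open immersion followed by finite) that you invoke. One minor remark: you do not need to appeal to the ambient finite-type-over-$k$ hypothesis to get that $f$ is of finite type, since the standard definition of ``quasi-finite'' (as in EGA or the Stacks Project) already includes ``locally of finite type and quasi-compact''.
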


For the proof, see \cite[Theorem A.2]{LM}  or  \cite[ch. II, Theorem 6.15]{Kn}. 

\begin{cor}   \label{c:schematic}
A monomorphism between algebraic $k$-spaces of finite type is schematic.
\end{cor}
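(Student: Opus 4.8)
The plan is to deduce this corollary directly from the Proposition immediately preceding it, which asserts that a separated quasi-finite morphism between algebraic spaces is quasi-affine, hence schematic. So, given a monomorphism $f\colon X\to Y$ between algebraic $k$-spaces of finite type, the whole task reduces to checking two things about $f$: that it is separated, and that it is quasi-finite. Once both are in hand, the cited Proposition applies verbatim.

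First I would record that $f$ is separated. This is a general feature of monomorphisms: the diagonal $\Delta_f\colon X\to X\underset{Y}\times X$ of a monomorphism is an isomorphism (this is essentially the definition of monomorphism, read in terms of fiber products), so in particular $\Delta_f$ is a closed embedding and $f$ is separated. No finiteness hypotheses are needed here.

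Next I would verify that $f$ is quasi-finite. Since $X$ and $Y$ are both of finite type over $k$, the morphism $f$ between them is of finite type. Now I would invoke the characterization of finite-type monomorphisms recalled in Subsect.~\ref{ss:conventions}: a morphism of finite type between algebraic spaces is a monomorphism precisely when each of its geometric fibers is a reduced scheme with at most one point. In particular every geometric fiber of $f$ is finite, so $f$ is a finite-type morphism with finite fibers, i.e.\ quasi-finite. Combining this with separatedness and applying the preceding Proposition gives that $f$ is quasi-affine, in particular schematic, which is exactly the assertion of the corollary.

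This argument is essentially immediate, so there is no serious obstacle; the only points demanding a little care are that the quoted fiberwise characterization of monomorphisms is used in its algebraic-space form (not merely for schemes) and that ``finite type over $k$'' for both $X$ and $Y$ does propagate to ``$f$ of finite type''. Both are standard in the Noetherian, finite-type-over-$k$ setting we are in, so I would simply cite them rather than expand the verifications.
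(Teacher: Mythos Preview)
Your argument is correct and is exactly the approach the paper intends: the corollary is stated as an immediate consequence of the preceding Proposition, and your proof simply unpacks why a monomorphism of finite type is separated and quasi-finite (using the fiberwise characterization recalled in Subsect.~\ref{ss:conventions}) so that the Proposition applies.
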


%\begin{rem}
%If $Z$ is a \emph{separated} scheme then \propref{p:props tilde p}(ii) and  \corref{c:schematic} immediately 
%imply that $\wt{Z}$ is a scheme. So the argument below is unnecessary if $Z$ is separated.
%\end{rem}
%
%Now we will construct a monomorphism from $\wt{Z}$ to a more manageable algebraic space.

Now it is easy to prove  \propref{p:schemeness}(ii) under an additional assumption that $Z$ is separated: indeed, in this case  \propref{p:props tilde p}(ii) allows to apply  \corref{c:schematic} to the morphism
$\wt{p}:\wt{Z}\to \BA^1\times Z\times Z$. 

To prove \propref{p:schemeness}(ii) in general, we will apply
\corref{c:schematic} to a more complicated morphism $\wt{p}'$ constructed below.

Recall that $\BX:=\BA^2=\Spec k[\tau_1,\tau_2]$ and that $\BX$ is equipped with the structure of a scheme over 
$\BA^1$ defined by the map $(\tau_1,\tau_2)\mapsto \tau_1\tau_2\,$. Let $\BB\subset\BX$ be the line defined by the equation $\tau_1=\tau_2$, then $\BB$ is finite and flat over $\BA^1$. So for any open subscheme $U\subset Z$
there is an algebraic space $\underline U$ over $\BA^1$ such that
for any scheme $S$ over $\BA^1$ 
$$\Maps_{\BA^1} (S,\underline U):=\Maps (\BB\underset{\BA^1}\times S,U);$$
moreover, if $U$ is affine %quasi-affine 
then $\underline U$ is scheme. The canonical morphism 
$\underline U\to\underline Z$ is an open embedding.

The embedding $\BB\mono\BX$ induces a morphism $\alpha :\wt{Z}\to\underline Z$ over $\BA^1$.
Combining it with $\wt{p}:\wt{Z}\to \BA^1\times Z\times Z$ one gets a morphism
\[
\wt{p}':\wt{Z}\to \underline Z\underset{\BA^1}\times(\BA^1\times Z\times Z)=\underline Z\times Z\times Z\, .
\]

\begin{lem}  \label{l:tilde p'}
%If the the diagonal map $Z\to Z\times Z$  is a locally closed embedding (e.g., if $Z$ is a scheme) then 
Suppose that $Z$ is a scheme (or more generally, an algebraic space such that the diagonal map 
$Z\to Z\times Z$  is a locally closed embedding). Then
the map $\wt{p}':\wt{Z}\to\underline Z\times Z\times Z$ is a monomorphism.
\end{lem}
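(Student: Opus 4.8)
The plan is to check directly that $\wt{p}'$ is injective on $S$-points for every test scheme $S$. An $S$-point of $\wt{Z}$ is a pair $(t,f)$ with $t\colon S\to\BA^1$ and $f\in\gMaps(\BX_S,Z)$, and its image under $\wt{p}'$ records $t$, the restriction $f|_{\BB_S}$ (the $\underline Z$-component), together with the two morphisms $f\circ\sigma_1,\ f\circ\sigma_2\colon S\to Z$ coming from the sections \eqref{e:two sections}. Thus I must show that if $f,f'\in\gMaps(\BX_S,Z)$ satisfy $f|_{\BB_S}=f'|_{\BB_S}$ and $f\circ\sigma_i=f'\circ\sigma_i$ for $i=1,2$, then $f=f'$.

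First I would use $\BG_m$-equivariance away from the origin. The open $\{\tau_1\neq 0\}\subset\BX_S$ is $\BG_m$-equivariantly isomorphic to $S\times\BG_m$ with $\BG_m$ acting by translation and with $\sigma_1$ as the unit section, so any equivariant map is determined there by its restriction to $\sigma_1$ via $f(s,\tau_1)=\tau_1\cdot(f\circ\sigma_1)(s)$; symmetrically on $\{\tau_2\neq 0\}$ using $\sigma_2$. As these two opens cover $\BX'_S:=\BX_S\setminus\{\tau_1=\tau_2=0\}$, the hypotheses give $f|_{\BX'_S}=f'|_{\BX'_S}$ as morphisms of schemes.

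Next I would record the agreement locus scheme-theoretically. Since $\Delta_Z$ is a locally closed embedding, the equalizer $E:=(f,f')^{-1}(\Delta_Z)$ is a locally closed subscheme of $\BX_S$, and $f=f'$ holds if and only if $E=\BX_S$. By the previous step $E$ contains the open subscheme $\BX'_S$, which is schematically dense in $\BX_S$ (as recalled in the proof of \propref{p:closed and open}); hence the closure of $E$ is all of $\BX_S$, so the locally closed $E$ is in fact open. It remains to see that $E$ contains the missing locus $\{\tau_1=\tau_2=0\}$, which lies over $0\in\BA^1$. This is where the curve $\BB=\{\tau_1=\tau_2\}$ is used: being finite flat over $\BA^1$ with fiber over $0$ supported at the origin, $\BB_S$ meets $\{\tau_1=\tau_2=0\}$ in each of its points, and since $\BB_S\subseteq E$ by hypothesis, $E$ contains every point of $\BX_S$. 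An open subscheme containing all points is the whole scheme, so $E=\BX_S$ and $f=f'$.

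The crux — and the reason a naive argument fails — is the possible non-separatedness of $Z$: two equivariant maps agreeing on the schematically dense $\BX'_S$ need not agree at the node of the zero fiber, so schematic density alone is not enough. The auxiliary curve $\BB$ is introduced precisely to detect the value of $f$ at that node, while the hypothesis that $\Delta_Z$ is a locally closed embedding is what turns the agreement locus into an open subscheme, reducing the problem to the pointwise check just carried out.
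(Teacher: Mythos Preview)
Your argument is correct. The key steps --- that $\BG_m$-equivariance forces $f=f'$ on $\BX'_S$, that the locally closed equalizer $E$ must then be open because $\BX'_S$ is schematically dense, and that $\BB_S$ supplies the remaining points of $\{\tau_1=\tau_2=0\}$ --- all go through as you describe.

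Your route, however, differs from the paper's. The paper argues fiberwise over $\BA^1$: since $\wt{Z}$ is already known to be an algebraic space of finite type (Theorem~\ref{t:tildeZ}), being a monomorphism can be checked on geometric fibers; over $\BG_m$ this is immediate from \propref{p:outside 0}, and over $0$ one uses the identification $\wt{Z}_0\simeq Z^+\times_{Z^0}Z^-$ from \propref{p:tilde Z_0} together with \remref{r:prigoditsya} (which says that $(p^\pm,q^\pm):Z^\pm\to Z\times Z^0$ is a monomorphism when $\Delta_Z$ is locally closed). In that argument, the role of the $\underline Z$-component is to recover the value $q^\pm(z^\pm)\in Z^0$ at the node of the degenerate hyperbola.

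Your approach is more elementary in that it works directly with the functor of points and does not invoke the representability theorem for $\wt{Z}$; it also makes the geometric role of $\BB$ very transparent. The paper's approach, on the other hand, is quicker once the structural results on $\wt{Z}_t$ are in hand and ties the lemma to the earlier remark on $(p^+,q^+)$.
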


\begin{proof}
Follows from \remref{r:prigoditsya} combined with Propositions~\ref{p:outside 0} and \ref{p:tilde Z_0}.
\end{proof}

%Let $\wt{Z}_{sch}\subset\wt{Z}$  denote the schematic locus  (i.e., the largest open subspace of $\wt{Z}$ 
%which is a scheme.
%Let $\underline Z_{sch}\subset\underline Z$  denote the schematic locus of $\underline Z$.

\begin{proof}[Proof of \propref{p:schemeness}(ii)]
Let $Z$ be a scheme. Then $\wt{Z}\underset{\BA^1}\times (\BA^1-\{ 0\}$ is a scheme by \propref{p:outside 0}.
So to prove that  $\wt{Z}$ is a scheme, it suffices to show that for any point $\zeta\in\wt{Z}_0$ there exists an open subscheme $V\subset\wt{Z}$ containing $\zeta\,$. Let $z\in Z$ be the image of $\zeta$ under the composition
\[
\wt{Z}_0\iso Z^+\underset{Z^0}\times Z^-\to Z^0\mono Z\, .
\]
Let $U\subset Z$ be an open affine containing $z$. Then the open subspace 
$\underline U\subset\underline Z$ is a scheme. Define an open subspace $V\subset\wt{Z}$ by
$$V:=(\wt{p}')^{-1}(\underline U\times Z\times Z)\, .$$
By \lemref{l:tilde p'} and \corref{c:schematic}, $V$ is a scheme. It is clear that $\zeta\in V$.
\end{proof}

\ssec{The case where $Z$ is affine}  \label{ss:affine}
\sssec{The scheme $\BX_R\,$}  
Let $R$ be an algebra over $k[t]$, so $S:=\Spec R$ is a scheme over $\BA^1$. In this situation
the scheme $\BX_S:=\BX \underset{\BA^1}\times S$ introduced in Subsect.~\ref{sss:X_S}
will be denoted by $\BX_R\,$. It has the following explicit description:
 \begin{equation}  \label{e:X_R}
\BX_R:=\Spec A_R, \quad \mbox{ where } A_R:=R[\tau_1,\tau_2]/(\tau_1\tau_2-t)\, .
\end{equation}

It is clear that $A_R$ is a free $R$-module with basis $e_n\,$, $n\in\BZ\,$, where
\begin{equation}   \label{e:e_n}
e_n=\tau_1^n \;\;\mbox{ for } n\ge 0, \quad e_n=\tau_2^{-n} \;\;\mbox{ for }  n\le 0\, .
\end{equation}
The $\BG_m$-action on $\BX_R$ defines a $\BZ$-grading on $A_R$. The element  $e_n$ defined by 
\eqref{e:e_n} has degree $n$ with respect to this grading.

\sssec{The space $\wt{Z}$ in the case that $Z$ is affine}  Recall that $\wt{Z}$ is the space over $\BA^1$
such that 
\begin{equation}   \label{e:2tilde Z}
\Maps_{\BA^1} (\Spec R, \wt{Z}):=\gMaps (\BX_R\, ,Z)
\end{equation}
for any  algebra $R$ over $k[t]$.

\begin{prop} \label{p:2new tilde}
Assume that $Z$ is affine. Then the morphism $\wt{p}:\wt{Z}\to \BA^1\times Z\times Z$ is a closed embedding. In particular, $\wt{Z}$ is an affine $k$-scheme of finite type.
\end{prop}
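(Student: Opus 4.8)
The plan is to compute the functor of points of $\wt{Z}$ completely explicitly, using that $Z=\Spec A$ for a finitely generated $\BZ$-graded $k$-algebra $A$. By \eqref{e:2tilde Z} and \eqref{e:X_R}, an $R$-point of $\wt{Z}$ (for a $k[t]$-algebra $R$) is a graded $k$-algebra homomorphism $\phi\colon A\to A_R$. In the basis $\{e_n\}$ of \eqref{e:e_n} the multiplication on $A_R$ is the twisted one $e_m e_n=t^{c(m,n)}e_{m+n}$, where $c(m,n)=\tfrac12(|m|+|n|-|m+n|)$ counts the occurrences of $\tau_1\tau_2=t$ produced when $m,n$ have opposite signs (and $c(m,n)=0$ otherwise). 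Thus giving $\phi$ is the same as giving, for each homogeneous $a\in A$ of degree $n$, an element $f(a)\in R$ with $\phi(a)=f(a)e_n$, subject to $f$ being $k$-linear in each degree, unital, and satisfying $f(ab)=t^{c(m,n)}f(a)f(b)$ for homogeneous $a,b$ of degrees $m,n$.

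Next I would write $\wt{p}$ in these terms. The two sections \eqref{e:two sections} induce the evaluation homomorphisms $A_R\to R$ sending $e_n$ to $t^{\max(-n,0)}$ and to $t^{\max(n,0)}$ respectively, so the components $\phi_1:=\pi_1(\phi)$ and $\phi_2:=\pi_2(\phi)$ are the honest $k$-algebra homomorphisms $A\to R$ given on homogeneous $a$ of degree $n$ by $\phi_1(a)=f(a)\,t^{\max(-n,0)}$ and $\phi_2(a)=f(a)\,t^{\max(n,0)}$. In particular $f(a)=\phi_1(a)$ for $n\ge0$ and $f(a)=\phi_2(a)$ for $n\le0$, so the pair $(\phi_1,\phi_2)$ determines $f$, hence $\phi$; this already recovers that $\wt{p}$ is injective on points (in agreement with \propref{p:props tilde p}).

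The core step is to identify the image of $\wt{p}$ and invert it. I claim that a triple $(t,\phi_1,\phi_2)\in(\BA^1\times Z\times Z)(R)$ lies in the image exactly when $t^{\max(n,0)}\phi_1(a)=t^{\max(-n,0)}\phi_2(a)$ for every homogeneous $a$ of degree $n$; equivalently $\phi_2(a)=t^n\phi_1(a)$ for $a$ of degree $n\ge0$ and the symmetric relation for $n\le 0$. These equations are clearly necessary. For sufficiency one defines $f$ by the above rule and checks that $\phi(a):=f(a)e_n$ is a graded algebra homomorphism: additivity and unitality are clear (the degree-$0$ case of the relations gives $\phi_1|_{A_0}=\phi_2|_{A_0}$), while the multiplicativity $f(ab)=t^{c(m,n)}f(a)f(b)$ is automatic when $m,n$ have the same sign (because $\phi_1,\phi_2$ are ring homomorphisms) and follows from the displayed relations precisely in the mixed-sign cases. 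One then verifies $\pi_1(\phi)=\phi_1$ and $\pi_2(\phi)=\phi_2$, so $\wt{p}$ is a bijection onto this locus with explicit inverse.

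Finally, writing $\BA^1\times Z\times Z=\Spec\bigl(k[t]\otimes_k A\otimes_k A\bigr)$, the relations above cut out an ideal $J$, and since $\phi_1,\phi_2$ are ring homomorphisms it suffices to impose them on a finite set of homogeneous generators of $A$; hence $J$ is finitely generated and $\wt{Z}\iso V(J)$ is a closed subscheme of $\BA^1\times Z\times Z$, affine and of finite type over $k$. The only real obstacle is the sufficiency check in the previous paragraph---that the reconstructed $f$ is multiplicative across a change of sign of degrees, which is exactly where the defining relations are used; the remaining verifications are routine bookkeeping with the basis $\{e_n\}$.
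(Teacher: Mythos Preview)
Your argument is correct and complete in all essential points, but it takes a different route from the paper. The paper's proof proceeds by reduction: using \propref{p:closed and open}(i) it reduces to the case of a finite-dimensional vector space with linear $\BG_m$-action, then by compatibility with products to the single case $Z=\BA^1$ with $\BG_m$ acting by $\lambda^n$, where one checks directly that $\wt{p}(\wt{Z})$ is the hypersurface $x_2=t^nx_1$ (for $n\ge 0$) or $x_1=t^{-n}x_2$ (for $n\le 0$). Your approach instead computes the functor of points of $\wt{Z}$ for a general affine $Z$ and identifies it with an explicit closed subscheme of $\BA^1\times Z\times Z$; this is essentially the content of the explicit description the paper gives in the subsection immediately following the proposition, repurposed as a proof of the proposition itself. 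Your version is more hands-on and yields the equations of $\wt{p}(\wt{Z})$ directly; the paper's version is shorter and exploits functoriality.

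One small point deserves tightening. Your final sentence claims that ``since $\phi_1,\phi_2$ are ring homomorphisms it suffices to impose [the relations] on a finite set of homogeneous generators of $A$''. This is not literally true: if $a,b$ have degrees $m,n$ of opposite sign, multiplying the two relations gives only $t^{c(m,n)}\bigl(t^{(m+n)_+}\phi_1(ab)-t^{(m+n)_-}\phi_2(ab)\bigr)=0$, and one cannot cancel $t^{c(m,n)}$ over a general $R$. So the relations on algebra generators need not formally imply the relations on all homogeneous elements. This does not damage your conclusion, however: the ideal $J$ generated by \emph{all} the relations is an ideal in the Noetherian ring $k[t]\otimes_k A\otimes_k A$ and is therefore finitely generated, which is all you need to conclude that $\wt{Z}\iso V(J)$ is a closed subscheme.
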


\begin{proof}
If $Z$ is a closed subscheme of an affine scheme $Z'$ and the proposition holds for $Z'$ then it holds for $Z$ by  \propref{p:closed and open}(i). So we are reduced to the case that $Z$ is a finite-dimensional vector space equipped with a linear $\BG_m$-action.   

If the proposition holds for affine schemes $Z_1$ and $Z_2$ then it holds for $Z_1\times Z_2\,$.
So we are reduced to the case that $Z=\BA^1$ and $\lambda\in\BG_m$ acts on $\BA^1$ as multiplication by 
$\lambda^n$, $n\in\BZ$. 

In this case it is straightforward to compute $\wt{Z}$ and $\wt{p}$ using 
\eqref{e:2tilde Z}, \eqref{e:X_R}, and the definition of $\wt{p}$ from Subsect.~\ref{sss:tilde p}.
In particular, one checks that $\wt{p}$ identifies $\wt{Z}$ with the closed subscheme of 
$\BA^1\times Z\times Z$ defined by the equation $x_2=t^nx_1$ if $n\ge 0$ and by the equation 
$x_1=t^{-n}x_2$ if $n\le 0$ (here $t,x_1,x_2$ are the coordinates on $\BA^1\times Z\times Z=\BA^3$).
\end{proof}

As before, assume that $Z$ is affine.  Then by \propref{p:2new tilde}, the morphism $\wt{p}$ identifies $\wt{Z}$ with the closed subscheme $\wt{p}(\wt{Z})\subset\BA^1\times Z\times Z$. By \propref{p:outside 0}, the intersection of $\wt{p}(\wt{Z})$ with the open subscheme 
$$\BG_m\times Z\times Z\subset \BA^1\times Z\times Z$$
is equal to the graph of the action map $\BG_m\times Z\to Z$.
Hence, $\wt{Z}$ contains the closure of the graph in $\BA^1\times Z\times Z$. In general, this containment is
not an equality\footnote{E.g., take $Z$ to be the hypersurface in $\BA^{2n}$ defined by the equation 
$x_1y_1+\ldots x_ny_n=0$ and define the $\BG_m$-action by $\tilde x_i=\lambda x_i\,$, 
$\tilde y_i=\lambda^{-1} y_i\,$.}. However, one has the following

\begin{prop}  \label{p:affine and smooth}
%Let $\Gamma\subset\BG_m\times Z\times Z$ denote the graph of of the action map $\BG_m\times Z\to Z$
%and  $\overline{\Gamma}$ its scheme-theoretical closure in $ \BA^1\times Z\times Z$.
If $Z$ is affine and smooth then 
$$\wt{p}(\wt{Z})=\overline{\Gamma},$$
where $\Gamma\subset\BG_m\times Z\times Z$ is the graph of of the action map $\BG_m\times Z\to Z$
and  $\overline{\Gamma}$ denotes its scheme-theoretical closure in $ \BA^1\times Z\times Z\,$.
%the morphism $\wt{p}$ identifies $\wt{Z}$ identifies with the scheme-theoretic closure in  
%$\BA^1\times Z\times Z$ of the graph of the action map $\BG_m\times Z\to Z\,$.
\end{prop}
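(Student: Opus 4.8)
The plan is to dispose first of the inclusion $\overline{\Gamma}\subseteq\wt{p}(\wt{Z})$, which holds for every affine $Z$ and uses no smoothness, and then to reduce the reverse inclusion to the flatness of $\wt{Z}$ over $\BA^1$; the latter is exactly what the smoothness hypothesis buys us via \propref{p:2smoothness}.

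First I would record that, since $Z$ is affine, \propref{p:2new tilde} identifies $\wt{p}$ with a closed embedding, so that $\wt{p}(\wt{Z})$ is an honest closed subscheme of $\BA^1\times Z\times Z$ whose ideal is the kernel of the surjection $\mathcal{O}(\BA^1\times Z\times Z)\twoheadrightarrow\mathcal{O}(\wt{Z})$. By \propref{p:outside 0} the open part $\wt{Z}\underset{\BA^1}\times\BG_m$ maps isomorphically onto $\Gamma$ via $\wt{p}$, so scheme-theoretically $\wt{p}(\wt{Z})\cap(\BG_m\times Z\times Z)=\Gamma$. In particular $\wt{p}(\wt{Z})$ is a closed subscheme of $\BA^1\times Z\times Z$ containing $\Gamma$, so its ideal is contained in the ideal of the scheme-theoretic closure $\overline{\Gamma}$; that is, $\overline{\Gamma}\subseteq\wt{p}(\wt{Z})$, as claimed in the remark preceding the proposition.

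For the opposite inclusion I would argue at the level of rings, using the factorization $\mathcal{O}(\BA^1\times Z\times Z)\to\mathcal{O}(\wt{Z})\to\mathcal{O}(\Gamma)$, where the second arrow is the localization at $t$ arising from restriction to $\wt{Z}\underset{\BA^1}\times\BG_m$. The ideal of $\overline{\Gamma}$ is the kernel of the composite, while the ideal of $\wt{p}(\wt{Z})$ is the kernel of the first arrow; hence $\wt{p}(\wt{Z})\subseteq\overline{\Gamma}$ holds precisely when the localization $\mathcal{O}(\wt{Z})\to\mathcal{O}(\wt{Z})[t^{-1}]=\mathcal{O}(\Gamma)$ is injective, i.e.\ when $\mathcal{O}(\wt{Z})$ has no $t$-torsion. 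Over the principal ideal domain $k[t]$ this torsion-freeness is exactly flatness of $\wt{Z}$ over $\BA^1$.

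It then remains to observe that $\wt{Z}$ is flat over $\BA^1$: since $Z$ is smooth, \propref{p:2smoothness} shows that $\wt{Z}\to\BA^1$ is smooth, hence flat, and the proof is complete. The only genuine input beyond formal manipulation of ideals of closed subschemes is thus the smoothness of $\wt{Z}\to\BA^1$. This is also what pinpoints the role of the hypothesis: for a singular $Z$ the morphism $\wt{Z}\to\BA^1$ may fail to be flat, $\mathcal{O}(\wt{Z})$ acquires $t$-torsion, and the inclusion $\overline{\Gamma}\subseteq\wt{p}(\wt{Z})$ becomes strict, exactly as in the quadric example of the preceding footnote.
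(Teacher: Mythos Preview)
Your argument is correct and is exactly the unpacking of the paper's one-line proof, which reads in its entirety: ``This immediately follows from \propref{p:2smoothness}.'' The implicit reasoning the paper expects the reader to supply---that $\wt{p}(\wt{Z})$ is closed and contains $\Gamma$, hence contains $\overline{\Gamma}$, while the reverse inclusion amounts to $\CO(\wt{Z})$ being $t$-torsion-free, which follows from flatness of $\wt{Z}\to\BA^1$---is precisely what you have written out.
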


\begin{proof}
This immediately follows from \propref{p:2smoothness}. 
%On the other hand, it is easy to deduce the proposition from the fact that if $Z$ is smooth and of pure 
%dimension $n$ then so is $\wt{Z}_0=Z^+\underset{Z^0}\times Z^-$. \Drin{(Here we use 
%\propref{p:smoothness}, \propref{p: p^+}(vi), and \lemref{l:TZ0}).}
%
%\Volod{
%Here is the proof. We can assume that $Z$ has pure dimension $n$. Then each irreducible component of the 
%fiber $\overline{\Gamma}_0:=\overline{\Gamma}\underset{\BA^1}\times\{ 0\}$ has dimension $n$.
%We have $\wt{p}(Z_0)\subset\overline{\Gamma}_0\subset\wt{p}(\wt{Z}_0)$, where $Z_0$ is identified with its 
%image under the embedding $(i^+,i^-):Z_0\mono Z^+\underset{Z^0}\times Z^-=\wt{Z}_0$. Since
%$\wt{p}(\wt{Z}_0)$ has pure dimension $n$ we see that $\overline{\Gamma}_0=\wt{p}(\wt{Z}_0)$. This easily 
%implies that $\wt{p}(\wt{Z})=\overline{\Gamma}$.
%}
\end{proof}

\sssec{Explicit description of $\wt{Z}$ in the case that $Z$ is affine}  
This subsection can be skipped by the reader.

Define a map $\mu :\BZ\times\BZ\to\BZ_+$ by
\begin{equation}
\mu (n_1,n_2):=(|n_1|+|n_2|-|n_1+n_2|)/2\, .
\end{equation}
So if $n_1,n_2$ are nonzero and have opposite signs then  $\mu (n_1,n_2)=\min (|n_1|,|n_2|)$; otherwise one has $\mu (n_1,n_2)=0$.

\begin{prop}
If $Z$ is the spectrum of a $\BZ$-graded $k$-algebra $B$ then $\wt{Z}=\Spec\wt{B}$, where $\wt{B}$ is the 
$k[t]$-algebra with generators 
$$[b],\quad b\in B_n\, ,\quad n\in\BZ\, ,$$ 
and defining relations
$$[b_1\cdot b_2]=t^{\mu (n_1,n_2)}\cdot [b_1]\cdot [b_2],\quad b_1\in B_{n_1}\, ,\, b_2\in B_{n_2}\, ,\quad
n_1,n_2\in\BZ\, ,$$

$$[\lambda_1b_1+\lambda_2b_2]=\lambda_1[b_1] +\lambda_2[b_2], 
\quad \lambda_i\in k,\, b_i\in B_n\, ,\, n\in\BZ\, .$$
\end{prop}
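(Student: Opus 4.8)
The plan is to prove the proposition by identifying the functor of points of $\Spec\wt{B}$ over $\BA^1$ with the functor $R\mapsto\gMaps(\BX_R,Z)$ from \eqref{e:2tilde Z}, and then invoking Yoneda. Since $Z=\Spec B$ is affine (so that $\wt{Z}$ is an affine scheme by \propref{p:2new tilde}), for any $k[t]$-algebra $R$ a $\BG_m$-equivariant morphism $\BX_R\to Z$ is the same thing as a homomorphism of $\BZ$-graded $k$-algebras $\phi:B\to A_R$, where $A_R=R[\tau_1,\tau_2]/(\tau_1\tau_2-t)$ carries the grading of \eqref{e:X_R}--\eqref{e:e_n}. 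Thus the whole proof reduces to decoding such graded homomorphisms.

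The key computation is the multiplication table of $A_R$ in the homogeneous basis $\{e_n\}_{n\in\BZ}\,$. Using $\tau_1\tau_2=t$ and a short case analysis on the signs of $n_1,n_2$, I would establish
\begin{equation}
 e_{n_1}\cdot e_{n_2}=t^{\mu(n_1,n_2)}\, e_{n_1+n_2}\, ,
\end{equation}
where the exponent equals $\min(|n_1|,|n_2|)$ when $n_1,n_2$ have strictly opposite signs and $0$ otherwise --- precisely the value $(|n_1|+|n_2|-|n_1+n_2|)/2=\mu(n_1,n_2)$. This is the origin of the factors $t^{\mu}$ in the defining relations, and I expect it to be the only genuinely computational step.

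Next I would decode $\phi$. Since $(A_R)_n=R\cdot e_n\,$, for each homogeneous $b\in B_n$ the graded map $\phi$ is forced to have the form $\phi(b)=f_b\cdot e_n$ with $f_b\in R$, and $\phi$ is determined by the assignment $b\mapsto f_b$ on homogeneous elements. The condition that $\phi$ be a homomorphism of $k$-algebras translates, via the multiplication table above and the $k$-linearity of $\phi$ on each $B_n\,$, into exactly
$$f_{b_1 b_2}=t^{\mu(n_1,n_2)}f_{b_1}f_{b_2}\, ,\qquad f_{\lambda_1 b_1+\lambda_2 b_2}=\lambda_1 f_{b_1}+\lambda_2 f_{b_2}\, ,$$
together with the unit normalization $f_{1_B}=1$. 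These are verbatim the defining relations of $\wt{B}$, so the rule $[b]\mapsto f_b$ sets up a bijection between graded homomorphisms $B\to A_R$ and $k[t]$-algebra homomorphisms $\wt{B}\to R$, manifestly natural in $R$. By the Yoneda lemma this yields $\wt{Z}\iso\Spec\wt{B}$ over $\BA^1$.

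The one point that needs care is the unit: the normalization $f_{1_B}=1$ corresponds to the relation $[1_B]=1$, which must be understood as part of the presentation of $\wt{B}$ (equivalently, $[1_B]$ is the identity of the unital $k[t]$-algebra $\wt{B}$). That it cannot be omitted is already visible for $Z=\Spec k$: there the listed relations merely force $[1]$ to be idempotent, yielding $k[t]\times k[t]$ rather than the correct answer $\wt{B}=k[t]$. With this convention in place the two functors agree on the nose, which completes the argument.
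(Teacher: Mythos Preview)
Your proof is correct and follows essentially the same approach as the paper: both compute the multiplication table $e_{n_1}e_{n_2}=t^{\mu(n_1,n_2)}e_{n_1+n_2}$ in $A_R$ and then translate the conditions for a graded $k$-algebra homomorphism $B\to A_R$ into the defining relations of $\wt{B}$, concluding via the functor of points. Your remark that the unit relation $[1_B]=1$ must be understood as part of the presentation is a valid clarification that the paper leaves implicit.
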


\begin{proof}
By \eqref{e:2tilde Z} and \eqref{e:X_R}, for any $k[t]$-algebra $R$, a   morphism of $k[t]$-algebras
$\wt{B}\to R$ is the same as a morphism of graded $k$-algebras $\varphi :B\to A_R\,$. Our $A_R$ is a free 
$R$-module whose basis is formed by elements $e_n$ defined by \eqref{e:e_n}. Let $B_n$ denote the $n$-th graded component of $B$, then for $b\in B_n$ one has  $\varphi (b)=\varphi_n (b)e_n$, where 
$\varphi_n :B_n\to R$ is some $k$-linear map.
It is easy to check that
\[
e_{n_1}e_{n_2}=t^{\mu (n_1,n_2)}e_{n_1+n_2}\, ,
\]
so the condition $\varphi (b_1b_2)=\varphi (b_1)\varphi (b_2)$ can be rewritten as
$$\varphi_{n_1+n_2}(b_1b_2)=t^{\mu (n_1,n_2)}\varphi_{n_1}(b_1)\varphi_{n_2}(b_2),\quad b_1\in B_{n_1}\, ,\, b_2\in B_{n_2}\, ,
\quad n_1,n_2\in\BZ\, .$$
The proposition follows. 
\end{proof}

\ssec{Proof of Theorem~\ref{t:tildeZ} in the case of a locally linear $\BG_m$-action}   
\label{ss:locally linear}
Let $Z$ be a $k$-scheme of finite type equipped with a $\BG_m$-action. Suppose that the action is locally linear, i.e., $Z$ can be covered by open affine $\BG_m$-stable subschemes $U_i\,$. Let us show that under this assumption $\wt{Z}$ is a $k$-scheme of finite type.

By \propref{p:2new tilde}, each $U_i$ is an affine $k$-scheme of finite type. By  \propref{p:closed and open}(ii), for each $i$ the canonical morphism $\wt{U}_i\to\wt{Z}$ is an open embedding. It remains to show that $\wt{Z}$ is covered by the open subschemes $\wt{U}_i\,$. 

It suffices to check that for each $t\in\BA^1$ the fiber $\wt{Z}_t$ is covered by the open subschemes 
$(\wt{U}_i)_t\,$. For $t\ne 0$ this is clear from \propref{p:outside 0}. It remains to consider the case $t=0$.

By \propref{p:tilde Z_0}, $\wt{Z}_0=Z^+\underset{Z^0}\times Z^-$. So a point of $\wt{Z}_0$ is a pair
$(z^+,z^-)\in Z^+\times Z^-$ such that $q^+(z^+)=q^-(z^-)$. The point $q^+(z^+)=q^-(z^-)$ is contained in some
$U_i\,$. By Lemma~\ref{l:U^+}, we have $z^+,z^-\in U_i\,$. So our point $(z^+,z^-)\in\wt{Z}_0$ belongs to 
$ (\wt{U}_i)_0\,$. \qed

%\sssec{Locally linear case}
%\begin{lem}
%Let $U\subset Z$ be a $\BG_m$-stable open subscheme. Then the morphism $\wt{U}\to \wt{Z}$ is an open 
%embedding.
%\end{lem}
%
%\begin{proof}
%Let $\wt{Z}_0$ denote the fiber of $\wt{Z}$ over $0\in\BA^1$. Let $f:\wt{Z}_0\to Z$ denote the composition
%$\wt{Z}_0\to Z^0\hookrightarrow Z$. Let $F$ denote $Z-U$ equipped, say, with the reduced scheme structure.
%Let $V\subset \wt{Z}$ be the open subspace complementary to the closed subspace $f^{-1}(F)\subset \wt{Z}$.
%Then $\wt{U}$ is the intersection of $V$ with $\wt{Z}\underset{Z\times Z}\times (U\times U)$. 
%The lemma follows.
%\end{proof}

%\begin{lem}
%Let $U_i\subset Z$ be $\BG_m$-stable open subschemes. If $Z=\bigcup\limits_i U_i$ then 
%$\wt{Z}=\bigcup\limits_i \wt{U}_i\,$.
%\end{lem}
%
%\begin{proof}
%It suffices to show that $\wt{Z}_0\subset\bigcup\limits_i (\wt{U}_i)_0\,$. A point of $\wt{Z}_0$ is a pair
%$(z^+,z^-)\in Z^+\times Z^-$ such that $q^+(z^+)=q^-(z^-)$. The point $q^+(z^+)=q^-(z^-)$ is contained in 
%some $U_i\,$. By Lemma~\ref{l:U^+}, we have $z^+,z^-\in U_i\,$. So our point $(z^+,z^-)\in\wt{Z}_0$ 
%belongs to $ \wt{U}_i\,$.
%\end{proof}

\ssec{The morphism $\wt{p}:\wt{Z}\to \BA^1\times Z\times Z$ in the case $Z=\BP^n$}  \label{ss:P^n}
In this subsection (which can be skipped by the reader) we prove the following statement promised in 
\remref{r:P^n}.

\begin{prop}
Let $Z$ be a projective space $\BP^n$ equipped with an arbitrary $\BG_m$-action.
Then the morphism $\wt{p}:\wt{Z}\to \BA^1\times Z\times Z$ is a locally closed embedding.
\end{prop}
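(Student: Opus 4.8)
The plan is to linearize the action and then study $\wt{Z}$ through the standard affine charts of projective space, showing that $\wt{p}$ is a closed embedding onto a subscheme of an explicit open set. As in \remref{r:P^n}, it suffices to treat $Z=\BP(V)$ with a linear action: every action of $\BG_m$ on $\BP^n$ is induced by a linear action on $V:=k^{n+1}$ (a cocharacter of $\mathrm{PGL}_{n+1}$ lifts to one of $\mathrm{GL}_{n+1}$), so we may assume $V=\bigoplus_w V_w$ is graded. Choosing a basis $v_0,\dots,v_n$ of weight vectors, of weights $a_0,\dots,a_n$, gives homogeneous coordinates $x_0,\dots,x_n$ and the $\BG_m$-stable affine charts $U_j=\{x_j\neq 0\}\cong\BA^n$, on which $\BG_m$ acts linearly with weights $a_i-a_j$. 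By \propref{p:2new tilde} each $\wt{U}_j$ is affine and $\wt{p}$ restricts to a \emph{closed} embedding $\wt{U}_j\mono\BA^1\times U_j\times U_j$, and by the argument of Subsect.~\ref{ss:locally linear} (via \lemref{l:U^+} and \propref{p:tilde Z_0}) the open subspaces $\wt{U}_j\mono\wt{Z}$ cover $\wt{Z}$.

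The heart of the proof is the identity $\wt{p}^{-1}(\BA^1\times U_j\times U_j)=\wt{U}_j$; that is, a $\BG_m$-equivariant map $g\colon\BX_S\to Z$ factors through $U_j$ as soon as both marked sections $\pi_1(g),\pi_2(g)$ do. Since $\wt{U}_j$ is by definition the locus where $g(\BX_S)\subseteq U_j$, only the inclusion of $\wt{p}^{-1}(\BA^1\times U_j\times U_j)$ into $\wt{U}_j$ needs proof. The preimage $g^{-1}(U_j)\subseteq\BX_S$ is open and $\BG_m$-stable and, by hypothesis, contains the sections $s\mapsto(1,t(s))$ and $s\mapsto(t(s),1)$; I would show its complement is empty by checking fibers over $S$. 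Over a point with $t\neq 0$ the fiber $\BX_t$ is a single $\BG_m$-orbit (\remref{r:theaction}), so a $\BG_m$-stable open meeting it equals it; concretely, the orbit through a point of $U_j$ stays in $U_j$ because the $j$-th homogeneous coordinate only gets rescaled.

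The genuine point is the fiber over $t=0$, where one must see that the image of the node of the degenerate hyperbola lies in $U_j$. Under \propref{p:tilde Z_0} this image is the fixed point $z^0=q^+(z^+)=q^-(z^-)$, and the needed implication is the \emph{weight computation}: if $z^+\in U_j$ then $a_j$ occurs in $z^+$, so $a_j\ge w_0$, where $w_0$ is the minimal weight occurring in $z^+$, i.e. the weight of $z^0$; if moreover $z^-\in U_j$ then symmetrically $a_j\le w_0$; hence $a_j=w_0$, so $v_j\in V_{w_0}$ and $(z^0)_j\neq 0$, i.e. $z^0\in U_j$. This is where the standard charts of $\BP^n$ are essential, and it is the step I expect to be the main obstacle, since for a general $\BG_m$-stable affine $U\subseteq Z$ the analogous implication fails (this is exactly the phenomenon behind $p^{\pm}$ not being locally closed embeddings, cf. \propref{p:props tilde p} and Example~\ref{ex:P^1}).

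Granting the identity, the conclusion is formal. Set $W:=\bigcup_j(\BA^1\times U_j\times U_j)$, an open subscheme of $\BA^1\times\BP^n\times\BP^n$; since each point of $\wt{Z}$ lies in some $\wt{U}_j$, we have $\wt{p}(\wt{Z})\subseteq W$ and $\wt{p}^{-1}(W)=\wt{Z}$. The sets $\BA^1\times U_j\times U_j$ form an open cover of $W$, and over each of them $\wt{p}$ restricts, by the identity above, to the closed embedding $\wt{U}_j\mono\BA^1\times U_j\times U_j$. As being a closed immersion is local on the target, $\wt{p}\colon\wt{Z}\to W$ is a closed embedding, whence $\wt{p}\colon\wt{Z}\to\BA^1\times\BP^n\times\BP^n$ is a locally closed embedding, as claimed; this is consistent with \propref{p:props tilde p}, which already provides that $\wt{p}$ is an unramified monomorphism.
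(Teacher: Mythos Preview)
Your proof is correct and follows essentially the same route as the paper: linearize the action, reduce to the identity $\wt{p}^{-1}(\BA^1\times U_j\times U_j)=\wt{U}_j$, handle $t\neq 0$ via the graph description, and for $t=0$ show that $z^+,z^-\in U_j$ forces the common limit $z^0\in U_j$. Your weight argument for this last step (pinning $a_j$ between the minimal weight in $z^+$ and the maximal weight in $z^-$, both equal to the weight $w_0$ of $z^0$, and then reading off $(z^0)_j=(z^+)_j\neq 0$) is the direct version of the paper's proof by contradiction, which assumes $\zeta_i=0$, picks $j$ with $\zeta_j\neq 0$, and derives $m_i>m_j$ and $m_i<m_j$ from the two limits.
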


\begin{proof}
For a suitable coordinate system in $\BP^n$, the $\BG_m$-action is given by
\[
\lambda *(z_0:\ldots :z_n)=(\lambda^{m_0}z_0: \ldots :\lambda^{m_n}z_n),\quad \lambda\in\BG_m \, .
\]
Let $U_i\subset Z=\BP^n$ denote the open subset defined by the condition $z_i\ne 0$. It is affine, so by
\propref{p:2new tilde}, the canonical morphism $\wt{U}_i\to\BA^1\times U_i\times U_i$ is a closed embedding.
Thus to finish the proof of the proposition, it suffices to show that 
$\wt{p}^{-1}(\BA^1\times U_i\times U_i)=\wt{U}_i\,$. By \propref{p:outside 0}, 
$\wt{p}^{-1}(\BG_m\times U_i\times U_i)=\BG_m\underset{\BA^1}\times \wt{U}_i\,$. So it remains to prove that the morphism $\wt{p}_0:\wt{Z}_0\to Z\times Z$ has the following property:
$(\wt{p}_0)^{-1}(U_i\times U_i)=(\wt{U}_i)_0\,$. Identifying $\wt{Z}_0$ with $Z^+\underset{Z^0}\times Z^-$ and using \lemref{l:U^+}, we see that it remains to prove the following lemma.
\end{proof}

\begin{lem}
Let $z^+,z^-\in\BP^n$. Suppose that
\[
\lim_{\lambda\to 0}\lambda*z^+=\lim_{\lambda\to\infty }\lambda*z^-=\zeta\, .
\]
If $z^+,z^-\in\ U_i$ then $\zeta\in U_i\,$.
\end{lem}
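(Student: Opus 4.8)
The plan is to reduce everything to an explicit computation of the two limits in the diagonal coordinates provided by the previous proof. Write $z^+=(z_0^+:\cdots:z_n^+)$ and $z^-=(z_0^-:\cdots:z_n^-)$, and recall that $\lambda*z^{\pm}=(\lambda^{m_0}z_0^{\pm}:\cdots:\lambda^{m_n}z_n^{\pm})$. Everything will come down to identifying which homogeneous coordinates survive in each limit.

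First I would compute the attractor limit $\zeta=\lim_{\lambda\to 0}\lambda*z^+$. Set $a:=\min\{m_j\,|\,z_j^+\neq 0\}$ and divide the homogeneous coordinates by $\lambda^a$, so the $j$-th coordinate becomes $\lambda^{m_j-a}z_j^+$. As $\lambda\to 0$ the coordinates with $m_j>a$ tend to $0$, those with $m_j<a$ are already $0$ (since they have $z_j^+=0$), and those with $m_j=a$ are unchanged; hence $\zeta$ has $j$-th coordinate $z_j^+$ when $m_j=a$ and $0$ otherwise, and the support is nonempty so the projective limit genuinely exists. Symmetrically, setting $b:=\max\{m_j\,|\,z_j^-\neq 0\}$ and dividing by $\lambda^b$, the repeller limit $\lim_{\lambda\to\infty}\lambda*z^-$ has $j$-th coordinate $z_j^-$ when $m_j=b$ and $0$ otherwise.

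Next I would use the hypothesis that the two limits coincide and equal $\zeta$. From the first description every nonzero coordinate of $\zeta$ lies in weight $a$, and from the second description every nonzero coordinate lies in weight $b$; since $\zeta\neq 0$ its support is nonempty, so $a=b$. Call this common value $m_\zeta$.

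Finally the argument is a squeeze on $m_i$. Since $z^+\in U_i$ we have $z_i^+\neq 0$, so the index $i$ participates in the minimum defining $a$, giving $m_i\geq a=m_\zeta$; and since $z^-\in U_i$ we have $z_i^-\neq 0$, so $i$ participates in the maximum defining $b$, giving $m_i\leq b=m_\zeta$. Hence $m_i=m_\zeta$, and therefore the $i$-th coordinate of $\zeta$ equals $z_i^+\neq 0$, i.e. $\zeta\in U_i$. I do not expect any serious obstacle: the whole proof is the inequality chain $m_\zeta\leq m_i\leq m_\zeta$, and the only point requiring care is the bookkeeping of which coordinates survive in each limit (in particular checking that each surviving set is nonempty, which is what makes the two descriptions of $\zeta$ comparable and forces $a=b$).
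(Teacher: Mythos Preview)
Your proof is correct and follows essentially the same idea as the paper's: both arguments show that the weight $m_i$ is squeezed between bounds forced by the two limits. The paper's version is slightly more economical---it argues by contradiction, picking one index $j$ with $\zeta_j\neq 0$ and deriving $m_i>m_j$ and $m_i<m_j$ from the ratios $\lambda^{m_i-m_j}(z_i^\pm/z_j^\pm)$---whereas you compute the two limits in full and identify the common weight $a=b$ explicitly; but the mathematical content is the same.
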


\begin{proof}
Write $z^+=(z^+_0:\ldots :z^+_n)$, $z^-=(z^-_0:\ldots :z^-_n)$, $\zeta =(\zeta_0:\ldots :\zeta_n)$.
We have $z^{\pm}_i\ne 0$, and the problem is to show that $\zeta_i\ne 0$.

Suppose that $\zeta_i= 0$. Choose $j$ so that $\zeta_j\ne 0\,$. Then $z^{\pm}_j\ne 0$ and
\[
\lim_{\lambda\to 0}\lambda^{m_i-m_j}(z_i/z_j)=\zeta_i/\zeta_j=0, \quad
\lim_{\lambda\to \infty}\lambda^{m_i-m_j}(z_i/z_j)=\zeta_i/\zeta_j=0\, .
\]
This means that $m_i>m_j$ and $m_i<m_j$ at the same time, which is impossible.
%Using that
%\[
%\lim_{\lambda\to 0}\lambda*z^+=\zeta ,\, z^+_i\ne 0,\, \zeta_i=0
%\]
%we see that $m_j<m_i$ whenever $\zeta_j\ne 0$. Similarly, using that $z^-_i\ne 0$ but $\zeta_i=0$ 
%we see that $m_j>m_i$ whenever $\zeta_j\ne 0$. Thus we get a contradiction.
\end{proof}

\section{Some openness results}  \label{s:openness}
In this section $Z$ denotes an algebraic $k$-space of finite type equipped with a $\BG_m$-action.

The main results %of this section 
are Propositions~\ref{p:2open embeddings} and \ref{p:2open embedding}.
They say that certain morphisms involving $\wt{Z}$ are open embeddings.

%These results are used in the proof of \propref{p:openness}.

\propref{p:2open embeddings} is used in \cite{DrGa1} in a crucial way. 
Propositions~\ref{p:2open embeddings} and \ref{p:2open embedding} are both used in 
the proof of  \propref{p:openness}.

%The method of its proof is used in Subsect.~\ref{ss:field theory} to describe the $n$-fold fiber product 
%$\wt{Z}\times_Z\ldots\times_Z\wt{Z}$ formed using the two projections $\wt{Z}\to Z\,$.

\ssec{The fiber products $Z^-\times_Z\wt{Z}$ and $\wt{Z}\times_ZZ^+$}   \label{ss:fiber products my}
The constructions and results of this subsection are used in \cite{DrGa1} (in the verification of the adjunction properties).

\sssec{Definition of the fiber products} \label{sss:the fiber products}
In Subsect.~\ref{sss:tilde p} we defined morphisms $\pi_1,\pi_2:\wt{Z}\to Z$.
We will study the fiber product 
\begin{equation}    \label{e:fibered1}
Z^-\underset{Z}\times \wt{Z}\, ,
\end{equation}
formed using $\pi_1:\wt{Z}\to Z$ and the fiber product
\begin{equation}     \label{e:fibered2}
\wt{Z}\underset{Z}\times Z^+ \, ,
\end{equation}
formed using $\pi_2:\wt{Z}\to Z$.
Note that both fiber products are spaces over $\BA^1$ (because $\wt{Z}$ is).

\sssec{Formulation of the result}  \label{sss:defining the 2 maps}
Consider the composition
\begin{equation}   \label{e:embedding1}
\BA^1\times Z^+\to\wt{Z^+}=\wt{Z^+}\underset{Z^+}\times Z^+ \to \wt{Z}\underset{Z}\times Z^+\, ,
\end{equation}
where the first arrow is the morphism \eqref{e:beta} for the space $Z^+$ and the second arrow comes from the morphism $p^+:Z^+\to Z$. Consider also the similar composition
\begin{equation}   \label{e:embedding2}
\BA^1\times Z^-\to\wt{Z^-}=Z^-\underset{Z^-}\times\wt{Z^-}  \to Z^-\underset{Z}\times\wt{Z} \, ,
\end{equation}
where the first arrow is the morphism \eqref{e:2beta} for the space $Z^-$. 
In \cite{DrGa1} we use the following result.

\begin{prop}   \label{p:2open embeddings}
The compositions \eqref{e:embedding1} and \eqref{e:embedding2} are open embeddings.
\end{prop}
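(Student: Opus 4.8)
\emph{Plan.} I will prove the statement for the composition \eqref{e:embedding1}, writing $\iota$ for it; the composition \eqref{e:embedding2} is handled identically after replacing the $\BG_m$-action on $Z$ by its inverse, which interchanges $Z^+\leftrightarrow Z^-$ and $\pi_1\leftrightarrow\pi_2$, turns $\BA^1$ into $\BA^1_-$, and carries \eqref{e:embedding1} into \eqref{e:embedding2}. The first thing I would do is make $\iota$ completely explicit. The $\BG_m$-action on $Z^+$ extends to an $\BA^1$-action (Subsect.~\ref{sss:structures}(i)), so \propref{p:2contracting} identifies the source $\BA^1\times Z^+$ with $\wt{Z^+}$ via \eqref{e:beta}. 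Unwinding the definitions, an $S$-point $(t,z^+)$ is sent to the pair $(\phi,\,t\cdot z^+)\in\wt{Z}\underset{Z}\times Z^+$, where $t\cdot z^+$ is the $\BA^1$-action of $t$ (this is $\pi_2$ of the corresponding point of $\wt{Z^+}$) and $\phi\colon\BX_S\to Z$ is the $\BG_m$-equivariant pullback $\phi=z^+\circ(\pr_1\times\id_S)$ of $z^+\colon\BA^1\times S\to Z$ along the first-coordinate projection $\pr_1\colon\BX\to\BA^1$, $(\tau_1,\tau_2)\mapsto\tau_1$. This formula is the computational backbone of everything below.

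Since $\wt{Z}$ and $\wt{Z^+}$ are algebraic of finite type by \thmref{t:tildeZ}, I may verify that $\iota$ is an open embedding by showing it is \'etale and universally injective (an \'etale universally injective morphism is an open immersion). Over $\BA^1-\{0\}$ the map $\iota$ is already an isomorphism: by \propref{p:outside 0} the component $\pi_2$ trivializes $\wt{Z}$ there, and $t$ acts invertibly on $Z^+$, so $t\cdot z^+$ recovers $z^+$. Thus the whole content is concentrated along the fibre over $0$. Universal injectivity I would check on geometric points: for $t\ne0$ reconstruct $z^+$ from $t\cdot z^+$ as above, and for $t=0$ reconstruct $z^+$ from $\phi$, using that the restriction of $\phi$ to the axis $\BX_0^+=\{\tau_2=0\}$ of \eqref{e:X0+-}, under the identification $\BX_0^+\cong\BA^1$, is precisely $z^+\in Z^+$.

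The crux is \'etaleness along $t=0$, which I would establish by formal \'etaleness using the same deformation-theoretic device as in Propositions~\ref{p:smoothness} and \ref{p:2smoothness}. Given a square-zero extension $R\to\bar R$, a $\bar R$-point of the source, and a lift of its image to an $R$-point $(\phi,w^+)$ of the target (so that the base parameter $t\in R$ is already pinned down), the remaining problem is to lift $z^+$ $\BG_m$-equivariantly subject to $z^+\circ\pr_1=\phi$ and $t\cdot z^+=w^+$. Standard arguments reduce its obstruction and its indeterminacy to groups $H^{\ge1}(\BG_m,M)$, for $\BG_m$-modules $M$ assembled from $\Theta_Z$-data on $\BX_{\bar R}$ and $\BA^1_{\bar R}$; these vanish since $H^{>0}$ of $\BG_m$ with coefficients in any $\BG_m$-module is zero. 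Together with finite presentation this gives \'etaleness, and \'etale plus universally injective yields the open embedding. As a consistency check, the tangent map can be matched using \propref{p: p^+}(vi) and its repeller analogue together with the identification $\wt{Z}_0\cong Z^+\underset{Z^0}\times Z^-$ of \propref{p:tilde Z_0}.

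The main obstacle I anticipate is exactly this \'etaleness (equivalently, flatness) over $0\in\BA^1$, in the stated generality where $Z$ may be non-separated and singular. There is no naive retraction to which \remref{r:section of unramified} applies: the $Z^+$-component of $\iota$ is $t\cdot z^+$ rather than $z^+$, and one cannot recover $z^+$ from $\phi$ by a single section valid in a neighbourhood of the central fibre (the reconstruction is genuinely different for $t=0$ and $t\ne0$). This is what forces the infinitesimal $\BG_m$-cohomology argument instead of a direct geometric one, and the delicate point is to organize the combined lifting problem for $z^+$ so that the governing cohomology is honestly that of $\BG_m$ and hence vanishes.
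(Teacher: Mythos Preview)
Your overall strategy—show $\iota$ is \'etale and universally injective—matches the paper's, and your explicit description of $\iota$ and your universal-injectivity argument are correct. The gap is in the \'etaleness step.

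You claim that the obstruction and indeterminacy for the constrained lifting problem (``lift $z^+$ subject to $z^+\circ\pr_1=\phi$ and $t\cdot z^+=w^+$'') live in $H^{\ge 1}(\BG_m, M)$, by analogy with Propositions~\ref{p:smoothness} and~\ref{p:2smoothness}. But those arguments have a different shape: one first uses \emph{smoothness of $Z$} to produce a non-equivariant lift, and only then is the remaining obstruction to equivariance an element of $H^1(\BG_m,-)$. Here $Z$ is not assumed smooth, and more to the point the problem is not one of equivariance at all: the data $\phi$, $w^+$, and $\bar z^+$ are already $\BG_m$-equivariant. The real question is whether the given equivariant map encoded by $(\phi,w^+)$ \emph{factors} through a certain morphism of source schemes, and the obstruction to such a factorization is controlled by the geometry of that morphism, not by group cohomology of $\BG_m$.

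Concretely, your two constraints together say that a given $\BG_m$-equivariant morphism $\BX^+_S\to Z$ should descend along a morphism $\sigma^+\colon\BX^+_S\to S\times\BA^1$, where $\BX^+$ is the pushout of the diagram $\BA^1\times\BA^1\hookleftarrow\BA^1\times\BG_m\hookrightarrow\BX$ glued via the section \eqref{e:+section}. The paper identifies $\BX^+$ with the blow-up of $\BA^2$ at the origin and $\sigma^+$ with the blow-down (\lemref{l:sigma+-}); the key computation is the Stein-type property $(\sigma^+_S)_*\CO_{\BX^+_S}=\CO_{S\times\BA^1}$ for Artinian $S$ (\lemref{l:sigma is Stein}). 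A general descent lemma (\lemref{l:postponed}) then shows that pullback along any surjection $f$ with $f_*\CO=\CO$ gives, for \emph{arbitrary} algebraic $Z$, exactly the Cartesian square needed for formal \'etaleness, as well as injectivity on field-valued points. This Stein property of $\sigma^+$, not vanishing of $H^{\ge 1}(\BG_m,-)$, is the missing ingredient in your argument; you even flag ``organizing the combined lifting problem'' as the delicate point, but the resolution is geometric rather than cohomological.
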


Note that unlike the situation of \propref{p:Cartesian}, these embeddings are usually not closed.

\begin{rem}
By Propositions~\ref{p:2contracting} and \ref{p:dilating}, the maps $\BA^1\times Z^+\to\wt{Z^+}$ and 
$\BA^1\times Z^-\to\wt{Z^-}$ are isomorphisms, so \propref{p:2open embeddings} means that the morphisms
\[
\wt{Z^+}\to\wt{Z}\underset{Z}\times Z^+ ,\quad \wt{Z^-}\to Z^-\underset{Z}\times\wt{Z} 
\]
are open embeddings.
\end{rem}

\begin{rem}  \label{r:gamma is an iso}
Using \eqref{e:fiberprod}, it is easy to see that if every $\BG_m$-equivariant map $\BP^1\otimes_k\bar k\to Z\otimes_k\bar k$ is constant then the maps \eqref{e:embedding1} and \eqref{e:embedding2} are surjective.
In this case they are isomorphisms by \propref{p:2open embeddings}.
\end{rem}

\sssec{Plan}
We will interpret the fiber products \eqref{e:fibered1} and \eqref{e:fibered2}
as spaces of $\BG_m$-equivariant maps. More precisely, we will define
schemes  $\BXm$ and $\BXp$ over $\BA^1$ equipped with $\BG_m$-action, such that for any scheme $S$ over $\BA^1$ one has natural bijections
\begin{equation}   \label{e:bij+}
\Maps_{\BA^1}(S,\wt{Z}\underset{Z}\times Z^+ )\iso\gMaps (\BXp_S\, ,Z),  \quad\quad 
\BXp_S:=\BXp \underset{\BA^1}\times S
\end{equation}

\begin{equation}   \label{e:bij-}
\Maps_{\BA^1}(S,Z^-\underset{Z}\times \wt{Z} )\iso\gMaps (\BXm_S\, ,Z), \quad\quad 
\BXm_S:=\BXm \underset{\BA^1}\times S\, .
\end{equation}
Then we will give a simple description of $\BXpm$. We will see that after reformulating 
\propref{p:2open embeddings} in terms of $\BX^{\pm}$ it becomes almost obvious.

\sssec{Definition of $\BXpm$}
We will define $\BXpm$ so that the bijections \eqref{e:bij+}-\eqref{e:bij-} are tautological.%obvious%clear

We have
\[
\Maps_{\BA^1}(S\, , \wt{Z} )=\gMaps (\BX_S\, ,Z)\, ,
\]
\begin{equation} \label{e:2bij+}
\Maps_{\BA^1}(S\, , \BA^1\times Z^+ )=\gMaps (S\times\BA^1,Z)\,,
\end{equation}
\begin{equation}  \label{e:2bij-}
\Maps_{\BA^1}(S\, , \BA^1\times Z^-)=\gMaps (S\times\BA^1_-\,\, ,Z)\,,
\end{equation}
\[
\Maps_{\BA^1}(S\, , \BA^1\times Z )=\gMaps (S\times\BG_m\,,Z)\, .
\]

Recall that the maps $\pi_1,\pi_2:\wt{Z}\to Z$  used in Subsect.~\ref{sss:the fiber products}
come from the two sections of the morphism $\BX\to\BA^1$ that are given by formula \eqref{e:two sections}; namely, $\pi_1$ corresponds to the section $t\mapsto (1,t)$ and $\pi_2$ to the section $t\mapsto (t,1)$. These two sections define two $\BG_m$-equivariant maps $\BA^1\times\BG_m\to\BX\,$, where the  $\BG_m$-action on $\BX$ is defined by \eqref{e:hyperbolic}.
Namely, the section $t\mapsto (t,1)$ defines the map
\begin{equation}   \label{e:+section}
\BA^1\times\BG_m\to\BX\, , \quad (t,\lambda)\mapsto (\lambda t,\lambda^{-1}) \, ,
\end{equation}
and the section $t\mapsto (1,t)$ defines the map
\begin{equation}   \label{e:-section}
\BA^1\times\BG_m\to\BX\, ,  \quad(t,\lambda)\mapsto(\lambda ,\lambda^{-1} t)  \, .
\end{equation}
Note that both maps are open embeddings.

\begin{defn}
(i) $\BXp$ is the push-out of the diagram of open embeddings 
\begin{equation}   \label{e:+diagram}
\BA^1\times\BA^1\hookleftarrow \BA^1\times\BG_m\hookrightarrow\BX\,
\end{equation} 
in which the right arrow is the map \eqref{e:+section}.

(ii) $\BXm$ is the push-out of the diagram of open embeddings \,
\begin{equation}   \label{e:-diagram}
\BA^1\times\BA^1_-\hookleftarrow \BA^1\times\BG_m\hookrightarrow\BX
\end{equation} 
in which the right arrow is the map \eqref{e:-section}.
\end{defn}

Both \eqref{e:+diagram} and \eqref{e:+diagram} are diagrams in the category of schemes over $\BA^1$
equipped with a $\BG_m$-action over $\BA^1$ (in the case of $\BA^1\times\BA^1$ the structure of scheme over $\BA^1$ is given by the \emph{first} projection $\BA^1\times\BA^1\to\BA^1$). So $\BXp$ and $\BXm$ are also in this 
category.\footnote{Moreover, one can define  an action of the torus $\BG_m^2$ on each of the diagrams
\eqref{e:+diagram}-\eqref{e:-diagram} so that they become diagrams in the category of toric varieties 
(a.k.a. toric embeddings); then $\BXp$ and $\BXm$ are also in this category. The above $\BG_m$-action is a part of the $\BG_m^2$-action.
%In fact, more is true. In Subsect.~\ref{sss:Gm2 on A2} we defined an action of $\BG_m^2$ on 
%$\BX$ so that the canonical mophism $\BX\to\BA^1$ is $\BG_m^2$-equivariant. It is easy to check that one 
%has this structure not only on $\BX$ but also on diagrams  \eqref{e:+diagram} and \eqref{e:-diagram}, so one 
%has it on $\BXpm$. One can also say that \eqref{e:+diagram} and \eqref{e:-diagram} are diagrams of ``toric 
%embeddings" for the torus $\BG_m^2\,$.
}

The bijections \eqref{e:bij+}-\eqref{e:bij-} are clear.

\sssec{Description of $\BXpm$}   \label{sss:blow-up}
%Here is a brief non-canonical description: \emph{both schemes $\BXp$ and $\BXm$ are 
%isomorphic to the blow-up of $\BA^2$ at a point.}
We claim that \emph{both schemes $\BXp$ and $\BXm$ are 
isomorphic to the blow-up of $\BA^2$ at a point.} Here is a more precise statement, whose verification is straightforward.

\begin{lem} \label{l:sigma+-}
(i) The morphisms
\[
\BA^1\times\BA^1\to\BA^1\times\BA^1, \quad \quad (t,\lambda )\mapsto (t,\lambda t)
\]
\[
\BX\to\BA^1\times\BA^1, \quad \quad (\tau_1\, ,\tau_2)\mapsto (\tau_1\tau_2\, ,\tau_1)
\]
are compatible via diagram \eqref{e:+diagram}. The corresponding morphism 
$\sigma^+:\BXp\to\BA^1\times\BA^1$ is a blow-up at the point $(0,0)\in\BA^1\times\BA^1$.

(ii) The morphisms
\[
\BA^1\times\BA^1_-\to\BA^1\times\BA^1_-\, , \quad \quad (t,\lambda )\mapsto (t,\lambda t^{-1})
\]
\[
\BX\to\BA^1\times\BA^1_-\, , \quad \quad (\tau_1\, ,\tau_2)\mapsto (\tau_1\tau_2\, ,\tau_2^{-1})
\]
are compatible via diagram \eqref{e:-diagram}. 
The corresponding morphism $\sigma^-:\BXm\to\BA^1\times\BA^1_-$
is a blow-up at the point $(0,\infty )\in\BA^1\times\BA^1_-\,$.

(iii) Both $\sigma^+$ and $\sigma^-$ are $\BG_m$-equivariant morphisms of schemes over $\BA^1$. \qed
\end{lem}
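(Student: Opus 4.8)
The plan is to recognize each of $\BXp$ and $\BXm$ as the standard two-chart atlas of the blow-up of $\BA^2$ at a point. The starting observation is that the push-out of a diagram of open embeddings $Y_1\hookleftarrow V\hookrightarrow Y_2$ is nothing but the scheme obtained by gluing $Y_1$ and $Y_2$ along the common open subscheme $V$. Thus, by the very definition \eqref{e:+diagram}, $\BXp$ is the result of gluing $\BA^1\times\BA^1$ and $\BX=\BA^2$ along $\BA^1\times\BG_m$, and similarly for $\BXm$ via \eqref{e:-diagram}. Everything then reduces to identifying these gluings with blow-ups.

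For part (i) I would first check that the two displayed morphisms are compatible via \eqref{e:+diagram}: restricting both to $\BA^1\times\BG_m$, the left branch composes the open embedding $(t,\lambda)\mapsto(t,\lambda)$ with $(t,\lambda)\mapsto(t,\lambda t)$ to give $(t,\lambda)\mapsto(t,\lambda t)$, while the right branch composes \eqref{e:+section}, $(t,\lambda)\mapsto(\lambda t,\lambda^{-1})$, with $(\tau_1,\tau_2)\mapsto(\tau_1\tau_2,\tau_1)$ to give $(t,\lambda)\mapsto(\lambda t\cdot\lambda^{-1},\lambda t)=(t,\lambda t)$; the two agree, so $\sigma^+\colon\BXp\to\BA^1\times\BA^1$ is well defined. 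Writing $(a,b)$ for the target coordinates and $\on{Bl}_{(0,0)}\BA^2=\{((a,b),[u:v]):av=bu\}$, I would identify $\BA^1\times\BA^1$ (coordinates $(t,\lambda)$) with the chart $u\ne0$, $b=a\lambda$, and $\BX$ (coordinates $(\tau_1,\tau_2)$) with the chart $v\ne0$, $a=b\tau_2$; under these identifications the two displayed morphisms become exactly the two chart projections to $\BA^2$. The overlap $\BA^1\times\BG_m$ is then precisely $\{u\ne0,v\ne0\}$, and the transition dictated by \eqref{e:+section} is $\lambda=\tau_2^{-1}$, i.e. the standard blow-up transition. Hence $\BXp$ is canonically $\on{Bl}_{(0,0)}\BA^2$ and $\sigma^+$ is the blow-down; the exceptional divisor $\{t=0\}\subset\BA^1\times\BA^1$ glued to $\{\tau_1=0\}\subset\BX$ maps to the center $(0,0)$.

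Part (ii) is entirely parallel: I would run the same two checks with the morphisms $(t,\lambda)\mapsto(t,\lambda t^{-1})$ and $(\tau_1,\tau_2)\mapsto(\tau_1\tau_2,\tau_2^{-1})$ and the diagram \eqref{e:-diagram}, identifying $\BXm$ with the blow-up of $\BA^1\times\BA^1_-$ at the fixed point $(0,\infty)$ (the image of $\tau_2=0$) and $\sigma^-$ with the blow-down. Alternatively one may simply transport (i) through the involution $(\tau_1,\tau_2)\mapsto(\tau_2,\tau_1)$ of $\BX$: it is a morphism over $\BA^1$ and, after $\lambda\mapsto\lambda^{-1}$ and replacing $\BA^1$ by $\BA^1_-$, it carries \eqref{e:+section} to \eqref{e:-section}, hence \eqref{e:+diagram} to \eqref{e:-diagram}, so the computation of (i) applies verbatim.

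Finally, for part (iii) I would equip the target $\BA^1\times\BA^1$ (resp.\ $\BA^1\times\BA^1_-$) with the $\BG_m$-action fixing the first coordinate and scaling the second, and verify directly that each chart projection is equivariant for the hyperbolic action \eqref{e:hyperbolic} on $\BX$: for $\sigma^+$ one checks on both charts that $\mu\cdot(a,b)=(a,\mu b)$ intertwines the maps, e.g.\ $(\mu\tau_1,\mu^{-1}\tau_2)\mapsto(\tau_1\tau_2,\mu\tau_1)$ on the $\BX$-chart. Since the first coordinate equals $\tau_1\tau_2$ on the $\BX$-chart and $t$ on the other, it is $\BG_m$-invariant and recovers the structure map to $\BA^1$; thus $\sigma^\pm$ is $\BG_m$-equivariant over $\BA^1$ and the center is a fixed point. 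I do not expect a genuine obstacle anywhere: the whole content is the single remark that gluing two affine planes along $\BA^1\times\BG_m$ by an inversion of one coordinate is a point blow-up, and the only real care needed is bookkeeping of the coordinate and action conventions on $\BA^1_-$ so that the signs in (ii) and (iii) come out correctly.
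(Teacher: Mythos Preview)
Your proposal is correct and is exactly the kind of routine verification the paper has in mind: the paper gives no proof at all (the lemma is stated with a \qed and the preceding sentence calls the verification ``straightforward''). Your identification of the two charts of the push-out with the standard affine charts $u\neq 0$ and $v\neq 0$ of $\on{Bl}_{(0,0)}\BA^2$, together with the check that the gluing map \eqref{e:+section} is precisely the transition $\tau_2=\lambda^{-1}$, is the intended argument.
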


\sssec{The canonical morphisms $\BA^1\times Z^+\to \wt{Z}\times_Z Z^+$ and 
$\BA^1\times Z^-\to Z^-\times_Z\wt{Z}\,$}  \label{sss:sigma*}

For any scheme $S$ over $\BA^1$, the morphisms $\sigma^{\pm}$ from \lemref{l:sigma+-}(i-ii) induce morphisms
\[
\sigma^+_S :\BXp_S\to S\times\BA^1, \quad\quad \sigma^-_S :\BXm_S\to S\times\BA^1_- \, .
\]
By \eqref{e:bij+}-\eqref{e:bij-} and \eqref{e:2bij+}-\eqref{e:2bij-}, these morphisms induce canonical maps
%Combining them with \eqref{e:bij+}-\eqref{e:bij-} and \eqref{e:2bij+}-\eqref{e:2bij-}, one gets canonical
\[
(\sigma^+_S)^* :\Maps_{\BA^1}(S,\BA^1\times Z^+)\to \Maps_{\BA^1}(S,\wt{Z}\times_Z Z^+), 
\]
\[
(\sigma^-_S)^*:\Maps_{\BA^1}(S,\BA^1\times Z^-)\to \Maps_{\BA^1}(S,Z^-\times_Z\wt{Z}) ,
\]
which are natural in $S$. These maps define canonical morphisms
\begin{equation}  \label{e:sigma+*}
(\sigma^+)^* :\BA^1\times Z^+\to \wt{Z}\times_Z Z^+, \quad\quad 
\end{equation}

\begin{equation}  \label{e:sigma-*}
(\sigma^-)^* :\BA^1\times Z^-\to Z^-\times_Z\wt{Z}\, .
\end{equation}

\begin{lem}
The morphisms \eqref{e:embedding1} and \eqref{e:embedding2} are equal, respectively, to \eqref{e:sigma+*} and \eqref{e:sigma-*}. 
\end{lem}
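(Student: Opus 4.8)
The plan is to verify both asserted equalities on $S$-points, using the representability bijections \eqref{e:bij+}--\eqref{e:bij-} to realize each $S$-point of $\wt{Z}\underset{Z}\times Z^+$ as a $\BG_m$-equivariant map $\BXp_S\to Z$. It suffices to treat the equality \eqref{e:embedding1}$=$\eqref{e:sigma+*}; the equality \eqref{e:embedding2}$=$\eqref{e:sigma-*} is proved verbatim after interchanging the roles of $\tau_1,\tau_2$, replacing $\BA^1$ by $\BA^1_-$ via \eqref{e:inversion}, the map \eqref{e:beta} by \eqref{e:2beta}, and \lemref{l:sigma+-}(i) by \lemref{l:sigma+-}(ii). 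Fix a scheme $S$ over $\BA^1$ with structure map $t:S\to\BA^1$ and an $S$-point of $\BA^1\times Z^+$, which by \eqref{e:2bij+} is a $\BG_m$-equivariant map $g:S\times\BA^1\to Z$; let $\zeta\in Z^+(S)$ be the corresponding point, so $\zeta(s)\in Z^+$ is the map $x\mapsto g(s,x)$. Since $\BXp$ is the push-out of \eqref{e:+diagram}, the scheme $\BXp_S$ is covered by the two charts $\BX_S$ and $(\BA^1\times\BA^1)_S$, and the latter is canonically $S\times\BA^1$ (with $\BG_m$ acting on the second factor, because the structure map of $\BA^1\times\BA^1$ is the first projection). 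So it is enough to compare the two resulting maps $\BXp_S\to Z$ on each of these charts.

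First I would compute \eqref{e:sigma+*}: by construction it sends $g$ to $g\circ\sigma^+_S$, where $\sigma^+_S$ is the base change of the blow-up $\sigma^+$ of \lemref{l:sigma+-}(i). Reading off the two formulas for $\sigma^+$ there and base-changing along $t$, the map $\sigma^+_S$ is $(\tau_1,\tau_2,s)\mapsto(s,\tau_1)$ on the chart $\BX_S$ and $(\lambda,s)\mapsto(s,\lambda t(s))$ on the chart $S\times\BA^1$ (with coordinate $\lambda$). Composing with $g$, the map \eqref{e:sigma+*}$(g)$ is therefore $(\tau_1,\tau_2,s)\mapsto g(s,\tau_1)$ on $\BX_S$ and $(\lambda,s)\mapsto g(s,\lambda t(s))$ on $S\times\BA^1$.

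Next I would unwind \eqref{e:embedding1}. The morphism \eqref{e:beta} for $Z^+$ sends $\zeta$ to the point $h\in\gMaps(\BX_S,Z^+)=\wt{Z^+}(S)$ with $h(\tau_1,\tau_2,s)=\tau_1\cdot\zeta(s)$, where $\cdot$ is the $\BA^1$-action on $Z^+$ from \ref{sss:structures}(i); concretely $\tau_1\cdot\zeta(s)$ is the map $x\mapsto g(s,\tau_1 x)$. Under the identification $\wt{Z^+}=\wt{Z^+}\underset{Z^+}\times Z^+$ (formed with $\pi_2$ and the identity) followed by the map induced by $p^+$, the point $h$ goes to the pair $(\,p^+\circ h,\ \pi_2(h)\,)$ in $\wt{Z}(S)\underset{Z(S)}\times Z^+(S)$. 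Here $(p^+\circ h)(\tau_1,\tau_2,s)=p^+(\tau_1\cdot\zeta(s))=g(s,\tau_1)$, which is exactly the $\BX_S$-restriction of \eqref{e:sigma+*}$(g)$; and $\pi_2(h)(s)=h(t(s),1,s)=t(s)\cdot\zeta(s)$ is the map $x\mapsto g(s,t(s)x)$, i.e. the element of $\gMaps(S\times\BA^1,Z)$ given by $(\lambda,s)\mapsto g(s,\lambda t(s))$, which is exactly the $S\times\BA^1$-restriction of \eqref{e:sigma+*}$(g)$. Finally, under \eqref{e:bij+} a pair $(\phi,\eta)$ reassembles to a single map on $\BXp_S$ precisely when its two restrictions to the overlap $(\BA^1\times\BG_m)_S$ agree, i.e. when $\pi_2(\phi)=p^+(\eta)$; this holds automatically since $(\phi,\eta)$ lies in the fiber product $\wt{Z}\underset{Z}\times Z^+$. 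Hence the two chart computations glue and \eqref{e:embedding1}$(g)=$\eqref{e:sigma+*}$(g)$.

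The argument is essentially bookkeeping, so the main obstacle is notational rather than conceptual: one must keep straight the several $\BG_m$-actions (the hyperbolic action on $\BX$, the induced actions on the two charts of $\BXp$, and the $\BA^1$-action on $Z^+$), the identification of an $S$-point of $\BA^1\times Z^+$ with the map $g$, and the fact that \eqref{e:fibered2} is formed using $\pi_2$ rather than $\pi_1$. The one place to be careful is that all of these conventions --- the two sections of \eqref{e:two sections}, the blow-up formulas of \lemref{l:sigma+-}, and the $\BA^1$-action on $Z^+$ --- are normalized so that the factor $\tau_1$ on $\BX_S$ and the factor $\lambda t(s)$ on $S\times\BA^1$ match those produced by \eqref{e:beta}; once this is confirmed, the coincidence on both charts is immediate.
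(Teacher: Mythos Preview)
Your proposal is correct. The paper itself omits the proof entirely, writing only ``We skip the verification of the lemma, which is straightforward,'' and your chart-by-chart computation on $S$-points---unwinding \eqref{e:beta} for $Z^+$, the bijection \eqref{e:bij+}, and the formulas of \lemref{l:sigma+-}(i)---is precisely the routine verification the paper has in mind.
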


We skip the verification of the lemma, which is straightforward.

The lemma implies that \propref{p:2open embeddings} is equivalent to the following one.

\begin{prop}  \label{p:open embeddings}
The morphisms \eqref{e:sigma+*} and \eqref{e:sigma-*} are open embeddings.
\end{prop}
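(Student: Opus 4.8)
The plan is to prove \propref{p:open embeddings} by combining the explicit description of $\BXpm$ from \lemref{l:sigma+-} with the reduction-to-fibers philosophy already used in the proofs of \propref{p:props tilde p} and \propref{p:2contracting}. By symmetry it suffices to treat \eqref{e:sigma+*}. The key geometric input is that $\sigma^+:\BXp\to\BA^1\times\BA^1$ is the blow-up at $(0,0)$, so that $\sigma^+$ is an isomorphism away from the exceptional fiber and, crucially, that the map \eqref{e:+section} realizes $\BA^1\times\BG_m\hookrightarrow\BX$ as the locus where $\sigma^+_S$ restricts to an isomorphism onto $S\times\BG_m\subset S\times\BA^1$.

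First I would observe that, since both $\BA^1\times Z^+$ and $\wt{Z}\times_Z Z^+$ are algebraic spaces of finite type over $\BA^1$ (using Theorem~\ref{t:attractors}, Theorem~\ref{t:tildeZ}, and \propref{p:Z^0closed}), being an open embedding can be checked after examining the morphism on fibers over each $t\in\BA^1$ together with the behaviour on formal/infinitesimal neighborhoods; more precisely I would show \eqref{e:sigma+*} is an \'etale monomorphism, hence an open embedding. That \eqref{e:sigma+*} is a monomorphism over $\BA^1-\{0\}$ is immediate from \propref{p:outside 0}, since there $\sigma^+$ restricts to an isomorphism $\BXp|_{\BA^1-\{0\}}\iso(\BA^1-\{0\})\times\BA^1$ and both sides become the graph of the action. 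The real content is at $t=0$.

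For the fiber over $0$, I would use \propref{p:tilde Z_0} to identify $(\wt{Z}\times_Z Z^+)_0$ with $(Z^+\underset{Z^0}\times Z^-)\underset{Z}\times Z^+$, where the middle map to $Z$ is $\pi_2=p^-$ restricted suitably; simultaneously the source fiber $(\BA^1\times Z^+)_0$ is just $Z^+$. The map \eqref{e:sigma+*} on fibers over $0$ should then be identified, via \lemref{l:sigma+-}(i) and the bijection \eqref{e:bij+}, with an explicit open inclusion determined by the two components $\BX_0^{\pm}$ of the degenerate hyperbola: the $\BXp_0$ picks out exactly the chart on which the $\BX_0^+$-axis survives. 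Concretely I expect to reduce, as in \propref{p:2new tilde}, to the case $Z=\BA^1$ with weight-$n$ action and verify the claim by a direct coordinate computation on the blow-up chart, then bootstrap to general affine $Z$ (via products and closed embeddings, using \propref{p:closed and open}(i)) and finally to arbitrary $Z$ by the openness statement \propref{p:closed and open}(ii) together with a local-linearity covering as in Subsect.~\ref{ss:locally linear}.

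The main obstacle will be the behaviour at $t=0$: away from $0$ everything is an honest isomorphism onto a graph, but over the degenerate hyperbola one must check that the blow-up chart $\BXp_0$ corresponds precisely to the \emph{open} locus of $\wt{Z}\times_Z Z^+$ cut out by the condition that the relevant limit exists along the $\tau_2$-direction, and that no points are lost or doubled. I anticipate that the cleanest way to control this is to verify, using Remark~\ref{r:section of unramified} and \propref{p: p^+}(vi), that the tangent map of \eqref{e:sigma+*} at each point of the image of the zero section is an isomorphism onto its image and that \eqref{e:sigma+*} is a monomorphism on fibers; \'etaleness plus the monomorphism property then forces an open embedding, exactly as in the proofs of \propref{p:Cartesian} and \corref{c:contractive}. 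The surjectivity-onto-an-open-subspace part is then automatic, and I would not expect to need any properness hypothesis, mirroring the pleasant fact, noted after \propref{p:2open embeddings}, that these embeddings are typically not closed.
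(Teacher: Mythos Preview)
Your proposal has a genuine gap in the final bootstrap step. You write that you will pass ``finally to arbitrary $Z$ by the openness statement \propref{p:closed and open}(ii) together with a local-linearity covering as in Subsect.~\ref{ss:locally linear}.'' But the whole point of this paper is that $Z$ is an arbitrary algebraic $k$-space of finite type with a $\BG_m$-action, and such a $Z$ need \emph{not} admit a locally linear $\BG_m$-action (Sumihiro's theorem requires normality and separatedness of $Z_{\red}$ over an algebraically closed field; see \remref{r:locally linear}). So your reduction chain terminates before reaching the general case. The earlier steps also have soft spots: the reduction to $Z=\BA^1$ with a weight-$n$ action works well for proving that something is a \emph{closed} embedding (as in \propref{p:2new tilde}), but ``open embedding'' is not obviously stable under passing to closed $\BG_m$-stable subspaces in the way you suggest; and the tangent-space check you outline is only formulated at points of the zero section, which is not enough to conclude \'etaleness everywhere.

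The paper's proof takes a completely different route, which avoids any reduction on $Z$. The key observation is that the blow-up morphism $\sigma^\pm$ satisfies $(\sigma^\pm_S)_*\CO_{\BXpm_S}=\CO_{S\times\BA^1_\mp}$ for $S$ the spectrum of an Artinian local ring (\lemref{l:sigma is Stein}). One then invokes the general \lemref{l:postponed}: for any surjective $f:A\to B$ with $f_*\CO_A=\CO_B$ and any algebraic space $Z$, the map $\Maps(B,Z)\to\Maps(A,Z)$ is injective, and a suitable square of mapping sets is Cartesian. Applying this with $A=\BXpm_S$, $B=S\times\BA^1_\mp$, $f=\sigma^\pm_S$ (and its $\BG_m$-version) gives directly that $(\sigma^\pm)^*$ is injective on field-valued points and formally \'etale, hence an open embedding. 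No structural hypothesis on $Z$ beyond being an algebraic space is needed, and no coordinate computation or case analysis on fibers is required.
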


We will prove the part of \propref{p:open embeddings} about $(\sigma^-)^*$. We will use the following property of the morphism $\sigma^-:\BXm\to\BA^1\times\BA^1_-\,$.

\begin{lem}  \label{l:sigma is Stein}
Let $S$ be a spectrum of an Artinian local ring equipped with a morphism $S\to\BA^1$. Then the morphism
$\sigma^-_S:\BXm_S\to S\times\BA^1_-$ has the following property: the map
\[
\CO_{S\times\BA^1_-}\to (\sigma^-_S)_*\CO_{\BXm_S}
\]
is an isomorphism (here $(\sigma^-_S)_*$ denotes the naive
%\footnote{For the derived direct image the statement is true but irrelevant for us.} 
direct image rather than the derived one).
\end{lem}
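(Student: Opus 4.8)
The plan is to reduce the assertion to an isomorphism on global sections and then verify it by an explicit calculation in the two standard charts of the blow-up $\sigma^-$.

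First I would observe that $W_S:=S\times\BA^1_-$ is a Noetherian affine scheme and that $\sigma^-_S:\BXm_S\to W_S$ is proper, being the base change of the blow-up $\sigma^-$ of \lemref{l:sigma+-}(ii). Consequently $(\sigma^-_S)_*\CO_{\BXm_S}$ is a quasi-coherent sheaf on the affine scheme $W_S$, and a morphism of quasi-coherent sheaves on an affine scheme is an isomorphism if and only if it is an isomorphism on global sections. Thus it suffices to show that the pullback map $\Gamma(W_S,\CO_{W_S})\to\Gamma(\BXm_S,\CO_{\BXm_S})$ is an isomorphism.

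By \lemref{l:sigma+-}(ii), $\sigma^-$ is the blow-up of the smooth affine surface $W:=\BA^1\times\BA^1_-$ at a closed point $P$ lying over $0\in\BA^1$. I would choose regular coordinates $(t,\mu)$ on $W\cong\BA^2$ with $P=(0,0)$ and with $t$ the pullback of the coordinate on $\BA^1$; then $\BXm$ is covered by the two standard charts $\Spec k[t,b]$ (with $\mu=tb$) and $\Spec k[c,\mu]$ (with $t=c\mu$), glued over $\{b\neq 0\}=\{c\neq 0\}$ via $bc=1$, and the structure map to $\BA^1$ is $t$ on each chart. Writing $\bar t\in R$ for the image of $t$, base change along $S\to\BA^1$ covers $\BXm_S$ by $\Spec R[b]$ and $\Spec R[c,\mu]/(c\mu-\bar t)$, glued over $\Spec R[b,b^{-1}]$, while $W_S=\Spec R[\mu]$.

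Finally I would compute $\Gamma(\BXm_S,\CO_{\BXm_S})$ as the equalizer of the two restrictions to the overlap. Using that $R[c,\mu]/(c\mu-\bar t)$ is a free $R$-module on $\{c^i\}_{i\geq 0}\cup\{\mu^j\}_{j\geq 1}$, a global section is a pair $(f,g)$ with $f=\sum_{n\geq 0}a_nb^n\in R[b]$ and $g=g_0+\sum_{i\geq 1}\gamma_ic^i+\sum_{j\geq 1}\delta_j\mu^j$; restricting $g$ via $c=b^{-1}$, $\mu=\bar t b$ and comparing coefficients of the powers of $b$ in $R[b,b^{-1}]$ forces $\gamma_i=0$ for all $i$, identifies $a_0=g_0$ and $a_j=\delta_j\bar t^{\,j}$, and leaves $g=g_0+\sum_{j\geq 1}\delta_j\mu^j\in R[\mu]$ unconstrained. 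Hence $\Gamma(\BXm_S,\CO_{\BXm_S})\cong R[\mu]=\Gamma(W_S,\CO_{W_S})$, and one checks directly that this identification is the pullback map, which proves the lemma.

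The calculation is routine; the only conceptual point, which I regard as the main obstacle, is that the base change $S\to\BA^1$ is in general not flat (for instance $R=k[t]/(t^n)$), so one cannot simply invoke flat base change or the theorem on cohomology and base change to carry over the familiar equality $\sigma^-_*\CO_{\BXm}=\CO_W$ valid for the blow-up of a smooth surface at a point. Passing to the explicit charts is precisely what makes the non-flatness harmless; in fact the same computation proves the statement for an arbitrary affine $S$ over $\BA^1$, the Artinian hypothesis being only what the intended application of \propref{p:open embeddings} requires.
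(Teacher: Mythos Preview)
Your proof is correct. The approach, however, differs from the paper's. The paper argues in two steps: first it treats the case where $S$ is the spectrum of a field, where the statement is immediate from the description of $\sigma^-$ as a blow-up of a smooth surface at a point; then it reduces the general Artinian case to the field case by d\'evissage on the length of $R$, using that both $\BXm_S$ and $S\times\BA^1_-$ are flat over $S$ (so that tensoring with a short exact sequence of $R$-modules stays exact on each side, allowing the five lemma to propagate the isomorphism).

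Your route bypasses the d\'evissage entirely: by passing to the two affine charts of the blow-up and computing the equalizer of the restriction maps, you verify the isomorphism on global sections directly and uniformly in $R$. This is more elementary and, as you note, in fact proves the statement for an arbitrary affine $S$ over $\BA^1$, which the paper only remarks on separately (see the remark following the lemma). The paper's argument is shorter once the field case is granted, and its d\'evissage pattern is a reusable template; your argument has the virtue of making explicit exactly why non-flatness of $S\to\BA^1$ causes no trouble.
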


\begin{proof}
If $S$ is a spectrum of a field the statement is clear from the explicit description of $\sigma^-$ given in 
\lemref{l:sigma+-}(ii). The case of a general Artinian local ring follows by devissage (one uses flatness of
$\BXm_S$ and $S\times\BA^1_-$ over $S$).
\end{proof}

\begin{rem} It is easy to prove \lemref{l:sigma is Stein} for \emph{any} scheme $S$ over $\BA^1$ and for the derived direct image 
 $R(\sigma^-_S)_*$ instead of the naive one. However, the above minimalistic formulation of 
 \lemref{l:sigma is Stein} will allow us to skip the proof of \propref{p:2open embedding}(i) 
 (because it is identical to that of \propref{p:open embeddings}).
 \end{rem}
 
We will also use the following general lemma, which is proved in Appendix~\ref{s:very general}.

\begin{lem}  \label{l:postponed}
Let $A$, $B$, $Z$ be algebraic $k$-spaces and $f:A\to B$ a surjective morphism with $f_*\CO_A=\CO_B$
(here $\CO_A$, $\CO_B$ are sheaves on the etale sites $A_{\ET}$, $B_{\ET}$ and $f_*$ is understood in the non-derived sense). Then

(i) the map $\Maps(B , Z)\to\Maps(A ,Z)$ induced by $f$ is injective;

(ii) if $B_0\subset B$ is a closed subspace containing $B_{\red}$ and $A_0=f^{-1}(B_0)$ then the diagram
\[
\CD
\Maps(B\, , Z) @>{}>>  \Maps(A\, ,Z) \\
@V{}VV   @V{}VV   \\ 
\Maps(B_0\, , Z) @>{}>>  \Maps(A_0\, ,Z)
\endCD
\]
induced by $f$ is Cartesian. 
\end{lem}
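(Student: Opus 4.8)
The plan is to reduce the assertion to the case where $Z$ is an affine scheme, and there to a single injectivity statement about rings of global functions.

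First I would record that the hypothesis $B_0\supseteq B_{\red}$ says exactly that the ideal $\CI\subseteq\CO_B$ cutting out $B_0$ lies in the nilradical, i.e. is a nil ideal; hence $A_0=f^{-1}(B_0)$ is cut out in $A$ by the nil ideal $\CI\CO_A$. Thus $B_0\mono B$ and $A_0\mono A$ are nilpotent immersions, so by topological invariance of the small etale site the pullback functors $\Et(B)\to\Et(B_0)$ and $\Et(A)\to\Et(A_0)$ are equivalences. Combining this with the fact that every algebraic space $Z$ is etale-locally affine, I would argue --- via an atlas $U\to Z$ and the descent statement \cite[exp. IX, Theorem 4.7]{SGA1}, exactly as in the proof of \lemref{l:pushout} --- that it is enough to prove (i) and (ii) when $Z$ is an affine scheme. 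For such $Z=\Spec R$ and any algebraic space $X$ one has $\Maps(X,Z)=\Hom_{k\text{-alg}}(R,\Gamma(X,\CO_X))$, so the problem becomes one about global functions.

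In the affine case the hypothesis $f_*\CO_A=\CO_B$ gives an isomorphism $\Gamma(B,\CO_B)\iso\Gamma(A,\CO_A)$, so the map $\Maps(B,Z)\to\Maps(A,Z)$ is even a bijection; this settles (i). For (ii), since $\Hom(R,-)$ preserves fiber products, the square of mapping sets is Cartesian for all $R$ if and only if the commuting square of global-section rings
\[
\CD
\Gamma(B,\CO_B) @>{f^{\#}}>> \Gamma(A,\CO_A) \\
@VVV @VVV \\
\Gamma(B_0,\CO_{B_0}) @>{f_0^{\#}}>> \Gamma(A_0,\CO_{A_0})
\endCD
\]
is a pullback square (the vertical maps being restriction). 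Its top arrow $f^{\#}$ is an isomorphism, and an elementary chase then shows that this square is Cartesian precisely when $f_0^{\#}\colon\Gamma(B_0,\CO_{B_0})\to\Gamma(A_0,\CO_{A_0})$ is injective. So the whole lemma comes down to this injectivity.

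Finally I would prove that $\CO_{B_0}\to(f_0)_*\CO_{A_0}$ is injective. Since $f_{\red}\colon A_{\red}\to B_{\red}$ is surjective and $B_{\red}$ is reduced, a local function on $B_{\red}$ that pulls back to $0$ vanishes at every point and is therefore $0$; this already shows that the kernel of $\CO_{B_0}\to(f_0)_*\CO_{A_0}$ is contained in the nil ideal of $\CO_{B_0}$. To conclude I would devissage the nil ideal $\CI$ into square-zero steps and, at each step, use the isomorphism $f_*\CO_A=\CO_B$ (for the successive thickenings) to identify the naive pushforward $f_*(\CI\CO_A)$ with $\CI\subseteq\CO_B$. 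The hard part is exactly this last identification --- equivalently, the vanishing of the first direct image that would otherwise obstruct the devissage --- and it is precisely here that $B_0\supseteq B_{\red}$ is indispensable: nilpotence reduces us to square-zero extensions, where $\CI\CO_A$ is the pullback of the $\CO_{B_0}$-module $\CI$, and the comparison of its global sections with $\CI$ is forced by the hypothesis $f_*\CO_A=\CO_B$. Note that the uniqueness half of (ii) is nothing but part (i).
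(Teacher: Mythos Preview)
Your plan has two genuine gaps.

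\medskip

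\textbf{The reduction to affine $Z$.} Topological invariance of the small \'etale site is a statement about the nil-immersions $B_0\hookrightarrow B$ and $A_0\hookrightarrow A$; it gives you nothing for part~(i), where no thickening is in play. Concretely, to run the argument of \lemref{l:pushout} you would need to know that an \'etale cover of $B$ is determined by its pullback to $A$ (so that the two covers of $B$ obtained from an atlas $U\to Z$ via $g_1$ and via $g_2$ coincide). But that statement is essentially part~(i) again, applied with $Z$ replaced by an \'etale $B$-space --- so the reduction is circular. Even for (ii), the final descent step (checking that the locally constructed lifts agree on overlaps) requires (i) for the \'etale pieces of $B$, which you have not yet established.

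\medskip

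\textbf{The injectivity of $\CO_{B_0}\to (f_0)_*\CO_{A_0}$.} This is equivalent to $f_*(\CI\CO_A)=\CI$ inside $\CO_B=f_*\CO_A$, and your d\'evissage does not close. For square-zero $\CI$ you assert that $\CI\CO_A=f_0^*\CI$ and that $(f_0)_*f_0^*\CI=\CI$; the first is only a surjection in general, and the second is a projection-formula statement that would need $\CI$ locally free or $(f_0)_*\CO_{A_0}=\CO_{B_0}$ --- neither is given. Moreover, to pass from one square-zero step to the next you would need the hypothesis $f_*\CO=\CO$ for the \emph{intermediate} thickenings $B_i$, which again is not available. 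Your observation that the kernel consists of nilpotents is correct but insufficient, since $B_0$ need not be reduced.

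\medskip

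\textbf{Comparison with the paper.} The paper avoids both difficulties by arguing directly. For (i) it forms the equalizer $B':=\Eq(g_1,g_2)\hookrightarrow B$, a finite-type monomorphism through which $f$ factors, and proves a separate lemma (\lemref{l:no factorization}) that any such $B'$ must equal $B$; the proof of that lemma is an idempotent argument using only $f_*\CO_A=\CO_B$ and surjectivity. For (ii) the paper never reduces to affine $Z$: using $B_{\ET}=(B_0)_{\ET}$, it writes down the lift $\varphi\colon g_0^{\Kdot}\CO_Z\to f_*\CO_A\iso\CO_B$ directly from the map $h^{\Kdot}\CO_Z\to\CO_A$ (via adjunction), so the construction of $g$ uses the full hypothesis on $f$ rather than any derived property of $f_0$.
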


Now let us prove the part of \propref{p:open embeddings} about the morphism
$(\sigma^-)^*$.  To prove that $(\sigma^-)^*$ is an open embedding, it suffices to 
show that it is etale and induces an injective map of field-valued points. This amounts to checking the following:

(a) if $S$ is a spectrum of a field equipped with a morphism $S\to\BA^1$ then the map
\[
(\sigma^-_S)^*:\gMaps(S\times\BA^1_-\, , Z)\to\gMaps(\BXm_S\, ,Z)
\]
is injective;

(b) let $S$ be a spectrum of an Artinian local ring equipped with a morphism $S\to\BA^1$ (so $S_{\red}$ is a spectrum of a field), then the diagram
\[
\CD
\gMaps(S\times\BA^1_-\, , Z) @>{(\sigma^-_S)^*}>>  \gMaps(\BXm_S\, ,Z) \\
@V{}VV   @V{}VV   \\ 
\gMaps(S_{\red}\times\BA^1_-\, , Z) @>{(\sigma^-_{S_{\red}})^*}>>  \gMaps(\BXm_{S_{\red}}\, ,Z)
\endCD
\]
is Cartesian.

To prove this, it suffices to apply \lemref{l:postponed} for 
$A=\BXm_S\,$, $B=S\times\BA^1$, $f=\sigma^-_S$ and also for
$A=\BG_m\times\BXm_S\,$, $B=\BG_m\times S\times\BA^1$, $f=\id_{\BG_m}\times\sigma^-_S\,$.
\lemref{l:postponed} is applicable by \lemref{l:sigma is Stein}.

\ssec{The fiber product $\wt{Z}\times_Z\ldots\times_Z\wt{Z}\;$}   \label{ss:field theory}
The material of this subsection is used in the proof of \propref{p:Phi_Z}. We also think it is interesting on its own.

\sssec{Plan}
In Subsect.~\ref{ss:fiber products my} we described the fiber products 
$Z^-\underset{Z}\times\wt{Z}$ and $\wt{Z}\underset{Z}\times Z^+$ and constructed open embeddings
\begin{equation}  \label{e:the embeddings}
 \BA^1\times Z^-\mono Z^-\underset{Z}\times\wt{Z}\, , \quad\quad
\BA^1\times Z^+\mono\wt{Z}\underset{Z}\times Z^+ .
\end{equation}

Similarly to the above fiber products, one defines $\wt{Z}\underset{Z}\times\wt{Z}$ and, more generally,
%Now we are going to describe t
the $n$-fold fiber product
\begin{equation}   \label{e:tilde Zn}
\wt{Z}_n:=\wt{Z}\underset{Z}\times\ldots\underset{Z}\times\wt{Z}
\end{equation}
using the projections $\pi_1,\pi_2:\wt{Z}\to Z$. 

We will describe $\wt{Z}_n$ as a space of maps
and construct an open embedding
\begin{equation}   \label{e:open_future}
\wt{Z}\underset{\BA^1}\times\BA^n\mono\tilde Z_n\, ,
\end{equation}
which is an isomorphism if $Z$ is affine.
The  morphism $\BA^n\to\BA^1$ implicit in formula \eqref{e:open_future} is the multiplication map
\begin{equation}   \label{e:mult_n}
(t_1,\ldots ,t_n)\mapsto t_1\cdot\ldots\cdot t_n \, .
\end{equation}

It will be clear that the embeddings \eqref{e:the embeddings} can be obtained by base change from the embedding \eqref{e:open_future} for $n=2$.

The strategy will be similar to the one used in Subsect.~\ref{ss:fiber products my}. The role of the blow-up of 
$\BA^2$ (see Subsect.~\ref{sss:blow-up}) will be played by a certain ``very small" resolution of singularities of the 
scheme
\begin{equation}   \label{e:XAn}
\BX_{\BA^n}:=\BX\underset{\BA^1}\times\BA^n \, ;
\end{equation}
here the fiber product is defined using the map \eqref{e:mult_n}, so it is, in fact, the hypersurface
\[ %\begin{equation}   
t_1\cdot\ldots\cdot t_n=uv\, .
\] %\end{equation}
The above-mentioned small resolution is well known for $n=2$.

\sssec{$\wt{Z}_n$ as a space of maps}   \label{sss:as a map space}
Let $\CC_n$ denote the category of spaces\footnote{Recall that ``space" just means ``functor with the sheaf property".} over $\BA^n$ equipped with a $\BG_m$-action over $\BA^n$.
For instance, $\BX$ and $\wt{Z}$ are objects of $\CC_1\,$, and the space $\wt{Z}_n$ defined by 
\eqref{e:tilde Zn} is an object of $\CC_n$ (because $\wt{Z}\in\CC_1$).

Now we will define a scheme $\BX_n\in\CC_n$ such that
for any scheme $S$ over $\BA^n$ one has
\begin{equation}    \label{e:maps to tilde Zn}
\Maps_{\BA^n}(S,\wt{Z}_n)=\gMaps ((\BX_n)_S\, , Z)  \, ,
\end{equation}
where $(\BX_n)_S:=\BX_n\underset{\BA^n}\times S\,$. (For instance, if $n=1$ then $\BX_n=\BX\,$.)

First, set
$$U_r:=\BA^{r-1}\times\BX\times\BA^{n-r}, \quad \quad 1\le r\le n\, .$$
Note that $U_r\in\CC_n$ because $\BX\in\CC_1\,$. In $\CC_1$ we have two open embeddings 
$\BG_m\times\BA^1\to\BX$
defined by \eqref{e:+section}-\eqref{e:-section}. 
Multiplying them by $\BA^{r-1}$ on the left and $\BA^{n-r}$ on the right one gets two open embeddings 
$\BG_m\times\BA^n\mono U_r$ in the category  $\CC_n\,$. Let $\alpha_r:\BG_m\times\BA^n\mono U_r$ be the embedding corresponding to \eqref{e:+section} and $\beta_r:\BG_m\times\BA^n\mono U_r$ the one
corresponding to \eqref{e:-section}.

Now define $\BX_n\in\CC_n$ to be the colimit (a.k.a. inductive limit) of the diagram 
\begin{equation}   \label{e:zigzag}
\xymatrix{
U_1&  &U_2& &\ldots&\\
  &\BG_m\times\BA^n\ar@{_{(}->}[lu]_{\alpha_1}\ar@{^{(}->}[ru]^{\beta_2}&&
  \BG_m\times\BA^n\ar@{_{(}->}[lu]_{\alpha_2}\ar@{^{(}->}[ru]^{\beta_3}  &&
  }
\end{equation}
Then the bijection \eqref{e:maps to tilde Zn} is tautological. The next lemma says that the space $\BX_n$ is, in fact, a nice scheme.

\begin{lem}   \label{l:smooth & flat}
(i) The canonical morphisms 
$$U_r=\BA^{r-1}\times\BX\times\BA^{n-r} \to \BX_n\, ,\quad\quad 1\le r\le n$$  
are  open embeddings, and their images cover $\BX_n\,$. 
%The schemes $U_r$ (or their images in $\BX_n$) are called \emph{charts.} 

(ii) $\BX_n$ is a smooth scheme over $k$ of dimension $n+1$, which is flat over $\BA^n$.
\end{lem}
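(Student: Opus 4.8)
The plan is to identify the colimit $\BX_n$ of the zig-zag \eqref{e:zigzag} with the scheme obtained by gluing the charts $U_r$ along open subschemes, and then to read off (ii) directly from this chart picture.

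First I would record, exactly as in \lemref{l:sigma+-}, the images of the gluing maps: writing $\tau_1^{(r)},\tau_2^{(r)}$ for the coordinates on the $\BX$-factor of $U_r=\BA^{r-1}\times\BX\times\BA^{n-r}$, the formulas \eqref{e:+section}--\eqref{e:-section} show that $\alpha_r$ is an open embedding with image $\{\tau_2^{(r)}\ne 0\}\subset U_r$ and $\beta_{r+1}$ is an open embedding with image $\{\tau_1^{(r+1)}\ne 0\}\subset U_{r+1}$. Thus \eqref{e:zigzag} is the gluing datum of a linear chain $U_1-U_2-\cdots-U_n$ whose consecutive terms are glued along these opens; on the overlap $W_r:=\alpha_r(\BG_m\times\BA^n)\cong\beta_{r+1}(\BG_m\times\BA^n)$ the transition is the explicit isomorphism $\tau_1^{(r+1)}=(\tau_2^{(r)})^{-1}$, $\tau_2^{(r+1)}=\tau_2^{(r)}\,t_{r+1}$ (the remaining $t_i$ being unchanged, and $t_r=\tau_1^{(r)}\tau_2^{(r)}$).

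Next I would argue that the gluing is effective and computes the colimit in $\CC_n$. The crucial point is that the index shape is an acyclic chain: a point of $U_r$ that lies in some $U_{r'}$ with $r'>r$ necessarily lies already in $U_{r+1}$, so that $U_r\cap U_{r'}\subseteq W_r$ for all $r'>r$. In particular every triple overlap $U_r\cap U_{r+1}\cap U_{r+2}$ sits inside the single chart $U_{r+1}$ as $\{\tau_1^{(r+1)}\tau_2^{(r+1)}\ne0\}=\{t_{r+1}\ne0\}$, so the cocycle condition is automatic and the scheme-theoretic gluing $\BX_n$ exists with the $U_r$ as an open cover; this is assertion (i). The same containment $U_r\cap U_{r'}\subseteq W_r$ shows that agreement on consecutive overlaps forces agreement on all overlaps, so for any space $T$ the set $\Maps(\BX_n,T)$ coincides with the limit over the diagram \eqref{e:zigzag} of the sets $\Maps(-,T)$; hence the glued scheme also represents the colimit in the category of spaces, as required.

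Finally, for (ii): each chart $U_r=\BA^{r-1}\times\BX\times\BA^{n-r}\cong\BA^{n+1}$ is smooth over $k$ of dimension $n+1$, so $\BX_n$ is too. The structure morphism $\BX_n\to\BA^n$ restricts on $U_r$ to $\id_{\BA^{r-1}}\times m\times\id_{\BA^{n-r}}$, where $m\colon\BX=\BA^2\to\BA^1$ is $(\tau_1,\tau_2)\mapsto\tau_1\tau_2$; since $k[\tau_1,\tau_2]$ is free over $k[\tau_1\tau_2]$ with basis $e_n$ from \eqref{e:e_n}, the map $m$ is flat, and tensoring with the two auxiliary polynomial rings keeps it flat. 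As flatness is local on the source, $\BX_n\to\BA^n$ is flat. The only real work is the coordinate bookkeeping in the gluing step --- pinning down the images of $\alpha_r,\beta_{r+1}$ and verifying the overlap containment $U_r\cap U_{r'}\subseteq W_r$ that makes the chain gluing effective and identifies it with the colimit; once the chart picture is in place, smoothness and flatness are immediate.
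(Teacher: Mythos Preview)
Your proposal is correct and follows essentially the same approach as the paper. The paper's proof is the one-liner ``Statement (i) is proved by induction. Statement (ii) follows''; your argument spells out precisely this induction, phrased as the step-by-step gluing of the linear chain of charts $U_r$ along the overlaps $W_r$, and then reads off smoothness and flatness from the chart description $U_r\cong\BA^{n+1}$ and the factorization of $U_r\to\BA^n$ through the flat map $m:\BX\to\BA^1$.
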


\begin{proof}
Statement (i) is proved by induction. Statement (ii) follows.
\end{proof}

\begin{rem}   \label{r:1curves}
It is clear that the fiber of $\BX_n$ over each field-valued point of $\BA^n$ is a curve. (It is obtained by gluing hyperbolas. Such gluing is non-tautological only if some of these hyperbolas are degenerate.)
\end{rem}

\begin{rem}  \label{r:2curves}
Here is a more precise version of the previous remark. For $m\ge 0$ let $C_m$ denote the following curve:
take $m+1$ copies of $\BP^1$, denoted by $(\BP^1)_i\, $, $0\le i\le m$; then for all $i<m$ glue 
$0\in (\BP^1)_i$ with $\infty\in(\BP^1)_{i+1}$ and finally, remove $\infty\in (\BP^1)_0$ and 
$0\in (\BP^1)_m\,$. 
It is easy to see that the fiber of $\BX_n$ over each field-valued point of $\BA^n$ is isomorphic to $C_m$ for some $m$, $0\le m\le n\,$.
\end{rem}

\sssec{The locally closed embedding $\BX_n\mono\BA^n\times (\BP^1)^{n+1}\,$}    \label{sss:q-projective}
We will first construct a quasi-projective scheme $\BX'_n\in\CC_n\,$; more precisely, $\BX'_n$ will be a locally closed subscheme of the product $\BA^n\times (\BP^1)^{n+1}$. Then we will construct a $\CC_n$-isomorphism
$\BX_n\iso\BX'_n\,$.

Points of $\BP^1$ will be denoted by $(p:q)$. We equip $\BP^1$ with the usual action of $\BG_m\,$, i.e.,
$\lambda\in\BG_m$ takes $(p:q)$ to $(\lambda p:q)$.

\medskip

\noindent {\bf Convention.} Let $\xi ,\xi'\in\BP^1$, $\xi =(p:q)$, $\xi' =(p':q')$, $t\in\BA^1$. Then 
\begin{equation}   \label{e:convention}
\mbox{we write }\quad t\cdot\xi=\xi' \quad\mbox{as a shorthand for }\quad  tpq'=p'q\, .
\end{equation}
(Thus $0\cdot\infty=\xi'$ for \emph{any} $\xi'\in\BP^1$.)

\medskip

We equip $(\BP^1)^{n+1}$ with the diagonal action of $\BG_m\,$. So $\BA^n\times(\BP^1)^{n+1}$ is an object of $\CC_n\,$. Points of $\BA^n$ will be denoted by $(t_1,\ldots ,t_n)$. Points of $(\BP^1)^{n+1}$ will be denoted by $(\xi_0\, ,\ldots ,\xi_n )$, where $\xi_i\in\BP^1$.

\begin{defn}
$\BX'_n\subset\BA^n\times (\BP^1)^{n+1}$ is the locally closed subscheme defined by the  inequalities
\begin{equation}  \label{e:inequalities}
\xi_0\ne \infty\, ,\quad\quad \xi_n\ne 0
\end{equation}
and the equations
\begin{equation}  \label{e:the_equations}
\xi_{i-1}=t_i\cdot\xi_i\, ,\quad\quad 1\le i\le n\, ;
\end{equation}
here the equations are understood according to Convention \eqref{e:convention}.
\end{defn}

The subscheme $\BX'_n\subset\BA^n\times (\BP^1)^{n+1}$ is $\BG_m$-stable, so  $\BX'_n\in\CC_n\,$.

Let $U'_r\subset\BX'_n$ denote the $\BG_m$-stable open subscheme defined by the inequalities
\begin{equation}  \label{e:2inequalities}
\xi_{r-1}\ne\infty\, ,\quad\xi_r\ne 0\,.
\end{equation}
Note that the inequalities \eqref{e:inequalities} follow from \eqref{e:2inequalities} and \eqref{e:the_equations}.

\begin{lem}
(i) The open subschemes $U'_r$ cover $\BX_n\,$.

(ii) If $r_1\le r_2\le r_3$ then $U'_{r_1}\cap U'_{r_3}\subset U'_{r_2}\:$.
\end{lem}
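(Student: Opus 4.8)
The plan is to verify both assertions on field-valued points, which suffices: part~(i) asserts an equality of open subschemes of $\BX'_n$ and part~(ii) an inclusion of one open subscheme in another, so both may be checked on the underlying set (and $\BX'_n$ is of finite type). So I would fix a field $K$ and a $K$-point of $\BX'_n$, given by $(t_1,\dots,t_n)\in\BA^n(K)$ together with $\xi_0,\dots,\xi_n\in\BP^1(K)$ subject to \eqref{e:inequalities} and \eqref{e:the_equations}. Writing $\xi_i=(p_i:q_i)$, the conditions $\xi_i=0$ and $\xi_i=\infty$ read $p_i=0$ and $q_i=0$, respectively. Note that a point lies in $U'_r$ precisely when \eqref{e:2inequalities} holds, i.e.\ when $\xi_{r-1}\ne\infty$ and $\xi_r\ne 0$.

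The key step is to extract two monotonicity properties from the equations \eqref{e:the_equations}, read through Convention~\eqref{e:convention}, which unwinds $\xi_{i-1}=t_i\cdot\xi_i$ as $t_i p_i q_{i-1}=p_{i-1}q_i$. First, if $\xi_i=0$ then $p_i=0$ while $q_i\ne 0$, so the equation forces $p_{i-1}=0$, i.e.\ $\xi_{i-1}=0$. Second, if $\xi_{i-1}=\infty$ then $q_{i-1}=0$ while $p_{i-1}\ne 0$, so the equation forces $q_i=0$, i.e.\ $\xi_i=\infty$. Consequently $\{i:\xi_i\ne\infty\}$ is downward closed and $\{i:\xi_i\ne 0\}$ is upward closed; by \eqref{e:inequalities} the former contains $0$ and the latter contains $n$. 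Writing them as $\{0,\dots,b\}$ and $\{a,\dots,n\}$, the point lies in $U'_r$ exactly when $\max(a,1)\le r\le\min(b+1,n)$.

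With this bookkeeping both parts fall out. For part~(ii), if the point lies in $U'_{r_1}\cap U'_{r_3}$ with $r_1\le r_2\le r_3$, then $r_1\ge a$ yields $r_2\ge a$ and $r_3\le b+1$ yields $r_2\le b+1$, so the point lies in $U'_{r_2}$. For part~(i), I must exhibit an admissible $r$: the disjointness of $\{\xi_i=0\}$ and $\{\xi_i=\infty\}$ translates into $a\le b+1$, and together with $0\le a\le n$ and $0\le b\le n$ this makes the interval $[\max(a,1),\min(b+1,n)]$ nonempty; any integer in it serves. Hence the $U'_r$ cover $\BX'_n$, the scheme to be identified with $\BX_n$.

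The computations are routine; the only point I would flag as the main (minor) obstacle is applying Convention~\eqref{e:convention} correctly at the degenerate points where some $t_i$ or some coordinate of $\xi_i$ vanishes. Here one must check that the degenerate case $0\cdot\infty=\text{anything}$ never spoils the two implications above: in each implication one of the homogeneous coordinates on the relevant side is nonzero, so the equation \eqref{e:the_equations} is a genuine constraint and the conclusion is forced. Everything else is the arithmetic of the initial/final segments recorded in the second paragraph.
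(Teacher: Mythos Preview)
Your proof is correct and follows essentially the same approach as the paper's. Both proofs hinge on the same two monotonicity facts extracted from the equations \eqref{e:the_equations}: that $\xi_i=0$ propagates downward and $\xi_i=\infty$ propagates upward. The paper's argument is terser---for (i) it simply picks the minimal $r$ with $\xi_r\ne 0$ and observes $\xi_{r-1}=0\ne\infty$, and for (ii) it appeals directly to the equations---whereas you package the monotonicity into the interval description $\max(a,1)\le r\le\min(b+1,n)$, which is more systematic and handles the edge cases (e.g.\ $\xi_0\ne 0$) more cleanly than the paper's one-line proof.
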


\begin{proof}
(i) Let $(\xi_0\, ,\ldots ,\xi_n )\in\BX'$. Let $r$ be the minimal number such that $\xi_r\ne 0\,$. Then 
$\xi_{r-1}=0\ne\infty$, so $(\xi_0\, ,\ldots ,\xi_n )\in U'_r\,$.

(ii) Let $(\xi_0\, ,\ldots ,\xi_n )\in U'_{r_1}\cap U'_{r_3}\,$. Since $\xi_{r_3-1}\ne\infty$ and  $\xi_{r_1}\ne 0$
the equations \eqref{e:the_equations} imply that $\xi_{r_2-1}\ne\infty$ and  $\xi_{r_2}\ne 0\,$.
\end{proof}

\begin{cor}   \label{c: colimit too}
$\BX'_n$ is the colimit of the diagram 
\begin{equation}   \label{e:2zigzag}
\xymatrix{
U'_1&  &U'_2& &\ldots&\\
  &U'_1\cap U'_2\ar@{_{(}->}[lu]_{}\ar@{^{(}->}[ru]^{}&&
  U'_2\cap U'_3\ar@{_{(}->}[lu]_{}\ar@{^{(}->}[ru]^{}  &&
  }
\end{equation}
\end{cor}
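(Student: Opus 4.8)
The plan is to verify that $\BX'_n$, together with the open embeddings $U'_r\hookrightarrow\BX'_n$, satisfies the universal property characterizing the colimit of the diagram \eqref{e:2zigzag} in $\CC_n$. These embeddings obviously form a cocone, so for every object $T$ of $\CC_n$ there is a natural map from $\Maps(\BX'_n,T)$ to the limit of the mapping sets over \eqref{e:2zigzag}, namely to the set of families $(f_r)_{1\le r\le n}$ with $f_r\in\Maps(U'_r,T)$ and $f_r|_{U'_r\cap U'_{r+1}}=f_{r+1}|_{U'_r\cap U'_{r+1}}$ for $1\le r<n$. By Yoneda it suffices to prove that this map is a bijection for all $T$.

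Since objects of $\CC_n$ are in particular fpqc (hence Zariski) sheaves, and the $U'_r$ form an open cover of $\BX'_n$ by part~(i) of the preceding lemma, a morphism $\BX'_n\to T$ in $\CC_n$ is the same datum as a family $(f_r)$ with $f_r\in\Maps(U'_r,T)$ that agree on \emph{all} the pairwise overlaps $U'_r\cap U'_s$ (the glued map automatically respects the structure over $\BA^n$ and the $\BG_m$-action, these being local conditions). Hence the map in question is injective, and the content of the corollary is that agreement on the \emph{consecutive} overlaps $U'_r\cap U'_{r+1}$ already forces agreement on every overlap $U'_r\cap U'_s$.

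This last implication is where part~(ii) of the preceding lemma does all the work, and it is the only genuine step. Given $r<s$, I would invoke part~(ii) for each $m$ with $r\le m<s$ to get $U'_r\cap U'_s\subset U'_m$ and $U'_r\cap U'_s\subset U'_{m+1}$, whence $U'_r\cap U'_s\subset U'_m\cap U'_{m+1}$; restricting the identity $f_m=f_{m+1}$ on $U'_m\cap U'_{m+1}$ to $U'_r\cap U'_s$ and telescoping over $m=r,\dots,s-1$ yields $f_r=f_s$ on $U'_r\cap U'_s$. Thus the family $(f_r)$ is compatible on all overlaps and glues, giving surjectivity and the desired bijection. I expect the only point requiring care to be the extraction of the nesting $U'_r\cap U'_s\subset U'_m\cap U'_{m+1}$ from part~(ii); the telescoping of the consecutive agreements is then automatic, and no input beyond the two parts of the lemma and the sheaf property of $T$ is needed.
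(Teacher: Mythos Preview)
Your argument is correct and is precisely the deduction the paper leaves implicit: the corollary is stated without proof, as an immediate consequence of the two parts of the preceding lemma, and you have written out exactly that deduction (cover plus the nesting property (ii) to collapse all overlaps to consecutive ones, then glue by the sheaf property).
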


Now we will construct a $\CC_n$-isomorphism between diagrams \eqref{e:zigzag} and  \eqref{e:2zigzag}.

Recall that the scheme $U_r$ from diagram \eqref{e:zigzag} equals $\BA^{r-1}\times\BX\times\BA^{n-r}$, and the coordinates on $\BX$ are denoted by $\tau_1,\tau_2\,$.

\begin{lem}   \label{l:identifying diagarms}
(i) Formulas $\tau_1=\xi_{r-1}\,$, $\tau_2=\xi_r^{-1}$ define a $\CC_n$-isomorphism $U'_r\iso U_r\,$. Its inverse is given by
\[
\xi_i=t_{i+1}\cdot\ldots\cdot t_{r-1}\cdot\tau_1 \quad\mbox{ for } i<r\, , \quad\quad
\xi_i=(\tau_2\cdot t_{r+1}\cdot\ldots\cdot t_i)^{-1}\quad\mbox{ for } i\ge r\, .
\]

(ii) There exists an isomorphism between diagrams \eqref{e:2zigzag} and  \eqref{e:zigzag} inducing the above isomorphism $U'_r\iso U_r$ for each $r\in\{1,\ldots, n\}$.
\end{lem}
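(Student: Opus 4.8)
The plan is to prove both parts by explicit computation in homogeneous coordinates, reading off the defining equations \eqref{e:the_equations} via Convention \eqref{e:convention}; part (ii) will then follow by comparing the two colimit presentations on overlaps.

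For part (i), note first that on $U'_r$ the inequalities $\xi_{r-1}\ne\infty$ and $\xi_r\ne 0$ make $\tau_1:=\xi_{r-1}$ and $\tau_2:=\xi_r^{-1}$ regular functions (affine coordinates on $\BP^1\setminus\{\infty\}$ and $\BP^1\setminus\{0\}$ respectively). I would check that the $i=r$ instance of \eqref{e:the_equations}, namely $\xi_{r-1}=t_r\cdot\xi_r$, reads $\tau_1\tau_2=t_r$ under \eqref{e:convention}; since $t_r$ is exactly the $\BX$-structure coordinate in $U_r=\BA^{r-1}\times\BX\times\BA^{n-r}$, the pair $(\tau_1,\tau_2)$ together with $(t_1,\dots,\widehat{t_r},\dots,t_n)$ defines a morphism $U'_r\to U_r$ over $\BA^n$. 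The $\BG_m$-action $\lambda\cdot(p:q)=(\lambda p:q)$ sends $\tau_1\mapsto\lambda\tau_1$ and $\tau_2\mapsto\lambda^{-1}\tau_2$, so this morphism is $\BG_m$-equivariant, hence a $\CC_n$-morphism.

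Next I would verify that the stated formulas define the inverse. Substituting them into \eqref{e:the_equations} and splitting into the cases $i<r$, $i=r$, $i>r$, each equation becomes a polynomial identity in the coordinates of $U_r$ (for $i=r$ it is precisely $t_r=\tau_1\tau_2$); one also checks the defining inequalities of $U'_r$ (and of $\BX'_n$) hold, so the formulas land in $U'_r$. That the two maps are mutually inverse follows because on $U'_r$ the equations \eqref{e:the_equations} recursively determine the whole tuple $(\xi_0,\dots,\xi_n)$ from $(\xi_{r-1},\xi_r)$: descending the index from $\xi_{r-1}$ gives $\xi_i=t_{i+1}\cdots t_{r-1}\,\xi_{r-1}$ for $i<r$, and ascending from $\xi_r$ gives $\xi_i=(\tau_2\, t_{r+1}\cdots t_i)^{-1}$ for $i\ge r$. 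The $\BG_m$-equivariance of the inverse is immediate from the same formulas.

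For part (ii), I would match the connecting objects of diagrams \eqref{e:zigzag} and \eqref{e:2zigzag}. The overlap $U'_r\cap U'_{r+1}$ is cut out inside $U'_r$ by the single extra condition $\xi_r\ne\infty$ (the condition $\xi_{r+1}\ne 0$ being automatic from the equation $\xi_r=t_{r+1}\cdot\xi_{r+1}$ once $\xi_r\ne 0$). Under the isomorphism of part (i) this is the locus $\tau_2\ne 0$ in $U_r$, which is exactly the image of $\alpha_r$ (cf.\ \eqref{e:+section}); symmetrically, inside $U'_{r+1}$ the overlap is $\xi_r\ne 0$, i.e.\ the locus $\tau_1\ne 0$ in $U_{r+1}$, the image of $\beta_{r+1}$ (cf.\ \eqref{e:-section}). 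Computing the induced parameter $\lambda$ from both sides gives $\lambda=\xi_r$ in each case ($\tau_2^{(r)}=\lambda^{-1}=\xi_r^{-1}$ from the left, $\tau_1^{(r+1)}=\lambda=\xi_r$ from the right), so the two identifications of $U'_r\cap U'_{r+1}$ with $\BG_m\times\BA^n$ agree; hence the isomorphisms $U'_r\iso U_r$ are compatible with the transition maps and assemble into an isomorphism between the diagrams \eqref{e:2zigzag} and \eqref{e:zigzag}.

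The routine but genuinely delicate point throughout is the bookkeeping forced by Convention \eqref{e:convention}: the equations must be read projectively, and one must keep track of the degenerate configurations (e.g.\ $t_{r+1}=0$ forcing $\xi_{r+1}=\infty$, where the identity $0\cdot\infty=\text{anything}$ is used) to be sure the morphisms and their inverses are everywhere defined. This is where I expect the only real care to be needed; the algebra itself is elementary.
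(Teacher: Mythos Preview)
Your proof is correct and follows exactly the approach the paper intends: the paper itself declares the proof ``straightforward'' and only records the single hint that the composition $U'_r\cap U'_{r+1}\iso\BG_m\times\BA^n\to\BG_m$ is given by $\xi_r$, which is precisely the compatibility check you carry out in part~(ii). Your write-up simply supplies the routine verifications the paper omits.
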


The proof is straightforward. Let us just say that the composition 
$$U'_r\cap U'_{r+1}\iso\BG_m\times\BA^n\to\BG_m$$
is given by $\xi_r$ (note that the values of $\xi_r$ on $U'_r\cap U'_{r+1}$ are in $\BP^1-\{ 0,\infty\}=\BG_m$).

\medskip

Finally, \corref{c: colimit too} tells us that the isomorphism between diagrams \eqref{e:2zigzag} and  \eqref{e:zigzag} constructed in \lemref{l:identifying diagarms} induces a  $\CC_n$-isomorphism 
$\BX'_n\iso\BX_n\,$. We will always identify $\BX'_n$ with $\BX_n$ using this isomorphism.

\sssec{The map $\BX_n\to\BX_{\BA^n}\,$} Recall that $\BX_{\BA^n}:=\BX\underset{\BA^1}\times\BA^n$; equivalently, $\BX_{\BA^n}$ is the hypersurface 
\begin{equation}   \label{e:hypersurface}
t_1\cdot\ldots\cdot t_n=uv\, . 
\end{equation}
The equations \eqref{e:the_equations} imply that $\xi_0\cdot\xi_n^{-1}=t_1\cdot\ldots\cdot t_n\,$, so we have 
a morphism 
\begin{equation}    \label{e:Xn to XAn}
\BX_n=\BX'_n\to\BX_{\BA^n}
\end{equation}
defined by
\[
u=\xi_0\, ,\quad\quad v=\xi_n^{-1}\, .
\]
\begin{lem}
The morphism \eqref{e:Xn to XAn} is projective and small.
\end{lem}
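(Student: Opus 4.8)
The plan is to deduce projectivity by realizing \eqref{e:Xn to XAn} as a base change of an evidently projective morphism, and then to prove smallness by an explicit fibre-dimension count. For projectivity I would use the identification $\BX_n\cong\BX'_n\subset\BA^n\times(\BP^1)^{n+1}$ from Subsect.~\ref{sss:q-projective}. Let $\overline{\BX}_n\subset\BA^n\times(\BP^1)^{n+1}$ be the \emph{closed} subscheme cut out by the chain equations \eqref{e:the_equations} alone (i.e. dropping the open conditions \eqref{e:inequalities}). The projection of $\BA^n\times(\BP^1)^{n+1}$ onto the factors $\BA^n$, $\xi_0$ and $\iota(\xi_n)$, where $\iota:\BP^1\iso\BP^1$ is $\xi\mapsto\xi^{-1}$, is projective; since $\overline{\BX}_n$ is closed in the source, the induced morphism $\overline{\BX}_n\to\BA^n\times\BP^1\times\BP^1$ is projective. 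Now $\BX_{\BA^n}$ is the locally closed subscheme $\{(t,u,v): u,v\ne\infty,\ uv=t_1\cdots t_n\}$ of $\BA^n\times\BP^1\times\BP^1$, with $u=\xi_0$ and $v=\xi_n^{-1}$; and because the equations \eqref{e:the_equations} already force $\xi_0\cdot\xi_n^{-1}=t_1\cdots t_n$ (as noted before \eqref{e:Xn to XAn}), the scheme-theoretic preimage of $\BX_{\BA^n}$ under $\overline{\BX}_n\to\BA^n\times\BP^1\times\BP^1$ is exactly $\BX'_n$. Thus \eqref{e:Xn to XAn} is the base change of a projective morphism along $\BX_{\BA^n}\hookrightarrow\BA^n\times\BP^1\times\BP^1$, hence projective.

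For smallness, first note that both $\BX_n$ and $\BX_{\BA^n}$ are integral of dimension $n+1$ ($\BX_n$ is smooth and connected by \lemref{l:smooth & flat}, and $\BX_{\BA^n}$ is the irreducible hypersurface $uv=t_1\cdots t_n$), and that \eqref{e:Xn to XAn} restricts to an isomorphism over the dense open locus $\{uv\ne 0\}$: there all $t_i\ne 0$, so every chain equation is rigid and $\xi_0=u$ determines the whole chain. Hence \eqref{e:Xn to XAn} is proper and birational, and recalling that such a morphism of integral schemes of equal dimension is \emph{small} as soon as $\mathrm{codim}\{y:\dim f^{-1}(y)\ge r\}>2r$ for all $r\ge 1$, it remains only to compute $\dim f^{-1}(y)$ for $y=(t_1,\ldots,t_n,u,v)$.

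This fibre analysis is the heart of the argument and the main obstacle. Over $y$ the fibre consists of chains $(\xi_0,\ldots,\xi_n)$ with $\xi_0=u$, $\xi_n=v^{-1}$ and $\xi_{i-1}=t_i\cdot\xi_i$ in the sense of \eqref{e:convention}. An edge with $t_i\ne 0$ is rigid ($\xi_{i-1}$ and $\xi_i$ determine one another), while an edge with $t_i=0$ forces $\xi_{i-1}=0$ or $\xi_i=\infty$; moreover along any run of rigid edges the conditions ``$\xi=0$'' and ``$\xi=\infty$'' propagate, so each maximal rigid segment carries a single value $a\in\BP^1$. Writing $Z=\{i:t_i=0\}$ and $s=|Z|$, the flexible edges cut the chain into $s+1$ segments with values $a_0,\ldots,a_s$, where $a_0$ is pinned by $u$ and $a_s$ by $v^{-1}$, and the surviving constraints read $a_{k-1}=0$ or $a_k=\infty$ for $1\le k\le s$. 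A short case analysis shows the fibre is $0$-dimensional unless $u=v=0$ (forcing $a_0=0$, $a_s=\infty$ and making the end constraints vacuous), in which case it is $\{(a_1,\ldots,a_{s-1}):a_j=0\text{ or }a_{j+1}=\infty,\ 1\le j\le s-2\}$. Here each of the $s-2$ constraints pins one of two consecutive variables, so the number of unpinned (hence freely varying) coordinates equals $(s-1)$ minus the minimum vertex cover $\lfloor (s-1)/2\rfloor$ of the path on $s-1$ vertices; therefore $\dim f^{-1}(y)=\lceil (s-1)/2\rceil$.

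Consequently $\dim f^{-1}(y)\ge r$ holds exactly on $W_r:=\{u=v=0,\ |Z|\ge 2r\}$ (one checks $\lceil (s-1)/2\rceil\ge r\iff s\ge 2r$). Inside $\BX_{\BA^n}$ the conditions $u=v=0$ already impose $t_1\cdots t_n=0$, and requiring at least $2r$ of the $t_i$ to vanish cuts out a union of coordinate subspaces of dimension $n-2r$; hence $\dim W_r=n-2r$ and $\mathrm{codim}_{\BX_{\BA^n}}W_r=(n+1)-(n-2r)=2r+1>2r$ for every $r\ge 1$ (with $W_r=\varnothing$ once $2r>n$). This verifies the smallness inequality and completes the proof. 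It is precisely the reduction of the fibre count to a minimum-vertex-cover problem on a path that makes the codimension come out to the sharp value $2r+1$.
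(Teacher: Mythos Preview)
Your projectivity argument is correct and essentially the paper's: the paper simply notes that $\BX'_n\hookrightarrow\BX_{\BA^n}\times(\BP^1)^{n-1}$ is a closed embedding and then projects.

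Your smallness argument contains a genuine error in the fibre-dimension count. The minimum-vertex-cover model does not apply to the constraints ``$a_j=0$ or $a_{j+1}=\infty$'': pinning $a_j$ to $0$ satisfies only the constraint on its \emph{right}, while pinning $a_j$ to $\infty$ satisfies only the constraint on its \emph{left}, and these are incompatible values. So each pinned variable covers at most \emph{one} of the $s-2$ constraints, forcing at least $s-2$ variables to be pinned and at most one to be free. Concretely, if some $a_r$ is free (neither $0$ nor $\infty$) then the constraint to its right forces $a_{r+1}=\infty$, hence $a_{r+2}=\infty$, etc.; and the constraint to its left forces $a_{r-1}=0$, hence $a_{r-2}=0$, etc. Thus the fibre over a point with $u=v=0$ and $s\ge 2$ vanishing $t_i$'s is a chain of $\BP^1$'s of dimension $1$, not $\lceil(s-1)/2\rceil$; your formula already fails at $s=4$. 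Consequently your assertion that $\{\dim f^{-1}(y)\ge r\}=W_r$ is false for $r\ge 2$ (the left side is empty).

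The paper's proof is accordingly much shorter: by \remref{r:2curves} every fibre of $\BX_n\to\BA^n$ is a curve, so every fibre of $\BX_n\to\BX_{\BA^n}$ has dimension $\le 1$; and the map is an isomorphism away from the codimension-$3$ locus $F=\{u=v=0,\ t_i=0\text{ for more than one }i\}$. Your locus $W_1$ coincides with $F$ and your codimension computation $\on{codim} F=3$ is correct, so once the fibre bound is fixed your argument collapses to the paper's.
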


\begin{proof}
The morphism \eqref{e:Xn to XAn} is projective because it is a composition 
\[
\BX'_n\mono\BX_{\BA^n}\times (\BP^1)^{n-1}\to \BX_{\BA^n}
\]
in which the first arrow is a closed embedding and the second one is the projection. By \remref{r:2curves},
the fibers of the morphism \eqref{e:Xn to XAn} have dimension $\le 1$ (the fibers that have more than one point are chains of projective lines). Finally, it is easy to check that the map \eqref{e:Xn to XAn} is an isomorphism over $\BX_{\BA^n}-F$, where 
$F\subset\BX_{\BA^n}$ is a closed subset of codimension 3; namely, a point 
$(t_1\,,\ldots ,t_n\,, u,v)\in \BX_{\BA^n}$ is in $F$ if and only if $u=v=0$ and $t_i=0$ for more than one~$i\,$.
\end{proof}

\sssec{The open embedding $\wt{Z}\times_{\BA^1}\BA^n\mono\tilde Z_n\,$}
By \eqref{e:maps to tilde Zn}, the map \eqref{e:Xn to XAn} induces a morphism
\begin{equation} \label{e:2open embedding}
\wt{Z}\underset{\BA^1}\times\BA^n\to\tilde Z_n\, . 
\end{equation}
\begin{prop}    \label{p:2open embedding}
(i) This morphism is an open embedding. 

(ii) If every $\BG_m$-equivariant map $\BP^1\otimes_k\bar k\to Z\otimes_k\bar k$ is constant then the map \eqref{e:2open embedding} is an isomorphism.
\end{prop}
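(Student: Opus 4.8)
The plan is to recognize that \eqref{e:2open embedding} is the exact analogue, one level up, of the morphism $(\sigma^-)^*$ treated in \propref{p:open embeddings}. Indeed, combining \eqref{e:maps to tilde Zn} with the identity $\BX_S=(\BX_{\BA^n})_S$ (valid for any scheme $S$ over $\BA^n$, since $\BX_{\BA^n}=\BX\underset{\BA^1}\times\BA^n$), one sees that for such $S$ the map \eqref{e:2open embedding} is the pullback
$$\rho_S^*:\gMaps((\BX_{\BA^n})_S\,,Z)\to\gMaps((\BX_n)_S\,,Z)$$
along the base change of the projective small morphism $\rho:\BX_n\to\BX_{\BA^n}$ of \eqref{e:Xn to XAn}. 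Thus $\rho$ plays the role that the blow-up $\sigma^-$ played in Subsect.~\ref{ss:fiber products my}, and I would prove part (i) by the word-for-word repetition of the proof of \propref{p:open embeddings}; the only input that has to be reproved is the analogue of \lemref{l:sigma is Stein}.

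For that input I would show: for $S$ the spectrum of an Artinian local ring over $\BA^n$, the natural map $\CO_{(\BX_{\BA^n})_S}\to(\rho_S)_*\CO_{(\BX_n)_S}$ (naive direct image) is an isomorphism. As in \lemref{l:sigma is Stein}, one first treats $S=\Spec K$ for a field $K$ and then passes to the Artinian case by d\'evissage, using that both $\BX_n$ and $\BX_{\BA^n}$ are flat over $\BA^n$ (the former by \lemref{l:smooth & flat}(ii), the latter since $\BX\to\BA^1$ is flat). Over a field the fibres are explicit: by \remref{r:2curves} the fibre of $\BX_n$ is a chain $C_m$ of projective lines with its two extreme ends removed, and $\rho$ collapses the $m-1$ complete interior components to the node of the degenerate hyperbola $uv=0$ while being an isomorphism on the two outer affine lines; a direct computation of $H^0$ then gives $\CO\iso\rho_*\CO$ on the fibre. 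With this in hand, the reduction of (i) to the two assertions (a) injectivity on field-valued points and (b) a Cartesian square over an Artinian base, and their verification via \lemref{l:postponed} applied to $\rho_S$ and to $\id_{\BG_m}\times\rho_S$, is identical to the proof of \propref{p:open embeddings}.

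For part (ii), I would use that, by (i), \eqref{e:2open embedding} is an open embedding, so it is an isomorphism as soon as it is surjective; since both sides are of finite type over $k$, surjectivity may be checked on $\bar k$-points, so I may assume $k=\bar k$. A $\bar k$-point of $\tilde Z_n$ lying over $(t_1,\ldots,t_n)\in\BA^n(k)$ is a $\BG_m$-equivariant map $f:C_m\to Z$, where $C_m$ is the corresponding fibre of $\BX_n$, and such a point lies in the image of \eqref{e:2open embedding} if and only if $f$ lies in the image of $\rho^*$, i.e.\ $f$ factors through $\rho$. Now $\rho$ is an isomorphism away from the complete interior components of $C_m$, each of which is a $\BP^1$ carrying the standard $\BG_m$-action; by hypothesis the restriction of $f$ to each such $\BP^1$ is constant. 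Hence $f$ is constant on every contracted component, the common value agreeing at the nodes (and lying in $Z^0$), so $f$ factors uniquely through $\rho$. Thus $\rho^*$ is surjective on every fibre, \eqref{e:2open embedding} is surjective, and (ii) follows.

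The main obstacle is the Stein-type statement $\CO_{\BX_{\BA^n}}\iso\rho_*\CO_{\BX_n}$ underlying part (i): one must check it fibrewise (the $H^0$-computation on the chain $C_m$) and then propagate it over an Artinian base by d\'evissage, this being precisely what licenses the application of \lemref{l:postponed}. Once this is available, both parts are formal --- (i) by the cited parallel with \propref{p:open embeddings} and (ii) by the elementary factoring argument, whose only real ingredient is the hypothesis that $\BG_m$-equivariant maps out of $\BP^1$ are constant.
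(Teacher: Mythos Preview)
Your proposal is correct and follows essentially the same route as the paper's own proof: for (i) the paper likewise says to repeat the argument of \propref{p:open embeddings}, proving the Stein-type analogue of \lemref{l:sigma is Stein} via \remref{r:2curves} and the flatness in \lemref{l:smooth & flat}(ii); for (ii) the paper invokes (i) together with the description of the fibers of \eqref{e:Xn to XAn} as chains of $\BP^1$'s carrying the standard $\BG_m$-action, which is exactly your factoring argument.
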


%The lemma is proved just as \propref{p:open embeddings}.

\begin{proof}
Statement (i) is proved just as \propref{p:open embeddings} (to prove an analog of \lemref{l:sigma is Stein}, use \remref{r:2curves} and the flatness statement from \lemref{l:smooth & flat}(ii)).

Statement (ii) is a consequence of (i) and the following corollary of \remref{r:2curves}: any fiber of the morphism  \eqref{e:Xn to XAn} is either a point or a chain of projective lines each of which is equipped with the standard $\BG_m$-action.
\end{proof}

\sssec{Toric action}
Recall that $\BX_{\BA^n}$ is the variety of solutions to the equation  \eqref{e:hypersurface}.
Let $T\subset\BX_{\BA^n}$ denote the set of those solutions all of whose coordinates are nonzero. This is a group with respect to multiplication. The torus $T$ acts on $\BX_{\BA^n}$ by multiplication; in fact,
$\BX_{\BA^n}$ is a \emph{toric variety} with respect to $T$. There is a unique structure of toric variety on 
$\BX_n$ (with the same torus $T$) such that the map \eqref{e:Xn to XAn} is a morphism of toric varieties.
Therefore one can describe $\BX_n$, $\BX_{\BA^n}$, and the map \eqref{e:Xn to XAn} using the language of fans, see \cite[Ch.~I]{KKMS}.

Note that the action of $\BG_m$ on $\BX_n$ and $\BX_{\BA^n}$ considered above is a part of the $T$-action.

\sssec{Relation with the anti-action from Subsect.~\ref{sss:anti-action}} \label{sss:relation anti-action}
The morphism \eqref{e:2open embedding} can be expressed in terms of the anti-action from 
Subsect.~\ref{sss:anti-action}. Let us explain this for $n=2$.

Given a $k$-scheme $S$ and morphisms $t_1,t_2:S\to\BA^1$, we have a commutative diagram
\begin{equation}   \label{e:anti-action diagram}
\CD
\wt{Z}_{t_1 t_2} @>{\phi_{t_1,1, t_2}}>>  \wt{Z}_{t_2} \\
@V{\phi_{1,t_2, t_1}}VV   @VV{\phi_{1,t_2, 1}}V   \\ 
\wt{Z}_{t_1} @>{\phi_{t_1,1,1}}>>\wt{Z}_1@=Z
\endCD
\end{equation}
whose arrows are given by the anti-action of $\BA^2$, see formula~\eqref{e:anti-action}. It is easy to check that the morphism $\phi_{1,t_2, 1}:\wt{Z}_{t_2}\to\wt{Z}_1=Z$ comes from the morphism $\pi_1:\wt{Z}\to Z$ defined in Subsect.~\ref{sss:tilde p} and the morphism $\phi_{t_1,1,1}:\wt{Z}_{t_1}\to\wt{Z}_1=Z$ comes from $\pi_2\,$. So diagram \eqref{e:anti-action diagram} defines a morphism 
$\wt{Z}_{t_1 t_2}\to \wt{Z}\underset{Z}\times\wt{Z}$.  As $t_1$ and $t_2$ vary, we get a morphism
\begin{equation}   \label{e:open emb for n=2}
\wt{Z}\underset{\BA^1}\times\BA^2\to\wt{Z}\underset{Z}\times\wt{Z}. 
\end{equation}
It is straightforward to check that it is equal to the morphism \eqref{e:2open embedding} for $n=2$.

\sssec{Remark} \label{sss:property of anti-action} 
By virtue of Subsect.~\ref{sss:relation anti-action}, one can interpret \propref{p:2open embedding} as a property of the anti-action from Subsect.~\ref{sss:anti-action}. Since the map  \eqref{e:2open embedding} involves a number $n$, we have, in fact, a sequence of properties for $n=2,3,4,...\; $. However, it is easy to see that the property for $n=2$ implies the rest.

\section{Proof of Theorem~\ref{t:attractors}} \label{s:attractors}

\ssec{Plan}

Let $Z$ be an algebraic $k$-space of finite type equipped with a $\BG_m$-action.
We have to prove that $Z^+$ is an algebraic space and that
%want to prove that 
%
%(i) $Z^+$ is an algebraic $k$-space of finite type,
%
%(ii) the morphism $
%
%\noindent Equivalently, we want to prove that 
$q^+:Z^+\to Z^0$ is an affine morphism of finite type.
To this end, we will decompose the morphism $q^+:Z^+\to Z^0$ as 
\[
Z^+\to\Zp\to Z^0,
\]
where $\Zp$ is defined in Subsect.~\ref{ss:Zp} below. Then we will prove that the morphism $Z^+\to\Zp$ is, in fact, an isomorphism and the morphism $\Zp\to Z^0$ is an affine morphism of finite type.

\ssec{The space $\Zp$} \label{ss:Zp}
For $n\in\BZ_+$ let $(\BA^1)_n\subset\BA^1$ denote the $n$-th infinitesimal neighborhood of $0\in\BA^1$, i.e., 
$(\BA^1)_n:=\Spec k[t]/(t^{n+1})$. %The multiplicative monoid $\BA^1$ acts on itself by multiplcation, and this 
%action preserves the subscheme $(\BA^1)_n\subset \BA^1$. So $\BA^1$ acts on each $(\BA^1)_n\,$.
Set $Z^+_n:=\GMaps ((\BA^1)_n\, , Z)$. Note that $Z^+_0=Z^0$. The spaces $Z^+_n$ form a projective system. 

\begin{defn}
$\Zp:= \limfromn Z^+_n$. Equivalently, $\Zp =\GMaps (\widehat{\BA}^1,Z)$, where $\widehat{\BA}^1$ is the formal completion of $\BA^1$ at $0\in\BA^1$, i.e., $\BA^1:= \limton (\BA^1)_n\,$.
\end{defn}

The multiplicative monoid $\BA^1$ acts on itself by multiplcation, and this action preserves the subschemes 
$(\BA^1)_n\subset \BA^1$. So $\BA^1$ acts on the spaces $Z^+_n$ and $\Zp$.

\ssec{A theorem which implies Theorem~\ref{t:attractors}}
The embeddings 
$$\Spec k=\{ 0\}\mono\widehat{\BA}^1\mono\BA^1$$ 
induce morphisms
\[
\GMaps (\BA^1,Z)\to\GMaps (\widehat{\BA}^1,Z)\to \GMaps (\Spec k,Z)
\]
or equivalently,
\begin{equation}   \label{e:factorization}
Z^+\to\Zp\to Z^0.
\end{equation}

The composition in \eqref{e:factorization} equals $q^+:Z^+\to\Zp$. So
Theorem~\ref{t:attractors} follows from the next one.

\begin{thm}   \label{t:2attractors}
(i) The  morphism $Z^+\to\Zp$ is an isomorphism.

(ii) $\Zp$ is an algebraic space. Moreover,  the  morphism $\Zp\to Z^0$ is affine and of finite type.
\end{thm}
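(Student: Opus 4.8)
The plan is to analyze the projective system $\{Z^+_n\}$ directly, following the decomposition $Z^+\to\Zp\to Z^0$ of \secref{s:attractors}, and to exploit the fact that $\BG_m$-equivariance forces everything to be controlled by the \emph{infinitesimal} neighborhood of $Z^0$ — which, and this is the point that circumvents the absence of Sumihiro's theorem, is automatically finite (hence affine) over $Z^0$. First I would record a reduction. Let $\CI\subset\CO_Z$ be the ideal of the closed embedding $Z^0\mono Z$ (closed by \propref{p:Z^0closed}), let $\ol{Z}_n$ be the $n$-th infinitesimal neighborhood of $Z^0$ in $Z$, and let $\widehat{Z}$ be the formal neighborhood. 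Since $t$ has weight $1$ on $\BA^1$, any $\BG_m$-equivariant map out of $(\BA^1)_n$ or $\widehat{\BA}^1$ carries $0$ into $Z^0$ and pulls $\CI$ back into $(t)$; hence $\CI^{n+1}$ pulls back into $(t^{n+1})=0$, and the map factors through $\ol{Z}_n$ (resp.\ $\widehat{Z}$). Thus $Z^+_n=\GMaps((\BA^1)_n,\ol{Z}_n)$ and $\Zp=\GMaps(\widehat{\BA}^1,\widehat{Z})$. The key observation is that $\CO_{\ol{Z}_n}=\CO_Z/\CI^{n+1}$ is a coherent $\CO_{Z^0}$-module (it is filtered by the coherent sheaves $\CI^j/\CI^{j+1}$, $j\le n$), so $\ol{Z}_n\to Z^0$ is \emph{finite}, with no local-linearity hypothesis on $Z$.

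Next I would establish representability and affineness at each finite stage. Because $(\BA^1)_n$ is finite and flat over $k$ while $\ol{Z}_n$ is affine over $Z^0$, the space $\GMaps((\BA^1)_n,\ol{Z}_n)$ is represented by a scheme affine and of finite type over $Z^0$: it is cut out of the Weil restriction of $\ol{Z}_n$ along the finite flat scheme $(\BA^1)_n$ by the closed condition of $\BG_m$-equivariance. In particular each transition morphism $Z^+_n\to Z^+_{n-1}$ is affine.

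The crux is to show that the tower \emph{stabilizes}, and this is what yields (ii). Passing from $(\BA^1)_{n-1}$ to $(\BA^1)_n$ is a square-zero extension by the ideal $(t^n)$, which as a $\BG_m$-module is the weight-$n$ representation. Hence, by $\BG_m$-equivariant deformation theory (exactly as in the proofs of \propref{p:smoothness} and \propref{p:2smoothness}, where one passes to the weight-$n$ part of an $H^\bullet(\BG_m,-)$), the lifting problem for $Z^+_n\to Z^+_{n-1}$ is governed by the weight-$n$ graded piece of the relevant cohomology of the cotangent complex $L_{Z/k}$ restricted along $Z^0$; by \lemref{l:TZ0} and \propref{p: p^+}(vi) these sheaves are built from $\CI/\CI^2$ and its symmetric powers $\Sym^j(\CI/\CI^2)$. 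Since $Z^0$ is quasi-compact and $\CI/\CI^2$ is a $\BG_m$-equivariant coherent sheaf, its weights are bounded, say in absolute value by $N$. For $n>N$ the weight-$n$ contribution vanishes, so every equivariant map over $(\BA^1)_{n-1}$ admits a \emph{unique} equivariant lift over $(\BA^1)_n$, and likewise in families; hence $Z^+_n\to Z^+_{n-1}$ is an isomorphism for $n>N$. Therefore $\Zp=Z^+_N$, which is an algebraic space, affine and of finite type over $Z^0$, proving (ii). I expect this uniform high-weight vanishing to be the main obstacle: one must phrase the deformation theory through the cotangent complex rather than a tangent bundle, since $Z$ is neither smooth nor locally linear, and one must extract the weight bound globally from quasi-compactness of $Z^0$.

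Finally, for (i) I would show that the restriction $Z^+\to\Zp$ is an isomorphism. The heart is the elementary identity $\GMaps(\BA^1,Y)\iso\GMaps(\widehat{\BA}^1,Y)$ for \emph{affine} $Y=\Spec B$: a $\BG_m$-equivariant map is a graded homomorphism $B\to R[t]$ (resp.\ $B\to R[[t]]$), and since its weight-$w$ component necessarily lands in the single monomial $R\cdot t^w$, the polynomial and formal data coincide. To globalize, injectivity follows because two equivariant maps $\BA^1\times S\to Z$ with the same formal germ along $\{0\}\times S$ must agree: over each geometric fibre their agreement locus is a $\BG_m$-stable closed subscheme of $\BA^1$ containing the entire formal disc, hence all of $\BA^1$ (one reduces to the separated case, or argues \'etale-locally, to make the agreement locus closed). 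For surjectivity one uses the reduction of the first paragraph together with the bound $N$ from (ii): a formal germ is graded with weights $\le N$, so it is the germ of a unique global equivariant map, produced affine-locally over $Z^0$ and glued by means of the graded structure. Combining, $Z^+\to\Zp$ is bijective on $R$-points for every $k$-algebra $R$, hence an isomorphism.
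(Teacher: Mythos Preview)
Your argument for (ii) has two genuine gaps. First, there is no morphism $\ol{Z}_n\to Z^0$: the surjection $\CO_{\ol Z_n}=\CO_Z/\CI^{n+1}\epi\CO_{Z^0}$ admits no natural section, even $\BG_m$-equivariantly (already in the affine case $A_0\cap\CI$ need not lie in $\CI^{n+1}$), so the claim ``$\ol Z_n\to Z^0$ is finite'' and the ensuing Weil-restriction argument are unfounded. The paper's fix is to replace $\ol Z_n$ by the sheaf of $n$-jets $J_n$ on $Z$, whose two $\CO_Z$-algebra structures make its pullback $J_n^0$ a genuine coherent $\CO_{Z^0}$-algebra; the graded algebra $\CA^+$ with $\Zp=\Spec\CA^+$ is then built from the $J_n^0$. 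Second, your stabilization bound fails. For the cusp $Z=\Spec k[x,y]/(x^2-y^3)$ with $\deg x=3$, $\deg y=2$, the top weight of $\CI/\CI^2$ is $N=3$, yet $Z^+_5=\BA^2$ while $Z^+_6=Z$, so $Z^+_6\to Z^+_5$ is the strict closed immersion of the cusp into the plane. The obstruction lives in weight $6$, the degree of the \emph{relation} $x^2-y^3$, not in the weights of the generators; your assertion that the relevant sheaves are ``built from $\CI/\CI^2$ and its symmetric powers'' is both incorrect and self-defeating (symmetric powers would have unbounded weights). The paper avoids stabilization entirely: finite generation of $\CA^+$ comes directly from coherence of $J_1^0$.

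For (i), your surjectivity step inherits the first gap: since neither $Z$ nor $\widehat Z$ is affine over $Z^0$, there is no way to work ``affine-locally over $Z^0$,'' and even granting the affine identity $\GMaps(\BA^1,Y)\cong\GMaps(\widehat{\BA}^1,Y)$ on some \'etale chart, you would still need to algebraize a formal map into $Z$ itself. The paper's route to (i) is substantially harder: after establishing that $Z^+\to\Zp$ is a monomorphism by an equalizer argument (your injectivity sketch is close to this, though ``one reduces to the separated case'' is exactly the step requiring care when $Z$ is not separated), it proves bijectivity on points of complete local Noetherian rings via Moret-Bailly's descent theorem---trading the formal disc for $\Spec R((t))$, where $\BG_m$-equivariance can be exploited directly---and then concludes by Artin approximation.
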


The easier statement (ii) will be proved in the next subsection. Statement (i) of the theorem will be proved in 
Subsections~\ref{ss:2attractors-proof}-\ref{ss:2attractors-2proof}.

\ssec{Proof of Theorem~\ref{t:2attractors}(ii)} \label{ss:A+}
We will first construct a finitely generated $\BZ_+$-graded quasi-coherent $\CO_{Z^0}$-algebra $\CA^+$
(see Definition~\ref{d:A+}). Then we will construct an isomorphism $Z^+\iso\Spec\CA^+$ of spaces over $Z^0$. Thus we will get an explicit description of $Z^+$ in the spirit of Subsect.~\ref{sss:attractors-affine}.

\medskip

%\sssec{The $\CO_{Z^0}$-algebra $\CA^+$}  \label{sss:A+}
Let $J_n$ denote the sheaf of $n$-jets of functions on $Z$. In particular, 
\begin{equation}   \label{e:J1}
J_0=\CO_{Z}\, , \quad J_1=\CO_{Z}\oplus\Omega^1_Z\, . 
\end{equation}
Let $J_n^0$ denote the pullback of $J_n$ to $Z^0$.

Each $J_n$ is a coherent $\CO_{Z}$-algebra.\footnote{More precisely, $J_n$ has two $\CO_{Z}$-algebra structures (the ``left" one and the ``right" one). We make a choice between left and right once and for all.} So $J_n^0$ is a coherent $\CO_{Z^0}$-algebra.
In addition, the $\CO_{Z^0}$-algebra is $\BZ$-graded: the grading corresponds to the $\BG_m$-action on
$J_n^0\,$. The epimorphism 
$$J_n^0\epi J_0^0=\CO_{Z^0}$$
will be called \emph{augmentation}. %and its kernel will be called the augmentation ideal. 
If $0\le m\le n$ then $J_m^0$ identifies with the quotient of $J_n^0$ by the $(m+1)$-th power of the augmentation ideal $\Ker (J_n^0\epi\CO_{Z^0})$.

\medskip

Let $\CA^+_n$ denote the quotient of $J_n^0$ by the ideal generated by the degree 0 component of
$\Ker (J_n^0\epi\CO_{Z^0})$ and by the components of negative degrees of $J_n^0$. Clearly $\CA^+_n$
is a $\BZ_+$-graded coherent $\CO_{Z^0}$-algebra whose degree 0 component equals $\CO_{Z^0}\,$.

\begin{lem} \label{l:truncating}
If $0\le m\le n$ then $\CA^+_m$ identifies with the quotient of $\CA^+_n$ by the $(m+1)$-th power of the augmentation ideal $\Ker (\CA^+_n\epi\CO_{Z^0})$. %\qed
\end{lem}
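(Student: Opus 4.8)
The plan is to reduce the statement to a purely formal computation of graded ideal sheaves on $Z^0$, realizing both $\CA^+_m$ and $\CA^+_n/\Ker(\CA^+_n\epi\CO_{Z^0})^{m+1}$ as \emph{the same} quotient of $J_n^0$. Throughout, all the sheaves $J^0_\bullet$ and $\CA^+_\bullet$ are quasi-coherent $\BZ$-graded $\CO_{Z^0}$-algebras and all maps in sight are graded, so every identity below can be checked affine-locally (or on stalks), where it is ordinary graded commutative algebra. Write $K_n:=\Ker(J_n^0\epi\CO_{Z^0})$ for the augmentation ideal and $\fm_n\subset J_n^0$ for the graded ideal generated by the degree-$0$ part of $K_n$ together with the negative-degree components of $J_n^0$, so that by definition $\CA^+_n=J_n^0/\fm_n$.

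First I would record that the augmentation descends. Both families of homogeneous generators of $\fm_n$ lie in $K_n$ (negative-degree elements are in the augmentation ideal since $\CO_{Z^0}$ is concentrated in degree $0$), so $\fm_n\subseteq K_n$ and the augmentation factors as $\CA^+_n\epi\CO_{Z^0}$ with kernel $\ol K_n:=K_n/\fm_n$ equal to the image of $K_n$. Consequently $\ol K_n^{m+1}$ is the image of $K_n^{m+1}$ in $\CA^+_n$, namely $(K_n^{m+1}+\fm_n)/\fm_n$, and therefore
\[
\CA^+_n/\ol K_n^{\,m+1}\;=\;(J_n^0/\fm_n)\big/\big((K_n^{m+1}+\fm_n)/\fm_n\big)\;=\;J_n^0/(\fm_n+K_n^{m+1}).
\]

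Next I would compute $\CA^+_m$ inside the same ambient algebra. By the identification recalled just before the lemma, $J_m^0=J_n^0/K_n^{m+1}$; let $\pi:J_n^0\epi J_m^0$ be the projection, so $\ker\pi=K_n^{m+1}$. Since $\pi$ is a graded surjection, the augmentation ideal of $J_m^0$ is $\pi(K_n)$, its degree-$0$ part is $\pi$ of the degree-$0$ part of $K_n$, and the negative-degree part of $J_m^0$ is $\pi$ of the negative-degree part of $J_n^0$; thus the homogeneous generators of $\fm_m$ are exactly the $\pi$-images of the homogeneous generators of $\fm_n$. Hence $\pi^{-1}(\fm_m)=\fm_n+\ker\pi=\fm_n+K_n^{m+1}$, giving
\[
\CA^+_m\;=\;J_m^0/\fm_m\;=\;J_n^0/\pi^{-1}(\fm_m)\;=\;J_n^0/(\fm_n+K_n^{m+1}).
\]
Comparing the two displays shows $\CA^+_m\cong\CA^+_n/\ol K_n^{\,m+1}$, and the isomorphism is the natural one: the composite $J_n^0\to J_m^0\epi\CA^+_m$ kills $\fm_n$, hence factors through $\CA^+_n$, inducing the surjection $\CA^+_n\epi\CA^+_m$ whose kernel is precisely $(\fm_n+K_n^{m+1})/\fm_n=\ol K_n^{\,m+1}$.

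The routine parts are the two quotient simplifications; the one point deserving care — and the only real content — is the middle step, the verification via the graded surjection $\pi$ that the homogeneous generators of $\fm_m$ are the images of those of $\fm_n$, equivalently that $\pi^{-1}(\fm_m)=\fm_n+K_n^{m+1}$. This rests on the elementary facts that for a graded surjection the image of a graded ideal is graded with $(\pi(K_n))_d=\pi((K_n)_d)$, and that the preimage of an ideal generated by images of homogeneous elements is the ideal they generate plus $\ker\pi$. I expect no genuine obstacle beyond this grading bookkeeping.
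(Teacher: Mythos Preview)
Your proof is correct and is precisely the expansion of the paper's one-line argument ``Follows from a similar property of the algebras $J_n^0$'': you use the identification $J_m^0=J_n^0/K_n^{m+1}$ (stated just before the lemma) and verify that passing to the quotient defining $\CA^+_\bullet$ commutes with it. The paper gives no further detail, so there is nothing to compare.
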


\begin{proof}
Follows from a similar property of the algebras $J_n^0\,$.
\end{proof}

\lemref{l:truncating} implies that $\Ker (\CA^+_n\epi\CA^+_m)$ is concentrated in degrees $>m$; in other words, the degree $m$ component of $\CA^+_n$ does not depend on $n$ if $n\ge m$.

\begin{defn}   \label{d:A+}
$\CA^+$ is the $\BZ_+$-graded quasi-coherent $\CO_{Z^0}$-algebra whose degree $m$ component is the  
degree $m$ component of $\CA^+_n$, where $n\ge m$. In other words, $\CA^+$ is the projective limit of 
$\CA^+_n$ in the category of $\BZ_+$-graded quasi-coherent $\CO_{Z^0}$-algebras.
\end{defn}

\begin{lem}   \label{l:finite-generation}
(i) The $\CO_{Z^0}$-algebra $\CA^+$ is finitely generated.

(ii) Suppose that the pullback of $\Omega^1_Z$ to $Z^0$ is concentrated in degrees $\le n$ with respect to the $\BZ$-grading corresponding to the $\BG_m$-action. Then the $\CO_{Z^0}$-algebra $\CA^+$ is generated by its graded components of degrees $\le n$.
\end{lem}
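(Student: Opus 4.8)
The plan is to extract a finite, degree-bounded generating set for $\CA^+$ directly from the coherent sheaf $\Omega^1_Z|_{Z^0}$ (the pullback of $\Omega^1_Z$ to $Z^0$), \emph{uniformly in the jet order}, and then to transport this bounded-degree generation through the inverse limit $\CA^+=\lim\CA^+_m$ (I write the jet order as $m$, to keep it apart from the degree bound $n$ of part (ii)). First I would record the relevant structure of the finite augmented algebras $J^0_m$. The sheaf of $m$-jets $J_m$ is the sheaf of principal parts $\mathcal{P}^m_{Z/k}$; consequently its augmentation ideal is nilpotent of order $m+1$ and the first graded quotient of that ideal is $\Omega^1_Z$. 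Pulling back along the closed embedding $Z^0\mono Z$, the augmentation ideal $\CK_m:=\Ker(J^0_m\epi\CO_{Z^0})$ satisfies $\CK_m^{m+1}=0$ and $\CK_m/\CK_m^2\cong\Omega^1_Z|_{Z^0}$, a $\BZ$-graded coherent $\CO_{Z^0}$-module that is independent of $m$ for $m\ge 1$. By nilpotence of $\CK_m$, the algebra $J^0_m$ is generated over $\CO_{Z^0}$ by any family of homogeneous elements of $\CK_m$ lifting a homogeneous generating family of $\Omega^1_Z|_{Z^0}$.

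Next I would pass to the quotient $\CA^+_m$. Since it is obtained from $J^0_m$ by killing the ideal generated by the non-positive part $(\CK_m)_{\le 0}$ of the augmentation ideal, it is $\BZ_{\ge 0}$-graded with degree-$0$ part $\CO_{Z^0}$, and it is generated over $\CO_{Z^0}$ by the images of lifts of the strictly positive part $(\Omega^1_Z|_{Z^0})_{>0}$ alone. Under the hypothesis of part (ii) these generators lie in degrees $1,\dots,n$, and --- this is the crux --- the bound $n$ is independent of the jet order $m$.

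The decisive step is the passage to $\CA^+=\lim_m\CA^+_m$, where \lemref{l:truncating} is exactly the tool needed: the transition $\CA^+_m\to\CA^+_{m'}$ for $m\ge m'$ is the quotient by the $(m'+1)$-st power of the augmentation ideal, hence an isomorphism in all degrees $\le m'$, so each graded component stabilizes and $(\CA^+)_d=(\CA^+_m)_d$ for $m\ge d$. I would then prove by induction on $d$ that $(\CA^+)_d$ lies in the subalgebra generated by degrees $\le n$: for $d>n$, working inside $\CA^+_d$ (where $(\CA^+)_d=(\CA^+_d)_d$), every degree-$d$ element is an $\CO_{Z^0}$-linear combination of monomials in the chosen generators, each monomial having at least two factors of degree $<d$ and $\le n$; since each factor already lies in the stable range, the monomial computed in $\CA^+_d$ is the image of the corresponding monomial in $\CA^+$, which lies in the subalgebra generated by degrees $\le n$, and as $(\CA^+)_d\to(\CA^+_d)_d$ is an isomorphism the same holds in $\CA^+$. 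This yields (ii). For (i), coherence of $\Omega^1_Z|_{Z^0}$ together with quasi-compactness of $Z^0$ (it is of finite type over $k$) bounds the grading, so (ii) applies with $n$ equal to the top degree; as each $(\CA^+)_d$ is coherent (a subquotient of the coherent $J^0_d$), the algebra $\CA^+$ is generated by a coherent submodule and is therefore finitely generated. I expect the limit step to be the only real obstacle: one must guarantee that uniform bounded-degree generation at every finite level genuinely descends to the limit, and this is precisely what the stabilization in \lemref{l:truncating} secures, rather than any naive commutation of generators with the inverse limit.
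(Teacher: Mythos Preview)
Your argument is correct and follows essentially the same route as the paper. The paper's proof is simply a compressed version of yours: it observes (via \lemref{l:truncating}) that $\CA^+_1=\CO_{Z^0}\oplus\CM$ with $\CM=(\Omega^1_Z|_{Z^0})_{>0}$, hence the augmentation ideal $I$ of $\CA^+$ satisfies $I/I^2=\CM$, and then the graded Nakayama argument you spell out (your inductive step~6) gives both statements; your detailed treatment of the stabilization and the finite-level generation is exactly what is implicit in the paper's two-sentence proof.
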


\begin{proof}
Statement~(i) follows from \lemref{l:truncating} and the fact that $\CA^+_1$ is coherent.
Statement~(ii) follows from \lemref{l:truncating} and the following description of $\CA^+_1$, which is an immediate corollary of \eqref{e:J1}: $\CA^+_1=\CO_{Z^0}\oplus\CM$, where $\CM$ is the strictly positive part of the pullback of $\Omega^1_Z$ to $Z^0$.
\end{proof}

Theorem~\ref{t:attractors}(ii) immediately follows from \lemref{l:finite-generation}(i) and the next proposition.

\begin{prop}
$\Zp$  is canonically isomorphic to $\Spec\CA^+$ as a space over $Z^0$.
\end{prop}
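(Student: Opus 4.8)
The plan is to compute the functor of points of both sides over $Z^0$ and to exhibit a natural bijection. Fix an affine test scheme $S=\Spec R$ equipped with a morphism $\varphi\colon S\to Z^0$; since both $\Zp$ and $\Spec\CA^+$ are spaces over $Z^0$, it suffices to match their fibers over $\varphi$, functorially in $(S,\varphi)$. On one side, a point of the relative $\Spec_{Z^0}\CA^+$ lying over $\varphi$ is an $\CO_{Z^0}$-algebra homomorphism $\CA^+\to R$, where $R$ is an $\CO_{Z^0}$-algebra via $\varphi^\#$. On the other side, since $\Zp=\GMaps(\widehat{\BA}^1,Z)=\limfromn Z^+_n$, a point of $\Zp$ over $\varphi$ is a compatible system of $\BG_m$-equivariant morphisms $f_n\colon (\BA^1)_n\times S\to Z$ with $f_n|_{\{0\}\times S}=\varphi$.

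First I would identify $Z^+_n$ fiberwise with a space of graded homomorphisms. The key input is the universal property of the jet algebra: for a morphism $g\colon S\to Z$, extensions of $g$ to $f\colon (\BA^1)_n\times S\to Z$ with $f|_{t=0}=g$ correspond naturally to $\CO_Z$-algebra homomorphisms $J_n\to R[t]/(t^{n+1})$ lifting $g$. Concretely one sends $a\otimes b\mapsto g^\#(a)\,f^\#(b)$; this factors through $J_n=\CO_{Z\times Z}/\CI^{\,n+1}$ (with $\CI$ the ideal of the diagonal) precisely because $f\equiv g\pmod t$, so $\CI$ lands in $t\,R[t]/(t^{n+1})$ and $\CI^{\,n+1}$ dies. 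When $g=\varphi$ factors through $Z^0$, the target becomes an $\CO_{Z^0}$-algebra, so the set of extensions reads $\Hom_{\CO_{Z^0}\text{-alg}}(J_n^0,R[t]/(t^{n+1}))$; imposing $\BG_m$-equivariance selects the \emph{graded} homomorphisms, where $R[t]/(t^{n+1})=\bigoplus_{d=0}^{n}Rt^d$ has $t$ in degree $1$.

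The heart of the matter is the reduction from $J_n^0$ to $\CA^+_n$. A graded homomorphism $\Phi\colon J_n^0\to R[t]/(t^{n+1})$ automatically kills the negative-degree components of $J_n^0$ (the target has none) and, crucially, the degree-$0$ part of the augmentation ideal: indeed $\Phi\bmod t$ equals $\varphi^\#$ composed with the augmentation $J_n^0\to\CO_{Z^0}$ (this is exactly the condition $f_n|_{t=0}=\varphi$), so any degree-$0$ element of the augmentation ideal is sent into $R\cap t\,R[t]/(t^{n+1})=0$. Hence $\Phi$ factors uniquely through the quotient defining $\CA^+_n$, yielding $Z^+_n(S)_\varphi\cong\Hom^{\mathrm{gr}}_{\CO_{Z^0}}(\CA^+_n,R[t]/(t^{n+1}))$. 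Such a graded homomorphism is nothing but a family of $\CO_{Z^0}$-linear maps $c_d\colon (\CA^+_n)_d\to R$ for $0\le d\le n$, with $c_0=\varphi^\#$, that is multiplicative whenever $d+d'\le n$. Passing to the inverse limit over $n$ and using that $(\CA^+_n)_d=(\CA^+)_d$ once $n\ge d$ while the transition maps simply restrict the $c_d$, the system $\{f_n\}$ assembles into a single family $c_d\colon(\CA^+)_d\to R$ ($d\ge 0$) with $c_0=\varphi^\#$ that is multiplicative in \emph{all} degrees, i.e.\ an $\CO_{Z^0}$-algebra homomorphism $\CA^+\to R$. All identifications are natural in $(S,\varphi)$ and compatible with the structure maps to $Z^0$, and since both sides are fpqc sheaves determined by their points on affines over $Z^0$, this bijection upgrades to the asserted isomorphism $\Zp\iso\Spec\CA^+$ over $Z^0$.

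I expect the main obstacle to be the bookkeeping in the jet universal property: pinning down the two $\CO_Z$-module structures on $J_n$ and verifying the identity ``reduction modulo $t$ equals the augmentation,'' which is exactly what forces the degree-$0$ part of the augmentation ideal to vanish and hence produces $\CA^+_n$ rather than merely $J_n^0$. The limit step is then essentially formal; the one genuine subtlety is that the truncation bound and the length of $R[t]/(t^{n+1})$ grow together, so that ``multiplicative up to degree $n$'' in the limit becomes honest multiplicativity of a homomorphism out of $\CA^+=\limfromn\CA^+_n$.
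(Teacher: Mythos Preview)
Your proof is correct and follows essentially the same route as the paper's: both identify $S$-points of $\Zp$ over $\varphi$ with compatible systems of graded $\CO_{Z^0}$-algebra homomorphisms $J_n^0\to R[t]/(t^{n+1})$, observe that such homomorphisms factor through $\CA^+_n$ for exactly the reasons you give (vanishing on negative degrees and on the degree-$0$ augmentation ideal), and then pass to the limit. The paper packages the last step slightly differently, by writing the limit as a graded homomorphism $A^+_R\to R[t]$ and then evaluating at $t=1$, whereas you work directly with the components $c_d$; these are equivalent formulations.
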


\begin{proof}
Let $S=\Spec R$ be an affine $k$-scheme. Fix a morphism $\varphi :S\to Z^0$. Set 
$$A^+_R:=H^0(S,\varphi^*\CA^+).$$
We have to construct a canonical bijection
\begin{equation}   \label{e:biject}
\Maps_{Z^0}(S,\Zp)\iso\Hom_R(A^+_R\, ,R),
\end{equation}
where $\Maps_{Z^0}$ stands for the set of morphisms of spaces over $Z^0$ and 
$\Hom_R$ stands for the set of $R$-algebra homomorphisms.

Define $\Phi :S\mono Z\times S$ by $\Phi :=(\varphi,\id_S)$. By definition, elements of 
$\Maps_{Z^0}(S,\Zp)$ correspond to $\BG_m$-equivaraint $S$-morphisms 
$f:\widehat{\BA}^1\times S\to Z\times S$ whose restriction to $\{ 0\}\times S$ equals $\Phi :S\to Z\times S$. 
Such $f$ is the same as a sequence of $S$-morphisms $f_n:\BA^1_n\times S\to Z\times S$ such that the
restriction of $f_n$ to  $\{ 0\}\times S$ equals $\Phi :S\to Z\times S$ and the restriction of $f_n$ to 
$\BA^1_{n-1}\times S$ equals $f_{n-1}\,$. Now we need the following standard fact.
%The following standard lemma allows to interpret $f_n$ in terms of the sheaf of $n$-jets~$J_n\,$.

\begin{lem}
The $n$-th infinitesimal neighborhood of the subspace $\Phi(S)\subset Z\times S$ equals 
$\Spec J_n^R$, where  $J_n^R:=H^0(S,\varphi^*J_n^0)$.  In other words, $\Spec J_n^R$ represents the functor that to a scheme $T$ associates the set of morphisms $g:T\to Z\times S$ such that the restriction of $g$ to some closed subscheme $T'\subset T$ with 
  $\CI_{T'}^{n+1}=0$ factors through $\Phi :S\mono Z\times S$ (here $\CI_{T'}\subset\CO_T$ is the ideal of $T'$).
% the pullback of $S\overset{\Phi}\mono Z\times S$ by $g$ is a closed subscheme of $T$ 
%defined by an ideal whose $(n+1)$-st power is zero.
\qed 
\end{lem}

%Note that $f(\BA^1_n\times S)$ is contained in the $n$-th infinitesimal neighborhood of the 
%subspace\footnote{If $Z$ is not a scheme then it may happen that the monomorphism $\Phi$ 
%is not a locally closed embedding. But the notion of the $n$-th infinitesimal neighborhood still makes sense in 
%this situation. E.g., one can use Defin\-ition~16.1.2 from EGA IV (if the algebraic spaces involved are %equipped with the etale topology). Alternatively, one can define an $S'$-point of the $n$-th infinitesimal %neighborhood to be a morphism $g:S'\to Z\times S$ such that after base-changing $\Phi :S\to Z\times S$ by 
%$g:S'\to Z\times S$ one gets a closed embedding defined by an ideal $\CI\subset\CO_{S'}$ with 
%$\CI^{n+1}=0$.} $\Phi(S)\subset Z\times S$. This neighborhood equals $\Spec J_n^R$, where 
%$J_n^R:=H^0(S,\varphi^*J_n^0)$ (this follows from the definition of the jet sheaves). 

The lemma shows that an $S$-morphism $\BA^1_n\times S\to Z\times S$ whose
restriction to  $\{ 0\}\times S$ equals $\Phi :S\to Z\times S$ is the same as an 
$S$-morphism $\BA^1_n\times S\to \Spec J_n^R$ whose restriction to  $\{ 0\}\times S$ is the canonical embedding $S\mono \Spec J_n^R$. Thus elements of the l.h.s. of \eqref{e:biject} correspond to homomorphisms of augmented topological algebras
\[
\limfromn J_n^R\to R[[t]]
\]
compatible with the $\BZ$-gradings (here $t\in R[[t]]$ has degree 1 and the augmentation $R[[t]]\to R$ is the ``constant term" map). Such a homomorphism has to kill all elements of negative degrees and all degree 0 elements of the augmentation ideal of $J_n^R$. Thus elements of the l.h.s. of \eqref{e:biject} correspond to
graded $R$-algebra homomorphisms $A^+_R\to R[t]$. 

Finally, graded $R$-algebra homomorphisms $A^+_R\to R[t]$ are in bijection with elements of the r.h.s of 
\eqref{e:biject}: to a graded homomorphism $A^+_R\to R[t]$ one associates its composition with 
$\ev_1:R[t]\to R$, where $\ev_1$ is evaluation at $t=1$.
\end{proof}

\ssec{Proof of Theorem~\ref{t:2attractors}(i) modulo \lemref{l:complete_local}}  \label{ss:2attractors-proof}
Our goal is to prove that the morphism $Z^+\to\Zp$ is an isomorphism.

\begin{lem}   \label{l:mono}
The morphism $Z^+\to\Zp$ is a monomorphism.
\end{lem}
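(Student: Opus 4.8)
The plan is to prove the equivalent concrete statement: for every $k$-scheme $S$ the restriction map $\gMaps(\BA^1\times S,Z)\to\gMaps(\wh{\BA}^1\times S,Z)$ is injective, where $\wh{\BA}^1$ is the formal completion of $\BA^1$ at $0$. So I take two $\BG_m$-equivariant morphisms $f_1,f_2\colon\BA^1\times S\to Z$ which agree after restriction to $\wh{\BA}^1\times S$ — that is, which agree on the $n$-th infinitesimal neighborhood $(\BA^1\times S)_n$ of the zero section $\{0\}\times S$ for every $n$ — and I must show $f_1=f_2$. Note that I do not need to know yet that $Z^+$ is algebraic; this is a purely set-theoretic injectivity statement.

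First I would form the equalizer $\Sigma:=(\BA^1\times S)\underset{Z\times Z}{\times}Z$, the fibre product taken along $(f_1,f_2)\colon\BA^1\times S\to Z\times Z$ and the diagonal $\Delta\colon Z\to Z\times Z$. Since $Z$ is an algebraic space, $\Delta$ is representable, so $\Sigma$ is an algebraic space and $\Sigma\to\BA^1\times S$ is a base change of $\Delta$. Because $Z$ is of finite type, $\Delta$ is a monomorphism locally of finite type, hence formally unramified, hence unramified; thus $\Sigma\to\BA^1\times S$ is an unramified monomorphism. Two further features are immediate: $\Sigma$ is $\BG_m$-stable (as $f_1$, $f_2$, and $\Delta$ are $\BG_m$-equivariant), and, since $f_1$ and $f_2$ agree on each $(\BA^1\times S)_n$, every infinitesimal neighborhood of the zero section factors through $\Sigma\mono\BA^1\times S$.

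The key step is to deduce from this that $\Sigma\to\BA^1\times S$ is an open embedding in a neighborhood of the zero section. Completing along the zero section, the fact that all its infinitesimal neighborhoods factor through the monomorphism $\Sigma$ shows that the induced map $\wh{\CO}_{\BA^1\times S}\to\wh{\CO}_{\Sigma}$ is injective (anything in its kernel lies in $\mathcal I^{\,n+1}$ for all $n$), while unramifiedness makes it surjective; hence it is an isomorphism. An isomorphism on formal completions at each point of the zero section, for a morphism of finite type, forces flatness there, so $\Sigma\to\BA^1\times S$ is flat and unramified, i.e. étale, in a Zariski neighborhood of the zero section; being also a monomorphism it is an open embedding there (cf. the argument of \remref{r:section of unramified}). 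This is the part I expect to require the most care: the completed-local-ring computation must be phrased étale-locally (via strictly henselian local rings on an atlas) so that it is valid for algebraic spaces and not just for schemes.

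Finally I would globalize using the $\BG_m$-action. The previous step produces a Zariski-open $U\subset\BA^1\times S$ containing the zero section with $U\subset\Sigma$; replacing $U$ by $\bigcup_{\lambda}\lambda\cdot U$ (legitimate since $\Sigma$ is $\BG_m$-stable) I may assume $U$ is $\BG_m$-stable. It then remains to observe that a $\BG_m$-stable open subset of $\BA^1\times S$ containing the zero section must be all of $\BA^1\times S$: its complement is $\BG_m$-stable and closed, and in each fibre over $S$ it is a $\BG_m$-stable closed subset of $\BA^1$ missing $0$, hence empty, because the only $\BG_m$-stable closed subsets of $\BA^1$ are $\emptyset$, $\{0\}$, and $\BA^1$. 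This is precisely the contracting argument already used in \lemref{l:U^+} and in the proof of \lemref{l:contracting}. Therefore $\Sigma=\BA^1\times S$, which means $f_1=f_2$, completing the proof that $Z^+\to\Zp$ is a monomorphism.
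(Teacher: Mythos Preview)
Your approach is essentially the same as the paper's: form the equalizer $\Sigma$ (called $E$ in the paper), show it is \'etale over $\BA^1\times S$ along the zero section because it contains the formal neighborhood, and then use $\BG_m$-stability to conclude $\Sigma=\BA^1\times S$. The one point the paper handles that you should make explicit is a Noetherian reduction: the paper first uses that $\Zp$ is of finite type over $k$ (already proved in Theorem~\ref{t:2attractors}(ii)) to reduce to $S$ of finite type, which is what makes the passage from ``isomorphism on all infinitesimal neighborhoods'' to ``\'etale'' rigorous (via Krull's intersection theorem / the local flatness criterion); without this your completed-local-ring step is not quite justified for arbitrary $S$.
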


\begin{proof}
We have to prove the injectivity of the map
\[
\gMaps (S\times\BA^1,Z)\to\gMaps (S,\Zp)
\]
for any $k$-scheme $S$. Since $\Zp$ has finite type over $k$ we can assume that $S$ is Noetherian (and moreover, has finite type over $k$).

Let $f_1,f_2:S\times\BA^1\to Z$ be $\BG_m$-equivariant morphisms having the same restriction to the formal neighborhood of $S\times\{ 0\}\subset S\times\BA^1$. We have to prove that $f_1=f_2$.
Let $E$ denote the equalizer of $f_1,f_2$, i.e., the preimage of the diagonal with respect to 
$(f_1,f_2):S\times\BA^1\to Z\times Z$. Clearly $E$ is a scheme of finite presentation over $S$ equipped with an 
$\BA^1$-action and an $\BA^1$-equivariant monomorphism $\nu :E\mono S\times\BA^1$. Moreover, the \subscheme\footnote{The quotation marks are due to the fact that $\nu$ is not necessarily a locally closed embedding. Of course, $\nu$ is a locally closed embedding if $Z$ is separated 
or if $Z$ is a scheme.}
 $E\subset S\times\BA^1$ contains the formal neighborhood of
$S\times\{ 0\}\subset S\times\BA^1$. So $\nu$ is etale at $S\times\{ 0\}\subset E$. Let $E'$ be the maximal open subscheme of $E$ such that $\nu|_{E'}$ is etale. Then $\nu|_{E'}$ is an open embedding. So
$E'$ is an open subsheme of $S\times\BA^1$ containing $S\times\{ 0\}$ and stable with respect to the  $\BA^1$-action. Therefore $E'=S\times\BA^1$, $E=S\times\BA^1$, and $f_1=f_2$.
\end{proof}

\begin{rem}   \label{r:commutation_colimits}
It is clear that the space $Z^+$ is locally of finite presentation (i.e., the corresponding functor 
\{$k$-algebras\}$\to$\{sets\} commutes with filtering inductive limits).
\end{rem}

\begin{lem}  \label{l:complete_local}
Let $R$ be a complete local Noetherian $k$-algebra. Then the map $Z^+(R)\to\Zp (R)$ is bijective.
\end{lem}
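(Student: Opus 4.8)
The plan is to use that, by \lemref{l:mono}, the morphism $Z^+\to\Zp$ is already a monomorphism, so the content of \lemref{l:complete_local} is the \emph{surjectivity} of the map $Z^+(R)\to\Zp(R)$. Fix an element of $\Zp(R)$; by definition it is a compatible system of $\BG_m$-equivariant morphisms $f_n:\Spec (R[t]/t^{n+1})\to Z$, and I write $\varphi:\Spec R\to Z^0\subset Z$ for $f_0$ and $z\in Z$ for the image of the closed point of $\Spec R$ under $\varphi$. I must extend the $f_n$ to a single $\BG_m$-equivariant morphism $\BA^1\times\Spec R\to Z$.

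First I would algebraize the formal datum. Choose an affine scheme $U=\Spec B$ together with an étale morphism $p:U\to Z$ and a point $u_0\in U$ lying over $z$ with the same residue field. Each ring $R[t]/t^{n+1}$ is finite over the complete (hence Henselian) local ring $R$, so it is itself Henselian local, with closed point mapping to $z=p(u_0)$. Since $p$ is étale, each $f_n$ therefore lifts uniquely to a morphism $\tilde f_n:\Spec (R[t]/t^{n+1})\to U$ carrying the closed point to $u_0$, and by uniqueness these lifts are compatible. As $U$ is affine and $R[[t]]=\varprojlim_n R[t]/t^{n+1}$, they assemble into an honest morphism $\tilde f:\Spec R[[t]]\to U$, whose composite $g:=p\circ\tilde f:\Spec R[[t]]\to Z$ is a genuine (no longer merely formal) morphism agreeing with $f_n$ modulo $t^{n+1}$ for every $n$. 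Because $g$ agrees with the equivariant $f_n$ to all orders and $R[[t]]=\varprojlim_n R[t]/t^{n+1}$, the morphism $g$ is $\BG_m$-equivariant.

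It remains to promote the equivariant morphism $g:\Spec R[[t]]\to Z$ to a morphism out of $\BA^1\times\Spec R=\Spec R[t]$, i.e. to show that $g$ is ``polynomial'' in $t$. Here I would exploit that $\BG_m$ is linearly reductive and that $z$ is a fixed point: the $\BG_m$-action on $Z$ lifts, along the formally étale $p$, to an action on the formal neighborhood of $u_0$, making $\tilde f$ equivariant and hence the induced homomorphism on completed local rings graded. Equivalently, one uses the explicit finite description of $\Zp$ from \thmref{t:2attractors}(ii): the given point of $\Zp(R)$ is a graded $R$-algebra homomorphism $A^+_R\to R[t]$ out of the \emph{finitely generated} graded algebra $\CA^+$ (\lemref{l:finite-generation}), all of whose generators sit in degrees $\le N$, where $N$ bounds the degrees of $\Omega^1_Z|_{Z^0}$. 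Either way, a homogeneous generator of degree $d$ is forced to map into $R\cdot t^{d}\subset R[t]$, so the whole homomorphism lands in the polynomial ring $R[t]$; this produces the desired $\BG_m$-equivariant morphism $f:\Spec R[t]\to Z$, which by construction restricts to the given system.

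The main obstacle is exactly this last step — the passage from the formal disc $\Spec R[[t]]$ to the affine line $\Spec R[t]$. Algebraizing the formal map (via Henselian lifting to an affine étale chart) is routine, but that chart is not $\BG_m$-stable, so the grading that forces polynomiality is only visible after lifting the action to the completed local ring at $u_0$; controlling this, or equivalently extracting enough homogeneity from the finite generation of $\CA^+$, is where completeness of $R$ and linear reductivity of $\BG_m$ are essential. Once $f$ is constructed, injectivity from \lemref{l:mono} guarantees it is the required preimage, so no further matching is needed.
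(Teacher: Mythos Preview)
Your proposal has a genuine gap in the crucial step, namely the passage from $\Spec R[[t]]$ to $\Spec R[t]$.

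The first alternative you offer (lifting the $\BG_m$-action to the completed local ring $\hat B_{u_0}$) does not yield what you need. The lift only gives a $\BG_m$-action on $\Spf\hat B_{u_0}$, i.e.\ a \emph{topological} grading on $\hat B_{u_0}$; there is no algebraic $\BG_m$-action on $\Spec\hat B_{u_0}$ (for instance, the would-be coaction $k[[x]]\to k[[x]]\otimes k[\lambda,\lambda^{-1}]$, $x\mapsto\lambda x$, does not land in the algebraic tensor product). Even granting that the map $\hat B_{u_0}\to R[[t]]$ respects the grading, an element $b\in B$ typically has infinitely many nonzero homogeneous components in $\hat B_{u_0}$, so its image in $R[[t]]$ need not be a polynomial. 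Concretely, take $Z=\BP^1$ with the standard action, $z=0$, and the (non-$\BG_m$-stable) open chart $U=\Spec k[x,\,(1+x)^{-1}]$; under the equivariant map $x\mapsto rt$ the element $(1+x)^{-1}\in B$ goes to $\sum_{n\ge 0}(-rt)^n\notin R[t]$ when $r$ is not nilpotent. The desired extension $\Spec R[t]\to Z$ exists but does not factor through $U$, so your strategy of descending along $U\to Z$ cannot succeed.

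The second alternative (invoking the finitely generated algebra $\CA^+$) is circular. A graded $R$-algebra homomorphism $A^+_R\to R[t]$ is exactly the datum of a point of $\Zp(R)$, i.e.\ a morphism $\Spec R[t]\to\Spec A^+_R=\Zp\times_{Z^0}\Spec R$; there is no natural map $\Zp\to Z$ (evaluation at $1\in\BA^1$ makes no sense for $\Zp=\GMaps(\widehat{\BA}^1,Z)$), and producing one is precisely the content of the lemma.

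The paper's proof avoids this obstacle by a different mechanism. After algebraizing to $g:\Spec R[[t]]\to Z$ (your first paragraph, which is fine), it invokes Moret-Bailly's descent theorem to reduce the problem of extending $g$ over $\Spec R[t]$ to that of extending $g|_{\Spec R((t))}$ over $\Spec R[t,t^{-1}]$. On the open torus $\BG_m$ the action is free, so a $\BG_m$-equivariant map $\Spec R[t,t^{-1}]\to Z$ is the same as a single point $z\in Z(R)$; the required $z$ is then produced by showing that $\tilde z(t):=t^{-1}\cdot g(t)\in Z(R((t)))$ actually lies in $Z(R)$, first for $R$ Artinian and then in general by an equalizer/limit argument using completeness.
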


Let us assume this lemma for now; it will be proved in Subsect.~\ref{ss:2attractors-2proof}. 

\begin{proof}[Proof of Theorem~\ref{t:2attractors}(i)]
We have to show that the morphism $Z^+\to\Zp$ is an isomorphism. By \lemref{l:mono}, it is a monomorphism, so it remains to show that for any point\footnote{Here ``point" is understood in the sense of \cite[Ch.2, Definition 6.1]{Kn}.} $z\in \Zp$ the morphism $Z^+\to\Zp$ admits a section over some etale neighborhood of $z$. By \remref{r:commutation_colimits}, we can replace ``etale neighborhood" by ``Henselization". By Artin approximation \cite[Theorem~1.10]{Ar} and \remref{r:commutation_colimits}, one can replace ``Henselization" by ``spectrum of the completed local ring of~$z\,$". It remains to 
use \lemref{l:complete_local}
\end{proof}

\begin{rem}
If $Z$ is separated the proofs of  Theorems~\ref{t:attractors} and \ref{t:2attractors}(i) can be simplified (in particular, Artin approximation is unnecessary). Namely, if $Z$ is separated it is easy to prove directly that the canonical morphism $Z^+\to\Zp\times Z$ is a closed embedding.
Combining this with Theorem~\ref{t:2attractors}(ii), one immediately gets Theorem~\ref{t:attractors}, and Theorem~\ref{t:2attractors}(i) follows from \lemref{l:complete_local} in the particular case of \emph{Artinian} local $k$-algebras.
\end{rem}

\ssec{A descent theorem of Moret-Bailly}  \label{ss:descent}
To prove \lemref{l:complete_local}, we need the following result from \cite{MB}.

\begin{thm}    \label{t:MB}
Let $S$ be a $k$-scheme and $Y\subset S$ a closed subscheme whose defining ideal in $\CO_S$ is finitely generated. Let $S'$ be a scheme flat and affine over $S$ such that the map $S'\times_SY\to Y$ is an isomorphism. Set $U:=S-Y$, $U':=U\times_SS'$.
Then for any algebraic $k$-space $Z$ the map
\[
Z(S)\iso Z(S')\underset{Z(U')}\times Z(U)  
\]
is bijective.
\end{thm}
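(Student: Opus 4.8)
The plan is to identify the square $U'\to S'$, $U'\to U$, $S'\to S$, $U\to S$ as a descent square: I will reduce the statement to the case where $Z$ is an affine scheme and $S$ is affine, and there reduce it to a pullback square of rings, which can be analysed by local cohomology. The reduction to affine $Z$ proceeds exactly as in the proof of \lemref{l:pushout}: one first settles the case of $\Maps$ into affine schemes, and a general algebraic space $Z$ is then assembled from an étale atlas using an equivalence of étale sites $\Et(S)\simeq\Et(S')\times_{\Et(U')}\Et(U)$ attached to the square. For $Z=\Spec B$ affine one has $\Maps(T,Z)=\Hom(B,\Gamma(T,\CO_T))$ for every $T$, and since $\Hom(B,-)$ preserves fibre products the assertion becomes the statement that, after localising on $S$ to assume $S=\Spec A$ with $Y=V(I)$ and $S'=\Spec A'$, the natural map
\[
A\to A'\underset{\Gamma(U',\CO_{U'})}\times\Gamma(U,\CO_U)
\]
is an isomorphism of rings.

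To prove this ring pullback square I would first extract the two consequences of the hypotheses that make everything work. Flatness of $A\to A'$ together with $A'/IA'\iso A/I$ gives, by induction on $n$ using the sequences $0\to I^n/I^{n+1}\to A/I^{n+1}\to A/I^n\to 0$ and the identity $(I^n/I^{n+1})\otimes_A A'=I^n/I^{n+1}$, the isomorphisms $A/I^nA\iso A'/I^nA'$ for all $n$. This yields the key observation that for any $I$-power-torsion $A$-module $M$ the map $M\to M\otimes_A A'$ is an isomorphism (reduce to $M$ killed by $I^n$, where $M\otimes_A A'=M\otimes_{A/I^n}(A'/I^nA')=M$). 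On the other hand, flatness gives the base-change isomorphism $H^i_I(A)\otimes_A A'\iso H^i_{IA'}(A')$ for local cohomology. Since $H^i_I(A)$ is $I$-power-torsion, combining the two shows that the natural maps $H^i_I(A)\to H^i_{IA'}(A')$ are isomorphisms for all $i$.

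Now I would run a diagram chase on the two four-term exact sequences $0\to H^0_I(A)\to A\to\Gamma(U,\CO_U)\to H^1_I(A)\to 0$ and its analogue for $A'$, linked by the vertical maps $A\to A'$, $\Gamma(U,\CO_U)\to\Gamma(U',\CO_{U'})$, and the isomorphisms $H^i_I(A)\iso H^i_{IA'}(A')$. For surjectivity onto the fibre product: given $(a',a_U)$ agreeing on $U'$, the image of $a_U$ in $H^1_I(A)$ maps to the class of $a'|_{U'}$ in $H^1_{IA'}(A')$, which vanishes because $a'$ extends it to $A'$; as $H^1_I(A)\to H^1_{IA'}(A')$ is injective, $a_U$ lifts to some $a\in A$. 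Writing $\iota:A\to A'$, the discrepancy $\iota(a)-a'$ dies on $U'$, hence lies in $H^0_{IA'}(A')\iso H^0_I(A)$; subtracting the corresponding element of $H^0_I(A)\subset A$ (which restricts to $0$ on $U$) produces a genuine preimage of $(a',a_U)$. Injectivity is immediate: an $a$ killed in both $A'$ and $\Gamma(U,\CO_U)$ lies in $H^0_I(A)$ and maps to $0$ in $H^0_{IA'}(A')$, hence is $0$ by the $H^0$-isomorphism. This settles the affine case.

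The main obstacle I anticipate is not this cohomological computation but the passage from affine $Z$ to an arbitrary algebraic space, i.e. the gluing of the étale sites $\Et(S)\simeq\Et(S')\times_{\Et(U')}\Et(U)$ along the flat square. This is the exact analogue for the present situation of the input \cite[exp.~IX, Theorem 4.7]{SGA1} invoked in \lemref{l:pushout}; deducing it from the affine case requires knowing that étale morphisms, together with their descent data, glue along the square, which is the genuinely structural part of the theorem. A minor technical point is the finite generation of $I$, which guarantees that $U$ is quasi-compact and that $H^i_I$ behaves as expected (Čech-computable); both are harmless, and in all applications in this paper—via \lemref{l:complete_local}—the base is Noetherian.
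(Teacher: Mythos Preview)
The paper does not give its own proof of this theorem: it cites it as Theorem~1.2 of \cite{MB} and records that Moret-Bailly's argument in the general case rests on Proposition~4.2 of \cite{FR}, namely the equivalence
\[
\QC(S)\iso\QC(S')\underset{\QC(U')}\times\QC(U)
\]
of categories of quasi-coherent modules. Your local-cohomology argument for the ring pullback (the case of affine $Z$ over affine $S$) is correct and clean. You also correctly locate the real content in the passage from affine $Z$ to an arbitrary algebraic space, and you are honest that you have not supplied it. The \'etale-site equivalence you propose is in fact a consequence of the Ferrand--Raynaud $\QC$ equivalence (separated \'etale $S$-schemes of finite type are encoded by \'etale quasi-coherent $\CO_S$-algebras, and both ``\'etale'' and ``algebra'' are conditions detected after the base changes in the square), so your outline and Moret-Bailly's converge on the same structural input. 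In short: what you have is an accurate plan with a correctly identified missing lemma; the paper simply imports that lemma from \cite{FR} rather than proving it.
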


This is Theorem 1.2 from \cite{MB}. If $Z$ is a scheme the proof is easy (see  \cite{MB}); more generally, there is an easy proof if the diagonal map $Z\to Z\times Z$ is a locally closed embedding. In the general case, the proof  from \cite{MB} uses Proposition 4.2 from \cite{FR}, which says that in the situation of Theorem~\ref{t:MB} the functor
\[
\QC (S)\iso \QC (S')\underset{\QC (U')}\times \QC (U)  
\]
is an equivalence; here $\QC$ stands for the category of quasi-coherent $\CO$-modules.

\begin{rem}
As far as I understand, Theorem~\ref{t:MB} easily implies a similar statement without assuming $S'$ to be affine over $S$. However, this slight generalization of Theorem~\ref{t:MB} will not be used in this article.
\end{rem}

\begin{rem}   \label{r:MB}
For any Noetherian ring $A$, Theorem~\ref{t:MB} is applicable in the following situation:

\[
S=\Spec A[t],\quad Y=\Spec A[t]/(t)\subset S,\quad U=\Spec A[t,t^{-1}],
\]

\[
S'=\Spec A[[t]], \quad U'=\Spec A((t)).
\]

\end{rem}

\ssec{Proof of \lemref{l:complete_local}}    \label{ss:2attractors-2proof}

By \lemref{l:mono}, we only have to prove that the map $Z^+(R)\to\Zp (R)$ is surjective.

The coordinate on $\BA^1_R$ will be denoted by $t$. So $\BA^1_R=\Spec R[t]$, and the formal completion of 
$\BA^1_R$ along $0$ is $\Spf R[[t]]$, where $R[[t]]$ is equipped with the $t$-adic topology. 

By definition, an element of $\Zp (R)$ is a $\BG_m$-equivariant morphism $\hat f:\Spf R[[t]]\to Z$.
Using the fact that $R$ and $R[[t]]$ are Henselian one easily checks that 
$$\Maps (\Spf R[[t]], Z)=\Maps (\Spec R[[t]], Z),$$
so we can also consider $\hat f$ as a morphism $\Spec R[[t]]\to Z$.
Let  $\hat f':\Spec R((t))\to Z$ denote the restriction of  $\hat f:\Spec R[[t]]\to Z$ to $\Spec R((t))$.
The problem is to extend $\hat f$  to a $\BG_m$-equivariant morphism 
$f:\BA^1_R=\Spec R[t]\to Z$. By Theorem~\ref{t:MB} and \remref{r:MB}, this problem is equivalent to extending $\hat f'$ to a $\BG_m$-equivariant morphism $f':(\BG_m)_R=\Spec R[t,t^{-1}]\to Z$.

Specifying $f'$ is the same as specifying its restriction to $\{1\}\subset (\BG_m)_R\,$; denote it by $z\in Z(R)$.
The requirement that $f'|_{\Spec R((t))}=\hat f'$ translates into the following condition:
\[
\iota (z)=\tilde z,
\]
where $\iota :Z(R)\to Z(R((t))\,)$ is induced by the embedding $R\mono R((t))$ and $\tilde z:\Spec R((t))\to Z$ is the composition
\[
\Spec R((t))\overset{(t^{-1},\hat f')}\longrightarrow \BG_m\times Z\longrightarrow Z
\]
(the second morphism is the action map).

\begin{rem} \label{r:down-to-earth}
In down-to-earth terms, $\tilde z (t):=t^{-1}\cdot \hat f(t)$, and the problem is to prove that $\tilde z (t)$ does not depend on $t$. This ``should be" true because $\BG_m$-equivariance of $\hat f$ implies that
\begin{equation}   \label{e:down-to-earth}
\tilde z (\lambda t)=\tilde z (t) \quad\mbox{ for } \lambda\in\BG_m\, .
\end{equation}
\end{rem}

Let us now transform Remark~\ref{r:down-to-earth} into a proof. The precise meaning of
\eqref{e:down-to-earth} is that 
\begin{equation}   \label{e:scientific}
\alpha (\tilde z)=\beta (\tilde z),
\end{equation}
where $\alpha:Z(R((t))\,)\to Z(R[\lambda,\lambda^{-1}]((t))\,)$ (resp. $\beta:Z(R((t))\,)\to Z(R[\lambda,\lambda^{-1}]((t)))\,$)
is induced by the natural embedding $R((t))\to R[\lambda,\lambda^{-1}]((t))$ (resp. by the homomorphism of topological $R$-algebras $R((t))\to R[\lambda,\lambda^{-1}]((t))$ such that $t\mapsto\lambda t$).
We want to conclude from \eqref{e:scientific} that $\tilde z\in Z(R((t))\,)$ is the image of a unique $z\in Z(R)$.
Let us proceed in two steps.

\medskip

\noindent {\bf Step 1.} Assume that $R$ is Artinian.  Then so is $R((t))$. Let  $z_0\in Z$ denote the image of 
the unique point of $\Spec R((t))$ and $O_{Z,z_0}$ the corresponding Henselian local ring.
Since $R((t))$ is Henselian the morphism $\tilde z:\Spec R((t))\to Z$ factors through $\Spec O_{Z,z_0}$.
So $\tilde z$ defines a homomorphism   $\varphi:O_{Z,z_0}\to R((t))$, and the problem is to show that
$\varphi (O_{Z,z_0})\subset R$. Indeed, if $p\in R((t))$ belongs to $\varphi (O_{Z,z_0})$ then by
\eqref{e:scientific}, $p$ satisfies the identity $p(\lambda t)=p(t)$, so $p\in R$.

\medskip

\noindent {\bf Step 2.}  Now drop the Artinian assumption. Let $m\subset R$ be the maximal ideal. Set 
$\Rn:=R/m^n$. Let $\tilde z_n\in Z(\Rn ((t))\,)$ be the image of $\tilde z$. By Step 1, $\tilde z_n$ comes from
a unique $z_n\in Z(\Rn )$. Since $R$ is a complete local ring the sequence $z_n$ defines a point $z\in Z(R)$, i.e., a morphism $z:\Spec R\to Z$. We have to prove that the composition 
$\Spec R((t))\to \Spec R\overset{z}\longrightarrow Z$ equals $\tilde z:\Spec R((t))\to Z$.
Just as in the proof of \lemref{l:mono}, let $E$ denote the equalizer of the two morphisms 
$\Spec R((t))\to Z$; this is a $\BG_m$-stable\footnote{Let us explain the precise meaning of the word 
``$\BG_m$-stable" here; we have to do it because $\BG_m$ does not act on the $R$-scheme $\Spec R((t))\,$. 
Instead, we have the ``action" morphism $a:\Spec R[\lambda ,\lambda^{-1}]((t))\to \Spec R((t))$ corresponding to the continuous $R$-algebra homomorphism $R((t))\to R[\lambda ,\lambda^{-1}]((t))$ such that $t\mapsto\lambda t$. By definition, $\BG_m$-stability of $E$ means that $a^{-1}(E)=p^{-1}(E)$, where $p:\Spec R[\lambda ,\lambda^{-1}]((t))\to \Spec R((t))$ is the obvious morphism.
} \subscheme of $\Spec R((t))$ containing $\Spec\Rn ((t))$ for each $n\in\BN$. Just as in the proof of  
\lemref{l:mono}, this implies that $E$ contains a $\BG_m$--stable subscheme $E'$ open in $\Spec R ((t))$ and 
containing $\Spec (R/m) ((t))$. Let us show\footnote{This step is unnecessary if $Z$ is separated: indeed, in this 
case $Z$ is a \emph{closed} subscheme of $\Spec R ((t))$ containing  
$\Spec\Rn ((t))$ for all $n$, so $E=\Spec R((t))$ and we are done.} that such $E'$ has to be equal to
$\Spec R((t))$.

%\sssec{An attempt to fill the gap}
Choose a closed subscheme $F\subset\Spec R((t))$ whose complement equals $E'$ and let
$I\subset R((t))$ be the corresponding ideal.

\begin{lem}   \label{l:properties of I}
(i) $I+m((t))=R((t))$.

(ii) Let $I'\subset R((t))$ be the ideal of all formal series $\sum\limits_i r_it^i$, $r_i\in R$, such that the series
$\sum\limits_i r_i\lambda^i t^i\in R[\lambda ,\lambda^{-1}]((t))$ belongs to $I\cdot R[\lambda,\lambda^{-1}]((t))$.
Then $I$ is contained in the radical of $I'$.
\end{lem}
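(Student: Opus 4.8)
The plan is to translate the two geometric inputs about the open $\BG_m$-stable subscheme $E'\subset\Spec R((t))$ into ideal-theoretic statements about $I$. Throughout write $B:=R[\lambda,\lambda^{-1}]((t))$, let $p^*:R((t))\hookrightarrow B$ be the inclusion (inducing the projection $p$), and let $\sigma$ be the automorphism of $B$ fixing $R[\lambda,\lambda^{-1}]$ with $\sigma(t)=\lambda t$, so that the action map $a$ is induced by $a^*:=\sigma\circ p^*$, i.e. $a^*(\sum_i r_it^i)=\sum_i r_i\lambda^it^i$. Set $J:=I\cdot B$. With this notation, the definition of $I'$ reads: $f\in I'$ iff $a^*(f)\in J$.

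For part (i) I would first record that, since $R$ is Noetherian, $m$ is finitely generated, whence a short direct computation gives $m\cdot R((t))=m((t))$ (the Laurent series all of whose coefficients lie in $m$); consequently $\Spec (R/m)((t))$ is exactly the closed subscheme $V(m((t)))$ of $\Spec R((t))$. The hypothesis $\Spec(R/m)((t))\subset E'=\Spec R((t))\setminus V(I)$ then says $V(I)\cap V(m((t)))=\emptyset$, i.e. $V(I+m((t)))=\emptyset$, which is precisely $I+m((t))=R((t))$.

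For part (ii) the starting point is the footnote's definition of $\BG_m$-stability of $E'$, namely $a^{-1}(E')=p^{-1}(E')$. Taking complements and extending the ideal $I$ along the two ring maps, this reads $V(\sigma(J))=V(J)$ in $\Spec B$ (using $a^*(I)\cdot B=\sigma(I)\cdot B=\sigma(J)$), hence $\sqrt{\sigma(J)}=\sqrt{J}$; since $\sigma$ is a ring automorphism $\sqrt{\sigma(J)}=\sigma(\sqrt J)$, so the radical $\sqrt J$ is $\sigma$-stable. Now take $f\in I$. Then $p^*(f)\in J\subset\sqrt J$, and $\sigma$-stability of $\sqrt J$ gives $a^*(f)=\sigma(p^*(f))\in\sqrt J$; thus $a^*(f)^N\in J$ for some $N$. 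Since $a^*(f^N)=a^*(f)^N\in J=I\cdot B$, the definition of $I'$ yields $f^N\in I'$, i.e. $f\in\sqrt{I'}$. This proves $I\subset\sqrt{I'}$. (No Noetherian hypothesis is needed for this radical argument, only for the identity in part (i).)

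The main obstacle is the bookkeeping in part (ii): one must unwind the non-standard footnote definition of $\BG_m$-stability — there being no genuine $\BG_m$-action on the $R$-scheme $\Spec R((t))$ — and recognize that the auxiliary ideal $I'$ is engineered so that $f^N\in I'$ is equivalent to $a^*(f)^N=\sigma(p^*(f))^N\in J$. Once it is seen that $\BG_m$-stability amounts exactly to $\sigma$-invariance of $\sqrt J$, the radical estimate is immediate. The only other point requiring care, the identity $m\cdot R((t))=m((t))$, rests on finite generation of $m$ and is otherwise routine.
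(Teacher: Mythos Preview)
Your proof is correct and follows exactly the route the paper indicates: the paper's own argument is the one-liner ``$E'\supset\Spec(R/m)((t))$ translates into (i); $\BG_m$-stability of $E'$ translates into (ii),'' and you have simply spelled out these translations. In particular, your unwinding of the footnote's definition of $\BG_m$-stability as $\sigma$-invariance of $\sqrt{J}$, and the observation $I'=(a^*)^{-1}(J)$, are precisely the bookkeeping the paper suppresses.
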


\begin{proof}
The open subset $E'=(\Spec R ((t)))-F$ contains $\Spec (R/m) ((t))$, so 
$F\cap\Spec (R/m) ((t))=\emptyset$. This translates into (i). The fact that $E'$ is $\BG_m$-stable translates into (ii).
\end{proof}

It remains to show that any ideal $I\subset R((t))$ with properties (i)-(ii) from the lemma is the unit ideal.
Since $I$ is contained in the radical of $I'$ property (i) implies that $I'+m((t))=R((t))$, so $I'$ contains an element
of the form $\sum_i r_it^i$, where $r_i\in R$ and
\begin{equation}   \label{e:r_0}
r_0\in 1+m.
\end{equation}
%\[
%\sum_i r_it^i, \quad \mbox{ where } r_i\in R, \, r_0\in 1+m. 
%\]
By the definition of $I'$,  one has an equality of the form
\[
\sum\limits_i r_i\lambda^i t^i=\sum_{j=1}^n g_jh_j, \quad g_j\in R[\lambda,\lambda^{-1}]((t)),\; h_j\in I .
\]
Equating the coefficients of $\lambda^0$ in this equality, we see that $r_0\in I$. On the other hand, 
%$r_0\in 1+m$, so
 $r_0$ is invertible by \eqref{e:r_0}. So $I$ is the unit ideal, and we are done.

\section{Proof of Theorem~\ref{t:tildeZ}}   \label{s:tildeZ}

In this section we prove Theorem~\ref{t:tildeZ}, which says that for any algebraic $k$-space of finite type equipped with a $\BG_m$-action, the space $\wt{Z}$ defined in Subsect.~\ref{sss:thespace} is 
an algebraic $k$-space of finite type.

We will use M.~Artin's technique for proving representability.\footnote{Instead of M.~Artin's technique one could use the one from \cite{Mur} (which does not rely on Artin's Approximation Theorem). This would not make the proof of representability more constructive.} In particular, in Subsect.~\ref{ss:proof_modulo} we use \cite[Theorem 1.6]{Ar2} to prove existence of a scheme equipped with a surjective etale morphism to $\tilde Z$. (Unfortunately, such proof of existence is not really constructive.)

We will be using the notation $\BX$ and $\BX_S$ introduced in 
Subsections~\ref{sss:family of hyperbolas}-\ref{sss:X_S}. Recall that $\BX:=\BA^2=\Spec k[\tau_1,\tau_2]$
and for any scheme $S$ over  $\BA^1$ we set $\BX_S:=\BX \underset{\BA^1}\times S$, where $\BX$ is mapped to $\BA^1$ by $(\tau_1,\tau_2)\mapsto \tau_1\cdot \tau_2\,$.

\ssec{Plan}
We will use the canonical morphism $\wt{p}:\wt{Z}\to \BA^1\times Z\times Z$, see Subsect.~\ref{sss:tilde p}.
As explained in Subsect.~\ref{sss:props tilde p}, representability of $\wt{Z}$ would immediately imply that $\wt{p}$ is unramified.

To prove representability of $\wt{Z}$, we will first prove some properties of $\wt{p}$, which are weaker than being representable and unramified. Namely, in Subsect.~\ref{ss:diagonalley}, we prove that the diagonal morphism
\begin{equation}   \label{e:diagonalley}
\Delta :\wt{Z}\to \wt{Z}\underset{\BA^1\times Z\times Z}\times \wt{Z}
\end{equation}
is an open embedding (in particular, it is \emph{representable}). This immediately implies that the morphism
$\wt{p}:\wt{Z}\to \BA^1\times Z\times Z$ is \emph{formally} unramified. Then we prove  another property of 
$\wt{p}$ (see \propref{p:intersection}) and deduce from it \propref{p:formal}, which is a strong form of pro-representability. Proposition~\ref{p:formal} implies ``openness of formal  etaleness" for morphisms from schemes to $\wt{Z}$ (see \corref{c:openness_etaleness}).
After that, it remains to check effective pro-representability, see 
Subsections~\ref{ss:proof_modulo}-\ref{ss:effective}.

Finally, in Subsect.~\ref{ss:conormal} (which is not used in the rest of the article) we give a reasonable ``upper bound" for the conormal sheaf of $\wt{Z}$ with respect to the unramified morphism 
$$\wt{p}:\wt{Z}\to \BA^1\times Z\times Z.$$ 
This bound is closely related to the proof of \propref{p:intersection}.   

\subsection{The diagonal morphism}  \label{ss:diagonalley}
\begin{prop}   \label{p:diagonalley}
The diagonal morphism \eqref{e:diagonalley} is an open embedding.
\end{prop}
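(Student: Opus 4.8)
The plan is to prove that $\Delta$ is an open embedding in the sense relevant to Artin's criterion, namely that it is \emph{representable by open immersions}: for every scheme $T$ over $\BA^1$ and every morphism $T\to\wt{Z}\underset{\BA^1\times Z\times Z}\times\wt{Z}$ — equivalently, every pair $(f_1,f_2)$ of $\BG_m$-equivariant morphisms $\BX_T\to Z$ with $\pi_1f_1=\pi_1f_2$ and $\pi_2f_1=\pi_2f_2$ — the fiber product $T\underset{\Delta}\times\wt{Z}$ is an open subscheme of $T$. Concretely, this fiber product represents the subfunctor of those $g\colon T'\to T$ for which $g^*f_1=g^*f_2$ as maps $\BX_{T'}\to Z$, and the goal is to realize this subfunctor as $T\setminus C$ for a suitable closed $C\subset T$.

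First I would observe that $f_1$ and $f_2$ automatically agree on the schematically dense open $\BX'_T:=(\BX-\{0\})\underset{\BA^1}\times T$. Indeed, the locus where they agree is $\BG_m$-stable by equivariance and contains the two sections \eqref{e:two sections}; since the orbit maps identify $\BG_m\times\sigma_1(T)$ and $\BG_m\times\sigma_2(T)$ with the open subschemes $\{\tau_1\ne0\}$ and $\{\tau_2\ne0\}$ of $\BX_T$, whose union is exactly $\BX'_T$, we get $f_1|_{\BX'_T}=f_2|_{\BX'_T}$. Consequently, over $\BA^1-\{0\}$ there is nothing to check (compare \propref{p:outside 0}), and the only remaining constraint is supported over $T_0:=T\underset{\BA^1}\times\{0\}$, on the central fibre $\BX_0=\BX_0^+\cup\BX_0^-$ of \eqref{e:X0+-}. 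Using the $\BG_m$-equivariant isomorphisms $\BX_0^+\cong\BA^1$ and $\BX_0^-\cong\BA^1_-$ of \remref{r:theaction}, restricting $f_i$ to the two components produces $a_i\in Z^+(T_0)$ and $b_i\in Z^-(T_0)$, and the known agreement on $\BX'_T$ restricts precisely to $p^+(a_1)=p^+(a_2)$ and $p^-(b_1)=p^-(b_2)$ (since $p^{\pm}$ is restriction from $\BA^1$, resp. $\BA^1_-$, to $\BG_m$). Via the identification $\wt{Z}_0\iso Z^+\underset{Z^0}\times Z^-$ of \propref{p:tilde Z_0}, full agreement of $f_1,f_2$ amounts to $a_1=a_2$ and $b_1=b_2$.

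The crux is then to see that this equality locus is \emph{open} rather than merely a monomorphism. Here is where the basic input enters: by \propref{p: p^+}(i) the morphism $p^+\colon Z^+\to Z$ is unramified, and likewise $p^-$. For finite-type algebraic spaces — and $Z^{\pm}$ are of finite type by \thmref{t:attractors} — being unramified is equivalent to the relative diagonal $Z^{\pm}\to Z^{\pm}\underset{Z}\times Z^{\pm}$ being an open immersion. Since $(a_1,a_2)$ and $(b_1,b_2)$ factor through these relative fiber products (by the agreement already obtained after $p^{\pm}$), pulling back the open diagonals gives an open subscheme $W_0\subset T_0$ with the property that for $h\colon T_0'\to T_0$ one has $h^*a_1=h^*a_2$ and $h^*b_1=h^*b_2$ if and only if $h$ factors through $W_0$. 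Finally I would promote $W_0$ to an open subscheme of $T$: since $T_0\hookrightarrow T$ is closed, $C:=T_0\setminus W_0$ is closed in $T$; setting $W:=T\setminus C$, for any $g\colon T'\to T$ one has $T'\underset{T}\times C=(T')_0\underset{T_0}\times C$ (as $C\subset T_0$), so $g$ factors through $W$ exactly when $(T')_0\to T_0$ factors through $W_0$, i.e. exactly when $g^*f_1=g^*f_2$. Thus $T\underset{\Delta}\times\wt{Z}=W$ is open in $T$.

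I expect the main obstacle to be the careful scheme-theoretic reduction to the central fibre when $Z$ is \emph{not} separated. One must verify that agreement on the schematically dense open $\BX'_T$ together with agreement of the induced $Z^{\pm}$-data on $T_0$ forces agreement on all of $\BX_{T'}$, and that the resulting equality locus is genuinely open; for non-separated $Z$ the naive ``dense implies equal'' fails, and it is precisely the unramifiedness of $p^+$ and $p^-$ (the openness of their relative diagonals) that supplies what separatedness would otherwise provide. I would be careful to invoke only \propref{p: p^+} and \thmref{t:attractors} here, and \emph{not} \propref{p:props tilde p}, since the latter presupposes the representability of $\wt{Z}$ toward which the present proposition is a step.
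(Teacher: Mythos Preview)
Your candidate open set $W$ is correct, and the direction ``$g^*f_1=g^*f_2 \Rightarrow g$ factors through $W$'' is fine. The gap is in the converse. You assert that ``$(T')_0\to T_0$ factors through $W_0$'' is equivalent to ``$g^*f_1=g^*f_2$'', but all you have actually established is that the former is equivalent to $f_1|_{\BX_{(T')_0}}=f_2|_{\BX_{(T')_0}}$. Combined with the (always valid) equality on the dense open $\BX'_{T'}$, this does \emph{not} obviously force $f_1|_{\BX_{T'}}=f_2|_{\BX_{T'}}$ when $Z$ is non-separated: the equalizer $E\hookrightarrow\BX_{T'}$ is only a finite-type monomorphism, and knowing that it contains $\BX'_{T'}$ and receives the closed immersion $\BX_{(T')_0}\hookrightarrow\BX_{T'}$ is not, on the face of it, enough to conclude $E=\BX_{T'}$. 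Concretely, take $T'=\Spec k[\epsilon]/(\epsilon^2)$ with $t=\epsilon$: then $(T')_0=\Spec k$ carries no infinitesimal information, so your hypothesis says nothing about how $f_1,f_2$ differ to first order in $\epsilon$ near the origin of $\BX_{T'}$.

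You correctly flag this as ``the main obstacle'' in your final paragraph, but then claim that the unramifiedness of $p^{\pm}$ supplies what is needed. It does not: unramifiedness of $p^{\pm}$ only produces the open $W_0$ inside $T_0$ and says nothing about thickening from $(T')_0$ to $T'$. The paper closes exactly this gap by a different route: it works directly with the equalizer $E\hookrightarrow\BX_S$, takes the largest open $\wt E\subset E$ on which the map to $\BX_S$ is \'etale (hence an open immersion $\wt E\hookrightarrow\BX_S$ containing $\BX'_S$), and then shows via an Artinian reduction that whenever $E_s\to\BX_s$ is an isomorphism one has $\BX_s\subset\wt E$; openness of $U$ then follows since $\BX_S\setminus\wt E$ is closed inside $\BX_S\setminus\BX'_S$, which is proper (indeed a closed immersion) over $S$. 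Your approach via $p^{\pm}$ and \propref{p:tilde Z_0} gives a pleasant description of $U\cap T_0$, but to finish you would still need an argument of this kind to pass from fibrewise agreement to scheme-theoretic agreement.
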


Let us prove the proposition.
We have to show that for any scheme $S$ and any morphisms $\varphi_1,\varphi_2:S\to \wt{Z}$ giving rise to the same morphism $h:S\to\BA^1\times Z\times Z$, the equalizer $\Eq (\varphi_1,\varphi_2)$ is representable by an open subscheme of $S$. Let $f_1,f_2:\BX_S\to Z$ be the $\BG_m$-equivariant morphisms corresponding to  $\varphi_1,\varphi_2\,$ and let $E:=\Eq (f_1,f_2)$ be their equalizer. Then $E$ is a scheme of finite presentation over $\BX_S$ equipped with a monomorphism $E\mono\BX_S$. Moreover, since
$\varphi_1$ and $\varphi_2$ correspond to the same morphism $h:S\to\BA^1\times Z\times Z$ we have 
$$E\supset \BX'_S,$$
where $\BX'$ is the open subscheme $\BA^2-\{ 0\}\subset\BA^2=\BX$ and $\BX'_S:=\BX'\times_{\BA^1}S$.
Now it remains to prove the following lemma.

\begin{lem}     \label{l:diagonalley}
Let $S$ be a scheme over $\BA^1$. Let $E$ be a scheme of finite presentation over $\BX_S$ such that the map
$E\to\BX_S$ is a monomorphism. %(so we can write $E\subset\BX_S$). 
Assume that the morphism $\BX'_S\mono\BX_S$ factors through $E$. 

Let $U$ be the set of all $s\in S$ such that the corresponding morphism $E_s\to\BX_s$ is an isomorphism (here $E_s$ and $\BX_s$ are the fibers of $E$ and $\BX_S$ over $s$). Then

(i) the subset $U\subset S$ is open;

(ii) the map $E\times_SU\to\BX_U$ is an isomorphism.
\end{lem}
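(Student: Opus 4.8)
The plan is to reduce everything to a statement about the closed ``node locus'' and the open locus where $E\to\BX_S$ is \'etale. First I would reduce to the case that $S$ is Noetherian (using that $E\to\BX_S$ is of finite presentation, so both the formation of $U$ and assertion (ii) are compatible with the relevant limits). Since $E\to\BX_S$ is a monomorphism of finite type it is unramified and separated, and each of its fibres is either empty or a single reduced point. The hypothesis that $\BX'_S\mono\BX_S$ factors through the monomorphism $E\to\BX_S$ means, after base change along $\BX'_S\mono\BX_S$, that $E\underset{\BX_S}\times\BX'_S\iso\BX'_S$; thus $E\to\BX_S$ is an isomorphism over $\BX'_S$, and $\BX'_S$ is an open subscheme of $E$. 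I would also record that the complement $B:=\BX_S\setminus\BX'_S$ is the locus $\tau_1=\tau_2=0$, which maps isomorphically onto $S\underset{\BA^1}\times\{0\}$; in particular $B\to S$ is a closed immersion (it is the zero-fibre of $S\to\BA^1$).

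Next I would pass to the open locus $W\subseteq E$ on which $E\to\BX_S$ is \'etale. An \'etale monomorphism is an open immersion, so $W\to\BX_S$ is an open immersion onto an open subscheme $\Omega\subseteq\BX_S$ containing $\BX'_S$; and because $E\to\BX_S$ has at most one point in each fibre, there is no further point of $E$ over $\Omega$, so in fact $E\to\BX_S$ restricts to an \emph{isomorphism} over $\Omega$. Set $Z:=\BX_S\setminus\Omega$, a closed subset contained in $B$.

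The topological heart of (i) is then immediate: define $U':=S\setminus\pr_S(Z)$. Since $Z$ is closed in $\BX_S$ and contained in $B$, it is closed in $B$; as $B\to S$ is a closed immersion, $\pr_S(Z)$ is closed in $S$, so $U'$ is open. Over $U'$ one has $\BX_{U'}\subseteq\Omega$, and therefore $E\underset{S}\times U'\iso\BX_{U'}$; in particular $E_s\to\BX_s$ is an isomorphism for every $s\in U'$, so $U'\subseteq U$. This already provides an open set over which (ii) holds, and it remains only to identify $U'$ with $U$.

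The one genuinely delicate point — and the step I expect to be the main obstacle — is the reverse inclusion $U\subseteq U'$: if $E_s\to\BX_s$ is an isomorphism, I must show $\BX_s\subseteq\Omega$, i.e. that $E\to\BX_S$ is \'etale at the point $e$ of $E$ lying over the node of $\BX_s$. This is a purely local assertion about $\CO_{\BX_S,x}\to\CO_{E,e}$ at the node $x$: it is an unramified ring epimorphism (because $E\to\BX_S$ is an unramified monomorphism) inducing an isomorphism on the reduced fibre $\CO_{\BX_s,x}$. I would prove that such a map must be an isomorphism — hence flat, hence $E\to\BX_S$ \'etale at $e$ — by a Nakayama argument, reducing if necessary to a finite morphism by Zariski's Main Theorem (write $E\mono\overline E\to\BX_S$ with the first map an open immersion and the second finite, note it is an isomorphism over the dense open $\BX'_S$, and analyse the finite fibre over the node). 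With this, $U=U'$ is open and $E\underset{S}\times U\iso\BX_U$, proving (i) and (ii).
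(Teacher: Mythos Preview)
Your overall architecture matches the paper's exactly: reduce to Noetherian $S$, pass to the \'etale locus $\Omega$ of $E\to\BX_S$ (an open subscheme of $\BX_S$ containing $\BX'_S$), set $U':=S\setminus\pr_S(\BX_S\setminus\Omega)$, observe that $U'$ is open and that $E\times_S U'\iso\BX_{U'}$, and then reduce everything to the single local question you flag as ``the main obstacle'': for $s\in U$, show $E\to\BX_S$ is \'etale at the point $e$ over the node of $\BX_s$.

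The difference is in how that local step is handled, and here your sketch is incomplete. Knowing only that $\CO_{\BX_s,x}\to\CO_{E_s,e}$ is an isomorphism (one fibre) together with ``unramified ring epimorphism'' is \emph{not} enough to force $\CO_{\BX_S,x}\to\CO_{E,e}$ to be an isomorphism: for instance, a quotient $A\to A/I$ with $I\subset\fm_R A$ satisfies all of that. Your ZMT compactification $\overline E$ does bring in the extra hypothesis $E\supset\BX'_S$, but after that the phrase ``analyse the finite fibre over the node by Nakayama'' hides real work: you would still need the isomorphism on \emph{all} infinitesimal thickenings $S_n=\Spec\CO_{S,s}/\fm^n$, not just on $\Spec\kappa(s)$, before any Nakayama-type statement applies. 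The paper supplies exactly this by first proving the Artinian case directly (Step~1): when $S$ is Artinian and $s\in U$, the monomorphism $E\to\BX_S$ is bijective on points (both spaces have the same underlying set as their fibres over $s$), hence is a closed immersion; its ideal vanishes on the schematically dense open $\BX'_S$, so is zero. Applying this to each $S_n$ and invoking the infinitesimal criterion then gives \'etaleness at $e$. Your route can likely be completed, but it would end up reproving this Artinian statement; the paper's decomposition into ``Artinian first, then thicken'' is the clean way through.
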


\begin{rem}
If the monomorphism $E\to\BX_S$ is a closed embedding then \lemref{l:diagonalley} is obvious;
moreover, in this case $U=S$. So if $Z$ is separated then \propref{p:diagonalley} is obvious; moreover, in this case the map \eqref{e:diagonalley}  is an isomorphism (i.e., $\wt{p}:\wt{Z}\to \BA^1\times Z\times Z$ is a monomorphism).
\end{rem}

\begin{proof}
We proceed in 3 steps. 

\medskip

\noindent {\bf Step 1.} 
Assume that $S$ is Artinian. Then statement (i) is tautological, and we may assume $U \ne \emptyset$, so the morphism $E\mono\BX_S$ is a closed embedding. Let $\CI\subset\CO_{\BX_S}$ be the ideal corresponding to $E\subset\BX_S\,$. Since 
$E\supset \BX'_S$ the restriction of $\CI$ to $\BX'_S$ is zero. This easily implies that $\CI=0$. So
$E=\BX_S\,$, which proves statement (ii).

\medskip

\noindent {\bf Step 2.} 
Assume that $S$ is Noetherian. Let $\wt{E}\subset E$ be the biggest open subscheme such that the morphism
 $\wt{E}\to\BX_S$ is etale. Then $\wt{E}$ is an open subscheme of $\BX_S$ containing $\BX'_S\,$. Applying the result of Step 1 to Artinian closed subschemes of $S$, we see that $\BX_s\subset\wt{E}$ for any $s\in U$. This allows to replace $E$ by $\wt{E}$; in other words, we can assume that the morphism $E\mono\BX_S$ is an open embedding. Then statements (i) and (ii) are clear because $\BX_S-E$ is a closed subset of 
 $\BX_S-\BX'_S$ and the morphism $\BX_S-\BX'_S\to S$ is closed (in fact, it is a closed embedding). 

\medskip

\noindent {\bf Step 3.} 
Since $E$ is of finite presentation we can remove the Noetherian assumption.
\end{proof}

Thus we have proved \propref{p:diagonalley}. Before formulating some corollaries of it, let us make an obvious remark.

\begin{rem}   \label{r:2commutation_colimits}
It is clear that the space $\wt{Z}$ is locally of finite presentation (i.e., the corresponding functor 
\{$k$-algebras\}$\to$\{sets\} commutes with filtering inductive limits).
\end{rem}

\begin{cor}   \label{c:morphisms from schemes}
Let $S$ be a $k$-scheme. Then

(i) any morphism $S\to\wt{Z}$ is representable;

(ii) if $S$ is locally of finite presentation over $k$ then any morphism $S\to\wt{Z}$ is locally of finite presentation.
\end{cor}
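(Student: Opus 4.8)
The plan is to deduce the corollary formally from \propref{p:diagonalley}; the genuine content is already contained there, and what remains is bookkeeping with fiber products. Write $W:=\BA^1\times Z\times Z$ and let $\wt p:\wt Z\to W$ be the morphism of Subsect.~\ref{sss:tilde p}; since $Z$ is an algebraic $k$-space of finite type, $W$ is an algebraic $k$-space of finite type as well. The key formal input is the standard identity, valid for any schemes $S,T$ with morphisms $a:S\to\wt Z$ and $b:T\to\wt Z$,
\[
S\underset{\wt Z}\times T\;=\;\Bigl(S\underset W\times T\Bigr)\underset{\wt Z\underset W\times\wt Z}\times\wt Z,
\]
in which $\wt Z\to\wt Z\underset W\times\wt Z$ is the diagonal $\Delta$ of \eqref{e:diagonalley} and the map $S\underset W\times T\to\wt Z\underset W\times\wt Z$ is $(a,b)$. (On the level of functors of points both sides classify a point of $S$, a point of $T$, and an identification of their images in $\wt Z$.)

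To prove (i), I would fix $a:S\to\wt Z$ and an arbitrary test morphism $b:T\to\wt Z$ from a scheme $T$ and show that $S\times_{\wt Z}T$ is an algebraic space, reducing to $S$ and $T$ affine. Since $W$ is an algebraic space and $S,T$ are affine schemes, $S\underset W\times T$ is an algebraic space. By \propref{p:diagonalley} the diagonal $\Delta$ is an open embedding, hence so is its base change along $(a,b)$, which by the displayed identity is precisely the projection
\[
S\underset{\wt Z}\times T\;\longrightarrow\;S\underset W\times T .
\]
Therefore $S\times_{\wt Z}T$ is an open subspace of an algebraic space, hence an algebraic space, and $a$ is representable.

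To prove (ii), assume moreover that $S$ is locally of finite presentation over $k$. First I would note that $W$ is of finite type over $k$, in particular locally of finite type and quasi-separated; by the cancellation property for morphisms locally of finite presentation, the composite $\wt p\circ a:S\to W$ is then locally of finite presentation. Base-changing along $T\to W$ shows $S\underset W\times T\to T$ is locally of finite presentation, the open embedding $S\times_{\wt Z}T\hookrightarrow S\underset W\times T$ is locally of finite presentation, and hence so is their composite $S\times_{\wt Z}T\to T$. As $T$ is arbitrary, $a$ is locally of finite presentation. (Alternatively, (ii) follows from (i) together with \remref{r:2commutation_colimits}.)

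I do not expect a real obstacle here: once \propref{p:diagonalley} is in hand, the argument is purely formal. The only point demanding a little care is that $W$ is merely an algebraic space rather than a scheme, so the auxiliary fiber products $S\underset W\times T$ must be formed in algebraic spaces, and the reduction to affine $S,T$ (so that these products are manifestly algebraic spaces) should be spelled out; this is exactly why statement (i) yields representability in the category of algebraic spaces, which is what Artin's method requires downstream.
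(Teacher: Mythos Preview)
Your proof is correct and takes essentially the same approach as the paper: both reduce everything to \propref{p:diagonalley}. The paper phrases it as ``it suffices to show that the diagonal $\wt{Z}\to\wt{Z}\times\wt{Z}$ is representable and locally of finite presentation'' and then invokes \propref{p:diagonalley} (together with \remref{r:2commutation_colimits} for (ii)); you unwind this same reduction explicitly using the relative diagonal over $W=\BA^1\times Z\times Z$ and the identity $S\times_{\wt Z}T=(S\times_W T)\times_{\wt Z\times_W\wt Z}\wt Z$. One small remark: your worry about reducing to affine $S,T$ is unnecessary, since fiber products of algebraic spaces over an algebraic space always exist.
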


\begin{proof}
It suffices to show that the diagonal morphism $\wt{Z}\to\wt{Z}\times\wt{Z}$ is representable and locally of finite presentation. Both properties follow from Proposition~\ref{p:diagonalley}. (The second property also follows from \remref{r:2commutation_colimits}.)
%Proposition~\ref{p:diagonalley} implies that the diagonal morphism $\wt{Z}\to\wt{Z}\times\wt{Z}$ is %representable. This implies (i). Statement (ii) follows from \remref{r:2commutation_colimits}. 
\end{proof}

\begin{cor}   \label{c:form_unr}
The morphism $\wt{p}:\wt{Z}\to \BA^1\times Z\times Z$ is formally unramified. In other words, for any commutative diagram
\begin{equation}  \label{e:obs1}
\xymatrix{
S_0 \ar[d]_{}\ar@{^{(}->}[r]^{}& S\ar[d]^{}\\
\wt{Z}\ar[r]^{}&\BA^1\times Z\times Z
    }
\end{equation}
where $S$ is a scheme and $S_0$ is a closed subscheme defined by a nilpotent ideal, there exists at most one way to complete \eqref{e:obs1} to a commutative diagram
\begin{equation}  \label{e:obs2}
\xymatrix{
S_0 \ar[d]_{}\ar@{^{(}->}[r]^{}& S\ar[ld]^{}\ar[d]^{}\\
\wt{Z}\ar[r]^{}&\BA^1\times Z\times Z
}
\end{equation}
\end{cor}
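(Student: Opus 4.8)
The plan is to deduce this directly from \propref{p:diagonalley}, which asserts that the relative diagonal \eqref{e:diagonalley} is an open embedding. Formal unramifiedness is precisely a uniqueness statement for infinitesimal lifts, and uniqueness of lifts along $\wt{p}$ is controlled by the relative diagonal of $\wt{p}$; so the corollary should be a formal consequence of the proposition just proved.

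First I would reformulate the assertion. Suppose $\varphi_1,\varphi_2:S\to\wt{Z}$ are two ways of completing diagram \eqref{e:obs1}; that is, both restrict to the given morphism $S_0\to\wt{Z}$ and both compose with $\wt{p}$ to the fixed morphism $h:S\to\BA^1\times Z\times Z$. The goal is to show $\varphi_1=\varphi_2$. Since $\wt{p}\circ\varphi_1=\wt{p}\circ\varphi_2=h$, the pair $(\varphi_1,\varphi_2)$ factors through the fiber product, yielding a morphism
\[
(\varphi_1,\varphi_2):S\to\wt{Z}\underset{\BA^1\times Z\times Z}\times\wt{Z}.
\]
Let $E:=\Eq(\varphi_1,\varphi_2)$ be the equalizer, i.e.\ the locus on which $\varphi_1$ and $\varphi_2$ coincide; by construction $E$ is the preimage of the image of the diagonal \eqref{e:diagonalley} under $(\varphi_1,\varphi_2)$.

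Next I would invoke \propref{p:diagonalley}: as the diagonal \eqref{e:diagonalley} is an open embedding, its base change along $(\varphi_1,\varphi_2)$ is again an open embedding, so $E\to S$ is an open embedding and $E$ is an open subscheme of $S$. By hypothesis $\varphi_1$ and $\varphi_2$ agree on $S_0$, so the inclusion $S_0\mono S$ factors through $E$; in particular $E$ contains every point of $S_0$. But $S_0\subset S$ is defined by a nilpotent ideal, whence $S_0$ and $S$ have the same underlying topological space, and therefore the open subscheme $E$ contains every point of $S$. An open embedding that is surjective on points is an isomorphism, so $E=S$ and thus $\varphi_1=\varphi_2$, as desired.

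The whole substance of the statement is already contained in \propref{p:diagonalley}, so I expect essentially no obstacle here. The only point needing (minor) care is the last step: one must observe that an open immersion which is surjective on the underlying space is an isomorphism, in order to pass from ``$E$ contains the nilpotent thickening $S_0$'' to ``$E=S$''.
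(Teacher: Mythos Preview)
Your argument is correct and is precisely the intended one: the paper's proof reads in its entirety ``Follows from Proposition~\ref{p:diagonalley},'' and what you have written is exactly the standard unpacking of why an open-embedding diagonal forces uniqueness of infinitesimal lifts. There is nothing to add.
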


\begin{proof}
Follows from Proposition~\ref{p:diagonalley}.
\end{proof}

\begin{rem}  \label{r:nil-ideal}
In \corref{c:form_unr} the condition ``$S_0$ is defined by a nilpotent ideal" can be replaced by a weaker condition $S_0\supset S_{\red}\,$. This follows from \remref{r:2commutation_colimits}.
\end{rem}

\ssec{Constructing formal neighborhoods}   \label{ss:formal}

\sssec{The property of $\wt{p}:\wt{Z}\to \BA^1\times Z\times Z$ to be proved.}
Fix a commutative diagram \eqref{e:obs1}. Say that a morphism of schemes $T\to S$ is \emph{liftable} (with respect to this diagram) if there exists a morphism $T\to\wt{Z}$ such that the corresponding digram
\begin{equation}  \label{e:obs3}
\xymatrix{
T\underset{S}\times S_0 \ar[d]_{}\ar@{^{(}->}[r]^{}&T\ar[ld]^{}\ar[d]^{}\\
\wt{Z}\ar[r]^{}&\BA^1\times Z\times Z
}
\end{equation}
commutes (note that such a morphism $T\to\wt{Z}$ is unique by \corref{c:form_unr}). Let us explain that the vertical arrows of \eqref{e:obs3} are obtained by composing the vertical arrows of \eqref{e:obs1} with the morphisms $T\to S$ and $T\times_SS_0\to S_0\,$.

\begin{prop}   \label{p:intersection}
For any commutative diagram \eqref{e:obs1} the corresponding functor
\[
T\mapsto\{\mbox{liftable morphisms } T\to S\}
\]
is representable by a closed subscheme $\underline S\subset S$.
\end{prop}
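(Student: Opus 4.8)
The plan is to reduce the statement to a square-zero deformation problem and to realize $\underline S$ as the zero scheme of an obstruction that I will arrange to live in a \emph{locally free} sheaf. First I would record that, by \corref{c:form_unr}, a lift is unique once it exists, so the functor in question is a subfunctor of $h_S$ and it suffices to find a closed subscheme through which exactly the liftable morphisms factor. Since $\wt Z$ is locally of finite presentation (\remref{r:2commutation_colimits}), everything is compatible with filtered limits, so I may assume $S=\Spec R$ with $R$ of finite type over $k$, in particular Noetherian; as closedness is local on $S$ this loses nothing. Finally, writing the nilpotent ideal $I$ of $S_0$ as a finite tower of square-zero ideals and composing the resulting closed subschemes, I reduce to the case $I^2=0$.

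In the square-zero case the question is whether the $\BG_m$-equivariant morphism $f_0\colon\BX_{S_0}\to Z$ extends across the thickening $\BX_{S_0}\subset\BX_S$ — whose ideal $I\cdot\CO_{\BX_S}$ is again square-zero because $\BX_S$ is flat over $S$ — in such a way that its values along the two sections $\sigma_1,\sigma_2$ of \eqref{e:two sections} equal the prescribed $z_1,z_2$. Two features simplify the deformation theory. Because $\BX_S\to S$ is affine there is no higher relative cohomology, and because $H^{>0}(\BG_m,-)=0$ (the vanishing already exploited in \propref{p:smoothness} and \propref{p:2smoothness}) passing to $\BG_m$-invariants is exact, so equivariance contributes no obstruction of its own. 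Moreover, by \propref{p:outside 0} the extension is \emph{forced} over the open subscheme $\BX'_S\subset\BX_S$ (whose complement $\BX_S\setminus\BX'_S$ maps to $S$ by a closed embedding) by $z_1,z_2$ together with equivariance; hence all the freedom, and all the obstruction, is concentrated along the two sections and along the origin of the central fibre. This is what lets me hope to present the obstruction as a section $\omega$ of a finite-rank sheaf on $S$ built from the restriction of $f_0^*$-data to $\sigma_1,\sigma_2$.

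The main obstacle is precisely the \emph{closedness}: I must be sure that $\omega$ lies in a locally free sheaf, for the vanishing of a section of a general coherent sheaf after base change is an \emph{open} (support-avoidance) condition, whereas the vanishing of a section of a vector bundle is its zero scheme, a closed subscheme. The model case anchors the argument: when $Z$ is affine, \propref{p:2new tilde} shows that $\wt p$ is a closed embedding, so liftability is literally the condition that $S\to\BA^1\times Z\times Z$ factor through the closed subscheme $\wt Z$, and $\underline S=S\times_{\BA^1\times Z\times Z}\wt Z$; here $\omega$ is visibly a section of a free module. In general I would rigidify the extension using $\sigma_1$ (which forces its value there to be $z_1$) and then read off the obstruction to matching $z_2$ along $\sigma_2$ and to the extended map landing back in $Z$; because these are conditions along the \emph{finite} sections, $\omega$ becomes a section of a locally free sheaf, and I take $\underline S$ to be its zero scheme. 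To see set-theoretically that this locus is closed one can also run the argument of \lemref{l:diagonalley}, passing to the maximal locus where an auxiliary étale model is an isomorphism and using that the complementary projection to $S$ is closed. Functoriality of the obstruction in $T$ then shows that $\underline S$ represents the liftability functor, and uniqueness from \corref{c:form_unr} finishes the proof.
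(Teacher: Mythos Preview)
Your outline matches the paper's: reduce to the square-zero case by a tower argument (the paper's \lemref{l:nilp2}), then identify an obstruction whose vanishing cuts out $\underline S$. You also correctly observe that the extension is already determined over $\BX'_S$ by $z_1,z_2$ and equivariance, so the genuine obstruction is concentrated near the origin of the central fibre.

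Where your argument breaks down is the claim that the obstruction can be arranged to live in a \emph{finite-rank} locally free sheaf ``because these are conditions along the finite sections.'' This is not justified, and in fact it is false as stated. The paper's approach makes this precise as follows: let $\nu:\BY\to\BX$ be the $\BG_m$-equivariant open embedding of the two orbits through $\sigma_1,\sigma_2$ (so $\BY\simeq\BA^1\times(\BG_m\sqcup\BG_m)$). The data $(z_1,z_2)$ give a $\BG_m$-equivariant morphism $\BY_S\to Z$ extending $f_0\circ\nu_{S_0}$, and liftability becomes the question of whether the resulting map $f_0^{\Kdot}\CO_Z\to(\nu_S)_*\CO_{\BY_S}$ factors through $\CO_{\BX_S}$. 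The obstruction is then a derivation $f_0^{\Kdot}\CO_Z\to\CF_S\otimes(\pr_S)^*\CI$ where $\CF_S:=\Coker(\CO_{\BX_S}\to(\nu_S)_*\CO_{\BY_S})$. The key computation (\lemref{l:delo}) is that $(\pr_S)_*\CF_S$ is a \emph{free $\CO_S$-module of countable rank}: it is identified with $k[\tau_1^{\pm1},\tau_2]+k[\tau_1,\tau_2^{\pm1}]$ inside $k[\tau_1^{\pm1},\tau_2^{\pm1}]$, which is free over $k[\tau_1\tau_2]$ on $1$ and the $\tau_i^{-n}$. Freeness (not finite rank) is exactly what makes the vanishing locus a closed subscheme: choosing a basis turns the obstruction into an infinite family of maps $(\pr_{S_0})_*f_0^*\Omega^1_Z\to\CI$, and their images generate the ideal of $\underline S$.

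Your appeal to \lemref{l:diagonalley} does not help here: that lemma shows the diagonal of $\wt p$ is an \emph{open} embedding and gives openness of a locus, not closedness. What you are missing is precisely the freeness computation of \lemref{l:delo}; without it, the vanishing of a section of an arbitrary coherent (or even flat) sheaf need not define a closed subscheme compatible with base change.
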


The proof of \propref{p:intersection} will use the following lemma, which is very abstract ($\wt{Z}$ and 
$\BA^1\times Z\times Z$ can be replaced by any spaces or functors).

\begin{lem}   \label{l:nilp2}
It suffices to prove \propref{p:intersection} if $n(S_0,S)\le 2$. Here $n(S_0,S)$ is the nilpotence degree of the ideal of the closed subscheme $S_0\subset S$.
\end{lem}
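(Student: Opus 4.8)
The plan is to argue by induction on $N:=n(S_0,S)$, the base case $N\le 2$ being exactly the hypothesis we are allowed to assume. Let $I\subset\CO_S$ be the ideal of $S_0$, so $I^N=0$. For the inductive step assume $N\ge 3$ and factor the immersion as $S_0\mono S_1\mono S$, where $S_1\subset S$ is the closed subscheme defined by $K:=I^{N-1}$. Then $K^2=I^{2N-2}=0$, so $S_1\mono S$ is a square-zero extension (nilpotence degree $\le 2$); and the ideal of $S_0$ in $S_1$ is $I/K$, whose $(N-1)$-st power vanishes, so $n(S_0,S_1)\le N-1$. Restricting the data in \eqref{e:obs1} gives a commutative square with $S_0\to\wt{Z}$ and $S_1\to\BA^1\times Z\times Z$ (the latter obtained by composing $S_1\mono S$ with $S\to\BA^1\times Z\times Z$), compatible over $S_0$.

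By the induction hypothesis applied to $(S_0,S_1)$, the liftable morphisms $T\to S_1$ are represented by a closed subscheme $\underline{S_1}\subset S_1$, carrying a universal lift $\ell:\underline{S_1}\to\wt{Z}$. Since the inclusion $S_0\mono S_1$ is itself liftable (via the given $S_0\to\wt{Z}$), we have $S_0\subset\underline{S_1}$, and $\ell$ restricts to the given map over $S_0$. Let $\mathfrak b\subset\CO_S$ be the ideal of $\underline{S_1}$ in $S$ (so $\mathfrak b\supset K$), and let $W:=V(\mathfrak b^2)\subset S$ be the first-order neighborhood of $\underline{S_1}$; then $\underline{S_1}\mono W$ is square-zero. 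Applying the degree-$\le 2$ hypothesis to the pair $(\underline{S_1},W)$, with data $\ell:\underline{S_1}\to\wt{Z}$ and $W\to\BA^1\times Z\times Z$ restricted from $S$, we obtain a closed subscheme $\underline{W}\subset W$. As $\underline{W}\subset W\subset S$ is a chain of closed immersions, $\underline{S}:=\underline{W}$ is a closed subscheme of $S$, and I claim it represents the liftability functor for $(S_0,S)$.

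The crux is the following compatibility. If $T\to S$ is liftable then $T\times_S S_1\to S_1$ is liftable, hence factors through $\underline{S_1}$; this means $\mathfrak b\,\CO_T\subset K\,\CO_T$, and since $K^2=0$ we get $\mathfrak b^2\CO_T=(\mathfrak b\,\CO_T)^2\subset K^2\CO_T=0$, so $T\to S$ factors through $W$. Moreover $T\times_W\underline{S_1}=T\times_S\underline{S_1}=T\times_S S_1$ carries the expected lift, so $T\to W$ is liftable for $(\underline{S_1},W)$ and thus factors through $\underline{W}$. Conversely, any $T\to W$ liftable for $(\underline{S_1},W)$ yields, after composing with $W\mono S$, a lift $T\to\wt{Z}$ which by construction restricts to $\ell$ over $\underline{S_1}$, hence to the given map over $S_0\subset\underline{S_1}$; so $T\to S$ is liftable. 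Here uniqueness of lifts, i.e. formal unramifiedness of $\wt{p}$ from \corref{c:form_unr}, guarantees that the two stages glue to a single well-defined lift and that the intermediate lift on $T\times_S S_1$ is forced, so that the conditions genuinely compose. The main thing to watch is precisely this bookkeeping of the lifts over $S_0$, $\underline{S_1}$ and $W$ through the two applications of \propref{p:intersection}, together with the essential use of $K^2=0$ to force factorization through the first-order neighborhood $W$.
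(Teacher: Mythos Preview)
Your proof is correct and follows essentially the same approach as the paper's. The paper is terser but structurally identical: choose an intermediate $S'$ (your $S_1$) with $n(S_0,S')<N$ and $n(S',S)\le 2$, apply induction to $(S_0,S')$ to get $\underline{S'}$, observe that any liftable $T\to S$ satisfies $T\times_S S'=T\times_S\underline{S'}$ (your $\mathfrak b\,\CO_T\subset K\CO_T$) and hence factors through the first infinitesimal neighborhood of $\underline{S'}$ in $S$, then apply the degree-$\le 2$ case once more.
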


\begin{proof}
Proceed by induction on $n(S_0,S)$. If $n(S_0,S)>2$ we can choose a closed subscheme $S'\subset S$ containing $S_0$ so that $n(S_0,S')<n(S_0,S)$ and $n(S',S)\le 2$. Applying \propref{p:intersection} to the embedding $S_0\mono S'$ we get a closed subscheme $\underline S'\mono S'$
and a commutative diagram
\[
\xymatrix{
S_0 \ar[rd]_{}\ar@{^{(}->}[r]^{}&\underline S'\ar[d]\ar@{^{(}->}[r]^{} &S'\ar@{^{(}->}[r]^{} &S\ar[ld]^{}\\
&\wt{Z}\ar[r]^{}&\BA^1\times Z\times Z
}
\]
such that for any liftable morphism $f:T\to S$ one has $T\times_SS'=T\times_S\underline S'$. Then for any liftable 
$f:T\to S$ one has 
\[
n(T\times_S\underline S',T)=n(T\times_SS',T)\le n(S',S)\le 2,
\]
so $f:T\to S$ factors through the first infinitesimal neighborhood of $\underline S'$ in $S$. Replacing $S$ by this neighborhood we can assume that $n(\underline S',S)\le 2$. Now it remains to apply \propref{p:intersection} to the embedding $\underline S'\mono S$.
\end{proof}

The proof of \propref{p:intersection} given below is straightforward; the elementary \lemref{l:delo} is its heart.

\sssec{Proof of \propref{p:intersection}}   \label{ss:intersection-proof}

%\begin{proof}[Proof of \propref{p:intersection}]
By \lemref{l:nilp2}, we can assume that $\CI^2=0$, where $\CI\subset\CO_S$ is the ideal of $S_0$.

Recall that for any scheme $S$ over $\BA^1$, an $\BA^1$-morphism $S\to\wt{Z}$ is the same as a $\BG_m$-equivaraiant morphism $\BX_S\to Z$, where $\BX_S:=\BX\times_{\BA^1}S$. We can think of an 
$\BA^1$-morphism $S\to \BA^1\times Z\times Z$ as a $\BG_m$-equivariant morphism $\BY_S\to Z$, where
$\BY_S:=\BY\times_{\BA^1}S$, $\BY:=\BA^1\times (\BG_m\sqcup \BG_m)$, and the $\BG_m$-action on 
$\BY=\BA^1\times\BG_m\times(\Spec k\sqcup\Spec k)$ comes from the $\BG_m$-action on the $\BG_m$~factor by translations. The morphism 
$\wt{p}:\wt{Z}\to \BA^1\times Z\times Z$ comes from the $\BG_m$-equivariant morphism $\nu :\BY\to\BX$ whose
restriction to the first copy of $\BA^1\times \BG_m$ is given by 
\[
(t,\lambda)\mapsto (\lambda ,\lambda^{-1}\cdot t)
\]
and whose restriction to the second copy of $\BA^1\times \BG_m$ is given by
\[
(t,\lambda)\mapsto (\lambda\cdot t,\lambda^{-1}).
\]
(Note that both restrictions are open embeddings.)

So a diagram \eqref{e:obs1} corresponds to the following data:

\begin{enumerate}
\item[(i)] a scheme $S$ over $\BA^1$ and a closed subscheme $S_0\subset S$ defined by an ideal 
$\CI\subset\CO_S$ such that $\CI^2=0$;

\item[(ii)] a $\BG_m$-equivariant morphism $f_0:\BX_{S_0}\to Z$;

\item[(iii)] a $\BG_m$-equivariant morphism $f:\BY_S\to Z$  whose restriction to $\BY_{S_0}$ is equal to the composition of $\nu_{S_0}:\BY_{S_0}\to\BX_{S_0}$ and $f_0:\BX_{S_0}\to Z$.
\end{enumerate}

Clearly (iii) is equivalent to the following datum: 

\begin{enumerate}
\item[(iii$'$)] a lift of the  composition $f_0^{\Kdot}\CO_Z\to\CO_{\BX_{S_0}}\to (\nu_{S_0})_*\CO_{\BY_{S_0}}$ to a $\BG_m$-equivariant morphism $f_0^{\Kdot}\CO_Z\to (\nu_S)_*\CO_{\BY_S}$.
\end{enumerate}
Here each algebraic space is equipped with the etale topology, and $f_0^{\Kdot}$ denotes the pullback with respect to $(f_0)_{\ET}: (\BX_S)_{\ET}=(\BX_{S_0})_{\ET}\to Z_{\ET}\,$.

We can rewrite [(iii$'$) as follows:

\begin{enumerate}
\item[(iii$''$)] a lift of the  morphism $f_0^{\Kdot}\CO_Z\to\CO_{\BX_{S_0}}$
to a $\BG_m$-equivariant morphism 
\begin{equation}  \label{e:to_lift}
f_0^{\Kdot}\CO_Z\to\CO_{\BX_{S_0}}\underset{(\nu_{S_0})_*\CO_{\BY_{S_0}}}\times (\nu_S)_*\CO_{\BY_{S}}\, .
\end{equation}
\end{enumerate}

Extending diagram \eqref{e:obs1} to diagram  \eqref{e:obs2} is equivalent to lifting the map \eqref{e:to_lift}
further to a morphism $f_0^{\Kdot}\CO_Z\to\CO_{\BX_{S}}\,$. By \corref{c:form_unr}, there is at most one such lift. This also follows from the first part of the next lemma.

\begin{lem}   \label{l:delo}
(a) The morphism $\CO_{\BX_S}\to(\nu_S)_*\CO_{\BY_{S}}$ is injective.

(b) Set $\CF_S:=\Coker (\CO_{\BX_S}\to(\nu_S)_*\CO_{\BY_{S}})$ and let $\pr_S :\BX_S\to S$ denote the projection. Then $(\pr_S)_*\CF_S$ is a free $\CO_S$-module (of countable rank).
\end{lem}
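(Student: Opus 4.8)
The plan is to exploit the completely explicit description of $\nu:\BY\to\BX$ given just above the lemma. Both restrictions of $\nu$ are open embeddings, and identifying the first (resp.\ second) copy of $\BA^1\times\BG_m$ with the standard open $\BX_1:=\{\tau_1\neq 0\}$ (resp.\ $\BX_2:=\{\tau_2\neq 0\}$) of $\BX$, the morphism $\nu$ is nothing but the disjoint union $\BX_1\sqcup\BX_2\to\BX$ of the two inclusions. Hence $(\nu_S)_*\CO_{\BY_S}=(j_1)_*\CO_{(\BX_1)_S}\oplus(j_2)_*\CO_{(\BX_2)_S}$, where $j_1,j_2$ are the two open immersions into $\BX_S$, and the map in (a) is the diagonal restriction. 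Since $\nu_S$ and $\pr_S$ are affine, all sheaves in sight are quasi-coherent and $(\pr_S)_*$ is exact; moreover everything is $\BG_m$-equivariant, hence $\BZ$-graded. I would therefore check both assertions after applying $(\pr_S)_*$ and, since they are local on $S$, after passing to an affine $S=\Spec R$.

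For affine $S$ I would compute directly in each graded degree. By Subsect.~\ref{ss:affine}, $(\pr_S)_*\CO_{\BX_S}=A_R$ is $R$-free on the homogeneous basis $e_n$ ($n\in\BZ$) of \eqref{e:e_n}, while $(\pr_S)_*(j_1)_*\CO=R[\tau_1,\tau_1^{-1}]$ and $(\pr_S)_*(j_2)_*\CO=R[\tau_2,\tau_2^{-1}]$ are $R$-free on the $\tau_1^m$ and the $\tau_2^m$. Using $\tau_1\tau_2=t$ one finds that in degree $n$ the map $(\pr_S)_*\CO_{\BX_S}\to(\pr_S)_*(\nu_S)_*\CO_{\BY_S}$ is the map $R\to R\tau_1^n\oplus R\tau_2^{-n}$ sending $e_n$ to $(\tau_1^n,\,t^n\tau_2^{-n})$ if $n\ge 0$ and to $(t^{-n}\tau_1^n,\,\tau_2^{-n})$ if $n\le 0$. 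In either case exactly one of the two components is an isomorphism $R\iso R$, namely the $\tau_1$-component for $n\ge 0$ and the $\tau_2$-component for $n\le 0$.

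This single observation gives both statements. Injectivity (a) is immediate degreewise, hence in total. For (b), I would use the elementary fact that if $0\to R\xrightarrow{(\alpha,\beta)}N_1\oplus N_2\to C\to 0$ with $\alpha$ an isomorphism, then the composite $N_2\mono N_1\oplus N_2\epi C$ is an isomorphism. Applying this in each degree shows that the class of $\tau_2^{-n}$ (for $n\ge 0$) and of $\tau_1^n$ (for $n\le 0$) is a free $R$-generator of the degree-$n$ part of $(\pr_S)_*\CF_S$. Summing over $n\in\BZ$ exhibits $(\pr_S)_*\CF_S$ as $R$-free of countable rank. Finally, since these generators $[\tau_2^{-n}]$ and $[\tau_1^n]$ are cut out by canonical projections that commute with base change, they are global sections of $(\pr_S)_*\CF_S$ over an arbitrary $S$, and the statement that they form a basis is local on $S$; this reduces the general case to the affine computation just performed.

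The only genuine point—and the step I would treat most carefully—is the degreewise structure that \emph{one} projection is an isomorphism: it is what simultaneously forces injectivity in (a) and produces a \emph{canonical} free basis that glues over a non-affine $S$ in (b). Everything else is bookkeeping with the $\BZ$-grading and with the affineness of $\nu_S$ and $\pr_S$ that guarantees exactness of $(\pr_S)_*$.
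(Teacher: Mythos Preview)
Your proof is correct and follows essentially the same approach as the paper: identify $\nu_S$ with the pair of open immersions $\{\tau_1\neq 0\}\sqcup\{\tau_2\neq 0\}\hookrightarrow\BX_S$, reduce to an explicit ring computation, and read off injectivity and a free basis of the cokernel from the $\BZ$-grading. The only organizational difference is that the paper reduces to the universal case $S=\BA^1$ (so that base change handles general $S$ at once), and describes the cokernel via the map $(u,v)\mapsto u-v$ into $k[\tau_1^{\pm 1},\tau_2^{\pm 1}]$ rather than via your degreewise ``one projection is an isomorphism'' argument; the resulting basis $\{1,\tau_1^{-n},\tau_2^{-n}:n\in\BN\}$ is the same as yours up to the harmless choice in degree~$0$.
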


\begin{proof}
It suffices to consider the case where the morphism $S\to\BA^1$ is an isomorphism. In this case we have to check that the map
\[
k[\tau_1,\tau_2]\to k[\tau_1,\tau_2,\tau_1^{-1}]\times k[\tau_1,\tau_2,\tau_2^{-1}]
\]
is injective and its cokernel is a free module over $k[\tau_1\tau_2]\subset k[\tau_1,\tau_2]$. Injectivity is clear. The cokernel identifies via the map $(u,v)\mapsto u-v$ with
\[
k[\tau_1,\tau_2,\tau_1^{-1}]+k[\tau_1,\tau_2,\tau_2^{-1}]\subset k[\tau_1,\tau_2,\tau_1^{-1},\tau_2^{-1}],
\]
which is a module over $k[\tau_1\tau_2]$ freely generated by the elements $1$ and $\tau_i^{-n}$, where $n\in\BN$ and $i=1,2$.
\end{proof}

%Now let us prove \propref{p:intersection}. 
\begin{proof}[End of the proof of \propref{p:intersection}] Let  $\CF_S$ and $\pr_S$ be as in \lemref{l:delo}(b). The obstruction to solving our lifting problem is a morphism $\obs:f_0^{\Kdot}\CO_Z\to \CF_S\otimes (\pr_S)^*\CI$.
Using the equality $\CI^2=0$, it is easy to prove that  $\obs$ is a derivation\footnote{To prove this, set $B:=\CO_{\BX_{S_0}}\underset{(\nu_{S_0})_*\CO_{\BY_{S_0}}}\times (\nu_S)_*\CO_{\BY_{S}}$, $C:=\CO_{\BX_{S_0}}$, $B':=\CO_{\BX_S}$. We have ring homomorphisms $B'\mono B\to C$, whose composition is surjective. Moreover, the ideal $J:=\Ker (B\epi C)$ satisfies $J^2=0$. Then the composition 
$B\to B/B'\iso J/(J\cap B')$ is a \emph{derivation} of $B$ with coefficients in the $C$-module $J/(J\cap B')$. Our $\obs$ is the composition of this derivation with a ring homomorphism $f_0^{\Kdot}\CO_Z\to B$.} with respect to the ring homomorphism $f_0^{\Kdot}\CO_Z\to \CO_{\BX_{S_0}}$. So we can rewrite 
$\obs$ as a morphism of quasi-coherent  
$\CO_{\BX_{S_0}}$-modules
 $f_0^*\Omega^1_Z\to \CF_S\otimes (\pr_S)^*\CI$ and then (using the fact that $\BX_S$ is affine over 
 $S$) as a morphism of  quasi-coherent  $\CO_{S_0}$-modules\footnote{Of course, this morphism is also $\BG_m$-equivariant and commutes with the action of  the algebra $(\pr_{S_0})_*\CO_{\BX_{S_0}}$. }
 \begin{equation}   \label{e:the_obstruction}
 (\pr_{S_0})_* f_0^*\Omega^1_Z\to  (\pr_S)_* \CF_S\otimes\CI \,  .
 \end{equation}
 Now let us explain how to construct the closed subscheme $\underline S\subset S$ from \propref{p:intersection}. By \lemref{l:delo}(b), $(\pr_S)_* \CF_S$ is a free $\CO_S$-module. After choosing a basis in it, we can think of the morphism \eqref{e:the_obstruction} as an (infinite) collection of morphisms 
 $(\pr_{S_0})_* f_0^*\Omega^1_Z\to \CI $. Let $\CI_1\subset\CI$ be the submodule (or equivalently, the ideal) generated by their images. Finally, let $\underline S\subset S$ be the closed subscheme corresponding to 
 $\CI_1\subset\CO_S\,$. It is easy to see that $\underline S$ has the property from  \propref{p:intersection}. 
 %\qed
 \end{proof}
 
 \sssec{A digression about $\obs$}  \label{sss:obsdigression}
 The obstruction $\obs$ studied at the end of the proof of  \propref{p:intersection} lives in the group
 $\Hom^{\BG_m}(f_0^*\Omega^1_Z\, , \CF_S\otimes (\pr_S)^*\CI )$, which can be identified (after some work) with
 \begin{equation}   \label{e:where obs lives}
 \Hom ( ((\pr_{S_0})_*(f_0^*\Omega^1_Z\otimes\omega_{\BX_{S_0}/S_0}))^{\BG_m},\CI),
 \end{equation}
 where $\omega_{\BX_{S_0}/S_0}$ is the relative dualizing sheaf.
 This fact will not be used in the proof of Theorem~\ref{t:tildeZ}, so we skip the details.

 \sssec{Constructing formal neighborhoods}     \label{sss:formal}
 
 \begin{lem}   \label{l:unrami}
 Let $S_0$ be a $k$-scheme of finite type. The following properties of a morphism $\varphi :S_0\to\wt{Z}$ are equivalent:
 
 (i) $\varphi$ is formally unramified;
 
 (ii) $\varphi$ is unramified;
 
 (iii) the composition 
 \begin{equation}    \label{e:thecomp}
 S_0\overset{\varphi}\longrightarrow\wt{Z}\overset{\wt{p}}\longrightarrow\BA^1\times Z\times Z
 \end{equation} 
 is unramified.
 \end{lem}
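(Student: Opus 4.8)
The plan is to establish the harmless equivalence (i)$\Leftrightarrow$(ii), which is pure finiteness bookkeeping, and then (i)$\Leftrightarrow$(iii), which is nothing but the two standard permanence properties of formal unramifiedness. The essential point is to phrase everything through the unique-infinitesimal-lifting characterization, since $\wt{Z}$ is not yet known to be algebraic and so one cannot yet speak of a sheaf of differentials or a cotangent complex on it. Throughout, by an \emph{infinitesimal lifting problem} for a morphism $g\colon A\to B$ of spaces I mean a closed immersion $T_0\hookrightarrow T$ of affine schemes with nilpotent ideal together with $a\colon T_0\to A$ and $b\colon T\to B$ satisfying $g\circ a=b|_{T_0}$; a \emph{solution} is $c\colon T\to A$ with $c|_{T_0}=a$ and $g\circ c=b$, and $g$ is formally unramified exactly when every such problem has at most one solution.

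For (i)$\Leftrightarrow$(ii): since $k$ is a field and $S_0$ is of finite type over $k$, it is locally of finite presentation over $k$, so by \corref{c:morphisms from schemes}(ii) the morphism $\varphi$ is automatically locally of finite presentation. As ``unramified'' means ``formally unramified and locally of finite presentation'' (see the conventions recalled in the introduction), properties (i) and (ii) coincide for $\varphi$. Note that this step does not involve $\wt{p}$ at all. For the remaining equivalence recall that $\varphi$ is representable by \corref{c:morphisms from schemes}(i), so its formal unramifiedness is indeed tested by the lifting property above; and recall that $\wt{p}$ enjoys the same unique-lifting property by \corref{c:form_unr}.

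For (iii)$\Rightarrow$(i): since unramified implies formally unramified, we may assume the composition $\wt{p}\circ\varphi$ is formally unramified. Given an infinitesimal lifting problem for $\varphi$ with data $(T_0\hookrightarrow T,\,a,\,b)$ and two solutions $c_1,c_2\colon T\to S_0$, applying $\wt{p}$ shows that $c_1,c_2$ both restrict to $a$ on $T_0$ and satisfy $(\wt{p}\circ\varphi)\circ c_i=\wt{p}\circ b$; thus they are two solutions of the associated lifting problem for $\wt{p}\circ\varphi$, and formal unramifiedness of the composition forces $c_1=c_2$. Hence $\varphi$ is formally unramified. (This is the general fact that the first factor of a formally unramified composition is formally unramified.)

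For (i)$\Rightarrow$(iii): assume $\varphi$ is formally unramified. Given a lifting problem for $\wt{p}\circ\varphi$ with data $(T_0\hookrightarrow T,\,a\colon T_0\to S_0,\,b'\colon T\to\BA^1\times Z\times Z)$ and two solutions $c_1,c_2\colon T\to S_0$, the morphisms $\varphi\circ c_1,\varphi\circ c_2\colon T\to\wt{Z}$ both extend $\varphi\circ a$ and satisfy $\wt{p}\circ(\varphi\circ c_i)=b'$, so they solve a common lifting problem for $\wt{p}$; by \corref{c:form_unr} we get $\varphi\circ c_1=\varphi\circ c_2$. Then $c_1,c_2$ solve a common lifting problem for $\varphi$, whence $c_1=c_2$ by formal unramifiedness of $\varphi$. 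Thus $\wt{p}\circ\varphi$ is formally unramified; it is moreover locally of finite presentation because $S_0$ and $\BA^1\times Z\times Z$ are both of finite type over $k$, so it is unramified, i.e.\ (iii) holds. Combined with (i)$\Leftrightarrow$(ii) this proves the lemma. The only care required is the one already flagged — using the lifting property rather than differentials — together with the routine reductions to finitely presented test schemes (\remref{r:2commutation_colimits}) and to nilpotent, not merely square-zero, ideals (\remref{r:nil-ideal}); there is no genuine obstacle beyond this.
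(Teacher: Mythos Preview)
Your proof is correct and follows essentially the same approach as the paper: (i)$\Leftrightarrow$(ii) via \corref{c:morphisms from schemes}, and (i)$\Leftrightarrow$(iii) via the formal unramifiedness of $\wt{p}$ from \corref{c:form_unr}. The paper simply cites these corollaries and the standard permanence properties of formal unramifiedness under composition, whereas you spell out the infinitesimal-lifting arguments explicitly; the logical content is the same.
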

 
  \begin{proof}
  By \corref{c:morphisms from schemes}, we have (i)$\Leftrightarrow$(ii).
  Since $S_0$ is of finite type property (iii) is equivalent to the composition \eqref{e:thecomp} being formally unramified. The latter is equivalent to (i) by \lemref{c:form_unr}.
  \end{proof}
  
%Let $S_0$ be a $k$-scheme of finite type and suppose that $\varphi :S_0\to\wt{Z}$ has the equivalent 
%properties (i)-(iii)  from \lemref{l:unrami}. Define the \emph{formal neighborhood} of
%$S_0$ with respect to $\varphi :S_0\to\wt{Z}$ to be the following contravariant functor 
%$\mbox{\{affine }k\mbox{-schemes\}}\to\mbox{\{sets\}}$:
%\[
%T\mapsto\Maps (T,\wt{Z})\underset{\Maps (T_{\red}\, ,\wt{Z})}\times\Maps (T_{\red}\, ,S_0).
%\]
%Note that this functor is locally of finite presentation (because it is a fiber product of functors each of which
%is locally of finite presentation).

%%Denote this functor by $S_{\infty}$. Similarly, one has the formal neighborhood of $S_0$ with respect to 
%%$\wt{p}\circ\varphi :S_0\to\BA^1\times Z\times Z$, which will be denoted by $S'_{\infty}\,$. 
%%One has canonical morphisms $S_0\to S_{\infty}\to S'_{\infty}\,$.

\begin{defn}   \label{d:formal neighborhood}
Let $F$ and $G$ be contravariant functors 
$\mbox{\{affine }k\mbox{-schemes\}}\to\mbox{\{sets\}}$ which are locally of finite presentation. Let 
$\varphi :F\to G$ be a formally unramified morphism. Then the \emph{formal neighborhood} of  $F$ with respect to $\varphi :F\to G$ is the following contravariant functor 
$\mbox{\{affine }k\mbox{-schemes\}}\to\mbox{\{sets\}}$:
\begin{equation}   \label{e:form neighb}
T\mapsto G(T)\underset{G(T_{\red})}\times F (T_{\red}).
\end{equation}
\end{defn}
Note that the functor \eqref{e:form neighb} is again locally of finite presentation (because it is a fiber product of functors each of which is locally of finite presentation). 
%Let us also note that the canonical morphism from $F$ to the above formal neighborhood is a monomorphism 
%(to prove this, represent $T_{\red}$ as the projective limit of all closed subschemes of $T$ defined by finitely 
%generated nilpotent ideals and then
%use that $\varphi$ is formally unramified and $F$ and $G$ are locally of finite presentation).

\begin{rem}
The canonical morphism from $F$ to the above formal neighborhood is a monomorphism (to prove this, represent $T_{\red}$ as the projective limit of all closed subschemes of $T$ defined by finitely generated nilpotent ideals; then
use that $\varphi$ is formally unramified and $F$ and $G$ are locally of finite presentation).
\end{rem}

\begin{rem}
We will apply Definition~\ref{d:formal neighborhood} only in the following situation: $F$ is a $k$-scheme of finite type and $G$ is either an algebraic $k$-space of finite type or the space $\wt{Z}$ (which will \emph{eventually} be shown to be an algebraic $k$-space of finite type).
\end{rem}

\begin{prop}   \label{p:formal}
Let $S_0$ be a $k$-scheme of finite type and suppose that $\varphi :S_0\to\wt{Z}$ has the equivalent properties (i)-(iii)  from \lemref{l:unrami}. Let  $S_{\infty}$ denote the formal neighborhood of $S_0$ with respect to $\varphi :S_0\to\wt{Z}$. Let $S'_{\infty}\,$ denote the formal neighborhood of $S_0$ with respect to $\wt{p}\circ\varphi :S_0\to\BA^1\times Z\times Z$. Then

(i) the morphism  $S_{\infty}\to S'_{\infty}$ is a closed embedding;

(ii) $S_{\infty}$ can be represented as an inductive limit of a diagram
\begin{equation}   \label{e:embeddings_diagram}
S_0\mono S_1\mono S_2\mono\ldots
\end{equation}
in which each $S_n$ is a $k$-scheme of finite type, the morphisms  $S_n\to S_{n+1}$ are closed embeddings, 
and for each $N\ge n$ the $n$-th infinitesimal neighborhood of $S_0$ in $S_N$ equals $S_n\,$.
\end{prop}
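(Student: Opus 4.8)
For $\varphi:S_0\to\wt Z$ formally unramified ($S_0$ of finite type), the formal neighborhood $S_\infty$ of $S_0$ with respect to $\varphi$ closed-embeds into the formal neighborhood $S'_\infty$ with respect to $\wt p\circ\varphi$, and $S_\infty$ is an increasing union of finite-type schemes $S_0\hookrightarrow S_1\hookrightarrow\cdots$ with the infinitesimal-neighborhood compatibility.

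Let me think about how to build this.

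The key tool available is Proposition \ref{p:intersection}: for any square as in \eqref{e:obs1}, the functor of "liftable morphisms $T\to S$" is represented by a closed subscheme $\underline S\subset S$. The formal neighborhood $S'_\infty$ is by definition the functor $T\mapsto (\BA^1\times Z\times Z)(T)\times_{(\BA^1\times Z\times Z)(T_{\red})}S_0(T_{\red})$, which because $\BA^1\times Z\times Z$ is algebraic of finite type can be presented as an increasing union of its "finite-order" pieces $S'_n$, the $n$-th infinitesimal neighborhoods coming from the unramified map $\wt p\circ\varphi$. So $S'_\infty=\mathrm{colim}_n S'_n$ with $S'_n$ finite-type schemes, closed embeddings $S'_n\hookrightarrow S'_{n+1}$, and the infinitesimal compatibility holding by construction.

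The plan is to cut out $S_\infty$ inside each $S'_n$ using Proposition \ref{p:intersection} applied level by level. Fix $n$ and consider the square \eqref{e:obs1} with $S=S'_n$ and $S_0$ its reduced subscheme (equivalently, apply the proposition to the pair $(S_0, S'_n)$, using Remark \ref{r:nil-ideal} which lets the closed subscheme merely contain the reduction rather than be defined by a nilpotent ideal). Proposition \ref{p:intersection} produces a closed subscheme $S_n:=\underline{S'_n}\subset S'_n$ representing those maps $T\to S'_n$ that lift to $\wt Z$; by Corollary \ref{c:form_unr} such a lift is unique when it exists. First I would check that the $S_n$ are compatible: because liftability is clearly preserved under the transition maps (a liftable map into $S'_n$ composed with $S'_n\hookrightarrow S'_{n+1}$ is liftable), the closed embedding $S'_n\hookrightarrow S'_{n+1}$ restricts to a closed embedding $S_n\hookrightarrow S_{n+1}$, giving diagram \eqref{e:embeddings_diagram}. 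Then I would identify $S_\infty=\mathrm{colim}_n S_n$ by a direct functor comparison: an element of $S_\infty(T)$ is a point of $\wt Z(T)$ together with a compatible reduced point, and since $T$ is the filtered union of the infinitesimal neighborhoods of $T_{\red}$, the representability statement of Proposition \ref{p:intersection} matches each finite-order datum with a map to the appropriate $S_n$. Statement (i) — that $S_\infty\to S'_\infty$ is a closed embedding — then falls out because at each finite level $S_n\hookrightarrow S'_n$ is a closed embedding by construction, and a compatible system of closed embeddings induces a closed embedding on colimits.

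For (ii) the infinitesimal-neighborhood assertion — that the $n$-th infinitesimal neighborhood of $S_0$ in $S_N$ equals $S_n$ for $N\ge n$ — I would inherit it from the corresponding property on the $S'_n$ side (which holds tautologically because $S'_\infty$ is the formal neighborhood of $S_0$ in an honest finite-type space, so its $S'_n$ are genuine infinitesimal neighborhoods) together with the fact that passing to the closed subschemes $S_n\subset S'_n$ is compatible with taking infinitesimal neighborhoods; here I would again invoke the fiber-product description \eqref{e:form neighb} and the level-wise application of Proposition \ref{p:intersection}. The one point needing care is the reduction-to-nilpotent-ideal step: Proposition \ref{p:intersection} as stated (via Lemma \ref{l:nilp2}) concerns a closed subscheme defined by a nilpotent ideal, whereas the transition $S_0\hookrightarrow S'_n$ is defined by a nilpotent ideal only for the standard infinitesimal filtration, so I would make sure to apply the proposition to the consecutive pairs $S'_{n}\hookrightarrow S'_{n+1}$ (each defined by a square-zero, or at least nilpotent, ideal) and bootstrap, exactly mirroring the induction in Lemma \ref{l:nilp2}.

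\emph{The main obstacle.} The genuinely delicate step is verifying that the level-wise closed subschemes $S_n$ glue to give $S_\infty$ and not something larger or smaller — i.e., that "liftability of $T\to S'_n$ to $\wt Z$" assembled over all $n$ is exactly the functor \eqref{e:form neighb} defining $S_\infty$. This requires knowing that a map $T\to\wt Z$ is determined by, and can be assembled from, its restrictions to the infinitesimal neighborhoods of $T_{\red}$; uniqueness is Corollary \ref{c:form_unr}, but existence/assembly is where one must use that $\wt Z$ is locally of finite presentation (Remark \ref{r:2commutation_colimits}) so that a map out of $T$ is detected on finite-order thickenings, combined with the representability in Proposition \ref{p:intersection}. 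Everything else is formal manipulation of filtered colimits of finite-type schemes and closed embeddings.
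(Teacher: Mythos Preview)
Your approach is essentially the same as the paper's: decompose $S'_\infty$ as $\varinjlim S'_n$ using that $\wt p\circ\varphi$ is unramified into an honest algebraic space, then cut out $S_n\subset S'_n$ level by level via \propref{p:intersection}. The paper, however, orders the steps so that what you call ``the main obstacle'' disappears: rather than \emph{defining} $S_n$ as the closed subscheme $\underline{S'_n}$ from \propref{p:intersection} and then struggling to identify $\varinjlim S_n$ with $S_\infty$, the paper \emph{defines} $S_n:=S'_n\times_{S'_\infty}S_\infty$ (so $\varinjlim S_n=S_\infty$ is tautological, as is the infinitesimal-neighborhood compatibility inherited from the $S'_n$) and then checks that $S_n\hookrightarrow S'_n$ is a closed embedding by verifying it coincides with $\underline{S'_n}$, using \corref{c:form_unr} and \remref{r:nil-ideal}. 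This is the same argument with the burden of proof shifted to the easier side.

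Two minor points. First, the ideal of $S_0$ in $S'_n$ \emph{is} nilpotent (that is what ``$n$-th infinitesimal neighborhood'' means), so \propref{p:intersection} applies directly to the pair $(S_0,S'_n)$; there is no need to bootstrap through consecutive pairs $S'_m\hookrightarrow S'_{m+1}$ as you suggest at the end, and invoking \remref{r:nil-ideal} here is superfluous. Second, your worry about assembling a map $T\to\wt Z$ from its restrictions to infinitesimal thickenings of $T_{\red}$ is legitimate in your ordering, but the fiber-product definition of $S_n$ sidesteps it entirely.
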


\begin{rem}
The diagram \eqref{e:embeddings_diagram} is unique up to isomorphism (indeed, $S_n$ can be characterized as the biggest closed subscheme of $S_{\infty}$ containing $S_0$ and such that the $(n+1)$-st power of the ideal of $\CO_{S_n}$ defining $S_0$ equals 0).
% infinitesimal neighborhood of $S_0$ in any closed subscheme of $S_{\infty}$ 
%containing $S_n$ equals $S_n$). 
 One could call $S_n$ the \emph{ $n$-th infinitesimal neighborhood of $S_0$ with respect to $\varphi :S_0\to\wt{Z}\,$.}
\end{rem}

\begin{proof}

Standard arguments prove the following property\footnote{This property holds for the formal neighborhood of $S_0$ with respect to any unramified morphism $g:S_0\to Y$, where $Y$ is any algebraic $k$-space of finite type (in particular, for $Y=\BA^1\times Z\times Z$, $g=\wt{p}\circ\varphi$). To prove it,
use that such $g$ can be locally factored as a closed embedding followed by an etale morphism (here ``locally" means ``etale-locally with respect to $Y$ and Zariski-locally with respect to $S_0\,$").}
%Use that if $Y$ is an algebraic $k$-space of finite type (e.g., 
%$Y=\BA^1\times Z\times Z$) then any unramified morphism $S_0\to Y$ can be locally factored as a closed 
%immersion followed by an etale morphism (here ``locally" means ``etale-locally with respect to $Y$ and 
%Zariski-locally with respect to $S_0\,$").} 
of $S'_{\infty}$: it can be (uniquely) represented as an inductive limit of a diagram
\[
S_0\mono S'_1\mono S'_2\mono\ldots
\]
in which each $S'_n$ is a $k$-scheme of finite type, the morphisms  $S'_n\to S'_{n+1}$ are closed embeddings, 
and for each $N\ge n$ the $n$-th infinitesimal neighborhood of $S'_0$ in $S'_N$ equals $S'_n\,$.
%Let $S'_n$ denote the  $n$-th infinitesimal neighborhood of $S_0$ in $S'_{\infty}\,$. 
%Clearly $S'_n$ is a $k$-scheme of finite type. 
Set 
$$S_n:=S'_n\underset{S'_{\infty}}\times S_{\infty}\, .$$
It remains to prove that for every $n\in\BN$ the morphism $S_n\to S'_n$ is a closed embedding. 
To this end, consider the diagram
\[
\xymatrix{
S_0 \ar[d]_{}\ar@{^{(}->}[r]^{}& S'_n\ar[d]^{}\\
\wt{Z}\ar[r]^{}&\BA^1\times Z\times Z
    }
\]
of type \eqref{e:obs1}. Applying \propref{p:intersection} to this diagram, one gets a closed subscheme of 
$S'_n\,$. It is easy to check that this closed subscheme equals $S_n\,$ (use \corref{c:form_unr} and \remref{r:nil-ideal}).
 \end{proof}

  \sssec{An openness lemma}
 % First, \propref{p:intersection} implies pro-representability:
  % 
  % \begin{cor}  \label{c:prrorepresentability}
 %  \end{cor}

%Now let us prove ``openness of formal (?) etaleness" for morphisms from schemes to $\wt{Z}$.
 
 \begin{cor}  \label{c:openness_etaleness}
 Let $S_0$ be a $k$-scheme of finite type and  $\varphi :S_0\to\wt{Z}$ a morphism.
 Let $s\in S_0$ be a closed point. Suppose that 
 
 \begin{enumerate}
\item[(i)] the morphism $s\to\wt{Z}$ is a monomorphism (so the formal neighborhood of $s$ in $\wt{Z}$ is well-defined and \propref{p:formal} is applicable to it);

\item[(ii)] $\varphi$ induces an isomorphism between the formal neighborhoods of $s$ in $S_0$ and $\wt{Z}$.
\end{enumerate}

 \noindent Then there is an open subscheme $U\subset S_0$ containing $s$ such that the restriction of 
 $\varphi$ to $U$ is etale.
  \end{cor}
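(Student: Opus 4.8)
The plan is to spread out the pointwise formal étaleness supplied by hypothesis (ii) to genuine étaleness on a Zariski neighborhood of $s$. Unravelling Definition~\ref{d:formal neighborhood}, hypothesis (ii) says precisely that $\varphi$ is \emph{formally étale at $s$} in the pointed sense: for every local Artinian thickening $T$ of the residue field $\kappa(s)$ the map $\Maps_s(\Spec T, S_0)\to\Maps_s(\Spec T,\wt{Z})$ induced by $\varphi$ is bijective. Since étaleness is insensitive to the base field and the hypotheses are preserved by the faithfully flat base change $\Spec k'\to\Spec k$, I may assume $k=\bar k$ and that $s$ is a $k$-point; the open locus produced after base change descends because its complement is closed and compatible with flat base change. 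The argument then has two parts: that $\varphi$ is unramified on some open $U_1\ni s$, and that the conormal data of this unramified morphism vanishes near $s$.

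For unramifiedness, hypothesis (ii) at first order gives that $d\varphi_s\colon T_sS_0\to T_s\wt{Z}$ is an isomorphism, where $T_s\wt{Z}$ is the space of $\kappa(s)[\varepsilon]$-points of $\wt{Z}$ over $\varphi(s)$. By \corref{c:form_unr} the morphism $\wt{p}$ is formally unramified, so $T_s\wt{Z}\to T_{\wt{p}(\varphi(s))}(\BA^1\times Z\times Z)$ is injective; hence the differential of $\psi:=\wt{p}\circ\varphi$ at $s$ is injective, i.e.\ $\Omega^1_{\psi}\otimes\kappa(s)=0$. As $\Omega^1_\psi$ is coherent on the finite-type scheme $S_0$ and $\BA^1\times Z\times Z$ is an algebraic space of finite type, Nakayama's lemma and the closedness of $\on{Supp}\Omega^1_{\psi}$ show that $\psi$ is unramified on an open $U_1\ni s$. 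By \lemref{l:unrami} applied to $U_1$, the morphism $\varphi|_{U_1}$ is unramified as well.

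Now I apply \propref{p:formal} to the unramified morphism $\varphi|_{U_1}\colon U_1\to\wt{Z}$. This presents the formal neighborhood $S_\infty$ of $U_1$ in $\wt{Z}$ as an increasing union of finite-type thickenings $U_1=N_0\subset N_1\subset N_2\subset\cdots$, and \propref{p:formal}(i) embeds $S_\infty$ as a closed formal subscheme of the formal neighborhood $S'_\infty$ of $U_1$ taken with respect to $\psi$. The coherent square-zero ideal $\CJ:=\Ker(\CO_{N_1}\to\CO_{U_1})$ is the conormal sheaf of $\varphi|_{U_1}$, and $\varphi$ is étale at a point $x\in U_1$ exactly when $\CJ_x=0$: for then $N_n=U_1$ near $x$, the formal neighborhood is trivial, $\varphi$ is formally étale near $x$, and hence étale by \corref{c:morphisms from schemes}. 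By Nakayama it therefore suffices to verify $\CJ\otimes\kappa(s)=0$, after which $U:=U_1\setminus\on{Supp}\CJ$ is the desired open neighborhood of $s$.

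The crux is the vanishing $\CJ\otimes\kappa(s)=0$, and this is where hypothesis (ii) re-enters. Writing $W:=\BA^1\times Z\times Z$ and using the closed embedding $S_\infty\hookrightarrow S'_\infty$ together with the local factorization of the unramified $\psi$ as a closed immersion $U_1\hookrightarrow V$ followed by an étale map $V\to W$ (the device used in the footnote to the proof of \propref{p:formal}), one obtains near $s$ a conormal exact sequence $\mathcal{C}_{\wt{Z}/W}\otimes\CO_{U_1}\to\mathcal{C}_{U_1/W}\to\CJ\to 0$. Hypothesis (ii) identifies the formal neighborhood of $s$ in $\wt{Z}$ with that of $s$ in $S_0$; read inside the formal completion of $W$ at $\wt{p}(\varphi(s))$, this forces the conormal ideals $\mathcal{C}_{\wt{Z}/W}$ and $\mathcal{C}_{U_1/W}$ to coincide at $s$, so the first arrow is surjective and $\CJ\otimes\kappa(s)=0$. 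I expect the main obstacle to be exactly this matching: one must keep careful track of the difference between the $\mathfrak{m}_s$-adic completions that encode hypothesis (ii) and the relative (conormal-adic) thickenings that control étaleness, so that the pointwise isomorphism of formal neighborhoods is correctly converted into the vanishing of the conormal fiber. Once this bookkeeping is in place, the coherence of $\CJ$ and Nakayama conclude the proof as above.
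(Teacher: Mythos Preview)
Your approach is essentially the paper's: unramify $\psi=\wt{p}\circ\varphi$ near $s$, invoke \propref{p:formal} to obtain the tower $S_0\subset S_1\subset\cdots$ with limit $S_\infty$, and show the ideal $\CI$ (your $\CJ$) of $S_0$ in $S_1$ vanishes at $s$. Where you route the last step through a conormal exact sequence and flag the matching of completions as a remaining obstacle, the paper proceeds more directly: the formal neighborhood of $s$ in $S_\infty$ is tautologically (from Definition~\ref{d:formal neighborhood}) the formal neighborhood of $\varphi(s)$ in $\wt{Z}$, so hypothesis~(ii) says precisely that $S_0\hookrightarrow S_\infty$ is an isomorphism on formal completions at $s$; sandwiching $S_1$ between $S_0$ and $S_\infty$ gives $\hat{\CO}_{S_1,s}\iso\hat{\CO}_{S_0,s}$, hence $\hat\CI_s=0$, hence $\CI_s=0$ by faithful flatness of completion. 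This resolves your flagged obstacle and makes the conormal sequence unnecessary; note too that $\mathcal{C}_{\wt{Z}/W}$ is not defined globally at this stage (representability of $\wt{Z}$ is what is being proved), so any use of it must anyway be localized to the formal completion at $s$, where the direct argument above is already available. The reduction to $k=\bar k$ is harmless but not needed.
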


\begin{proof}
Since $\wt{p}:\wt{Z}\to\BA^1\times Z\times Z$ is formally unramified, condition (ii) implies that 
the morphism $\wt{p}\circ\varphi :S_0\to\BA^1\times Z\times Z$ is unramified at $s$.
So after shrinking $S_0$ we can assume that the morphism $\wt{p}\circ\varphi$ is unramified. Then
\propref{p:formal} is applicable. 

Let $S_1$ be as in \propref{p:formal}(ii). Let $\CI\subset\CO_{S_1}$ be the ideal of the closed subscheme $S_0\subset S_1$ and let $F\subset S_0$ be the support of the coherent $\CO_{S_0}$-module $\CI$.
Let us check that the open subscheme $U:=S_0-F$ has the required properties.

Condition (ii) implies that $s\in U$. 
After replacing $S$ by $U$ we get
$\CI=0$, i.e., $S_1=S_0\,$. This implies that $S_{\infty}=S_0$. So $\varphi$ is formally etale.
By \corref{c:morphisms from schemes}, this implies that  $\varphi$ is etale.
\end{proof}

 \ssec{Proof of Theorem~\ref{t:tildeZ} modulo \propref{p:effective}}   \label{ss:proof_modulo}

 Given a $k[t]$-algebra $R$, set 
$$\wt{Z}(R):=\Maps_{\BA^1} (\Spec R, \wt{Z}).$$

\begin{prop} \label{p:effective}
Let $R$ be a complete local Noetherian $k[t]$-algebra and $m\subset R$ the maximal ideal. Then the map
\begin{equation}    \label{e:bijection_to_prove}
\wt{Z}(R)\to\limfromn \wt{Z}(R/m^n)
\end{equation}
is bijective.
\end{prop}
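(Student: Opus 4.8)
The plan is to prove injectivity and surjectivity of \eqref{e:bijection_to_prove} separately. First I reformulate the target. By definition $\wt Z(R/m^n)=\gMaps(\BX_{R/m^n},Z)$, so $\limfromn\wt Z(R/m^n)$ is the set of $\BG_m$-equivariant morphisms to $Z$ from the $m$-adic formal completion of $\BX_R$ along its closed fibre $\BX_{R/m}$. Since $A_R:=R[\tau_1,\tau_2]/(\tau_1\tau_2-t)$ is finitely generated over the Noetherian ring $R$, it is Noetherian, and its $m$-adic completion $\widehat{A_R}$ is flat and affine over $A_R$ and an isomorphism over the closed fibre; because $Z$ is of finite type and $R$ (hence $\widehat{A_R}$) is Henselian along the closed fibre, a $\BG_m$-equivariant morphism out of the formal completion is the same as a $\BG_m$-equivariant morphism $\hat f:\Spec\widehat{A_R}\to Z$ (exactly as in the passage from $\Spf$ to $\Spec$ in Subsect.~\ref{ss:2attractors-2proof}). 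Thus the assertion becomes: every such $\hat f$ extends uniquely to a $\BG_m$-equivariant morphism $f:\BX_R=\Spec A_R\to Z$.

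Injectivity is the uniqueness. Given two extensions $f_1,f_2$, their equalizer $E:=\Eq(f_1,f_2)$ is a monomorphism of finite presentation onto $\BX_R$ which, by hypothesis, contains every infinitesimal neighbourhood of the closed fibre $\BX_{R/m}$. As in the proof of \lemref{l:mono}, $E\to\BX_R$ is then étale along $\BX_{R/m}$, hence an open embedding there, so $E$ contains a $\BG_m$-stable open subscheme of $\BX_R$ containing $\BX_{R/m}$; the contracting $\BG_m$-action on the hyperbola (letting $\lambda\to0$ along one axis and $\lambda^{-1}\to0$ along the other) then forces this open to be all of $\BX_R$, whence $f_1=f_2$. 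Uniqueness of infinitesimal lifts is in any case guaranteed by formal unramifiedness, \corref{c:form_unr}.

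Surjectivity is the heart of the matter. I would first dispose of the case $t\in R^\times$: here $\BX_R$ is a $\BG_m$-torsor over $\Spec R$, so by \propref{p:outside 0} the morphism $\wt p$ identifies $\wt Z(R)$ with $Z(R)$ and likewise modulo $m^n$, and the claim reduces to the effectivity statement $Z(R)\iso\limfromn Z(R/m^n)$, which holds for any algebraic $k$-space of finite type (lift the residual point to an affine étale chart, use Henselianity to lift the formal point to the chart, and invoke effectivity for affine schemes). For the essential case $t\in m$ I would algebraize $\hat f$ by descent: apply the Moret-Bailly theorem \thmref{t:MB} (as in the proof of \lemref{l:complete_local}) to the flat affine cover $\Spec\widehat{A_R}\to\Spec A_R$, which is an isomorphism over the closed fibre, thereby reducing the extension of $\hat f$ across the closed fibre to an extension over its complement and a compatibility over the overlap. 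On that complement the $\BG_m$-equivariant geometry is under control: away from $t=0$ one has the graph description of \propref{p:outside 0}, while along the fixed locus $\{\tau_1=\tau_2=0\}$ the morphism $\hat f$ lands in $Z^0$ and the boundary data are governed by the maps $q^\pm\colon Z^\pm\to Z^0$, which are affine by \thmref{t:attractors}(ii); combining these with the identification $\wt Z_0\iso Z^+\underset{Z^0}\times Z^-$ of \propref{p:tilde Z_0} produces the required extension, which then glues with $\hat f$ via \thmref{t:MB}.

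The main obstacle is precisely the non-properness of $\BX_R\to\Spec R$: this morphism is affine rather than proper, so Grothendieck's existence theorem does not apply directly, and since Sumihiro's theorem is unavailable one cannot reduce to the affine case by covering $Z$ with $\BG_m$-stable affine opens. What compensates is $\BG_m$-equivariance, which encodes the cohomological properness of the quotient $\BX/\BG_m$ over $\BA^1$ alluded to in the introduction: it bounds $\hat f$ in the graded directions and, through the affineness of $q^\pm$ and the attractor/repeller structure already established, lets the Moret-Bailly descent carry the algebraization through. Making this \emph{equivariant} effectivity precise is where the real work lies.
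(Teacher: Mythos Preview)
Your surjectivity argument for $t\in m$ has a genuine gap. You apply \thmref{t:MB} to the pair $\Spec\widehat{A_R}\to\Spec A_R$ with $Y=\BX_{R/m}$ (the closed fibre over $m\subset R$). Moret--Bailly then says that a map $\BX_R\to Z$ is the same as a map $\Spec\widehat{A_R}\to Z$ together with a map on the complement $U=\BX_R\setminus\BX_{R/m}$ and a compatibility on $U'$. But you have no data whatsoever on $U$: the input is only the formal family $(\hat f_n)$, which lives entirely over the closed fibre. Your discussion of ``away from $t=0$'' and ``the fixed locus $\{\tau_1=\tau_2=0\}$'' does not address this; neither of these loci is $U$ (indeed the fixed locus $\Spec R/(t)$ sits inside $\BX_{R/m}$, not in its complement). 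There is also a problem one step earlier: the passage from $\Spf$ to $\Spec$ you invoke from Subsect.~\ref{ss:2attractors-2proof} relies on $R[[t]]$ being a complete \emph{local} ring, so that a map to $Z$ factors through a single affine \'etale chart. The $m$-adic completion $\widehat{A_R}$ is not local (its reduction mod $m$ is the coordinate cross over the residue field), and the identification $\Maps(\Spec\widehat{A_R},Z)=\varprojlim_n\Maps(\BX_{R/m^n},Z)$ is itself a nontrivial algebraization statement that you have not justified.

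The paper's fix is to choose a different $Y$ in Moret--Bailly: one completes $\BX_R$ not along the closed fibre over $m$, but along the $\BG_m$-fixed locus $Y=\{\tau_1=\tau_2=0\}=\Spec R/(t)$, taking $S'=\Spec\hat A_R$ with $\hat A_R:=R[[\tau_1,\tau_2]]/(\tau_1\tau_2-t)$. The point is that the complement $\BX_R\setminus Y=\BX_R[\tau_1^{-1}]\cup\BX_R[\tau_2^{-1}]$ is covered by two pieces each $\BG_m$-equivariantly isomorphic to $(\BG_m)_R$, so $\BG_m$-equivariant maps from them are simply elements of $Z(R)$, and $Z(R)\iso\varprojlim_n Z(R/m^n)$ is the easy effectivity for algebraic spaces. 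This expresses $\wt{Z}(R)$ as an equalizer $F(R)\rightrightarrows G_1(R)\times G_2(R)$ (after a short argument dropping a redundant term, \lemref{l:still exact}); one then checks that $F$ commutes with $\varprojlim_n$ and that the $G_i$ are at least injective on $\varprojlim_n$, the latter by an equalizer/graded-ideal argument parallel to Step~2 in Subsect.~\ref{ss:2attractors-2proof}. Your injectivity paragraph is essentially fine, though the reason a $\BG_m$-stable open $E'\supset\BX_{R/m}$ must equal $\BX_R$ is not a ``contraction'' picture: it is that a graded ideal $I\subset A_R$ with $I+mA_R=A_R$ must have $I_0+m=R$, hence $1\in I_0$ since $R$ is local.
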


The proposition will be proved in Subsect.~\ref{ss:effective}. Now let us deduce Theorem~\ref{t:tildeZ} from
\propref{p:effective} and the results of Subsections~\ref{ss:diagonalley}-\ref{ss:formal}.

We already know that $\wt{Z}\times_{\BA^1}(\BA^1-\{0\})$ and $\wt{Z_0}:=\wt{Z}\times_{\BA^1}\{0\}$ are algebraic $k$-spaces of finite type (see \remref{r:alg space}). So to prove Theorem~\ref{t:tildeZ}, it suffices to check that $\wt{Z}$ is an algebraic $k$-space \emph{locally} of finite type. 

 Let $\wt{z}_0$ be a 
 %closed point\footnote{These words make sense even though we do not yet know that
 %$\wt{Z}$ is an algebraic  space. To fix the ideas, let us understand them as follows: a closed point of $\wt{Z}$ 
 %is a pair consisting of a closed point $t\in\BA^1$ and a closed point of the fiber $\wt{Z}_t$ (which we already  %know to be an algebraic $k$-space of finite type).} of $\wt{Z}$. 
spectrum of a finite extension of $k$ equipped with a morphism $\varphi :\wt{z}_0\to\wt{Z}$.
Our goal is to construct a $k$-scheme $S$ of finite type equipped with an etale morphism $S\to\wt{Z}$ whose fiber over $\wt{z}_0$ is non-empty.

The morphism $\varphi :\wt{z}_0\to\wt{Z}$ factors as $\wt{z}_0\to \wt{z}'_0\overset{\varphi'}\to\wt{Z}$, where
$\wt{z}'_0$ is a spectrum of a field and $\varphi':\wt{z}_0\to\wt{Z}$ is a \emph{mono}morphism.\footnote{This follows from the fact that $\wt{Z}\times_{\BA^1}(\BA^1-\{0\})$ and $\wt{Z_0}:=\wt{Z}\times_{\BA^1}\{0\}$ are algebraic $k$-spaces of finite type. Alternatively, one can define $\wt{z}'_0$ to be the quotient of $\wt{z}_0$ by the equivalence relation on $\wt{z}_0$ corresponding to $\varphi$ and then define $\varphi':\wt{z}_0\to\wt{Z}$ using the fact that $\wt{Z}$ is an fppf sheaf.}
So after replacing  $\wt{z}_0$ by $\wt{z}'_0$ we can assume that already $\varphi :\wt{z}_0\to\wt{Z}$ is a monomorphism.

Applying \propref{p:formal} to the monomorphism $\varphi :\wt{z}_0\mono\wt{Z}$, we see that the formal neighborhood of $\wt{z}_0$ in $\wt{Z}$ equals $\Spf A$ for some complete Noetherian local ring $A$. 
Applying \propref{p:effective}, we upgrade this pro-representability result to \emph{effective} pro-representability; in other words,  we get a morphism $\Spec A\to\wt{Z}$ extending the morphism $\Spf A\to\wt{Z}$. Using Theorem~1.6 from M.~Artin's work \cite{Ar2} and the fact that 
$\wt{Z}$ is locally of finite presentation, we get a $k$-scheme $S'$ of finite type equipped with a closed point $s_0\in S'$ and a morphism $(S',s_0)\to (\wt{Z},\wt{z}_0)$ inducing an isomorphism between the formal completions. By \corref{c:openness_etaleness}, $s_0$ has a Zariski neighborhood $S\subset S'$ such that the morphism $S\to\wt{Z}$ is etale. Thus we have proved Theorem~\ref{t:tildeZ} modulo \propref{p:effective}.

\ssec{Proof of \propref{p:effective}}      \label{ss:effective}

The proof below is parallel to that of \lemref{l:complete_local}.

\bigskip

If $t$ is invertible in $R$ the statement is clear because 
$\wt{Z}\times_{\BA^1}(\BA^1-\{ 0\})\simeq\BG_m\times Z$ is an algebraic space. So from now we will assume that $t\in m$.

Set $A_R:=R[\tau_1,\tau_2]/(\tau_1\tau_2-t)$. Recall that
\[
\wt{Z} (R):=\gMaps (\BX_R ,Z), \quad\quad \BX_R:=\Spec A_R\, .
\]
We will use the following notation:
\[
\BX_R [t^{-1}]:=\Spec A_R [t^{-1}]; \quad\quad\quad \BX_R [\tau_i^{-1}]:=\Spec A_R[\tau_i^{-1}],\quad  i=1,2;
\]

\[
 \hat A_R:=R[[\tau_1,\tau_2]]/(\tau_1\tau_2-t); \quad\quad\quad  \hat\BX_R:=\Spec \hat A_R\, ;
\]

\[
\hat\BX_R [t^{-1}]:=\Spec \hat A_R [t^{-1}]; \quad\quad\quad \hat\BX_R [\tau_i^{-1}]:=\Spec \hat A_R[\tau_i^{-1}],
\quad  i=1,2.
\]

Applying Moret-Bailly's Theorem~\ref{t:MB} for 
\[
S=\BX_R\, , \; S'=\hat\BX_R\, , \; Y=\Spec R/(t)=\Spec R[\tau_1,\tau_2]/(\tau_1,\tau_2,\tau_1\tau_2-t)\subset\BX_R
\]
and then applying Zariski descent to the covering
\[
\BX_R-Y=\BX_R [\tau_1^{-1}]\cup \BX_R [\tau_2^{-1}]
\]
one gets an exact sequence
\[
\wt{Z} (R)\to F(R)\rightrightarrows G_1 (R)\times G_2 (R)\times\gMaps (\BX_R [t^{-1}],Z),
\]
where 
\begin{equation}   \label{e:F}
F(R):=\gMaps (\hat\BX_R,Z)\times\gMaps (\BX_R [\tau_1^{-1}],Z)\times
\gMaps (\BX_R [\tau_2^{-1}],Z),
\end{equation}

\begin{equation}    \label{e:G_i}
G_i(R):=\gMaps (\hat\BX_R [\tau_i^{-1}],Z), \quad i=1,2.
\end{equation}

\begin{lem}   \label{l:still exact}
The sequence
\[
\wt{Z} (R)\to F(R)\rightrightarrows G_1 (R)\times G_2 (R)
\]
is still exact.
\end{lem}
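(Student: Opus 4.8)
The plan is to show that the extra comparison term $\gMaps(\BX_R[t^{-1}],Z)$ appearing in the displayed exact sequence is redundant, i.e. that an element $(g,h_1,h_2)\in F(R)$ — with $g\in\gMaps(\hat\BX_R,Z)$ and $h_i\in\gMaps(\BX_R[\tau_i^{-1}],Z)$ — whose images in $G_1(R)\times G_2(R)$ agree automatically has $h_1,h_2$ restricting to the same element of $\gMaps(\BX_R[t^{-1}],Z)$. Spelling out the maps, the $G_1$- and $G_2$-conditions read $g|_{\hat\BX_R[\tau_1^{-1}]}=h_1|_{\hat\BX_R[\tau_1^{-1}]}$ and $g|_{\hat\BX_R[\tau_2^{-1}]}=h_2|_{\hat\BX_R[\tau_2^{-1}]}$, while the dropped condition is $h_1|_{\BX_R[t^{-1}]}=h_2|_{\BX_R[t^{-1}]}$. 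Restricting the first two equalities further to the common open $\hat\BX_R[t^{-1}]=\hat\BX_R[\tau_1^{-1}]\cap\hat\BX_R[\tau_2^{-1}]$ gives $h_1|_{\hat\BX_R[t^{-1}]}=g|_{\hat\BX_R[t^{-1}]}=h_2|_{\hat\BX_R[t^{-1}]}$. So the whole lemma reduces to injectivity of the restriction map
\[
\gMaps(\BX_R[t^{-1}],Z)\longrightarrow\gMaps(\hat\BX_R[t^{-1}],Z)
\]
induced by the canonical morphism $\hat\BX_R[t^{-1}]\to\BX_R[t^{-1}]$.

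To prove this injectivity I would exploit the free $\BG_m$-action. On $\BX_R[t^{-1}]$ the coordinate $\tau_1$ is an invertible function of weight one (since $\tau_1^{-1}=t^{-1}\tau_2$), so $\BX_R[t^{-1}]\cong\BG_m\times\Spec R[t^{-1}]$ as a $\BG_m$-scheme and $\gMaps(\BX_R[t^{-1}],Z)=Z(R[t^{-1}])$; under this identification every $f$ factors as $\bar f\circ\pi$, where $\pi:\BX_R[t^{-1}]\to\Spec R[t^{-1}]$ is the projection and $\bar f\in Z(R[t^{-1}])$. Composing with $\hat\BX_R[t^{-1}]\to\BX_R[t^{-1}]$, the restriction of $f$ equals $\bar f\circ q$, where $q:\hat\BX_R[t^{-1}]\to\Spec R[t^{-1}]$ is the induced ($\BG_m$-invariant) morphism, corresponding on rings to the structural inclusion $R[t^{-1}]\to\hat A_R[t^{-1}]$. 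Hence injectivity of the restriction map follows once $q$ is an epimorphism of fpqc sheaves, for which it is enough that $R[t^{-1}]\to\hat A_R[t^{-1}]$ be faithfully flat, since $Z$ is an fpqc sheaf.

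Thus the heart of the matter is that $\hat A_R[t^{-1}]$ is faithfully flat over $R[t^{-1}]$. Flatness is immediate: $A_R$ is free as an $R$-module, the $(\tau_1,\tau_2)$-adic completion $A_R\to\hat A_R$ is flat because $A_R$ is Noetherian, and localization at $t$ preserves flatness. For surjectivity on spectra I would verify that every fibre is nonzero: given $\mathfrak{q}\in\Spec R[t^{-1}]$ lying over $\mathfrak{p}\in\Spec R$ (so $t\notin\mathfrak{p}$), the residue field $\kappa(\mathfrak{q})$ inverts $t=\tau_1\tau_2$, and there is a ring homomorphism from the fibre $\hat A_R[t^{-1}]\otimes_{R[t^{-1}]}\kappa(\mathfrak{q})$ to $\kappa(\mathfrak{q})[[\tau_1,\tau_2]]/(\tau_1\tau_2-\bar t)[\bar t^{-1}]$, which is a nonzero ring since $\bar t\neq 0$; hence the fibre is nonzero. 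This yields faithful flatness and completes the proof.

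I expect the faithful-flatness verification — concretely, the surjectivity on spectra — to be the only genuine obstacle, the rest being bookkeeping with the descent exact sequence and the free-action identification. It is worth stressing that $\hat\BX_R[t^{-1}]\to\BX_R[t^{-1}]$ is \emph{not} an isomorphism (completion genuinely enlarges the ring even after inverting $t$, e.g.\ $\sum_i\tau_1^i$ is not a Laurent expression in $\tau_1$ over $R[t^{-1}]$), so the argument must be routed through the quotient $\Spec R[t^{-1}]$ rather than by comparing the two schemes directly.
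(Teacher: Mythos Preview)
Your reduction is exactly the paper's: drop the third comparison factor by proving injectivity of
\[
\gMaps(\BX_R[t^{-1}],Z)\longrightarrow\gMaps(\hat\BX_R[t^{-1}],Z),
\]
identify $\BX_R[t^{-1}]\simeq\BG_m\times\Spec R[t^{-1}]$, and use faithful flatness of $\hat\BX_R[t^{-1}]$ over $\Spec R[t^{-1}]$. The paper phrases the last step via equalizers (the monomorphism $E_0\hookrightarrow\Spec R[t^{-1}]$ becomes an isomorphism after faithfully flat base change, hence was already one), while you phrase it via fpqc-epimorphisms; these are equivalent.

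There is, however, a genuine error in your verification of faithful flatness. You claim the fiber maps to $\kappa(\mathfrak q)[[\tau_1,\tau_2]]/(\tau_1\tau_2-\bar t)$ with $\bar t\neq 0$, and that this target is nonzero. It is not: in the power series ring $\kappa(\mathfrak q)[[\tau_1,\tau_2]]$ the element $\tau_1\tau_2-\bar t=-\bar t(1-\bar t^{-1}\tau_1\tau_2)$ is a unit, so the quotient is the zero ring. Thus your fiber argument proves nothing.

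The fix is to use the standing hypothesis $t\in m$ (recall $R$ is complete local and we are in the case $t\in m$). Then $\hat A_R/m\hat A_R\cong (R/m)[[\tau_1,\tau_2]]/(\tau_1\tau_2)\neq 0$, so the closed point of $\Spec R$ is in the image of $\Spec\hat A_R\to\Spec R$; since $\hat A_R$ is flat over $R$, going-down gives surjectivity onto all of $\Spec R$, i.e.\ $\hat\BX_R$ is faithfully flat over $\Spec R$. Localizing at $t$ preserves faithful flatness, which is what you need. This is precisely the paper's parenthetical ``because $\hat\BX_R$ is faithfully flat over $\Spec R$''.
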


\begin{proof}
It suffices to show that the map
\[
\gMaps (\BX_R [t^{-1}],Z)\to \gMaps (\hat\BX_R [t^{-1}],Z)
\]
is injective. 

Let $f_1,f_2:\BX_R [t^{-1}]\to Z$ be $\BG_m$-equivariant morphisms. Using a $\BG_m$-equivariant isomorphism
$$\BX_R [t^{-1}]\simeq\BG_m\times\Spec R[t^{-1}], $$
we see that the equalizer $E:=\Eq (f_1,f_2)$ equals $E_0\underset{\Spec R[t^{-1}]}\times \BX_R [t^{-1}]$ for some scheme $E_0$ equipped with a monomorphism $\mu :E_0\mono\Spec R[t^{-1}]$.

Now suppose that $f_1$ and $f_2$ have equal images in $\gMaps (\hat\BX_R [t^{-1}],Z)$. Then $\mu$ becomes an isomorphism after base change with respect to the morphism 
$\pi :\hat\BX_R [t^{-1}]\to\Spec R[t^{-1}]$. But $\pi$ is faithfully flat (because $\hat\BX_R$ is faithfully flat over 
$\Spec R$). So $\mu$ is an isomorphism and therefore $f_1=f_2$.
\end{proof}

We have to prove that the map \eqref{e:bijection_to_prove} is bijective. Let $F$ and $G_i$ be as in
\eqref{e:F}-\eqref{e:G_i}.

It is easy to see that the map
\[
F(R)\to\limfromn F(R/m^n)
\]
is bijective.\footnote{To prove this, analyze separately each of the three factors in the r.h.s. of \eqref{e:F}. To analyze the second and third factor, use a $\BG_m$-equivariant $R$-isomorphism $\BX_R[\tau_i^{-1}]\iso(\BG_m)_R$\;.} So by \lemref{l:still exact}, it remains to show that the map 
$$G_i(R)\to \limfromn G_i(R/m^n)$$
is injective for $i=1.2$. Let us prove this for $i=1$. We will proceed  as at Step 2 of the proof of \lemref{l:complete_local} (see Subsect.~\ref{ss:2attractors-2proof}).

Suppose that $f_1,f_2\in G_1 (R)$ have equal images in $G_1 (R/m^n)$ for each $n\in\BN$. Let $E$ denote the equalizer of the $\BG_m$-equivariant morphisms $f_1,f_2:\hat\BX_R [\tau_1^{-1}]\to Z$.
Just as at Step 2 of the proof of \lemref{l:complete_local}, we see that $E$ contains an open
$\BG_m$-stable subscheme $E'\subset \hat\BX_R [\tau_1^{-1}]$ such that $E'\supset\hat\BX_{R/m} [\tau_1^{-1}]$. It remains to show that such $E'$ has to be equal to $\hat\BX_R [\tau_1^{-1}]$.

Choose a closed subscheme $F\subset \hat\BX_R$ whose complement equals $E'$ and let
$$I\subset\hat A_R:=R[[\tau_1,\tau_2]]/(\tau_1\tau_2-t)$$
be the corresponding ideal. The inclusion $E'\supset\hat\BX_{R/m} [\tau_1^{-1}]$ and the fact
that $E'$ is $\BG_m$-stable translate into the following properties of $I$.

\begin{lem}   \label{l:2properties of I} 
(i) The image of $I$ in  $\hat A_{R/m}$ %=(R/m)[[\tau_1,\tau_2]]/(\tau_1\tau_2-t)$ 
contains $\tau_1^N$ for some $N\in\BZ_+\,$.

(ii) Let $\varphi :\hat A_R\to\hat A_{R[\lambda ,\lambda^{-1}]}$ be the continuous $R$-algebra homomorphism such that
$$\tau_1\mapsto \lambda \tau_1, \quad \tau_2\mapsto \lambda^{-1} \tau_2\, ;$$
then $I$ is contained in the radical of the ideal $I':=\varphi^{-1}(I\cdot \hat A_{R[\lambda ,\lambda^{-1}]})$. \qed
\end{lem}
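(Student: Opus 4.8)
The plan is to prove the two assertions by translating the two defining geometric properties of the open subscheme $E'\subset\hat\BX_R[\tau_1^{-1}]$ into ring-theoretic statements about the ideal $I\subset\hat A_R$ cutting out its complement $F$. The only inputs are that $F=V(I)$, that the set-theoretic preimage of $V(J)\subset\Spec A$ under a ring map $\psi:A\to B$ is $V(\psi(J)B)$, and that $F$ contains the locus $\tau_1=0$ (since $E'\subset\hat\BX_R[\tau_1^{-1}]$).

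For (i): recall $t\in m$, so $\hat A_{R/m}=\hat A_R/m\hat A_R=(R/m)[[\tau_1,\tau_2]]/(\tau_1\tau_2)$, and the image $\bar I=(I+m\hat A_R)/m\hat A_R$ of $I$ cuts out exactly $F\cap\hat\BX_{R/m}$. The inclusion $E'\supset\hat\BX_{R/m}[\tau_1^{-1}]$ says precisely that $F$ is disjoint from the locus $\tau_1\ne 0$ of the special fiber, i.e. $V(\bar I)\subset V(\tau_1)$ inside $\Spec\hat A_{R/m}$. Since $\sqrt{\bar I}=\bigcap_{\mathfrak p\supset\bar I}\mathfrak p$, every prime containing $\bar I$ contains $\tau_1$, so $\tau_1\in\sqrt{\bar I}$; by the definition of the radical this gives $\tau_1^N\in\bar I$ for some $N\in\BZ_+$.

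For (ii): as in Step 2 of the proof of \lemref{l:complete_local}, the $\BG_m$-stability of $E'$ means $a^{-1}(F)=p^{-1}(F)$, where $a:\Spec\hat A_{R[\lambda,\lambda^{-1}]}\to\Spec\hat A_R$ corresponds to $\varphi$ and $p:\Spec\hat A_{R[\lambda,\lambda^{-1}]}\to\Spec\hat A_R$ corresponds to the inclusion $\hat A_R\hookrightarrow\hat A_{R[\lambda,\lambda^{-1}]}$ (the complementary open sets are $\BG_m$-stable, and the action preserves $\{\tau_1=0\}$ separately). Taking preimages of $F=V(I)$ this reads $V(\varphi(I)\hat A_{R[\lambda,\lambda^{-1}]})=V(I\hat A_{R[\lambda,\lambda^{-1}]})$ as subsets, hence the two extended ideals have the same radical. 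Now take $g\in I$; then $\varphi(g)\in\varphi(I)\subset\sqrt{\varphi(I)\hat A_{R[\lambda,\lambda^{-1}]}}=\sqrt{I\hat A_{R[\lambda,\lambda^{-1}]}}$, so $\varphi(g^k)=\varphi(g)^k\in I\hat A_{R[\lambda,\lambda^{-1}]}$ for some $k\ge 1$, whence $g^k\in\varphi^{-1}(I\hat A_{R[\lambda,\lambda^{-1}]})=I'$, i.e. $g\in\sqrt{I'}$. Thus $I\subset\sqrt{I'}$.

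Both parts are routine, which is why the author records the lemma with \qed. The one point that requires care — and the reason the statement is phrased with radicals rather than with actual ideals — is the bookkeeping between set-theoretic and scheme-theoretic preimages: stability of the \emph{open} locus $E'$ only forces equality of the two preimages as sets, equivalently equality of the radicals $\sqrt{\varphi(I)\hat A_{R[\lambda,\lambda^{-1}]}}=\sqrt{I\hat A_{R[\lambda,\lambda^{-1}]}}$, and not $\varphi(I)\hat A_{R[\lambda,\lambda^{-1}]}=I\hat A_{R[\lambda,\lambda^{-1}]}$. Keeping the argument purely set-theoretic throughout, and exploiting that $F$ meets the special fiber only along $\{\tau_1=0\}$, is exactly what makes the two translations go through; no deeper obstacle arises.
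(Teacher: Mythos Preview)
Your proposal is correct and is exactly the translation the paper intends: the paper states that the two assertions are simply the ring-theoretic translations of the conditions $E'\supset\hat\BX_{R/m}[\tau_1^{-1}]$ and ``$E'$ is $\BG_m$-stable'', and records the lemma with an immediate \qed. You have accurately spelled out these translations, including the passage through radicals that the set-theoretic stability forces.
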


It remains to prove the following.

\begin{lem}
Suppose that an ideal $I\subset\hat A_R$ has properties (i)-(ii) from \lemref{l:2properties of I}.
Then $I$ contains a power of $\tau_1\,$. 
\end{lem}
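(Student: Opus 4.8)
The plan is to mirror the argument at Step~2 of the proof of \lemref{l:complete_local} (Subsect.~\ref{ss:2attractors-2proof}), exploiting the $\BG_m$-grading of $\hat A_R$. Recall that $\hat A_R=R[[\tau_1,\tau_2]]/(\tau_1\tau_2-t)$ is $\BZ$-graded by the $\BG_m$-action, with $\deg\tau_1=1$, $\deg\tau_2=-1$ and $R$ in degree $0$; its homogeneous component of degree $n$ is $R\cdot\tau_1^n$ for $n\ge 0$ and $R\cdot\tau_2^{-n}$ for $n<0$. Under this grading the homomorphism $\varphi:\hat A_R\to\hat A_{R[\lambda,\lambda^{-1}]}$ of \lemref{l:2properties of I}(ii) acts on a homogeneous element $w_n$ of degree $n$ by $w_n\mapsto\lambda^n w_n$; hence for $w=\sum_n w_n$ one has $\varphi(w)=\sum_n\lambda^n w_n$, so that $\varphi$ converts the $\BG_m$-grading into the grading by powers of $\lambda$. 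The conclusion $\tau_1^M\in I$ will be reached by producing a single $\BG_m$-homogeneous element of $I$ that is a unit multiple of a power of $\tau_1$.

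The key step is an analysis of how $I'=\varphi^{-1}(I\cdot\hat A_{R[\lambda,\lambda^{-1}]})$ interacts with the grading. I would introduce, for each $p\in\BZ$, the $R$-linear projection $\psi_p:\hat A_{R[\lambda,\lambda^{-1}]}\to\hat A_{R[\lambda,\lambda^{-1}]}$ onto the $\lambda^p$-homogeneous part; this is well defined because in the standard $\tau$-expansion of an element of $\hat A_{R[\lambda,\lambda^{-1}]}$ every coefficient is a Laurent polynomial in $\lambda$. Each $\psi_p$ is $\hat A_R$-linear, since $\hat A_R$ sits in $\lambda$-degree $0$; consequently $\psi_p$ carries the extension ideal $I\cdot\hat A_{R[\lambda,\lambda^{-1}]}$ into itself, and $\psi_0$ carries it into $\hat A_R$, in fact into $I$ (so that $I\cdot\hat A_{R[\lambda,\lambda^{-1}]}\cap\hat A_R=I$). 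Applying $\psi_n$ to $\varphi(w)=\sum_n\lambda^n w_n$ for $w\in I'$ gives $\lambda^n w_n\in I\cdot\hat A_{R[\lambda,\lambda^{-1}]}$, whence $w_n\in I\cdot\hat A_{R[\lambda,\lambda^{-1}]}\cap\hat A_R=I$ for every $n$. In other words: \emph{if $w\in I'$ then each $\BG_m$-homogeneous component of $w$ already lies in $I$.}

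It then remains to assemble the pieces. By property~(i) there is $h\in I$ whose image in $\hat A_{R/m}$ equals $\tau_1^N$; equivalently $h\equiv\tau_1^N\pmod{m\hat A_R}$ (here one uses $t\in m$, so that $\hat A_{R/m}$ is the ring of the coordinate cross). By property~(ii), $h\in\sqrt{I'}$, so $h^s\in I'$ for some $s\ge 1$. By the previous paragraph, the degree-$Ns$ homogeneous component $(h^s)_{Ns}$ lies in $I$. Reducing modulo $m\hat A_R$ and comparing degree-$Ns$ parts, $h^s$ maps to $\tau_1^{Ns}$, so $(h^s)_{Ns}=c\,\tau_1^{Ns}$ with $c\in R$, $c\equiv 1\pmod m$; as $R$ is local, $c$ is a unit, and therefore $\tau_1^{Ns}=c^{-1}(h^s)_{Ns}\in I$, as desired.

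The main obstacle is the middle step: one must make precise the interaction between the $\BG_m$-grading on $\hat A_R$ and the auxiliary variable $\lambda$ in the \emph{completed} ring $\hat A_{R[\lambda,\lambda^{-1}]}$, where the powers of $\lambda$ occurring in a single element are unbounded. The point to verify with care is that the $\lambda$-degree projections $\psi_p$ are genuinely well defined and $\hat A_R$-linear, and that $\psi_0$ sends $I\cdot\hat A_{R[\lambda,\lambda^{-1}]}$ into $I$ (equivalently, that $I\cdot\hat A_{R[\lambda,\lambda^{-1}]}\cap\hat A_R=I$); everything else is a short formal manipulation together with a reduction modulo the maximal ideal.
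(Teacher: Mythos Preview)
Your proof is correct and follows essentially the same route as the paper's: both produce an element of $I'$ whose reduction modulo $m$ is a power of $\tau_1$, then extract a single $\lambda$-homogeneous (equivalently, $\BG_m$-homogeneous) component to land a unit multiple of $\tau_1^{Ns}$ in $I$. Your formalization via the projections $\psi_p$ and the intermediate statement ``every $\BG_m$-homogeneous component of an element of $I'$ lies in $I$'' is exactly the paper's ``equating the coefficients of $\lambda^N$'' made explicit; the paper works in the lift $R[[\tau_1,\tau_2]]$ while you work directly in $\hat A_R$ via its grading, but this is cosmetic.
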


\begin{proof}
Since $I$ is contained in the radical of $I'$ property (i) implies that $I'$ contains an element $u$
of the form 
$$u=\sum\limits_{i=0}^{\infty}r_i\tau_1^i+\sum\limits_{i<0}r_i\tau_2^{-i},$$ where $r_i\in R$ and
\begin{equation}   \label{e:r_N}
r_N\in 1+m
\end{equation}
for some $N\in\BZ_+\,$. We will show that then
\begin{equation}   \label{e:We will show}
\tau_1^N\in I\,. 
\end{equation}
Since $u\in I':=\varphi^{-1}(I\cdot \hat A_{R[\lambda ,\lambda^{-1}]})$ we have 
\begin{equation}   \label{e:belongs}
\varphi (u)\in I\cdot \hat A_{R[\lambda ,\lambda^{-1}]}\, .
\end{equation}
Recall that $\hat A_{R[\lambda ,\lambda^{-1}]}:=R[\lambda ,\lambda^{-1}][[\tau_1,\tau_2]](\tau_1\tau_2-t)$.
So by the definition of $\varphi$ (see \lemref{l:2properties of I}), property \eqref{e:belongs} means that in
$R[\lambda ,\lambda^{-1}][[\tau_1,\tau_2]]$ one has an equality of the form
\begin{equation}  \label{e:where we equate coefficients}
\sum\limits_{i=0}^{\infty}r_i\lambda^i \tau_1^i+\sum\limits_{i<0}r_i\lambda^i\tau_2^{-i}=\sum_{j=1}^n g_jh_j, \quad g_j\in R[\lambda,\lambda^{-1}][[\tau_1,\tau_2]],\; 
h_j\in \widetilde I ,
\end{equation}
where $\widetilde I\subset R][[\tau_1,\tau_2]]$ is the pre-image of 
$I\subset\hat A_{R}:=R[[\tau_1,\tau_2]](\tau_1\tau_2-t)$. Equating the coefficients of $\lambda^N$ in 
\eqref{e:where we equate coefficients}, we see that $r_N \tau_1^N$ viewed as an element of $R[[\tau_1,\tau_2]]$ belongs to
$\widetilde I$. So $r_N \tau_1^N$ viewed as an element of $\hat A_R$ belongs to $I$. By \eqref{e:r_N}, $r_N$ is invertible, so we get \eqref{e:We will show}.
\end{proof}

\ssec{Virtual conormal sheaf of $\wt{Z}$ with respect to $\BA^1\times Z\times Z$}       \label{ss:conormal}
Let $\CN$ denote the conormal sheaf of $\wt{Z}$ with respect to the unramified morphism 
$\wt{p}:\wt{Z}\to\BA^1\times Z\times Z$. We are going to define another coherent sheaf $\CN'$ on $\wt{Z}$ such that $\CN$ is canonically a quotient of $\CN'$. One could call $\CN'$ the \emph{virtual conormal sheaf}.

Here is the definition of $\CN'$:
for any affine scheme $S$ equipped with a morphism $\varphi :S\to\wt{Z}$
\[
H^0(S,\varphi^*\CN'):=H^0(\BX_S,f^*\Omega^1_Z\otimes\omega_{\BX_S/S})^{\BG_m},
\]
where $f:\BX_S\to Z$ is the $\BG_m$-equivariant morphism corresponding to $\varphi$ and 
$\omega_{\BX_S/S}$ is the relative dualizing sheaf.

The following facts are not used in the rest of the article; we formulate them for completeness.

First, %the ``obstruction theory" from the proof of \propref{p:intersection} 
Subsection~\ref{sss:obsdigression}
yields a \emph{canonical epimorphism} $\CN'\epi\CN$. Indeed, by formula~\eqref{e:where obs lives}, for any coherent $\CO_{\wt{Z}}$-module $\CI$ we have a canonical injection $\Hom (\CN ,\CI)\mono\Hom (\CN' ,\CI)$, which is functorial in $\CI$.
%(we leave the definition to the reader). 

Second, let $\wt{Z}_{\der}$ denote the derived version of $\wt{Z}$ (to define it, replace the space 
$\GMaps$ from the definition of $\wt{Z}$ by its derived version). 
Let $\CN_{\der}$ denote the conormal complex of $\wt{Z}_{\der}$ with respect to $\BA^1\times Z\times Z$ (in other words, $\CN_{\der}[1]$ is the relative cotangent complex of $\wt{Z}_{\der}$ with respect to $\BA^1\times Z\times Z$). Then \emph{$\CN'$ identifies with the 0-th cohomology sheaf of $\CN_{\der}\,$.} (We skip the details.)

%The identification is similar to the above construction of the epimorphism $\CN'\epi\CN$ (we skip the details). 

%Then $\CN'$ is, in fact, the ``conormal sheaf" of
%$\wt{Z}_{\der}$ with respect to $\BA^1\times Z\times Z$; more precisely, $\CN'$ is the (-1)-st cohomology sheaf of the relative cotangent complex 
%of  $\wt{Z}_{\der}$ over $\BA^1\times Z\times Z$. 

\appendix

\section{Proof of \lemref{l:postponed}} \label{s:very general}

\begin{lem}  \label{l:no factorization}
Let $A$ and $B$ be algebraic $k$-spaces and $f:A\to B$ a surjective morphism with $f_*\CO_A=\CO_B$
(here $\CO_A$, $\CO_B$ are sheaves on the etale sites $A_{\ET}$, $B_{\ET}$ and $f_*$ is understood in the non-derived sense). Suppose that $f:A\to B$ factors as 
\[
A\overset{f'}\longrightarrow B' \overset{i}\mono B,
\]
where $i:B' \mono B$ is a monomorphism of finite type. Then $i$ is an isomorphism (i.e., $B'=B$).
\end{lem}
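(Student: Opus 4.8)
The plan is to reduce to the case where $B$ is affine and then translate everything into commutative algebra: the point is that the hypothesis $f_*\CO_A=\CO_B$, combined with $i$ being a monomorphism, forces the ring of global functions on $B'$ to coincide with that of $B$. Since being an isomorphism is étale-local on the target and all spaces here are of finite type over $k$ (hence Noetherian), I may assume $B=\Spec R$. As $i:B'\mono B$ is a monomorphism it is separated, and being of finite type with geometric fibres having at most one point it is quasi-finite; by the fact quoted just before \corref{c:schematic} (a separated quasi-finite morphism of algebraic spaces is quasi-affine), $B'$ is a quasi-affine scheme. I set $R':=\Gamma(B',\CO_{B'})$ and $\bar B':=\Spec R'$, so that $B'\mono\bar B'$ is a quasi-compact open immersion with schematically dense image and the induced $\pi:\bar B'\to B$ is affine, given by a ring map $R\to R'$. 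Note also that $f$ surjective forces $i$ surjective, so $i$ is universally bijective.

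Next I would extract split injectivity from the hypothesis. Since $f=i\circ f'$, the unit $\CO_B\to f_*\CO_A$ factors as
\[
\CO_B\overset{i^{\Kdot}}\longrightarrow i_*\CO_{B'}\longrightarrow i_*f'_*\CO_A=f_*\CO_A,
\]
and the composite is an isomorphism by hypothesis. Hence $i^{\Kdot}:\CO_B\to i_*\CO_{B'}$ is a split monomorphism, and taking global sections over $\Spec R$ (where $\Gamma(B,i_*\CO_{B'})=\Gamma(B',\CO_{B'})=R'$) shows that $R\to R'$ is split injective as $R$-modules; write $R'=R\oplus M$.

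The main step is to prove that $R\to R'$ is a ring \emph{epimorphism}. Because $i$ is a monomorphism the diagonal gives $B'\underset{B}\times B'\iso B'$, and restricting functions identifies the multiplication map $\mu:R'\otimes_R R'\to R'$ with the restriction $\Gamma(\bar B'\underset{B}\times\bar B',\CO)\to\Gamma(B'\underset{B}\times B',\CO)=R'$. Since $\mu$ is always surjective, it will be an isomorphism — i.e. $R\to R'$ an epimorphism — as soon as $B'\underset{B}\times B'$ is schematically dense in $\bar B'\underset{B}\times\bar B'$, equivalently as soon as the two projections $p_1,p_2$ agree (their equalizer is closed, as $\bar B'$ is separated, and already contains the dense open $B'\underset{B}\times B'$). \textbf{This density is the main obstacle:} $\pi:\bar B'\to B$ need not be flat, so schematic density of a product of schematically dense opens is not formal. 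I expect to handle it by reducing to the reduced case (where topological density of the product suffices and the equalizer argument closes) and then recovering the scheme-theoretic statement by a Noetherian devissage, using that the hypothesis $f_*\CO_A=\CO_B$ already rules out nilpotent-shrinking behaviour of $i$.

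Finally, a ring map that is both an epimorphism and split injective as modules is an isomorphism: writing $R'=R\oplus M$, the epimorphism condition $m\otimes 1=1\otimes m$ in $R'\otimes_R R'=R\oplus M\oplus M\oplus(M\otimes_R M)$ places the two sides in distinct summands, forcing $m\otimes 1=0$, i.e. $m=0$; thus $M=0$ and $R'=R$. Therefore $\pi:\bar B'\to B$ is an isomorphism, so $B'\mono\bar B'=B$ is a surjective open immersion, hence an isomorphism, which is exactly the assertion $B'=B$.
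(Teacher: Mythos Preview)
Your argument has a genuine gap exactly where you flag it: you need $B'\times_B B'$ to be schematically dense in $\bar B'\times_B\bar B'$, and you do not prove this. The plan you sketch (``reduce to the reduced case, then Noetherian devissage'') is not carried out, and it is not clear how it would go. In the reduced case you still need \emph{topological} density of $B'\times_B B'$ in $\bar B'\times_B\bar B'$, which is again not automatic without flatness of $\bar B'\to B$; and the hypothesis $f_*\CO_A=\CO_B$ does not interact with infinitesimal thickenings in a way that makes a devissage transparent. Everything after that point (split-injective ring epimorphisms are isomorphisms, then $B'\hookrightarrow\bar B'=B$ is a surjective open immersion) is fine, but the epimorphism step is the whole content of the lemma and it is missing. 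A minor side issue: the lemma as stated does not assume $A,B$ are of finite type over $k$, so your opening reduction should be phrased with a bit more care, though this is not the real problem.

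The paper's proof avoids introducing $\bar B'$ altogether and instead exploits the local structure of finite-type monomorphisms: after an \'etale base change around any point of $B$, one can find an open $\wt B\subset B'$ that is \emph{closed} in $B$. Its preimage $\wt A\subset A$ is then open and closed, so the characteristic function $1_{\wt A}$ is a global idempotent in $H^0(A,\CO_A)=H^0(B,\CO_B)$; shrinking $B$ makes it $1$, which forces $\wt B=B'$, i.e.\ $i$ is a closed embedding. The closed-embedding case is then immediate from $\Ker(\CO_B\to f_*\CO_A)=0$. This route uses the hypothesis $f_*\CO_A=\CO_B$ in a very concrete way (via a single idempotent) and never needs to control a fiber product like $\bar B'\times_B\bar B'$.
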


%Can I use that $i$ is quasi-affine?

\begin{proof}
It suffices to show that for every $b\in B$ there exists an etale morphism $(B_1,b_1)\to (B,b)$ such that $i$ becomes an isomorphism after base change to $B_1\,$.

Note that since $f$ is surjective so is $i:B'\mono B$. In other words, $B'$ and $B$ have the same field-valued points. In particular, $b\in B'$.

The monomorphism $i:B'\mono B$ has finite type, so after etale base change $(B_1,b_1)\to (B,b)$, we can assume\footnote{E.g., see Lemma 37.17.2 from \cite{St} (whose ``tag" is 04HI).} that there is an open subspace 
$\wt{B}\subset B'$ which is closed in $B$ (and therefore closed in $B'$). Set $\wt{A}:=(f')^{-1}(\wt{B})$, then
$\wt{A}\subset A$ is both open and closed. Let $1_{\wt{A}}\in H^0(A,\CO_A)$ denote the characteristic function of $\wt{A}$. Since $f_*\CO_A=\CO_B$ the map $H^0(B,\CO_B )\to H^0(A,\CO_A)$ is an isomorphism. So $1_{\wt{A}}$ comes from an idempotent element of $H^0(B,\CO_B)$. After shrinking $B$, we can assume that this element equals~$1$. This means that $1_{\wt{A}}=1$, $\wt{A}=A$, $\wt{B}=B'$, and $i:B'\mono B$ is a closed embedding.

Let $\CI_{B'}\subset\CO_B$ be the ideal of the closed subspace $B'\subset B$. Then  
$\CI_{B'}\subset\Ker (\CO_B\to f_*\CO_A)=0$. So $B'=B$.
\end{proof}

Now let us prove \lemref{l:postponed}. It says the following:

\begin{lem}  
Let $A$, $B$, $Z$ be algebraic $k$-spaces and $f:A\to B$ a surjective morphism with $f_*\CO_A=\CO_B$
Then

(i) the map $Maps(B , Z)\to\Maps(A ,Z)$ induced by $f$ is injective;

(ii) if $B_0\subset B$ is a closed subspace containing $B_{\red}$ and $A_0=f^{-1}(B_0)$ then the diagram
\[
\CD
\Maps(B\, , Z) @>{}>>  \Maps(A\, ,Z) \\
@V{}VV   @V{}VV   \\ 
\Maps(B_0\, , Z) @>{}>>  \Maps(A_0\, ,Z)
\endCD
\]
induced by $f$ is Cartesian. 
\end{lem}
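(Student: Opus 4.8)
The plan is to reduce both assertions to the already-established \lemref{l:no factorization}, whose function is to promote a factorization of $f$ through a finite-type monomorphism into an isomorphism. The two standing inputs are the surjectivity of $f$ and the identity $f_*\CO_A=\CO_B$, supplemented by one elementary observation: for a quasi-separated algebraic space $Z$ the diagonal $\Delta_Z\colon Z\to Z\times_k Z$ is a \emph{monomorphism of finite type}. Indeed, the diagonal is always a monomorphism and locally of finite type, and it is quasi-compact because $Z$ is quasi-separated (quasi-separatedness being part of our definition of algebraic space), so it is of finite type.

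\emph{Part (i).} Suppose $g_1,g_2\colon B\to Z$ satisfy $g_1\circ f=g_2\circ f$, and let $i\colon B'\mono B$ be their equalizer $B':=\Eq(g_1,g_2)$, that is, the base change of $\Delta_Z$ along $(g_1,g_2)\colon B\to Z\times_k Z$. Then $i$ is a monomorphism of finite type, and since $f$ coequalizes $g_1$ and $g_2$ it factors as $A\to B'\overset{i}\mono B$. By \lemref{l:no factorization}, $i$ is an isomorphism, hence $B'=B$ and $g_1=g_2$; this is the injectivity asserted in~(i).

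\emph{Part (ii).} Injectivity of the comparison map $\Maps(B,Z)\to\Maps(A,Z)\underset{\Maps(A_0,Z)}\times\Maps(B_0,Z)$ is immediate from~(i), since two morphisms $B\to Z$ having the same image in particular agree after precomposition with $f$. For surjectivity, fix a compatible pair $(a,b_0)$, where $a\colon A\to Z$, $b_0\colon B_0\to Z$, and $a|_{A_0}=b_0\circ f_0$ with $f_0:=f|_{A_0}\colon A_0\to B_0$. I would combine the two $B$-morphisms
\[
c:=(f,a)\colon A\to B\times Z,\qquad d:=(\iota_{B_0},b_0)\colon B_0\to B\times Z
\]
into $e:=c\sqcup d\colon A\sqcup B_0\to B\times Z$, and let $G\mono B\times Z$ be its scheme-theoretic image, with projection $p:=\pr_1|_G\colon G\to B$. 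By construction $c$ and $d$ factor through $G$, so in particular $f=p\circ c'$ for the induced $c'\colon A\to G$; thus $f$ factors through $p$. \emph{Granting that $p$ is a monomorphism of finite type}, \lemref{l:no factorization} yields $p\colon G\iso B$, and then $b:=\pr_2\circ p^{-1}\colon B\to Z$ satisfies $b\circ f=a$ and $b|_{B_0}=b_0$ (because $c'$ lifts $c$ and the restriction of $d$ lifts $d$), which is exactly the surjectivity we need.

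The hard part is precisely the claim that $p\colon G\to B$ is a monomorphism of finite type; the finite-type condition is unproblematic in the situations of interest (and holds once the fibres are understood, $p$ being a separated quasi-finite monomorphism), so the real content is that $G$ meets each geometric fibre of $\pr_1$ in a single reduced point. This is where the hypothesis $B_0\supseteq B_{\red}$ is used: it forces $A_0=f^{-1}(B_0)\supseteq A_{\red}$, so on the reductions $a$ and $b_0$ already agree along $f$, and the piece $d$ pins the value of $e$ to the graph of $b_0$ over the whole (topologically reduced) base, preventing $G$ from spreading out fibrewise. The equality $f_*\CO_A=\CO_B$ is again essential here, as it is what makes $a$ constant along the fibres of $f$ — this is visible in the affine model, where a morphism to an affine target is a homomorphism out of $H^0(A,\CO_A)=H^0(B,\CO_B)$ and hence descends to $B$. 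To make this rigorous in full generality without wrestling with base change of scheme-theoretic images, I would fall back on reducing to the case of affine $Z$ by the étale-descent argument used in the proof of \lemref{l:pushout}: for $Z$ affine the square of $\Maps$-sets is obtained by applying $\Hom_k(\CO(Z),-)$ to the square of global functions, whose top arrow is an isomorphism by $f_*\CO_A=\CO_B$, so Cartesianness comes down to injectivity of $H^0(B_0,\CO_{B_0})\to H^0(A_0,\CO_{A_0})$ — which is the fibrewise shadow of the monomorphism property above and is forced by $B_0\supseteq B_{\red}$ together with $f_*\CO_A=\CO_B$.
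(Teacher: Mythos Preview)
Your proof of part~(i) is correct and coincides with the paper's argument word for word: both take the equalizer $B'=\Eq(g_1,g_2)\mono B$, observe that it is a finite-type monomorphism through which $f$ factors, and apply \lemref{l:no factorization}.

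For part~(ii), however, your argument has a genuine gap and diverges from the paper's route. Your scheme-theoretic image construction is left incomplete---you yourself flag that showing $p:G\to B$ is a monomorphism is ``the hard part,'' and you do not prove it. Your fallback to affine $Z$ correctly reduces the Cartesian property to the injectivity of $H^0(B_0,\CO_{B_0})\to H^0(A_0,\CO_{A_0})$, but the claim that this injectivity is ``forced by $B_0\supseteq B_{\red}$ together with $f_*\CO_A=\CO_B$'' is not justified: you would need $(f_0)_*\CO_{A_0}=\CO_{B_0}$ (or at least that the unit map is injective), and this does not follow formally from $f_*\CO_A=\CO_B$. The hypotheses are on $f$, not on $f_0$, and pushforward does not commute with passing to closed subschemes in the required direction. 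Also, the ``\'etale-descent argument used in the proof of \lemref{l:pushout}'' is really a pushout statement for $\Et^{\sep}_{\fin}$, not a device for reducing arbitrary $Z$ to affine $Z$; you would need to spell out how that reduction actually goes.

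The paper's proof of~(ii) avoids all of this by a direct sheaf-theoretic construction. The key observation is that since $B_0\supseteq B_{\red}$, the \'etale sites of $B_0$ and $B$ coincide, so a morphism $g:B\to Z$ extending $g_0$ is \emph{exactly} a lift of the sheaf map $g_0^{\Kdot}\CO_Z\to\CO_{B_0}$ to a $k$-algebra map $\varphi:g_0^{\Kdot}\CO_Z\to\CO_B$. One then writes down $\varphi$ as the composite
\[
g_0^{\Kdot}\CO_Z\longrightarrow f_*\CO_A\iso\CO_B,
\]
where the first arrow is adjoint to the map $f^{\Kdot}g_0^{\Kdot}\CO_Z=h^{\Kdot}\CO_Z\to\CO_A$ supplied by $h:A\to Z$ (using $A_{\ET}=(A_0)_{\ET}$ to make the identification). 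This gives $g$ with $g\circ f=h$ directly, without passing through scheme-theoretic images or needing the auxiliary injectivity statement.
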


\begin{proof}
(i) Let $g_1,g_2:B\to Z$ be morphisms such that $g_1\circ f=g_2\circ f$. Let $i:B'\mono B$ denote the equalizer of $g_1$ and $g_2$ ( i.e., the preimage of the diagonal with respect to 
$(g_1,g_2):B\to Z\times Z$). By \lemref{l:no factorization}, $i$ is an isomorphism. So $g_1=g_2\,$.

(ii) Suppose that $g_0\in\Maps(B_0\, , Z)$ and $h\in\Maps(A\, ,Z)$ have the same image in $\Maps(A_0\, ,Z)$.
We have to extend $g_0:B_0\to Z$ to a morphism $g:B\to Z$ whose composition with $f:A\to B$ equals $h$.

We have a canonical $k$-algebra homomorphism 
\begin{equation}  \label{e:what we have}
g_0^{\Kdot}\CO_Z\to\CO_{B_0}\, , 
\end{equation}
where 
$g_0^{\Kdot}$ denotes the sheaf-theoretical pullback with respect to $(g_0)_{\ET}: (B_0)_{\ET}\to Z_{\ET}\,$. Note that $B_{\ET}=(B_0)_{\ET}\,$.  So extending $g_0$ to a morphism $g:B\to Z$ is equivalent to lifting the map~\eqref{e:what we have} to a $k$-algebra homomorphism $\varphi :g_0^{\Kdot}\CO_Z\to\CO_B\,$.
Define $\varphi$ to be the composition 
$$g_0^{\Kdot}\CO_Z\to f_*\CO_A\iso\CO_B\,,$$
where the first arrow comes from the homomorphism $f^{\Kdot}g_0^{\Kdot}\CO_Z=h^{\Kdot}\CO_Z\to\CO_A$
corresponding to $h:A\to Z$.
\end{proof}

\section{Some results of Bia{\l}ynicki-Birula, Konarski, and Sommese}   \label{s:Polish}

Recall that if $Z$ is separated then $p^+:Z^+\to Z$ is a monomorphism. But already if $Z$ is the projective line equipped with the standard $\BG_m$-action, the morphism $p^+:Z^+\to Z$ is not a locally closed embedding.
%In this Appendix we discuss conditions that ensure that $p^+:Z^+\to Z$ is a locally closed embedding.

\begin{thm}
Let $Z$ be a separated scheme over an algebraically closed field $k$ equipped with a $\BG_m$-action.
Then each of the following conditions ensures that the restriction of $p^+:Z^+\to Z$ to each connected component \footnote{Using the $\BA^1$-action on $Z^+$, it is easy to see that each connected component of $Z^+$ is the preimage of a connected component of $Z^0$ with respect to the map $q^+:Z^+\to Z^0\,$.} 
of $Z^+$  is a locally closed embedding:

(i) $Z$ is smooth;

(ii) $Z$ is normal and quasi-projective;

(iii) $Z$ admits a $\BG_m$-equivariant locally closed embedding into
a projective space $\BP(V)$, where $\BG_m$ acts linearly on $V$. 
\end{thm}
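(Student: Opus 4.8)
The plan is to reduce the whole statement to the single assertion that, for each connected component $W\subset Z^+$, the image $p^+(W)\subset Z$ is \emph{locally closed}; once this is known, $p^+|_W$ will be a locally closed embedding. Indeed, by the footnote $W=(q^+)^{-1}(F)$ for a connected component $F\subset Z^0$, and by \propref{p: p^+} the map $p^+\colon Z^+\to Z$ is an unramified monomorphism. In each of the three situations $Z$ will be covered by $\BG_m$-stable affine opens $U$, and on each such $U$ the morphism $p^+\colon U^+\to U$ is a \emph{closed} embedding by \propref{p: p^+}(iv); since $U^+=(q^+)^{-1}(U^0)$ is open in $Z^+$ by \lemref{l:U^+}, these closed embeddings glue over a cover of $Z$ and exhibit $p^+|_W$ as an isomorphism of $W$ onto the subspace $p^+(W)$. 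The only thing that can go wrong — and that does go wrong for general $Z$, as the preceding remark warns — is that $p^+(W)$ need not be locally closed; the hypotheses (i)--(iii) are precisely designed to rule this out.

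I would first settle (iii). Choosing a basis of $V$ made of $\BG_m$-weight vectors, the standard affine charts $\{v_j\neq 0\}$ of $\BP(V)$ are $\BG_m$-stable and affine, so the reduction above applies. Decomposing $V=\bigoplus_n V_n$ into weight spaces, a direct computation shows that the attracting cell of the fixed component $\BP(V_{n_0})$ is
\[
\{[v]\in\BP(V)\mid v_n=0\ \text{for}\ n<n_0,\ v_{n_0}\neq 0\},
\]
which is the intersection of the closed locus $\{v_n=0,\ n<n_0\}$ with the open locus $\{v_{n_0}\neq 0\}$, hence locally closed. Thus the theorem holds for $Z=\BP(V)$. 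For a general $Z$ carrying a $\BG_m$-equivariant locally closed embedding $Z\hookrightarrow\BP(V)$, write $Z=\overline Z\cap O$ with $\overline Z$ the ($\BG_m$-stable) closure and $O:=\BP(V)\setminus(\overline Z\setminus Z)$ a $\BG_m$-stable open; then \lemref{l:U^+} realises $O^+$ as an open subspace of $\BP(V)^+$ and \lemref{l:F+} realises $Z^+$ as a closed subspace of $O^+$. Restricting a locally closed embedding first to an open and then to a closed subspace of the source again gives, on each component, a locally closed embedding, which proves (iii).

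Case (ii) I would deduce from (iii): a normal quasi-projective $Z$ equipped with a $\BG_m$-action admits a $\BG_m$-equivariant locally closed embedding into some $\BP(V)$ on which $\BG_m$ acts linearly — this is the projective form of Sumihiro's linearization theorem (\cite{Sum}, \cite{KKLV}) — so hypothesis (iii) is satisfied.

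The real difficulty is (i), where $Z$ is smooth but possibly neither quasi-projective nor embeddable equivariantly in a projective space, so the reduction to (iii) is unavailable. Here $Z$ is in particular normal, so by \remref{r:locally linear} the action is locally linear and the reduction of the first paragraph applies: $p^+|_W$ is, locally on $Z$, a closed embedding, and it remains only to prove that $p^+(W)$ is locally closed. For this I would use smoothness through \propref{p:smoothness}: the map $q^+\colon W\to F$ is smooth, its fibre over $z\in F$ being the attractor of the tangent space $T_zZ$, namely the affine space $\bigoplus_{n>0}(T_zZ)_n$ of dimension $d$ equal to the number of strictly positive weights of $\BG_m$ on $T_zZ$. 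Hence $W$ is smooth and irreducible over the (smooth, connected, hence irreducible) component $F$, of constant dimension $\dim F+d$. Since $p^+|_W$ is injective with smooth irreducible source of this dimension, a comparison of dimensions with the lower-dimensional cells shows that $p^+(W)$ is open in its closure, hence locally closed; this is the Bia{\l}ynicki--Birula stratification argument \cite{Bia}. I expect this last point — the global local-closedness of an attracting cell for a smooth $Z$ that cannot be embedded equivariantly in projective space — to be the main obstacle, precisely because it is exactly the place where the smoothness hypothesis, rather than a projective embedding, must do the work.
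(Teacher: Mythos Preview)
Your handling of (ii) and (iii) is essentially the paper's: reduce (ii) to (iii) via Sumihiro's equivariant projective embedding theorem, and settle (iii) by the explicit computation of attracting cells in $\BP(V)$ together with \lemref{l:U^+} and \lemref{l:F+}. The paper is terser but follows the same route.

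For (i), however, there is a genuine gap. The paper does not attempt an argument here at all; it simply attributes the smooth case to Bia{\l}ynicki-Birula \cite{Bia}. Your sketch breaks down at the sentence ``a comparison of dimensions with the lower-dimensional cells shows that $p^+(W)$ is open in its closure.'' Knowing that $W$ is smooth and irreducible of dimension $\dim F+d$, and that $p^+|_W$ is an injective unramified morphism, does \emph{not} imply that the image is locally closed: an injective immersion of a smooth irreducible variety can have image whose closure meets the image badly (there is no a priori reason the boundary $\overline{p^+(W)}\setminus p^+(W)$ should be a union of lower-dimensional cells, nor that distinct cells have distinct dimensions). The actual argument in \cite{Bia} is local: smoothness allows one, via an equivariant \'etale slice at each fixed point, to linearize the $\BG_m$-action in a neighborhood, and in the linear model the cell is visibly locally closed. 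Your framework (reduce to showing $p^+(W)$ is locally closed, using that $p^+$ is already a monomorphism) is sound, but the step you flagged as ``the main obstacle'' really is one, and the dimension-counting placeholder does not fill it. Either cite \cite{Bia} as the paper does, or supply the local linearization.
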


 Case (i) is due to A.~Bia{\l}ynicki-Birula \cite{Bia}.

Case (iii) immediately follows from the easy case $Z=\BP(V)$. Case (ii) turns out to be a particular case of (iii)
because by Theorem~1 from \cite{Sum}, if $Z$ is normal and quasi-projective then it admits a $\BG_m$-equivariant locally closed embedding into a projective space.

\medskip

In case (i) the condition that $Z$ is a scheme (rather than an algebraic space) is essential, as shown by A.~J.~Sommese \cite{Som}. In case (ii) the quasi-projectivity condition is essential, as shown by 
J.~Konarski \cite{Kon} using a method developed by J.~Jurkiewicz \cite{Ju1,Ju2}. 
%Jurkiewicz's example is also explained in \cite{Kon}. 
In this example $Z$ is a 3-dimensional toric variety which is proper but not projective; it is constructed by drawing a certain picture on a 2-sphere, see the last page of \cite{Kon}.

In case (ii) normality is clearly essential (to see this, take $Z$ to be the curve obtained from $\BP^1$ by gluing $0$ with 
$\infty$).

%\sssec{}  \label{sss:qp}  
%
%Assume for a moment that $Z$ is quasi-projective, and admits a $\BG_m$-equivariant locally closed 
%embedding into a projective space $\BP(V)$, where $\BG_m$ acts linearly on $V$. 
%
%\medskip
%
%For every connected component $Z^0_\alpha$ of $Z^0$, let $Z^+_\alpha$ denote its preimage under the 
%map $q^+:Z^+\to Z$.  In this case, one can show: 
%
%\begin{prop} \label{p:limit}
%For every $\alpha$ the map $p^+:Z^+_\alpha\to Z$ is a locally closed embedding. Its set-theroretic image
%consists of those points $z\in Z$ for which the map $\BG_m\to Z$ given by the action on $z$ extends to
%a map $\BA^1\to Z$ such that the image of $0\in \BA^1$ belongs to $Z^0_\alpha$.
%\end{prop}

\section{A categorical framework}   \label{s: 2-categorical framework}
This section is largely due to my discussions with N.~Rozenblyum. In particular, he educated me about the ``twisted arrow category" (see Subsect.~\ref{ss:Tw}).

%In this section we will develop a 2-categorical framework, which allows to reformulate the constructions of 
%Sections~\ref{s:actions}-\ref{s:openness} in a way which is concise and convenient for proving the main 
%results of \cite{DrGa1,DrGa2}. 

\ssec{The goal of this Appendix}   \label{ss:the goal}
Let $Z$ be an algebraic $k$-space of finite type equipped with a $\BG_m$-action. In this situation we have a non-flat family of correspondences 
$$Z\leftarrow \wt{Z}_t\to Z,\quad\quad t\in\BA^1$$  
(which was constructed  in \secref{ss:deg}) and also the following two correspondences relating $Z$ with $Z^0$ (which were constructed in Subsections~\ref{ss:attr} and \ref{ss:repeller}):
\begin{equation}   \label{e:2+ and - correspondences}
Z^0\overset{q^+}\longleftarrow Z^+\overset{p^+}\longrightarrow Z, \quad\quad
Z\overset{p^-}\longleftarrow Z^-\overset{q^-}\longrightarrow Z^0.
\end{equation}
It is natural to ask what kind of categorical structure is formed by all these correspondences combined together.
In Subsect.~\ref{ss:What we want} we give 
an answer to this question (in a format convenient for proving the main results of \cite{DrGa1,DrGa2}). If one disregards \eqref{e:2+ and - correspondences} and considers only the correspondences $\wt{Z}_t$ then what one gets is a \emph{lax action} of the multiplicative monoid $\BA^1$ on $Z$ \emph{by correspondences} extending the given action of $\BG_m\,$ (the precise meaning of these words is explained in Subsect.~\ref{ss:What we want}).

%Let us note that this section is not used in the rest of the article.
 
\ssec{The structure of this Appendix}
In Subsections~\ref{ss:corresp}-\ref{ss:P_M} we give some categorical preliminaries. In 
Subsect.~\ref{ss:unsatisfactory} we formulate \propref{p:unsatisfactory}, which gives a very understandable but unsatisfactory answer to the question posed in Subsect.~\ref{ss:the goal}. In Subsect.~\ref{ss:to fix} we explain what has to be changed. After more categorical preliminaries, we formulate in Subsect.~\ref{ss:What we want} the answer to the question from Subsect.~\ref{ss:the goal}. 

%The rest of the Appendix is \Drin{No longer true!}
Subsections~\ref{ss:Tw}-\ref{ss:openness} are devoted to the actual construction of the functor promised in Subsect.~\ref{ss:What we want}.  Let us note that the family of hyperbolas $\tau_1\tau_2=t$ appears in this construction for purely categorical reasons, see 
formula~\eqref{e:again hyperbolas}.

The categorical structure described in  Subsect.~\ref{ss:What we want} is slightly reformulated in Subsect.~\ref{ss:Reinterpretation}. In the remaining part of  Appendix~\ref{s: 2-categorical framework} we briefly explain how this structure can be used to prove the main result of \cite{DrGa1}. (Main idea: the lax action of $\BA^1$ on $Z$ by correspondences induces an action of $\Dmod (\BA^1 )$ on $\Dmod (Z )$, where $\Dmod$ stands for the DG categoryy of D-modules.)

\ssec{The 2-category of correspondences}   \label{ss:corresp}
\sssec{2-categorical conventions}
The word ``2-category"  is always understood in the ``weak" sense (as opposed to the ``strict" one).

Given 2-categories $\CC$ and $\CCD$, the notion of functor $F:\CC\to \CCD$ is also understood in the
``weak" sense.  That is, given 1-morphisms $f:c\to c'$ and $g:c'\to c''$ in $\CC$ one does not require that
$F (gf)=F(g)F(f)$; instead, one specifies 2-isomorphisms 
\begin{equation}  \label{e:2-functor}
F(g)F(f) \to F (gf)
\end{equation}
satisfying the usual coherence condition.

%Functors between 2-categories (a.k.a. 2-functors) are also understood in the ``weak" sense (i.e., given
A 1-morphism $f:c_1\to c_2$ is said to be an \emph{isomorphism} if there exists a 1-morphism $g:c_2\to c_1$ such that $g\circ f$ is 2-isomorphic to $\id_{c_1}$ and $f\circ g$ is 2-isomorphic to $\id_{c_2}\,$.

\sssec{The 2-category $\Corr (\CC)$}   \label{sss:Corr (C)}
Let $\CC$ be a category in which finite products and fiber products exist. Then one defines the \emph{2-category of correspondences} $\Corr (\CC)$ as follows. 

$\Corr (\CC)$ has the same objects as $\CC$. For any objects $X_1,X_2$ in $\CC$  the category of 1-morphsisms from $X_1$ to $X_2$ is the category \emph{opposite} to the category of objects of $\CC$ over 
$X_1\times X_2$. (We will usually write a correspondence from $X_1$ to $X_2$ as a diagram 
$X_1\leftarrow X_{1,2}\to X_2$.) As usual, the composition of correspondences $X_1\leftarrow X_{1,2}\to X_2$ and $X_2\leftarrow X_{2,3}\to X_3$ is the diagram $$X_1\leftarrow  X_{1,2}\underset{X_2}\times X_{2,3} \to X_3$$ whose morphisms are defined in the obvious way.

\sssec{The 2-categories $\Corr (\Alg )$ and $\Corr^{open} (\Alg )$}    \label{sss:Corr Alg}
Let $\Alg$ denote the category of algebraic $k$-spaces of finite type.
Applying to it the construction from Subsect.~\ref{sss:Corr (C)}, one gets the 2-category $\Corr (\Alg )$.

Define a 2-subcategory $\Corr^{open} (\Alg )\subset \Corr (\Alg )$ as follows:
it has the same objects and 1-morphisms as $\Corr (\Alg )$, and its 2-morphisms are those 2-morphsisms in 
$\Corr (\Alg )$ which are open embeddings.

\ssec{The category $\P_M$}  \label{ss:P_M}
Let $M$ be a monoid. Assume that $M$ has a zero, i.e., an element $0\in M$ such that $0\cdot m=m\cdot 0=0$ for all $m\in M$ (clearly 0 is unique if it exists).

Then we define a category $\P_M$ as follows. It has two objects, denoted by $\bb$ and $\bs$ (``b" stands for ``big", ``s" stands for ``small"). The monoid $\End (\bb)$ equals $M$. The monoid $\End (\bs)$ has only one element (namely, $\id_{\bs}$).
There is a unique morphism $\bs\to\bb$, denoted by $\alpha^+$, and there is a unique morphism $\bb\to\bs$, denoted by $\alpha^-$. Finally, the composition
\[
\bb\overset{\alpha^-}\longrightarrow\bs\overset{\alpha^+}\longrightarrow\bb
\]
equals $0\in\End (\bb)$.

\ssec{A preliminary (and unsatisfactory) statement}   \label{ss:unsatisfactory}
The construction of Subsect.~\ref{ss:P_M} applies to the monoid $M=\BA^1 (S)$, where $S$ is any scheme and $\BA^1$ is the multiplicative monoid. In particular, we have the 2-category  $\P_{\BA^1 (k)}\,$. 

\begin{prop}   \label{p:unsatisfactory}
Let $Z$ be an affine $k$-scheme of finite type equipped with a $\BG_m$-action. Then there is a unique functor
\begin{equation}   \label{e:unsatisfactory}
\P_{\BA^1 (k)}\to\Corr^{open} (\Alg )
\end{equation}
such that

(i) the object $\bb\in\P_{\BA^1 (k)}$ goes to $Z\in\Corr^{open} (\Alg )$, and
the object $\bs\in\P_{\BA^1 (k)}$ goes to $Z^0\in\Corr^{open} (\Alg )$;

(ii) the morphisms $\alpha^+: \bs\to\bb$ and $\alpha^-:\bb\to\bs$ go to the correspondences
\begin{equation}   \label{e:+ and - correspondences}
Z^0\overset{q^+}\longleftarrow Z^+\overset{p^+}\longrightarrow Z \mbox{ and }
Z\overset{p^-}\longleftarrow Z^-\overset{q^-}\longrightarrow Z^0,
\end{equation}
respectively;

(iii) the action of $\BG_m (k)=\Aut (\bb )$ on $Z$ comes from the original action of $\BG_m$ on $Z$.

Moreover, for any $t\in\BA^1 (k)$ the morphism $t:\bb\to\bb$ goes to the correspondence
\[
Z\leftarrow \wt{Z}_t\to Z,
\]
given by the morphism $\wt{p}_t:\wt{Z}_t\to Z\times Z$ from Subsect.~\ref{sss:tilde p}.
\end{prop}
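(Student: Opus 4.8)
The plan is to construct the functor \eqref{e:unsatisfactory} directly by unwinding what a functor into $\Corr^{open}(\Alg)$ amounts to, and then checking the required compatibilities. Since $\P_{\BA^1(k)}$ has only two objects $\bb,\bs$ and is generated by $\BA^1(k)=\End(\bb)$ together with $\alpha^+,\alpha^-$ subject to the single relation $\alpha^+\circ\alpha^-=0$, a (weak) functor is the data of: the two target objects prescribed by (i), the correspondences assigned to generators as in (ii)–(iii) and to each $t\in\BA^1(k)$, together with coherent $2$-isomorphisms \eqref{e:2-functor} witnessing compatibility with composition. First I would assign to the morphism $t:\bb\to\bb$ the correspondence $Z\leftarrow\wt{Z}_t\to Z$ furnished by $\wt{p}_t:\wt{Z}_t\to Z\times Z$ from Subsect.~\ref{sss:tilde p}; this is the ``moreover" clause, and it is forced, so in effect the whole of $\wt{Z}$ encodes the action of the monoid $\BA^1(k)$ on $Z$ by correspondences.

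The heart of the construction is the composition data. For $t_1,t_2\in\BA^1(k)$ I must produce a $2$-isomorphism in $\Corr^{open}(\Alg)$ between the composite correspondence $\wt{Z}_{t_2}\underset{Z}\times\wt{Z}_{t_1}$ (using $\pi_1$ and $\pi_2$) and $\wt{Z}_{t_1 t_2}$; in the $2$-category of correspondences a $2$-morphism is a map of the middle objects over $Z\times Z$, and in $\Corr^{open}$ it is required to be an open embedding. This is exactly what the material of Subsect.~\ref{ss:field theory} provides: by \propref{p:2open embedding}(i) the morphism \eqref{e:2open embedding}, which for $n=2$ is the map $\wt{Z}\underset{\BA^1}\times\BA^2\to\wt{Z}\underset{Z}\times\wt{Z}$ of \eqref{e:open emb for n=2}, is an open embedding, and by Subsect.~\ref{sss:relation anti-action} it is built from the anti-action morphisms $\phi_{\lambda_1,\lambda_2,t}$ of \eqref{e:anti-action}. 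Restricting to the fiber over $(t_1,t_2)\in\BA^2$ gives the required open embedding $\wt{Z}_{t_1 t_2}\mono\wt{Z}_{t_2}\underset{Z}\times\wt{Z}_{t_1}$, and the associativity property (iii) of the anti-action in Subsect.~\ref{sss:anti-action} supplies the coherence of these $2$-isomorphisms, i.e.\ the pentagon/cocycle condition needed for a weak functor. The assignment to $\alpha^{\pm}$ from \eqref{e:+ and - correspondences} is handled analogously: the relation $\alpha^+\alpha^-=0$ translates into an identification of the composite $Z^+\underset{Z}\times Z^-$ with $\wt{Z}_0$, which is precisely \propref{p:tilde Z_0}, and the open-embedding compatibilities with $\alpha^{\pm}$ are given by \propref{p:2open embeddings} via the identifications of \eqref{e:embedding1}–\eqref{e:embedding2} with $\wt{Z}\underset{Z}\times Z^+$ and $Z^-\underset{Z}\times\wt{Z}$.

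The \emph{uniqueness} assertion is comparatively soft: a functor is pinned down on objects by (i) and on the generating $1$-morphisms by (ii)–(iii); since every $1$-morphism of $\P_{\BA^1(k)}$ is a composite of these generators, the value on an arbitrary morphism is determined up to the specified coherent $2$-isomorphisms, and the open-embedding condition defining $\Corr^{open}$ rigidifies the $2$-isomorphisms (an open embedding over a fixed base, once it exists, is unique). I would spell this out by verifying that any two functors satisfying (i)–(iii) agree on $t:\bb\to\bb$, which follows because $\wt{Z}_t$ is characterized by $\wt{p}_t$ and by \propref{p:outside 0} for $t\neq 0$ and \propref{p:tilde Z_0} for $t=0$.

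I expect the main obstacle to be bookkeeping the \emph{coherence} of the composition $2$-isomorphisms: one must check that the open embeddings coming from \eqref{e:2open embedding}, when iterated, satisfy the associativity constraint of a weak functor. The cleanest route is to observe that the $n$-fold coherence data is itself controlled by the single morphism \eqref{e:open_future} for all $n$ simultaneously, so that associativity is not a separate verification but a consequence of the compatibility of the schemes $\BX_n$ under the gluing diagrams \eqref{e:zigzag}; concretely, the remark at the end of Subsect.~\ref{sss:property of anti-action} that the $n=2$ property implies the rest is exactly the statement that the binary composition data determines a coherent weak functor. Once this structural point is granted, the verification reduces to the already-established Propositions~\ref{p:2open embeddings}, \ref{p:2open embedding}, and \ref{p:tilde Z_0}, and the affineness hypothesis on $Z$ is only needed to keep all the relevant spaces representable and the fiber products well-behaved.
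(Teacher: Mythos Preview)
There is a genuine gap: you are constructing a \emph{lax} functor, not a functor. By the convention of Subsect.~\ref{ss:corresp}, for a (weak) functor the composition $2$-morphisms $F(g)F(f)\to F(gf)$ must be $2$-\emph{isomorphisms}; in $\Corr^{open}(\Alg)$ this means the open embeddings you produce must actually be isomorphisms. Exhibiting open embeddings via Propositions~\ref{p:2open embeddings} and~\ref{p:2open embedding}(i) yields only the lax functor $\Theta_Z$ of Subsect.~\ref{ss:without openness}--\ref{ss:openness}, which exists for \emph{any} $Z$.

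Relatedly, your presentation of $\P_{\BA^1(k)}$ is incomplete. The relation $\alpha^+\circ\alpha^-=0$ is not ``the single relation'': since $\End(\bs)=\{\id_{\bs}\}$, one also has $\alpha^-\circ\alpha^+=\id_{\bs}$, and since $\Hom(\bs,\bb)$ and $\Hom(\bb,\bs)$ are singletons one has $t\circ\alpha^+=\alpha^+$ and $\alpha^-\circ t=\alpha^-$ for every $t$. The relation $\alpha^-\circ\alpha^+=\id_{\bs}$ is the crux: it forces the map $j:Z^0\to Z^+\underset{Z}\times Z^-$ of \propref{p:Cartesian} to be an \emph{isomorphism}, which is exactly what affineness buys (\remref{r:affine case}). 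Likewise, the other relations force the open embeddings of Propositions~\ref{p:2open embeddings} and~\ref{p:2open embedding} to be isomorphisms; this holds when $Z$ is affine by \remref{r:gamma is an iso} and \propref{p:2open embedding}(ii), since every $\BG_m$-equivariant map $\BP^1\to Z$ is then constant. Your closing claim that affineness is ``only needed to keep all the relevant spaces representable and the fiber products well-behaved'' is therefore wrong: the spaces are representable for any finite-type $Z$ by Theorems~\ref{t:attractors} and~\ref{t:tildeZ}; affineness is needed precisely to upgrade the open embeddings to isomorphisms, i.e.\ to pass from lax to strict (cf.\ \propref{p:openness}(iii)). The paper leaves the proof to the reader but singles out exactly this point in the paragraph following \propref{p:unsatisfactory}.
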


The proof is easy and left to the reader.
Let us only explain the reason for the affineness assumption in the above proposition. Note that in 
$\BA^1 (k)$ one has the relation $\alpha^-\circ\alpha^+=\id_{\bs}\,$. So we need a similar relation between the
correspondences \eqref{e:+ and - correspondences}. It is provided by the morphism 
\begin{equation}  \label{e:open emb}
Z^0\to Z^+\underset{Z}\times Z^-
\end{equation}
 from  \propref{p:Cartesian}, which is an \emph{isomorphism if $Z$ is affine} 
(see \remref{r:affine case}).

\ssec{What has to be changed}   \label{ss:to fix}
\sssec{Lax functors instead of true functors}
If $Z$ is not assumed affine then the map  \eqref{e:open emb} is an \emph{open embedding} but not necessarily an isomorphism. So we cannot expect to get a true functor $\P_{\BA^1 (k)}\to\Corr^{open} (\Alg )$ but only a \emph{lax functor} (this notion is recalled in Subsect.~\ref{ss:lax} below). 

\sssec{Pre-sheafification is necessary}   \label{sss:why fibered_cat}
In \propref{p:unsatisfactory} the correspondences $\wt{Z}_t$ are treated as if they exist separately for each 
$t\in\BA^1 (k)$. But in fact, they form a (non-flat) family $\wt{Z}$ over~$\BA^1$. To fix this, we will use the language of fibered categories. E.g., instead of the category $\P_{\BA^1 (k)}$ we will work with the fibered category formed by categories $\P_{\BA^1 (S)}$, where $S$ is an arbitrary $k$-scheme. We use the word ``pre-sheafification" for this procedure.

%To fix this, we will replace the category $\P_{\BA^1 (k)}$ by the pre-sheaf of categories 
%$S\mapsto \P_{\BA^1 (S)}\,$, where $S$ belongs to the category of $k$-schemes.   \Drin{Should I rather work 
%with fibered categories? I should still call this ``pre-sheafification".}
%
%To fix this, one could consider the pre-sheaf of categories $S\mapsto \P_{\BA^1 (S)}\,$, where $S$ belongs to 
%the category of $k$-schemes.  For technical reasons, we will rather work with the fibered category 
%corresponding to this pre-sheaf of categories.

\ssec{Lax functors and lax actions}   \label{ss:lax}

\sssec{Lax functors}     \label{sss:lax}
 Let $\CC$ and $\CCD$ be 2-categories. In addition to the usual notion of functor 
$\CC\to \CCD$, there is a notion of  \emph{lax functor} $F:\CC\to \CCD$. The word ``lax" means that the 2-mor\-phisms~\eqref{e:2-functor} are \emph{not required} to be isomorphisms.

We also include the following two conditions in the definition of lax functor:

(i) if either $f$ or $g$ is a 1-isomorphism then \eqref{e:2-functor} is a 2-isomorphism\footnote{Following N.~Rozenblyum's advice, we include this condition in order to ensure that an equivalence $\CC'\iso\CC$ induces an equivalence between the 2-categories of lax functors $\CC\to \CCD$ and $\CC'\to \CCD$.};

(ii) if a 1-morphism $f$ is an isomorphism\footnote{By property (i), it suffices to require this if $f$ is an identity isomorphism.} then so is $F(f)$.

Combining (i) and (ii), we get a canonical 2-isomorphism $F(\id_c )\iso\id_{F(c)}$ for any $c\in \CC_1\, $.
(Thus a more appropriate name for what we call `` lax functor" would be ``unital lax functor".)

\sssec{Lax actions of monoids}  \label{sss:lax_monoids}
If $\CM$ is a monoidal category let $B\CM$ denote the 2-category with a single object $c$ such that the monoidal category of 1-endomorphisms of $c$ is $\CM$. 

Now let $\CCD$ be a 2-category and $d\in\CCD$. By a \emph{lax action} of $\CM$ on $d$ we mean a lax functor $F:B\CM\to\CCD$ such that $F(c)=d$ (as before, $c$ is the unique object of $B\CM$).

Any monoid can be considered as a (discrete) monoidal category. Thus we have the notion of lax action of a monoid on an object of a 2-category.

\ssec{Some fibered categories}  \label{ss:Some fibered}
The reason why we need fibered categories was explained in Subsect.~\ref{sss:why fibered_cat}.

Unless stated otherwise, ``fibered category" will mean ``category fibered over the category of $k$-schemes".
A functor between fibered categories is tacitly assumed to be over the category of $k$-schemes. The natural class of such functors is formed by \emph{Cartesian} functors (i.e., those taking Cartesian morphisms to Cartesian morphisms).

Let $\P_{\BA^1}$ denote the fibered category whose fiber over a $k$-scheme $S$ is $\P_{\BA^1 (S)}\,$.

Let $B\BG_m$ (resp. $B\BA^1$) denote the fibered category whose fiber over a $k$-scheme $S$ is the category with a single object whose monoid of endomorphisms is $\BG_m (S)$ (resp. $\BA^1 (S)$. Note that the monoid of endomorphsms of the object
$\bb\in\P_{\BA^1 (S)}$ equals $\BA^1 (S)$, so we have $$B\BG_m\subset B\BA^1\subset\P_{\BA^1}\,.$$

\begin{rem}   \label{r:enriched}
Any category $\CA$ enriched over $k$-schemes defines a fibered category (its fiber over $S$ is obtained
from $\CA$ by replacing the scheme of morphisms between any two objects of $\CA$ with the set of $S$-points of this scheme) . The fibered categories $\P_{\BA^1}$, $B\BG_m$, and $B\BA^1$ are of this type; the fibered categories defined below are not.
\end{rem}

Let $\Sp$ (resp. $\AAlg$) denote the fibered category whose fiber over a $k$-scheme $S$ is the category of spaces\footnote{Recall that  by a $k$-space we mean a contravariant functor $F$ from the category of 
$k$-schemes to that of sets which is a sheaf for the fpqc topology.} (resp. algebraic spaces of finite presentation) over $S$.

Let $\CORR (\AAlg )$ denote the fibered 2-category\footnote{Let $\CC$ be a 2-category and $\CCD$ a 1-category. A functor $F:\CC\to\CCD$ is said to be a \emph{fibration} if for any $c\in\CC$ and $d\in \CCD$ any morphism $f:d\to F(c)$ can be lifted to a 1-morphism $\tilde f:c'\to c$ which is \emph{Cartesian} in the following sense: for any $c''\in\CC$ and any $g:F(c'')\to d$, composition with $\tilde f$ defines an equivalence 
$\Mor_g (c'',c'))\to \Mor_{g\circ f} (c'',c))$, where $\Mor_g (c'',c')$ is the category of 1-morphisms over $g$.} 
whose fiber over a $k$-scheme $S$ is the 2-category of correspondences (see Subsect.~\ref{sss:Corr (C)}) corresponding to the category of algebraic spaces of finite presentation over $S$. Similarly, we have the fibered 2-categories $\CORR^{open} (\AAlg )$,
$\CORR^{open} (\Sp )$, and $\CORR^{open} (\Sp )$.

\ssec{What we want to construct}    \label{ss:What we want}

Let $Z$ be an algebraic $k$-space  of finite type equipped with a $\BG_m$-action. Such $Z$ is the same as a Cartesian functor
\begin{equation}  \label{e:BG_m_to}
B\BG_m\to\AAlg.
\end{equation}
Recall that $B\BG_m\subset\P_{\BA^1}$ (see Subsect.~\ref{ss:Some fibered}).
%We also have a canonical Cartesian functor $B\BG_m\to\P_{\BA^1}$ (

In Subsections~\ref{ss:without openness}-\ref{ss:openness}  we will define a canonical 
extension\footnote{The extension constructed below is, in some sense, universal (see \propref{p:Phi_Z} for the precise statement).} 
of  \eqref{e:BG_m_to} to a Cartesian lax functor 
\begin{equation}  \label{e:PBA_to}
\P_{\BA^1}\to\CORR^{open} (\AAlg )
\end{equation}
The restriction of \eqref{e:PBA_to} to $B\BA^1\subset \P_{\BA^1}$ is the ``lax action of $\BA^1$ on $Z$ by correspondences" mentioned in Subsect.~\ref{ss:the goal}.

%This will be done in two steps.
%
%First, in Subsections~\ref{?} we will extend \eqref{e:BG_m_to} to a Cartesian lax functor 
%\begin{equation}  \label{e:2PBA_to}
%\P_{\BA^1}\to\Corr(\Sp ).
%\end{equation}
%This part is purely tautological. (In fact, one can show that the extension constructed in Subsections~\ref{?} is 
%the final object in an appropriately defined category of all possible extensions.)
%
%Then we will show that the functor \eqref{e:2PBA_to} factors through 
%$\Corr^{open} (\AAlg )$. (As explained in Subsect.~\ref{?}, this is just a reformulation of the geometric results 
%from Sections~\ref{s:actions}-\ref{s:openness}.)

\ssec{Twisted arrow category}   \label{ss:Tw}
In this subsection we recall a standard categorical construction, which will be used to construct \eqref{e:PBA_to}.

%To construct \eqref{e:PBA_to}, we will use the well known construction that associates to any category $\CC$ 
%its \emph{twisted arrow category,} denoted by $\Tw (\CC )$. Let us recall it.

\sssec{Definition of $\Tw (\CC )$}
Given any category $\CC$ one defines its \emph{twisted arrow 
%category}\footnote{Our definition of 
%$\Tw (\CC )$ follows the convention of \cite[Construction 3.3.5]{Lur3}, which simplifies the formulations in 
%Subsect.~\ref{sss:Tw-Corr}. In other words, we use the notation $\Tw (\CC )$ for the category traditionally 
%denoted by $\Tw (\CC )^{op}$. Our choice of notation is dictated by the desire to simplify the formulations in 
%Subsect.~\ref{sss:Tw-Corr}. }, 
category}\footnote{Following the convention of \cite[Construction 3.3.5]{Lur3}, 
we use the notation $\Tw (\CC )$ for the category traditionally denoted by $\Tw (\CC )^{op}$. This allows to simplify the formulations in Subsect.~\ref{sss:Tw-Corr}.},
which will be denoted by $\Tw (\CC )$. The objects of $\Tw (\CC )$ are arrows $c_1\to c_2$ in $\CC$. Given two such arrows $f:c_1\to c_2$ and $f':c'_1\to c'_2\,$, a $\Tw (\CC )$-morphism from $f$ to $f'$ is defined to be a commutative square
\[
\CD
c_1 @>{u}>>  c'_1 \\
@V{f}VV   @VV{f'}V   \\ 
c_2@<{v}<<  c'_2
\endCD
\]
or equivalently, a factorization of $f$ as $v\circ f'\circ u$. Composition of $\Tw (\CC )$-morphisms corresponds to juxtaposition of squares:
\[
\CD
c_1 @>>>  c'_1 @>>>  c''_1\\
@V{f}VV   @VV{f'}V   @VV{f''}V \\ 
c_2@<<<  c'_2 @<<<  c''_2
\endCD
\]

\begin{rem}      \label{r:groupoids}
One gets a canonical functor $\Tw (\CC )\to\CC$ by associating to an arrow $f:c_1\to c_2$ the object $c_1$. If 
$\CC$ is a groupoid this functor is an equivalence.
\end{rem}

\begin{rem}  
Any morphism $f:c_1\to c_2$ in $\CC$ defines a diagram
\begin{equation}  \label{e:diagr in Tw}
\id_{c_1}\leftarrow f\to \id_{c_2}
\end{equation}
in $\Tw (\CC )$: namely,  the morphism $f\to\id_{c_1}$ (resp. $f\to \id_{c_2}$) corresponds to the factorization of $f$ as $f\circ \id_{c_1}\circ\id_{c_1}$ (resp. as $\id_{c_2}\circ\id_{c_2}\circ f$).
\end{rem}

\begin{rem}
(ii) Any composable morphisms $f:c_1\to c_2$ and $g:c_2\to c_3$ in $\CC$ define a commutative diagram
\begin{equation} \label{e:square in Tw}
\CD
g\circ f @>>>  g \\
@VVV   @VVV   \\
f@>>>   \id_{c_2}
\endCD
\end{equation}
in $\Tw (\CC )$; in this diagram the morphisms  $f\to \id_{c_2}\leftarrow g$ are as in the previous remark, and the morphism $g\circ f\to f$ (resp. $g\circ f\to g$) corresponds to the factorization of $g\circ f$ as
$g\circ f\circ\id_{c_1}$ (resp. as $\id_{c_2}\circ g\circ f$).

\end{rem}

\sssec{``Adjunction" between $\Tw$ and $\Corr$}  \label{sss:Tw-Corr}
Let $\CC$ and $\CCD$ be categories. Assume that $\CCD$ has finite products and fiber products, so the category $\Corr(\CCD )$ from Subsect.~\ref{sss:Corr (C)} is defined.   Then \emph{the groupoid\footnote{
   In other words, we remove non-invertible morphisms between lax functors $\CC\to \Corr(\CCD )$ (after which 2-morphisms are automatically invertible) and similarly, remove non-invertible  morphisms between functors
  $\Tw(\CC)\to\CCD$. Replacing the categories of functors by the corresponding groupoids is necessary already if $\CC$ is the point category (i.e., the category with one object and one morphism).} of  lax functors
   $\CC\to \Corr(\CCD )$
is canonically equivalent to the groupoid of functors
  $\Tw(\CC)\to\CCD$. } (I learned this fact from N.~Rozenblyum.)

Let us explain the construction in one direction (the only one we need). Given a functor $F: \Tw(\CC)\to\CCD$,  one constructs the following lax functor $\CC\to \Corr(\CCD )$. To an object $c\in \CC$ one associates the object $F(\id_c)\in\CCD$. To a morphism $f:c_1\to c_2$ in $\CC$ one associates the correspondence 
$F(\id_{c_1})\leftarrow F(f) \to F(\id_{c_2})$
obtained by applying $F$ to diagram \eqref{e:diagr in Tw}. Finally, to composable morphisms $f:c_1\to c_2$ and $g:c_2\to c_3$ in $\CC$, one associates the morphism $F(g\circ f)\to F(g)\underset{F(c_2)}\times F(f)$ obtained by applying $F$ to the commutative diagram \eqref{e:square in Tw}.

\ssec{The Cartesian lax functor $\P_{\BA^1}\to\CORR (\AAlg )$}   \label{ss:without openness}
Let $\TTw (\P_{\BA^1})$ denote the fibered category whose fiber over a $k$-scheme $S$ equals 
$\Tw (\P_{\BA^1(S)})$.\footnote{The fibered category $\P_{\BA^1}$ comes from a category enriched over $k$-schemes, see \remref{r:enriched}. Accordingly, the fibered category $\TTw (\P_{\BA^1})$ comes from a category \emph{internal to} the category of $k$-schemes (i.e., \emph{both} objects and morphisms form $k$-schemes rather than abstract sets).} Similarly, one defines  the fibered category $\TTw (B\BG_m )$.

Given an algebraic $k$-space of finite type $Z$ equipped with a $\BG_m$-action, we want to
construct a Cartesian lax functor 
\begin{equation}   \label{e:2PBA_to}
\Theta_Z :\P_{\BA^1}\to\CORR (\AAlg ).
\end{equation}
By Subsect.~\ref{sss:Tw-Corr}, this amounts to constructing a Cartesian functor 
$\Phi_Z:\TTw (\P_{\BA^1})\to\AAlg$. 

Recall that $\P_{\BA^1}\supset B\BG_m\,$. Our $Z$ gives a Cartesian functor 
$B\BG_m\to\AAlg$, which is the same as a Cartesian functor $\Psi_Z: \TTw (B\BG_m )\to\AAlg$ 
(see \remref{r:groupoids}).
We want $\Phi_Z$ to be an extension of $\Psi_Z\,$.

The definition of $\Phi_Z$ is given in part (i) of the next proposition.

\begin{prop}   \label{p:Phi_Z}
(i) The category of Cartesian functors $\Phi:\TTw (\P_{\BA^1})\to\Sp$ equipped with a morphism
$\Phi |_{\TTw (B\BG_m )}\to\Psi_Z$ has a final object, denoted by $\Phi_Z\,$.

(ii) The morphism $(\Phi_Z) |_{\TTw (B\BG_m )}\to\Psi_Z$ is an isomorphism.

(iii) The Cartesian functor $\Phi_Z:\TTw (\P_{\BA^1})\to\Sp$ factors though $\AAlg\subset\Sp$.

(iv) Let $t\in\BA^1(S)$. Consider $t$ as an object of $\Tw (\P_{\BA^1(S)})$. Then $\Phi_Z$ takes  
$t$ to $\wt{Z}_t:=\wt{Z}\underset{\BA^1}\times S$.

(v) $\Phi_Z$ takes the object $\alpha^{\pm}$ of $\Tw (\P_{\BA^1(S)})$ to $Z^{\pm}\times S$.

(vi) $\Phi_Z$ takes the object $\id_{\bs}$ of $\Tw (\P_{\BA^1(S)})$ to $Z^0\times S$ (recall that $\bs$ denotes the ``small" object of $\P_{\BA^1(S)})$.
\end{prop}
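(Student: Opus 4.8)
The plan is to construct $\Phi_Z$ as a right Kan extension and then read off (ii)--(vi) from the pointwise formula. By the $\Tw$--$\Corr$ correspondence of Subsect.~\ref{sss:Tw-Corr}, a Cartesian lax functor $\P_{\BA^1}\to\CORR(\AAlg)$ is the same datum as a Cartesian functor $\TTw(\P_{\BA^1})\to\AAlg$, and under this dictionary the $\BG_m$-space $Z$ corresponds to $\Psi_Z:\TTw(B\BG_m)\to\AAlg$ (via \remref{r:groupoids}). Thus (i) asks for a final object in the category of Cartesian functors $\Phi:\TTw(\P_{\BA^1})\to\Sp$ equipped with a morphism $\Phi|_{\TTw(B\BG_m)}\to\Psi_Z$. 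I claim this is exactly the right Kan extension $\operatorname{Ran}_i\Psi_Z$ along the inclusion $i:\TTw(B\BG_m)\hookrightarrow\TTw(\P_{\BA^1})$, together with its counit $(\operatorname{Ran}_i\Psi_Z)\circ i\to\Psi_Z$: the adjunction bijection $\Hom(\Phi,\operatorname{Ran}_i\Psi_Z)\cong\Hom(\Phi\circ i,\Psi_Z)$ translates verbatim into the final-object property. The extension exists because $\Sp$ is complete (a category of fpqc sheaves), so its pointwise description as a limit over comma categories makes sense fiber by fiber over each test scheme $S$; assembling these fiberwise extensions into a single Cartesian functor is a routine but nontrivial base-change check, discussed below.

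First I would isolate the one genuinely categorical input. Although $B\BG_m\to\P_{\BA^1}$ is faithful but \emph{not} full (it misses the non-invertible endomorphisms of $\bb$), the induced functor $i:\TTw(B\BG_m)\to\TTw(\P_{\BA^1})$ \emph{is} fully faithful: a $\Tw$-morphism between two objects $\lambda,\mu\in\BG_m(S)$ is a factorization $\lambda=v\mu u$ in $\BA^1(S)$, and since $\lambda,\mu$ are invertible this forces $u,v\in\BG_m(S)$, so the factorization already lives in $\TTw(B\BG_m)$. Because a right Kan extension along a fully faithful functor has invertible counit, this gives (ii) immediately.

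Next I would compute the pointwise limits defining $\Phi_Z$ at the remaining objects $t\in\BA^1(S)$, $\alpha^{\pm}$, and $\id_{\bs}$, thereby proving (iv)--(vi). For $x=t$ the comma category $(t\downarrow i)$ has objects $(\lambda;u,v)$ with $\lambda\in\BG_m$ and $u,v\in\BA^1$ subject to $v\lambda u=t$, i.e.\ (absorbing $\lambda$) to $uv=t$; this is literally the equation of the hyperbola $\BX_t=\{\tau_1\tau_2=t\}$, which is the ``purely categorical'' source of the family of Subsect.~\ref{sss:hyperbolas}. Since $\Psi_Z$ sends every object of $(t\downarrow i)$ to $Z=\GMaps(\BG_m,Z)$, the limit identifies with $\GMaps(\BX_t,Z)=\wt{Z}_t$, giving (iv). Running the same computation for $x=\alpha^{+}$ (where $u=\alpha^{+}$ forces the second coordinate to vanish, so the curve is the axis $\BX_0^{+}\cong\BA^1$) yields $\GMaps(\BA^1,Z)=Z^+$; for $x=\alpha^{-}$ one gets the axis $\BX_0^{-}\cong\BA^1_-$ and hence $Z^-$; and for $x=\id_{\bs}$ the comma category is equivalent to $\TTw(B\BG_m)$ itself, whose limit of $Z$ is the fixed-point space $Z^0$. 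This proves (v) and (vi). Statement (iii) is then immediate: all the values just computed, $\wt{Z}_t,Z^{\pm},Z^0$ and their products with $S$, are algebraic $k$-spaces of finite type by \thmref{t:attractors}, \thmref{t:tildeZ} and \propref{p:Z^0closed}, so $\Phi_Z$ factors through $\AAlg\subset\Sp$.

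The hard part will be the two assembly steps I glossed over: showing that the fiberwise right Kan extensions are compatible with base change $S'\to S$ (so that $\Phi_Z$ is genuinely a Cartesian functor, not merely a collection of fiberwise ones), and making precise the identification of each comma-category limit with the corresponding $\GMaps$-space. The latter amounts to presenting each universal curve ($\BX_t$, and the degenerate cross $\BX_0=\BX_0^{+}\cup\BX_0^{-}$) as a colimit of free orbits $\cong\BG_m$ indexed by the opposite comma category, so that $\GMaps(-,Z)$ converts it into the required limit; for the degenerate fibers this is exactly the gluing handled by \lemref{l:pushout}, and for the whole family it is the colimit-of-hyperbolas picture of Subsect.~\ref{ss:field theory}. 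Once these bookkeeping points are settled, (i)--(vi) all follow.
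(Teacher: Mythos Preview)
Your approach is correct and essentially identical to the paper's. The paper explicitly frames (i)--(ii) as an instance of right Kan extension (``coinduction''), see \remref{r:similar to induced}, and then defines $\Phi_Z(f)$ by the formula $S'\mapsto\gMaps(\bX_f\times_S S',Z)$ where $\bX_f$ is the $S$-space of $\Tw$-morphisms $f\to\id_{\bb}$; this is precisely your pointwise limit over the comma category, packaged via the observation that all objects of $\TTw(B\BG_m)$ are isomorphic to $\id_{\bb}$ and $\BG_m$ acts on the space of such morphisms. Your explicit verification that $i:\TTw(B\BG_m)\to\TTw(\P_{\BA^1})$ is fully faithful (hence the counit is invertible, giving (ii)) is a clean addition that the paper leaves implicit. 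For (iv) the paper computes $\bX_t$ exactly as you do: a $\Tw$-morphism $t\to\id_{\bb}$ is a factorization $t=\tau_1\tau_2$, recovering the hyperbola $\BX_t$, and (v)--(vi) are dispatched with ``proved similarly''. Like you, the paper does not spell out the Cartesian base-change compatibility; your acknowledgment of this as the residual bookkeeping is honest and matches the paper's level of detail.
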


\begin{rem}   \label{r:similar to induced}
Statements (i)-(ii) are quite similar to the following well known facts:
%It is well known that 
if $\CA$ and $\CB$ are categories and $F:\CA\to\CB$ is a functor then the ``restriction" functor
\[
F^*:\{\mbox{Functors } \CB\to \mbox{Sets}\}\to \{\mbox{Functors } \CA\to \mbox{Sets}\}
\]
admits a right adjoint called \emph{right Kan extension along $F$} or the 
\emph{coinduction functor}\footnote{Here is a well known example of a coinduction functor. Suppose that each of the categories $\CA$ and $\CB$ has one object, so $F$ corresponds to a homomorphism of monoids $M\to M'$. Then coinduction is the functor $\{M\mbox{-sets}\}\to\{M'\mbox{-sets}\}$ that takes an $M$-set $Y$ to the $M'$-set $\Maps^M(M',Y)$.}; moreover, if $F$ is fully faithful then applying first coinduction and then restriction one gets the identity. 
\end{rem}

\begin{rem}  
In the situation of statements (i)-(ii) the role of $F$ is played by the functor 
$\TTw (B\BG_m )\to\TTw (\P_{\BA^1})$, which is clearly fully faithful.
\end{rem}

%Statements (i)-(ii) are quite similar to this, so we skip their proofs.

\begin{proof}
Because of the above remarks, instead of giving a detailed proof of statements (i)-(ii), we will only explain the construction of the functor $\Phi_Z$ (which is quite similar to the construction of the coinduced functor in the situation of \remref{r:similar to induced}).
%Here is the construction of the functor $\Phi_Z$ (which is quite similar to the construction of the coinduced %functor in the situation of \remref{r:similar to induced}).
For any object $f\in\Tw (\P_{\BA^1(S)})$ %one gets the following description of the $S$-space $\Phi_Z (f)$.
the $S$-space $\Phi_Z (f)$ is defined as follows. 
Let  $\bX_f$ denote the $S$-space of $\TTw (\P_{\BA^1})$-morphisms\footnote{By this we mean the functor that to a
scheme $S'$ equipped with a morphism $\varphi :S'\to S$ associates the set of $\Tw (\P_{\BA^1(S')})$-morphisms $\varphi^*(f)\to\id_{\bb}\,$.} $f\to\id_{\bb}$ (where $\bb\in\P_{\BA^1(S)})$ is the ``big" object). The group $\BG_m$ acts on $\bX_f$ (because it acts on 
$\id_{\bb}\in \Tw (\P_{\BA^1})$), and the $S$-space $\Phi_Z (f)$ is %the functor
defined to be the functor
\begin{equation}   \label{e:Phi_Z (f)}
S'\mapsto\gMaps (\bX_f\underset{S}\times S',Z)
\end{equation}
on the category of $S$-schemes.
%The verification of the universal property of $\Phi_Z$ is straightforward. 

%Statement (ii) formally follows from the construction of $\Phi_Z$ and fully faithfulness  of the functor 
%$$\TTw (B\BG_m )\to\TTw (\P_{\BA^1}) .$$

Now let us prove statement (iv). Let $f:\bb\to\bb$ be the endomorphism corresponding to $t\in\BA^1 (S)$.
By the definition of twisted arrow category, a $\Tw (\P_{\BA^1(S)})$-morphism $f\to\id_{\bb}$ is the same as a factorization $f=f_2\circ f_1\,$, $f_i\in\End (\bb )$, or equivalently, a factorization
\begin{equation}   \label{e:again hyperbolas}
t=\tau_1\tau_2\,, \quad\quad\tau_i\in\BA^1 (S).
\end{equation}
So the space $\bX_f$ defined above identifies with the fiber product $\BX\underset{\BA^1}\times S$ with respect to $t:S\to\BA^1$, which was used in 
Subsect.~\ref{ss:tilde Z}. Statement (iv) follows.

Statements (v)-(vi) are proved similarly to (iv). Statement (iii) follows from (iv)-(vi).
\end{proof}

%\Drin{Relation to the ``anti-action".}

Thus we have constructed  the Cartesian lax functor $\Theta_Z :\P_{\BA^1}\to\CORR (\AAlg )$. It takes
the objects $\bb,\bs\in\P_{\BA^1 (k)}$ to $Z,Z^0\in\Corr^{open} (\Alg )$, respectively.
The morphisms $\alpha^+: \bs\to\bb$ and $\alpha^-:\bb\to\bs$ from $\P_{\BA^1 (k)}$ 
go to the correspondences
\begin{equation}   %\label{e:+ and - correspondences}
Z^0\overset{q^+}\longleftarrow Z^+\overset{p^+}\longrightarrow Z \mbox{ and }
Z\overset{p^-}\longleftarrow Z^-\overset{q^-}\longrightarrow Z^0,
\end{equation}
respectively.
For any $t\in\BA^1 (S)$ the morphism $t:\bb\to\bb$ in $\P_{\BA^1 (S)}$ goes to the correspondence
obtained from the morphism $\wt{p}:\wt{Z}\to\BA^1\times Z\times Z$ by base change with respect to
$t:S\to\BA^1$.
%\[
%Z\leftarrow \wt{Z}_t\to Z,
%\]
%given by the morphism $\wt{p}_t:\wt{Z}_t\to Z\times Z$ from Subsect.~\ref{sss:tilde p}. \Drin{Does this make 
%sense after I have replaced $\Spec k$ by $S$?}

\ssec{Openness}  \label{ss:openness}
The structure of lax functor on
%In Subsect.~\ref{ss:without openness} we constructed a Cartesian lax functor 
$$\Theta_Z :\P_{\BA^1}\to\CORR(\AAlg )$$
is given by 2-morphisms
%For any composable morphisms $f$ and $g$ in $\P_{\BA^1 (S)}$ we have a morphism
\[
\gamma_{g,f}: \Theta_Z (g)\circ \Theta_Z (f)\to  \Theta_Z (g\circ f),
\]
where $f$ and $g$ are composable morphisms in $\P_{\BA^1 (S)}\,$. E.g., if $f$ and $g$ are endomorphisms of $\bb\in\P_{\BA^1 (S)}$ then $\gamma_{g,f}$ comes from the morphism \eqref{e:open emb for n=2} constructed in Subsect.~\ref{sss:relation anti-action} using the ``anti-action".

\begin{prop}   \label{p:openness}
(i) The Cartesian lax functor $\Theta_Z :\P_{\BA^1}\to\CORR(\AAlg )$ constructed in Subsect.~\ref{ss:without openness} factors through $\CORR^{open}(\AAlg )$.

(ii) The morphism $\gamma_{\alpha^+,\alpha^-}$ is an isomorphism.

(iii) If every $\BG_m$-equivariant map $\BP^1\otimes_k\bar k\to Z\otimes_k\bar k$ is constant  (in particular, if $Z$ is affine) then the lax functor $\Theta_Z$ is a true functor.
\end{prop}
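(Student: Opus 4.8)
The plan is to evaluate the comparison $2$-morphisms $\gamma_{g,f}$ on the (finitely many types of) composable pairs in $\P_{\BA^1(S)}$ and to identify each one with a morphism already shown to be an open embedding or an isomorphism. Recall from \ref{sss:Tw-Corr} that for $f\colon A\to B$ and $g\colon B\to C$ the $2$-morphism $\gamma_{g,f}\colon\Theta_Z(g)\circ\Theta_Z(f)\to\Theta_Z(g\circ f)$ is, on middle objects, a morphism $\Phi_Z(g\circ f)\to\Phi_Z(g)\underset{\Phi_Z(\id_B)}\times\Phi_Z(f)$, and that $\gamma_{g,f}$ being an open embedding (resp.\ isomorphism) of $2$-morphisms in $\CORR(\AAlg)$ means exactly that this morphism is one. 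Since $\Theta_Z$ is Cartesian and both open embeddings and isomorphisms are stable under base change, it is enough to treat the universal instance of each pair.

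Up to pairs in which one leg is an identity (where $\gamma$ is a $2$-isomorphism by the unitality axioms of \ref{sss:lax}), there are five types, indexed by the source/middle/target in $\{\bb,\bs\}$. For $(\bb,\bb,\bb)$ the universal instance is over $S=\BA^2$, and by \ref{sss:relation anti-action} the map $\gamma_{g,f}$ is \eqref{e:open emb for n=2}, i.e.\ \eqref{e:2open embedding} for $n=2$; it is an open embedding by \propref{p:2open embedding}(i). For $(\bs,\bb,\bb)$ and $(\bb,\bb,\bs)$ (one leg $\alpha^+$, resp.\ $\alpha^-$, the other a $t\in\BA^1$) the universal instances are \eqref{e:embedding1} and \eqref{e:embedding2}, which are open embeddings by \propref{p:2open embeddings}. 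For $(\bs,\bb,\bs)$ one has $g\circ f=\id_{\bs}$, and $\gamma_{\alpha^-,\alpha^+}$ is the map $j\colon Z^0\to Z^+\underset{Z}\times Z^-$ of \propref{p:Cartesian}, which is even a closed and open embedding. For $(\bb,\bs,\bb)$ one has $g\circ f=0$, and $\gamma_{\alpha^+,\alpha^-}$ is the morphism $\wt{Z}_0\to Z^+\underset{Z^0}\times Z^-$, which is the isomorphism of \propref{p:tilde Z_0}. This proves (i); and since this last case is precisely $\gamma_{\alpha^+,\alpha^-}$, it also proves (ii).

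For (iii) I would verify that, under the stated hypothesis, all five $\gamma$'s are isomorphisms. The pair $(\bb,\bs,\bb)$ always is, as just noted; the pair $(\bb,\bb,\bb)$ is by \propref{p:2open embedding}(ii); and the two mixed pairs are by \remref{r:gamma is an iso}, since the hypothesis makes \eqref{e:embedding1}--\eqref{e:embedding2} surjective and hence isomorphisms. There remains $(\bs,\bb,\bs)$: the map $j$ is already a closed and open embedding, so it is an isomorphism once it is surjective on $\bar k$-points, and by \eqref{e:fiberprod} a $\bar k$-point of $Z^+\underset{Z}\times Z^-$ is a $\BG_m$-equivariant map $\BP^1\otimes_k\bar k\to Z\otimes_k\bar k$, which under the hypothesis is constant and so factors through $Z^0$, i.e.\ lies in $j(Z^0)$. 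As the comparison $2$-morphisms for all composable pairs are then isomorphisms, and those for longer composites follow by coherence, $\Theta_Z$ is a true functor.

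The content of the argument is entirely carried by the four earlier results \propref{p:2open embedding}, \propref{p:2open embeddings}, \propref{p:Cartesian}, and \propref{p:tilde Z_0}; what remains is careful bookkeeping. I expect the only real difficulty to be keeping straight the directions of the correspondences and of the $2$-morphisms, the exact structure maps ($p^{\pm},q^{\pm},\pi_1,\pi_2$) and fiber products entering each $\gamma_{g,f}$, and checking via the conventions of \ref{sss:relation anti-action} that the anti-action morphisms really do assemble into \eqref{e:open emb for n=2}, \eqref{e:embedding1}, and \eqref{e:embedding2}; the reduction to universal pairs is harmless, using only Cartesianness and base-change stability.
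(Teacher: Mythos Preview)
Your proposal is correct and follows essentially the same approach as the paper's own proof: both enumerate the finitely many types of composable pairs in $\P_{\BA^1(S)}$ and identify each $\gamma_{g,f}$ with one of the previously established open embeddings (\propref{p:2open embedding}, \propref{p:2open embeddings}, \propref{p:Cartesian}) or isomorphisms (\propref{p:tilde Z_0}), and for (iii) invoke the corresponding isomorphism statements plus \eqref{e:fiberprod} for the $(\bs,\bb,\bs)$ case. Your presentation is slightly more systematic in explicitly listing the five source/middle/target types and in spelling out the surjectivity argument for $j$, but the content is the same.
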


\begin{proof}
Statement (ii) is a reformulation of \propref{p:tilde Z_0}.

By \propref{p:Cartesian}, $\gamma_{\alpha^-,\alpha^+}$ is an open embedding. In the situation of (iii) it is an isomorphism by formula \eqref{e:fiberprod}.

By \propref{p:2open embeddings}, the morphisms 
$$\gamma_{t,\alpha^+} \mbox{ and }\gamma_{\alpha^-,t}\, , \quad\quad t\in\BA^1 (S)$$
are open embeddings. In the situation of (iii) they are isomorphisms by  \remref{r:gamma is an iso}.

By \propref{p:2open embedding}, the morphisms 
$$\gamma_{t_1,t_2}\, , \quad\quad t_1,t_2\in\BA^1 (S)$$
are open embeddings, and  in the situation of (iii)  they are isomorphisms.
\end{proof}

%\newpage

\ssec{Reinterpretation of the lax functor $\Theta_Z\,$}  \label{ss:Reinterpretation}

\sssec{Generalities}   \label{sss:pattern}
Suppose that a monoidal category $\CM$ acts on a 2-category $\CC$. If $M$ is a monoid in $\CM$ and $c$ is an object of $\CC$ then there is a notion of action (resp. lax action) of $M$ on~$c$.

A monoid in $\CM$ is the same as an $\CM$-enriched category with one object. If $\CA$ is an \emph{arbitrary} $\CM$-enriched category there is a similar notion of  functor (resp. lax functor) $F:\CA\to\CC$. This means that we have a map $F:\Ob\CA\to\Ob\CC$, for each $a_1,a_2\in\CA$ a 1-morphism 
\begin{equation}   \label{e:enriched functor}
\Hom_\CA(a_1,a_2)\otimes F(a_1) \to F(a_2)
\end{equation}
in $\CC$, and for each $a_1,a_2,a_3\in\CA$ a 2-isomorphism (resp. 2-morphism) between the two 1-morpisms
$$\Hom_\CA(a_1,a_2)\otimes\Hom_\CA(a_2,a_3)\otimes F(a_1) \to F(a_3)$$
obtained from \eqref{e:enriched functor}. The compatibility conditions are just as in the non-enriched case.

The notion of (lax) functor from  an $\CM$-enriched category $\CA$ to an $\CM$-enriched category $\CC$ exists even if $\CM$  is a monoidal 
$\infty$-category, $\CA$ is an $\infty$-category, and $\CC$ is an $(\infty ,2)$-category.\footnote{Recall that an $(n,m)$-category is an $n$-category such that all its $r$-morphisms are invertible for $r>m$.}

%This remains true  even if $\CM$ is a monoidal $\infty$-category and $\CC$ is an $(\infty ,2)$-category.

\sssec{Example}   \label{sss:examplepattern}
Take $\CM=\Alg$, $\CC=\Corr^{open} (\Alg )$ and define the monoidal structure on $\CM$ and the action of $\CM$ on $\CC$ via Cartesian product. Then we get the notion of lax functor from an $\Alg$-enriched category to $\Corr^{open} (\Alg )$.

\sssec{Reinterpretaion of $\Theta_Z\,$}  
From now on we consider $\P_{\BA^1}$ as a category enriched over $k$-schemes (and therefore over $\Alg$) rather than as a category fibered over $k$-schemes. Yoneda's lemma allows to reinterpret the lax functor $\Theta_Z: \P_{\BA^1}\to\CORR^{open}(\AAlg )$ constructed in Subsections~\ref{ss:without openness}-\ref{ss:openness} as a lax functor $\Theta_Z: \P_{\BA^1}\to\Corr^{open}(\Alg )$ in the sense of Subsectios~\ref{sss:pattern}-\ref{sss:examplepattern}. From now on we will use this reinterpretation.

\sssec{Passing to quotients by $\BG_m\,$}  \label{sss:passing to quot}
Let $\St$ denote the $(2,1)$-category formed by those algebraic $k$-stacks of finite type whose field-valued points have affine automorphism groups\footnote{The condition on automorphism groups ensures that the theory of D-modules on such stacks is ``nice" (assuming that $k$ has characteristic 0). This will be important in Subsect.~\ref{sss:Dmod}.}.  
One defines the $(3,2)$-category $\Corr^{open}  (\St )$ similarly to $\Corr^{open}  (\Alg )$ (see Subsect.~\ref{sss:Corr Alg}). 
Define the monoidal structure on $\St$ and the action of $\St$ on $\Corr^{open} (\St )$ via Cartesian product. Then by Subsect.~\ref{sss:pattern}, we get the notion of lax action of an algebraic monoid on an object of $\Corr^{open} (\St )$ and a notion of lax functor from a $\St$-enriched category to $\Corr^{open} (\St )$.

Note that $\P_{\BA^1}$ can be considered as a category enriched over the category of $\BG_m$-schemes 
(this is a shorthand for ``schemes with $\BG_m$-action"). Now let $\P_{\BA^1}/B\BG_m$ denote the $\St$-enriched category obtained from 
$\P_{\BA^1}$ by replacing each $\BG_m$-scheme of morphisms with its quotient by $\BG_m\,$.\footnote{Motivation of the notation:  enrichment over $\BG_m$-schemes can be interpreted as an action of the classifying stack $B\BG_m$ on $\P_{\BA^1}$.} Thus $\P_{\BA^1}/B\BG_m$ has two objects $\bs,\bb$, and
\[
\on{Mor} (\bb ,\bb )=\BA^1/\BG_m\, , \quad \on{Mor} (\bs ,\bs )=\on{Mor} (\bs ,\bb )=\on{Mor} (\bb ,\bs )=B\BG_m\, .
\]
%$\on{Mor} (\bb ,\bb )=\BA^1/\BG_m$, $\on{Mor} (\bs ,\bb )=\BA^1/\BG_m$, and so on.

The lax functor $\Theta_Z: \P_{\BA^1}\to\Corr^{open}(\Alg )$ induces a lax functor 
\begin{equation}   \label{e:wtTheta}
\wt{\Theta}_Z: \P_{\BA^1}/B\BG_m\to\Corr^{open}(\St ).
\end{equation} 
such that $\wt{\Theta}_Z (\bb ):=Z/\BG_m$ and $\wt{\Theta}_Z (\bs ):=Z^0/\BG_m=Z^0\times B\BG_m\,$. Rather than giving a formal definition\footnote{A formal definition can be given using the following device (N.~Rozenblyum). Let $\CM$ be the category of pairs $(G,X)$, where $X\in\Alg$ and $G$ is a group scheme acting on $X$; a morphism $(G,X)\to (G',X')$ is given by a homomorphism $G\to G'$ and a $G$-equivariant map $X\to X'$. Define a monoidal structure on $\CM$ via Cartesian product. Upgrade $\P_{\BA^1}$ to an $\CM$-enriched category and $\wt{\Theta}_Z$ to a lax functor $\P_{\BA^1}\to\Corr (\CM )$. Finally, use the ``passing to quotient" functor $\CM\to\St$.} of 
$\wt{\Theta}_Z\,$, let us define the lax action of the monoidal stack $\BA^1/\BG_m$ on $\wt{\Theta}_Z (\bb )=Z/\BG_m\in\Corr^{open}(\St )$ (this lax action is a part of $\wt{\Theta}_Z$). Recall that the lax action of the multiplicative monoid $\BA^1$ on $Z\in\Corr^{open}(\Alg)$ is given by the morphism $\wt{p}:\wt{Z}\to \BA^1\times Z\times Z$. The lax action of $\BA^1/\BG_m$ on $Z/\BG_m$ is given by the morphism of stacks 
$$\wt{Z}/(\BG_m\times\BG_m)\to (\BA^1/\BG_m)\times (Z/\BG_m)\times (Z/\BG_m)$$
induced by $\wt{p}$.

%\newpage

\ssec{Using the lax functor  to prove Braden's theorem}   \label{ss:Using lax functor}
The lax functor \eqref{e:wtTheta} is the geometric datum, which is used in \cite{DrGa1} (without being mentioned quite explicitly) to prove a cohomological theorem due to T.~Braden. Here we give a brief outline of the proof.
Let us note that a similar approach can be used to prove the main result of  \cite{DrGa2}.\footnote{In the situation of \cite{DrGa2} the geometric datum is a certain lax functor $\P_{\BA^1}/B\BG_m\to\Corr^{open}(\St' )$ such that $\bb\mapsto\Bun_G\,$. Here $\St'$ is the $(2,1)$-category of algebraic stacks \emph{locally} of finite type over $k$, $G$ is a reductive group, and $\Bun_G$ is the stack of $G$-bundles on a smooth projective curve.} 

% (applied to another lax functor $\P_{\BA^1}/B\BG_m\to\Corr^{open}(\St )$) 

To fix the ideas, we consider the de Rham version of Braden's theorem. So the ground field $k$ will be supposed to have characteristic 0, and the role of sheaves will be played by D-modules.

\sssec{Formulation of Braden's theorem}
The theorem (as formulated in \cite[Sect.~3.4]{DrGa1}) says that the functor 
\begin{equation}   \label{e:left_Braden}
(q^-)_\bullet\circ (p^-)^!:\Dmod(Z/\BG_m)\to\Dmod(Z^0/\BG_m)
\end{equation}
 is left adjoint to 
\begin{equation}   \label{e:right_Braden}
(p^+)_\bullet\circ (q^+)^!:\Dmod(Z^0/\BG_m)\to\Dmod(Z/\BG_m).
\end{equation}
Here $Z/\BG_m$ and $Z^0/\BG_m$ denote the quotient \emph{stacks} (the action of $\BG_m$ on $Z^0$ being trivial), $\Dmod$ stands for the DG category of D-modules, $p^\pm$ and $q^\pm$ have the usual meaning (see Subsect.~\ref{ss:the goal}), and $(q^-)_\bullet$ stands for the de Rham direct image with respect to $q^- :Z/\BG_m\to Z^0/\BG_m\,$.

\sssec{The 2-category $\P_{1\to 0}\,$ and adjoint pairs}
Let us describe a general way to construct adjoint pairs of functors.

Recall that to any monoid $M$ having a zero we associated in Subsect.~\ref{ss:P_M} a category $\P_M$ with two objects, denoted by $\bb$ and $\bs$. In particular, we can apply this construction to the monoid $\{ 1, 0\}$ (with the operation of multiplication) and get a category $\P_{\{ 1, 0\}}\,$.

Now let $\P_{1\to 0}$ denote the following (strict) 2-category: $\P_{1\to 0}$ has the same objects and 1-morphisms as $\P_{\{ 1, 0\}}\,$, it has a 2-morphism from the 1-morphism $1:\bb\to\bb$ to the 1-morphism $0:\bb\to\bb$ and no other 2-morphisms except the identities.

The next remark explains how to use the 2-category $\P_{1\to 0}$ to construct adjoint pairs of 1-morphisms in any 2-category.

\begin{rem}   \label{r:lax_to_adj}
Let $\CC$ be a 2-category, and $F:\P_{1\to 0}\to\CC$ a lax functor in the sense of Subsect.~\ref{sss:lax}. In addition, assume that the 2-morphism 
\begin{equation}   \label{e:required_invertible}
F(\alpha^+)F(\alpha^-)\to F(\alpha^+\alpha^-)=F(0)
\end{equation}
(which is a part of the lax functor structure on $F$) is invertible. Then $F(\alpha^-):F(\bb )\to F(\bs )$ is left adjoint to
$F(\alpha^+):F(\bs )\to F(\bb )$; namely, the counit 
$$F(\alpha^-)F(\alpha^+)\to F(\alpha^-\alpha^+)=\id_{F(\bs )}$$ comes from the lax functor structure on $F$, and the unit $\id_{F(\bb )}\to F(\alpha^+)F(\alpha^-)$ is the composition of the morphism
$\id_{F(\bb )}=F(1)\to F(0)$ and the isomorphism $F(0)\iso F(\alpha^+)F(\alpha^-)$ inverse to \eqref{e:required_invertible}. Moreover, a lax functor $F:\P_{1\to 0}\to\CC$ such that \eqref{e:required_invertible} is invertible is \emph{the same} as an adjoint pair of 1-morphisms in $\CC$.
\end{rem}

\begin{rem} \label{r:Nick_lax_to_adj}
N.~Rozenblyum showed that the statements from the previous remark remain valid if $\CC$ is an $(\infty ,2)$-category
 rather than a usual 2-category.

\end{rem}

\sssec{Plan of the proof of Braden's theorem}  \label{sss: functors to construct}
%Let $\DGCat$ denote the 2-category of DG categories, as defined in Appendix E of \cite{Dr} (which corresponds to Appendix V of the e-print 
%version of \cite{Dr}). 

Just as in Subsect.~\ref{sss:passing to quot}, we consider $\P_{\BA^1}$ as a category enriched over $\BG_m$-schemes.
Let $\CA'$ (resp.~$\CA$) denote the ($\infty$,2)-category obtained from $\P_{\BA^1}$ by replacing in $\P_{\BA^1}$ each $\BG_m$-scheme of morphisms $Y$ with the DG category $\Dmod (Y)$ (resp. with  $\Dmod (Y/\BG_m )$). Let $\DGCat_{\on{cont}}$ be the $(\infty ,2)$-category whose objects are cocomplete DG categories (i.e., ones that contain arbitrary direct sums, or equivalently, colimits) and whose 1-morphisms are continuous functors (i.e., exact functors that commute with arbitrary direct sums, or equivalently all colimits). Given an ($\infty$,2)-category $\CCD$, let $\CCD^{naive}$ denote its homotopy 2-category (i.e., $\CCD^{naive}$ has the same objects and 1-morphisms as $\CCD$, and 2-morphisms of $\CCD^{naive}$ are homotopy classes of the 2-morphisms of $\CCD$). 
%Let $\CA^{naive}$ and 
%$\DGCat_{\on{cont}}^{naive}$ denote the 2-categorical truncations of the $(\infty$,2)$-categories $\CA$ and $\DGCat_{\on{cont}}^{naive}$.

%\footnote{By is the 2-category whose 1-morphisms are the same as in 
%$\DGCat_{\on{cont}}$ and whose 2-morphisms are homotopy classes of 2-morphisms in $\DGCat_{\on{cont}}} 

We are going to construct a functor
\begin{equation}   \label{e:PtoA}
\P_{1\to 0}\to\CA^{naive}
\end{equation}
and a lax functor
\begin{equation}   \label{e:AtoDGCat}
G:\CA\to\DGCat_{\on{cont}}\, .
\end{equation}
Let $F:\P_{1\to 0}\to \DGCat_{\on{cont}}^{naive}$ be the composition of \eqref{e:PtoA} and the functor $$G^{naive}:\CA^{naive}\to\DGCat_{\on{cont}}^{naive}.$$ It will be clear from the construction that the 2-morphism \eqref{e:required_invertible} is invertible. So by Remark~\ref{r:lax_to_adj},
we get an adjoint pair of 1-morphisms in $\DGCat_{\on{cont}}^{naive}$ and then (by the general nonsense from \cite[Sect.~3.2]{Lur1}) in 
$\DGCat_{\on{cont}}$ itself.\footnote{The only reason why we do not construct the adjoint pair directly in $\DGCat_{\on{cont}}$ is that the result mentioned in  \remref{r:Nick_lax_to_adj} is not published.} It will be clear from the construction that these 1-morphisms are the functors~\eqref{e:left_Braden}-\eqref{e:right_Braden}.

\sssec{The functor $G:\CA\to\DGCat_{\on{cont}}$} \label{sss:passing to Dmod}
The lax functor $\wt{\Theta}_Z: \P_{\BA^1}/B\BG_m\to\Corr^{open}(\St )$ from Subsect.~\ref{sss:passing to quot}
induces a lax functor
$G:\CA\to\DGCat_{\on{cont}}$ such that 
$$G(\bs )=\Dmod (Z^0/\BG_m), \quad G(\bb )=\Dmod (Z/\BG_m),$$
$$G(\alpha^-)=(q^-)_\bullet\circ (p^-)^!,\quad G(\alpha^+)=(p^+)_\bullet\circ (q^+)^! .$$
We will explain more details in Subsect.~\ref{ss:D-module details} below.
%Here let us just say that the lax action of the monoidal category $\Dmod (\BA^1/\BG_m)$ on $\Dmod (Z/\BG_m)$ comes from the lax action of 
%$\BA^1/\BG_m$ on $Z/\BG_m$ by correspondences; more precisely, the functor 
%$\Dmod (\BA^1/\BG_m)\otimes\Dmod (Z/\BG_m)\to\Dmod (Z/\BG_m)$ is the pull-push along the diagram
%\[
%(\BA^1/\BG_m)\times (Z/\BG_m)\leftarrow\wt{Z}/(\BG_m\times\BG_m)\to Z/\BG_m
%\]
%obtained from the canonical morphism $\wt{p}:\wt{Z}\to\BA^1\times Z\times Z$.}
%%%% (see Subsect.~\ref{sss:tilde p}). (Details are left to the reader.)

\sssec{Constructing the functor \eqref{e:2PtoA}} 
Before constructing \eqref{e:PtoA}, let us construct a functor
\begin{equation}   \label{e:2PtoA}
\P_{\{ 1, 0\}}\to\CA\,.
\end{equation}
We define  \eqref{e:2PtoA} to be the composition of the following functors
\begin{equation}   \label{e:three_functors}
\P_{\{ 1, 0\}}\mono\P_{\BA^1 (k)}\to\CA'\to\CA\, .
\end{equation}
The first functor comes from the embedding $\{ 1, 0\}\mono \BA^1 (k)$. The functor 
$\P_{\BA^1 (k)}\to\CA'$ comes from the fact that for any $k$-scheme $Y$ of finite type one has the canonical map
\[
Y(k)\to\Dmod (Y), \quad y\mapsto \delta_y\, ,
\]
where $\delta_y\in\Dmod (Y)$ is the D-module of delta-functions at $y$. The functor $\CA'\to\CA$ comes from the fact that for any $k$-scheme of finite type equipped with a $\BG_m$-action, one has the de Rham direct image functor $\Dmod (Y)\to\Dmod (Y/\BG_m)$.

Now to construct  the functor \eqref{e:PtoA} it remains to prove the following lemma.\footnote{This lemma (and the similar Lemma C.6.1 from \cite{DrGa3}) is closely related to the \emph{specialization map} from \cite[Sect.~4.1]{DrGa1}.}
\begin{lem}
The functor $\P_{\{ 1, 0\}}\to\CA^{naive}$ corresponding to \eqref{e:2PtoA} uniquely extends to a functor $\P_{1\to 0}\to\CA^{naive}\,$.
\end{lem}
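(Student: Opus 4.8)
The plan is to translate the combinatorial data of the source $2$-category $\P_{\{1,0\}}$ and its $2$-categorical enlargement $\P_{1\to 0}$ into the category $\CA^{naive}$, where the extra $2$-morphism $1\Rightarrow 0$ must be produced geometrically. Recall that $\P_{1\to 0}$ differs from $\P_{\{1,0\}}$ only by adjoining a single non-identity $2$-morphism from the $1$-endomorphism $1:\bb\to\bb$ to the $1$-endomorphism $0:\bb\to\bb$. Thus, to extend the already-constructed functor $\P_{\{1,0\}}\to\CA^{naive}$ of \eqref{e:2PtoA} to $\P_{1\to 0}\to\CA^{naive}$, the only new datum I must supply is a $2$-morphism in $\CA^{naive}$ between the images of $1$ and $0$, and I must verify it satisfies the (automatic, since there is just one) coherence condition and that it is the unique such extension.

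First I would unwind what the images of $1,0\in\End_{\P_{\{1,0\}}}(\bb)$ are. Under the composite \eqref{e:three_functors}, the element $1\in\BA^1(k)$ goes to the object $\delta_1\in\Dmod(\BG_m)$ pushed into $\Dmod(\BG_m/\BG_m)=\Dmod(\mathrm{pt})$, and similarly $0\in\BA^1(k)$ goes to $\delta_0$, pushed forward from the fixed locus. In the enriched category $\CA$ these are objects of the DG category $\Dmod(\BA^1/\BG_m)=\on{Mor}_{\CA}(\bb,\bb)$; concretely $1$ and $0$ become the $!$-extensions (equivalently, de Rham direct images under $\BG_m/\BG_m\to\BA^1/\BG_m$ and $\{0\}/\BG_m\to\BA^1/\BG_m$) of the constant D-module on the two $\BG_m$-orbits of $\BA^1$. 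The required $2$-morphism is then a morphism in the DG (hence, after passing to homotopy classes, ordinary) category $\Dmod(\BA^1/\BG_m)$ from the object attached to the open orbit $\BG_m/\BG_m$ to the object attached to the closed orbit $\{0\}/\BG_m$.

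The heart of the matter — and the step I expect to be the main obstacle — is exhibiting this morphism canonically and checking it is unique up to the homotopy used in $\CA^{naive}$. The natural candidate is the \emph{specialization map}, which the footnote to the lemma already flags as the relevant structure (cf.\ \cite[Sect.~4.1]{DrGa1}): on $\BA^1/\BG_m$ there is a canonical arrow from the $*$- (or $!$-)extension of the constant sheaf on the open point to the constant sheaf on the closed point, coming from the adjunction between the open and closed strata of the stratified stack $\BA^1/\BG_m=(\BG_m/\BG_m)\sqcup(\{0\}/\BG_m)$. I would construct it as the image under the functor $\Dmod(\BA^1/\BG_m)$ of the canonical unit/counit associated with the open-closed decomposition, and then observe that because $\P_{1\to 0}$ has exactly one non-identity $2$-morphism among these, there is no further coherence to verify beyond compatibility with the two identity $2$-morphisms, which holds tautologically. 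Uniqueness follows because the space of $2$-morphisms $1\Rightarrow 0$ in $\CA^{naive}$ is, by definition, the set of homotopy classes $\Hom_{\Dmod(\BA^1/\BG_m)}$ between these two objects, and the relevant $\Hom$ is one-dimensional; I would compute this $\Hom$ explicitly using the stratification and the fact that $H^\bullet(\BG_m)$ and $H^\bullet(\mathrm{pt})$ have the expected ranks, so that exactly one non-zero class (up to scalar) realizes the specialization and the extension is forced. The only genuine work is this $\Hom$-computation together with checking that the chosen class is compatible with the de Rham pushforward $\CA'\to\CA$ used in \eqref{e:three_functors}, i.e.\ that passing to $\BG_m$-quotients does not destroy the canonicity of the specialization arrow.
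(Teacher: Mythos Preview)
Your geometric instinct is right—the required $2$-morphism is indeed the specialization map $1_\bullet(k)\to 0_\bullet(k)$ in $\Dmod(\BA^1/\BG_m)$, and this is exactly what the paper constructs. But your analysis of the coherence conditions contains a real gap.

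You assert that ``there is no further coherence to verify beyond compatibility with the two identity $2$-morphisms, which holds tautologically.'' This is false. In $\P_{1\to 0}$ the single new $2$-morphism $\eta:1\Rightarrow 0$ can be whiskered by $\alpha^-$ on the left and by $\alpha^+$ on the right. Since $\alpha^-\cdot 1=\alpha^-=\alpha^-\cdot 0$ and $1\cdot\alpha^+=\alpha^+=0\cdot\alpha^+$ in $\P_{\{1,0\}}$, the whiskerings $\alpha^-\eta$ and $\eta\alpha^+$ are $2$-endomorphisms of $\alpha^-$ and $\alpha^+$, and in $\P_{1\to 0}$ they must equal the identities. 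Because $0=\alpha^+\alpha^-$ and the functor already supplies a $2$-isomorphism $F(\alpha^-)F(\alpha^+)\iso\id_{F(\bs)}$, these whiskering constraints are precisely the triangle identities: extending the functor to $\P_{1\to 0}$ is the same as exhibiting $F(\eta)$ as the \emph{unit} of an adjunction whose counit is the already-given $2$-isomorphism. The paper makes this reduction explicit and then unravels it to the concrete requirement that $\pi_\bullet(f)$ be the identity, where $\pi:\BA^1/\BG_m\to(\Spec k)/\BG_m$; this is the nontrivial normalization your proposal omits.

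This also undermines your uniqueness argument. Saying the relevant $\Hom$ is one-dimensional leaves an entire line of candidate $2$-morphisms; it does not pin down a unique extension. What forces uniqueness is the general fact that an adjunction is determined by its counit: once the counit is fixed, there is at most one unit satisfying the triangle identities. Equivalently, among all morphisms $f:1_\bullet(k)\to 0_\bullet(k)$, the condition $\pi_\bullet(f)=\id$ selects exactly one. Your $\Hom$-dimension computation is at best a sanity check, not the argument.
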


\begin{proof}

Let us start with a general remark. For any 2-category $\CC$, a functor $F:\P_{\{ 1, 0\}}\to\CC$ is the same as the following data: objects $F(\bs), F(\bb )\in\CC$, 1-morphisms
$F(\alpha^+): F(\bs )\to F(\bb )$, $F(\alpha^-): F(\bb )\to F(\bs )$, and a 2-isomorphism
\begin{equation} \label{e:potential_counit}
F(\alpha^-)F(\alpha^+)\iso\id_{F(\bs )}.
\end{equation}
Such a functor extends to a functor $\P_{1\to 0}\to\CC$ if and only if \eqref{e:potential_counit} is a counit of an adjunction between $F(\alpha^-)$ and $F(\alpha^+)$; in this case the extension is unique (because an adjunction is uniquely determined by its counit).  

Now let us construct the unit of the adjunction in the situation where $\CC=\CA^{naive}$ and $F$ is the functor  $\P_{\{ 1, 0\}}\to\CA^{naive}$ corresponding to \eqref{e:2PtoA}. Unraveling the definitions, we see that the problem is to construct in $\Dmod (\BA^1/\BG_m)$ a morphism $f:1_\bullet (k)\to 0_\bullet (k)$ with a certain property (here 0 and 1 denote the corresponding morphisms $\Spec k\to\BA^1/\BG_m$). The property is as follows: let $\pi :\BA^1/\BG_m\to (\Spec k)/\BG_m$ be the projection, then we want $\pi_\bullet (f)$ to be (homotopic to) the identity.

To construct $f$, recall that the functor $\pi_\bullet :\Dmod (\BA^1/\BG_m)\to\Dmod ((\Spec k)/\BG_m)$ is canonically isomorphic\footnote{More precisely, the morphism 
$\pi_\bullet\to\pi_\bullet\circ i_\bullet\circ i^*=i^*$ is an isomorphism.} to $i^*$, where $i:(\Spec k)/\BG_m=\{ 0\}/\BG_m\mono\BA^1/\BG_m$ is the closed embedding. In particular, we have a canonical morphism 
\[
1_\bullet (k)\to i_\bullet \circ i^*\circ  1_\bullet (k)= i_\bullet\circ \pi_\bullet\circ 1_\bullet (k)=0_\bullet (k) .
\]
This is $f$.
\end{proof}

%\newpage

\ssec{Passing to D-modules}   \label{ss:D-module details}
Here we will explain the details of the construction of the functor $G:\CA\to\DGCat_{\on{cont}}\, $, which were promised in Subsect.~\ref{sss:passing to Dmod}.

\sssec{A general construction}   \label{sss:2pattern}
Suppose that we have a monoidal category $\CM$ acting on a 2-category $\CC$, a monoidal functor $\CM_0\to\CM$, and a functor 
$\CC_0\to\CC$ compatible with $\CM_0$-actions. Then any lax functor $G_0:\CA_0\to\CC_0$ induces a lax functor $G:\CA\to\CC$, where $\CA$ is obtained from $\CA_0$ by change of enrichment via the monoidal functor $\CM_0\to\CM$.

This remains true even if $\CM_0\, ,\CM$ are monoidal $\infty$-categories and $\CC_0\, ,\CC$ are $(\infty ,2)$-categories.

%\sssec{Applying the construction}  
\sssec{The geometric data}
We will apply the construction of Subsect.~\ref{sss:2pattern} as follows.

We take $\CM_0$ to be the $(2,1)$-category $\St$ defined in Subsect.~\ref{sss:pattern}. We take $\CC_0$ to be the $(3,2)$-category 
$\Corr^{open} (\St )$. The monoidal structure on $\CM_0$ and the action of $\CM_0$ on $\CC_0$ are given by Cartesian product.

We take $\CA_0=\P_{\BA^1}\,$, and we take $G_0$ to be the lax functor $\wt{\Theta}_Z: \P_{\BA^1}\to\Corr^{open}(\St )$ (see Subsect.~\ref{sss:passing to quot}).

%to the following $\CM_0\,$, $\CC_0\,$, and $G_0:\CA_0\to\CC_0\,$. 
%
%We take $\CM_0$ to be the $(2,1)$-category $\St$ formed by those algebraic $k$-stacks of finite type whose field-valued points have affine 
%tautomorphism groups. We take $\CC_0$ to be the $(3,2)$-category $\Corr^{open}  (\St )$, which is defined similarly to $\Corr^{open}  (\Alg )$ 
%t(see Subsect.~\ref{sss:Corr Alg}). The monoidal structure on $\CM_0$ and the action of $\CM_0$ on $\CC_0$ are given by Cartesian product.
%
%Let $\CA_0$ denote the $\St$-enriched category obtained from the $\Alg$-enriched category $\P_{\BA^1}$ by replacing each space of 
%morphisms with its quotient by $\BG_m\,$. The lax functor $\Theta_Z: \P_{\BA^1}\to\CORR^{open}(\AAlg$ constructed in 
%Subsections~\ref{ss:without openness}-\ref{ss:openness} induces\footnote{E.g., the lax action of $\BA^1$ on $Z$ by correspondences (which is 
%a part of $\Theta_Z$) is given by the morphism $\wt{p}:\wt{Z}\to \BA^1\times Z\times Z$. It induces a morphism of stacks 
%$\wt{Z}/(\BG_m\times\BG_m)\to (\BA^1/\BG_m)\times (Z/\BG_m)\times (Z/\BG_m)$. The corresponding lax action of $\BA^1/\BG_m$ on 
%$Z/\BG_m$ by correspondences is a part of the lax functor $G_0\,$.} a lax functor $G_0:\CA\to\Corr^{open}  (\St )$.

\sssec{Passing to D-modules} \label{sss:Dmod}
%Let $\DGCat_{\on{cont}}$ denote the $(\infty,2)$-category
%of cocomplete DG categories (i.e., ones that contain arbitrary direct sums, or equivalently,
%colimits), in which 1-morphisms are continuous functors (i.e., exact functors that commute
%with arbitrary direct sums, or equivalently all colimits). 

%We take $\CM$ and $\CC$ to be the $(\infty,2)$-category $\DGCat_{\on{cont}}$ defined in Subsect.~\ref{ss:Using lax functor}. 
We take $\CC$ to be the $(\infty,2)$-category $\DGCat_{\on{cont}}$ defined in Subsect.~\ref{ss:Using lax functor}. 
We take $\CM$ to be the $(\infty,1)$-category obtained from $\DGCat_{\on{cont}}$ by removing all non-invertible 2-morphisms.
The monoidal structure on $\CM$ and the action of $\CM$ on $\CC$ are given by Lurie's tensor product.

Finally, one uses the functors 
$$\Dmod :\St\to\DGCat_{\on{cont}}\,, \;\quad \Dmod  :\Corr^{open} (\St )\to\DGCat_{\on{cont}}$$ constructed in \cite{RG}. Namely, to $\CY\in\St$ one associates the DG category of (complexes of) D-modules on $\CY$, denoted by $\Dmod (\CY )$. To a correspondence
\[
\CY_1\overset{p_1}\longleftarrow \CY\overset{p_2}\longrightarrow \CY_2
\]
one associates the functor $(p_2)_{\bullet}\circ p_1^!:\Dmod (\CY_1)\to\Dmod (\CY_2)$. To an open embedding $j:\CY'\mono\CY$ one associates the natural transformation $(p_2)_{\bullet}\circ p_1^!\to (p_2\circ j)_{\bullet}\circ (p_1\circ j)^!$ that comes from the adjunction between $j_*$ and $j^*=j^!$. The required compatibilities are proved  in \cite{RG}.

%\newpage

\end{document}